\documentclass[a4paper,11pt,reqno]{amsart}

\usepackage{amssymb}
\usepackage{amsmath}
\usepackage{mathtools}
\usepackage{latexsym}
\usepackage{eepic}
\usepackage{epsfig}
\usepackage{graphicx}
\usepackage{amscd}
\usepackage{color}
\usepackage[bookmarks=false]{hyperref}
\usepackage{tikz}
\usetikzlibrary{arrows.meta}
\usetikzlibrary{calc}
\usetikzlibrary{positioning}
\usepackage{fancyhdr}
\usepackage{caption}
\usepackage{subcaption}
\usepackage[ruled]{algorithm2e}

\usepackage{mathrsfs}
\usepackage[english]{babel}
\usepackage[all,cmtip]{xy}

\numberwithin{figure}{section}

\DeclareMathAlphabet{\mathpzc}{OT1}{pzc}{m}{it}

\newtheorem{theorem}{Theorem}[section]
\newtheorem*{theorem*}{Theorem}
\newtheorem{theorem-definition}[theorem]{Theorem-Definition}
\newtheorem{lemma-definition}[theorem]{Lemma-Definition}
\newtheorem{definition-prop}[theorem]{Proposition-Definition}
\newtheorem{corollary}[theorem]{Corollary}
\newtheorem{prop}[theorem]{Proposition}
\newtheorem*{prop*}{Proposition}
\newtheorem{lemma}[theorem]{Lemma}
\newtheorem{cor}[theorem]{Corollary}
\newtheorem{definition}[theorem]{Definition}
\newtheorem*{definition*}{Definition}

\newtheorem{assumption}[theorem]{Assumption}

\theoremstyle{definition}
\newtheorem{example}[theorem]{Example}

\newtheorem{remark}[theorem]{Remark}

\newcommand{\Z}{\ensuremath{\mathbb{Z}}}

\newcommand{\R}{\ensuremath{\mathbb{R}}}

\renewcommand{\R}{\ensuremath{\mathbb{R}}}

\newcommand{\Spec}{\ensuremath{\mathrm{Spec}\,}}

\numberwithin{equation}{section}

\title{Degenerations of generalized Kummer varieties}

\author[L.~H.~Halle]{Lars H. Halle}
\address{University of Bologna\\
Department of Mathematics\\
Piazza Porta S.~Donato 5\\
40126 Bologna\\
Italy}
\email{larshalvard.halle@unibo.it}

\author[K.~Hulek]{Klaus Hulek}
\address{Leibniz Universit\"at Hannover\\
Institut f\"ur algebraische Geometrie\\
Welfengarten 1\\
30167 Hannover\\
Germany}
\email{hulek@math.uni-hannover.de}

\author[Z.~Zhang]{Ziyu Zhang}
\address{ShanghaiTech University\\
Institute of Mathematical Sciences\\
393 Middle Huaxia Road\\
201210 Shanghai\\
P.R.China}
\email{zhangziyu@shanghaitech.edu.cn}

\keywords{Degenerations, generalized Kummer varieties, hyperk\"ahler varieties}

\subjclass[2020]{Primary: 14D06; Secondary: 14C05, 14D23}

\begin{document}

\begin{abstract}
We present a method to construct explicit
degenerations of higher-dimensional generalized Kummer varieties.
We start with a simple
degeneration $f: \mathcal Y \to C$  of abelian surfaces.  
Then $ \mathcal{Y} \setminus \mathcal{Y}_0$ is an abelian scheme over $C \setminus 0$ and we can form 
the relative generalized Kummer variety
$K^{n-1}_{\circ} = \mathrm{Kum}^{n-1}(\mathcal{Y} \setminus \mathcal{Y}_0) \to C \setminus 0$. This is naturally a closed subscheme of the relative Hilbert scheme $\mathrm{Hilb}^{n}(\mathcal{Y} \setminus \mathcal{Y}_0) \to C \setminus 0$. In previous work (joint with Gulbrandsen) we had constructed a compactification $I^n_{\mathcal{Y}/C}$ over $C$ of the latter scheme.
The closure $K^{n-1}_{\mathcal{Y}/C}$ of $K^{n-1}_{\circ}$ inside $I^n_{\mathcal{Y}/C}$ yields a canonical way to degenerate the family of generalized Kummer varieties, 
and is the degeneration we propose. This paper contains a detailed study of the geometry of the scheme 
$K^{n-1}_{\mathcal{Y}/C}$ and its natural stratification. For $n=2$ we
obtain a projective Kulikov model of Kummer surfaces, whereas already for $n=3$ new phenomena occur. We study in detail the dual complex of $K^{2}_{\mathcal{Y}/C}$ and show that this is 
PL-homeomorphic to the standard $2$-simplex.
\end{abstract}

\maketitle

\section{Introduction}

\subsection{Degenerations of curves and surfaces}
Studying degenerations of $1$-parameter families of smooth varieties is an important and much studied subject in algebraic geometry. Apart from the intrinsic interest, one can find numerous motivations for the wealth of work in this area. One obvious and prominent reason for investigating degenerations is to define compactifications of moduli spaces parametrizing a given class of varieties. Degenerations can also be extremely useful for computational purposes. Namely, if a variety $V$ degenerates to, say, a normal crossing union $ \cup_i W_i$ of smooth varieties, the geometry of each irreducible component $W_i$ tends to be simpler than that of $V$, and one has in some sense traded geometric complexity for \emph{combinatorial} complexity. This can be often exploited in computations not depending on the choice of a fiber in a family. Degenerations also have many applications for arithmetic questions. To give one example, we mention that if $X$ is a variety with trivial canonical sheaf over a complete discretely valued field $K$, then the asymptotic behavior of the set of rational points of $X$ defined over finite extensions of $K$ is encoded by the so-called motivic zeta function of $X$. This is a certain rational function which is computable from sufficiently nice \emph{proper} degenerations of $X$ over the valuation ring of $K$ (see for instance \cite{HN}).

Constructions of degenerations are of a highly non-unique nature. More precisely, if one starts with a smooth family $X$ defined over a punctured smooth curve $C \setminus 0$, there will in general be infinitely many ways to fill in a fiber over $0 \in C $ while still preserving nice properties of the family (e.g flatness). It is usually too optimistic to hope for a smooth filling; the best one can hope for in general, is to fill in with a fiber that is a strict normal crossing union of smooth proper varieties. In this case, we say that the family has \emph{semi-stable reduction} over $0 \in C$. 
Starting with the work of Deligne and Mumford \cite{DM69}, which has since been extended by many mathematicians, we now know that semi-stable reduction holds in equal characteristic zero, possibly after a finite ramified base change. However, it should be pointed out that, especially in high dimension, it can be very difficult to understand the geometry of the irreducible components in a semi-stable degeneration, as well as how they intersect. Depending on the situation, it might therefore be desirable to also seek other types of degenerations, say, with mild singularities.

The best understood case is that of curves. 
Here, the semi-stable reduction theorem is known in arbitrary characteristic. Moreover, one has well understood geometrically meaningful compactifications of the moduli space $\mathcal{M}_g$ of 
curves of genus $g \geq 2$. The best known of these is the Deligne-Mumford compactification $\overline{\mathcal M}_g$ parametrizing stable curves of genus $g$.

By necessity, the picture is more complicated in higher dimensions. The situation is well-understood for surfaces, at least for surfaces with torsion canonical bundle. This goes back to the seminal work of Kulikov 
\cite{Ku77} and Pinkham-Persson \cite{PP81}. They proved, that if $ 0 \in C$ is a pointed smooth curve, and $X \to C \setminus \{0\}$ is a smooth family of proper surfaces with trivial canonical bundle, then, possibly after a finite base change, the family can be completed to a \emph{Kulikov model} $\mathcal{X}$ over $C$. Roughly, this means that the relative canonical bundle is trivial and the fiber $\mathcal{X}_0$ over $0$ is a reduced divisor with strict normal crossings; we refer to Section \ref{sec:KulikovNeron} for the definition, and further details. 

The geometry and combinatorial structure of the special fiber $\mathcal{X}_0$ of a Kulikov model is very restricted. In particular, it turns out that the \emph{dual complex} $\Delta(\mathcal{X}_0)$ must be one of three types. If the general fiber $X_t$ is a K3 surface, the dual complex is either a point, a partion of an interval, or a triangulation of the sphere $S^2$. If $X_t$ is an abelian surface, the dual complex is either a point, a partition of the circle $S^1$ or a triangulation of the torus $S^1 \times S^1$. In either case these are called degenerations of type $I$, $II$ or $III$, respectively. It is moreover known that the type corresponds precisely to the level of unipotency for the monodromy representation on the middle cohomology of $X_t$.

\subsection{Hyperk\"ahler varieties and their degenerations}
Hyperk\"ahler varieties, also known as irreducible holomorphic symplectic manifolds, can, in many ways, be seen as higher dimensional analogues of K3 surfaces, and have been studied in great depth over the last four decades. There are two known families of examples, namely deformations of  Hilbert schemes $\mathrm{Hilb}^n(S)$ of $n$ points on a K3 surface $S$, and of generalized Kummer varieties $\mathrm{Kum}^{n}(A)$ obtained as the kernel of the summation map $\mathrm{Hilb}^{n+1}(A) \to A$ where $A$ is an abelian surface. For fixed $n$, both of these are smooth of dimension $2n$. These families depend on 21 and 5 parameters, which is one more than the dimension of the versal deformation space of K3 and abelian surfaces respectively. Besides these families, there are two sporadic examples, namely varieties of OG6  and OG10 type. These have dimensions
6 and 10 and are named after O'Grady who first constructed them. These varieties have 5 and 22 moduli respectively. 
Note that these are moduli of K\"ahler manifolds (not necessarily projective). Imposing a polarization reduces the number of moduli by 1 in each case. 
It is a wide open problem whether other examples of hyperk\"ahler manifolds exist.
Hyperk\"ahler manifolds and K3 surfaces share many properties, but, typically, the situation is more complex and involved in the higher-dimensional case. One example is their Hodge theory. Period maps can also be constructed for hyperkahler manifolds, but the global Torelli theorem, developed for polarized hyperk\"ahler manifolds by Verbitsky and Markman, is considerably more subtle. As a result, moduli spaces of polarized hyperk\"ahler manifolds can be constructed, but show many new properties (e.g. have several components)
which do not appear for K3 surfaces. 
A very natural question, not least in view of moduli theory, is to 
try and understand \emph{degenerations} of hyperk\"ahler manifolds. The study of minimal dlt models of hyperk\"ahler varieties, their associated dual complexes and the relation to the monodromy representation on the cohomology of a general fiber was studied by Koll{\'a}r, Laza, Sacca and Voisin in \cite{KLSV}. Their results form a natural generalization of (a part of) the work of Kulikov and Persson-Pinkham.

\subsection{Previous work on degenerations}
Relatively little is known about explicit degenerations of hyperk\"ahler varieties. The first study goes back to Nagai \cite{Na08} 
who studied degenerations of Hilbert schemes of K3 surfaces. It is indeed a natural idea to start with a well behaved degeneration of 
K3 surfaces of type II, and then take the associacted family of Hilbert schemes of cycles of length $n$. This naive approach does not yield 
degenerations with good properties. In \cite{Na08} Nagai found a suitable modification of the total space to obtain good degenerations for $n=2$. 

Together with M.~Gulbrandsen, the first two authors studied degenerations of $\mathrm{Hilb}^n(X)$, with $X$ either a K3 or an abelian surface, in a series of papers (cf. e.g. \cite{GHH}, \cite{GHHZ21}). In fact, the degenerations studied there are obtained from taking specific degenerations of $X$ as input; we recall the main features of this work. Let $f \colon \mathcal{X} \to (0 \in C)$ be a \emph{simple} degeneration of surfaces over a smooth pointed curve (see Definition \ref{def:simpledegen}). In particular, the fiber $\mathcal{X}_0$ over $0$ is semi-stable, without triple intersections, and all other fibers are smooth surfaces. 

The relative Hilbert scheme of $n$ points on a non-smooth family is not well understood beyond relative dimension one. To remedy this fact, we instead used Jun Li's theory of \emph{expanded degenerations} (\cite{Li01}, \cite{Li13}). Li replaces the original family with $f[n] \colon \mathcal{X}[n] \to C[n]$, where $C[n]$ has an \'etale map to $\mathbb{A}^{n+1}$. For the purpose of this introduction, it is useful to recall that this map is an isomorphism over the union of the coordinate hyperplanes in $\mathbb{A}^{n+1}$. The morphism $f[n]$ has an equivariant action by a rank $n$ torus $\mathbb \mathbb G[n]$. The fiber of $\mathcal{X}[n]$ over a point in the base where $b$ coordinates are zero is obtained from $\mathcal{X}_0$ by replacing each component $D$ of the double locus by a chain of $b-1$ $\mathbb{P}^1$-bundles over $D$ intersecting along the sections at $0$
 and $\infty$.

 Under mild assumptions, the map $\mathcal{X}[n] \to C[n]$ is projective, provided this was the case for the original degeneration. The relative Hilbert scheme $\mathrm{Hilb}^n(\mathcal{X}[n]/C[n])$ therefore has a suitable relatively ample line bundle. It moreover inherits an action by the torus $\mathbb G[n]$, so one can study GIT-stability. The main result in \cite{GHH} yields an explicit description of the GIT-stable locus $\mathcal{H}^n$. In particular, a subscheme $Z$ in a fiber of $f[n]$ is stable if and only if it is contained in the smooth locus, and if the underlying cycle of $Z$ satisfies an explicit combinatorial criterion. Moreover, stability and semi-stability coincide for $Z$.

The GIT-quotient $I^n_{\mathcal{X}/C} = \mathcal{H}^n/\mathbb G[n]$ is the desired degeneration; it is projective over $C$, and one checks that it contains $\mathrm{Hilb}^n(\mathcal{X}^{sm}/C)$ as an open subscheme whose complement has codimension $2$. It also has good geometric properties. In \cite{GHHZ21}, we showed among other things that $I^n_{\mathcal{X}/C} \to C $ is a dlt model, and that the dual complex of its special fiber is the $n$-th symmetric product of the dual complex of $\mathcal{X}_0$. 

Independently, Nagai developed his approach further and also showed that his degenerations and ours coincide, see \cite{Na18}, \cite{Nagai-GHH}.

All of these constructions start with type II degenerations of K3 surfaces and again yield type II degenerations of hyperk\"ahler varieties.
Type III degenerations of Hilbert schemes were first considered successfully by C.~Tschanz. The local construction 
is discussed in work of Tschanz  \cite{Tsch23}, \cite{Tsch24}, whereas the global picture was completed by Shafi and Tschanz in \cite{ST25}.

\subsection{Goal of this paper}
Until now, all known constructions of degenerations of hyperk\"ahler varieties in dimension greater than two have been of $K3^{[n]}$-type. 
The goal of this paper is to improve this situation and to study degenerations of generalized Kummer varieties. 
Here we shall address two aspects of this problem. First, we shall give an explicit construction of degenerations of generalized Kummer varieties taking as input a type II degeneration of abelian surfaces.
This works in great generality. Second, we will 
illustrate our approach by a detailed analysis of the geometry of our degenerations in the case of $n=2$ and $n=3$, namely Kummer surfaces and 
generalized Kummer $4$-folds. We shall see that in the latter case a number of new phenomena appear.

More precisely, we construct degenerations, compatible with the Hilbert scheme degenerations outlined above. Indeed, if we take $f \colon \mathcal{X} \to C$ to be a simple degeneration of abelian surfaces, then $ \mathcal{X} \setminus \mathcal{X}_0$ is an abelian scheme over $C \setminus 0$ and we can form 
the relative generalized Kummer variety
$K^{n-1}_{\circ} = \mathrm{Kum}^{n-1}(\mathcal{X} \setminus \mathcal{X}_0) \to C \setminus 0$. This is naturally a closed subscheme of the relative Hilbert scheme $\mathrm{Hilb}^{n}(\mathcal{X} \setminus \mathcal{X}_0) \to C \setminus 0$, which is in turn a dense open subscheme of $I^n_{\mathcal{X}/C}$, obtained via restriction along the open inclusion $C \setminus 0 \subset C$. The closure $K^{n-1}_{\mathcal{X}/C}$ of $K^{n-1}_{\circ}$ inside $I^n_{\mathcal{X}/C}$ therefore yields a canonical way to degenerate the family of generalized Kummer varieties over $C \setminus 0$. The degeneration $K^{n-1}_{\mathcal{X}/C} \to C$ is indeed the main object of study of this paper.

To study $K^{n-1}_{\mathcal{X}/C}$, we view it as a GIT quotient $\mathcal{K}^{n-1}/\mathbb G[n]$, where the so-called \emph{Kummer locus} $\mathcal{K}^{n-1} $ denotes the reduced preimage of $K^{n-1}_{\mathcal{X}/C}$ under the quotient map $\mathcal{H}^n \to I^n_{\mathcal{X}/C}$. In this way, we reduce the problem to understanding the closed subvariety $\mathcal{K}^{n-1}$ of the GIT stable locus $\mathcal{H}^n$, where the latter has been extensively studied in our previous work. Nevertheless, it requires a substantial amount of work to describe the Kummer locus precisely. As an indication that this task is somewhat involved, we mention, for instance, that $\mathcal{K}^{n-1}$ is not flat over $C[n]$ (when $n>2$). Another interesting, and in fact related, aspect is the following phenomenon. For the families we take as input, it will always be the case that 
also the relative smooth locus $\mathcal{X}^{sm}/C$ has a group structure -- in fact, it is the N\'eron model of its generic fiber. Hence one can define $\mathrm{Kum}^{n-1}(\mathcal{X}^{sm}/C)$. This is an open subscheme of $K^{n-1}_{\mathcal{X}/C}$, but its complement will in general be of codimension \emph{one} (and not of codimension two as in the Hilbert scheme case).

In other words, not all irreducible components in the special fiber of our degeneration are visible at the level of $\mathrm{Kum}^{n-1}(\mathcal{X}_0^{sm})$, but, as we will explain, arise for more subtle reasons.

\subsection{Main results}
Next, we explain some central features of our approach, and we discuss the main results of the paper. Before we begin, let us point out that an important technical assumption (which was not necessary for the Hilbert-scheme case) is that $\mathcal{X} \to C$ is a (projective) Kulikov model of type II, constructed through Mumford's technique of \emph{uniformization} in the generality developed in the work of Faltings-Chai \cite{FaltingsChai} and K\"unnemann \cite{Kunnemann}. This is explained in more detail in Section \ref{sec:KulikovNeron}.

\subsubsection{Kummer locus and its stratification} 
In Section \ref{sec:Kummerlocus}, we describe the Kummer locus $ \mathcal{K}^{n-1} $ in terms of a stratification compatible with the natural stratification of $C[n]$ induced by the coordinate hyperplanes in $\mathbb{A}^{n+1}$. Over the complement $C[n]^*$ of the coordinate hyperplanes, the restriction $ \mathcal{K}^{n-1}_{\circ} $ is simply the pullback $K^{n-1}_{\circ} \times_C C[n]^*$. Any other stratum of $ \mathcal{K}^{n-1} $ lives over a stratum of $C[n]$ where some coordinates, say indexed by $I \subset \{1, \ldots, n+1\}$, are zero and the rest are non-zero. Assuming for the moment that the coordinates indexed by the complement of $I$ are all equal to $1$, we are looking at what Li calls a \emph{standard embedding} $ \tau_I \colon C[b] \to C[n]$ for some $b$. It has the property that $\tau_I^* \mathcal{X}[n] = \mathcal{X}[b]$. Now assume that $S \to \mathcal{K}^{n-1} $ is a map from the spectrum of a complete discrete valuation ring, where the generic point maps to $ \mathcal{K}^{n-1}_{\circ} $ and the special point maps to the fiber over $0 \in C[b]$. Then the composition $S \to C[b] \to \mathbb{A}^{b+1}$ is a parametrized
curve in $\mathbb{A}^{b+1}$. If it is, in fact, contained in a \emph{line} $L$,
we explain in Section \ref{sec:KulikovNeron} that the restriction $\mathcal{X}[b]_S$ is a Kulikov model, and, consequently, the smooth locus $\mathcal{X}[b]^{sm}_S$ is a N\'eron model (and hence a group scheme). It is not hard to see, in this situation, that $S$ maps into $\mathrm{Kum}^{n-1}(\mathcal{X}[b]^{sm}_S)$. The action of $\mathbb G[b]$ acts on the set of lines, and this corresponds to an action on $\mathrm{Kum}^{n-1}(\mathcal{X}[b]^{sm}_S)$. The resulting orbit is precisely a stratum. Obviously $S \to C[b] \to \mathbb{A}^{b+1}$ does not in general factor through a line, and a more elaborate version of the above analysis is needed, see Section \ref{sec:Kummerlocus}.

We now formulate the main theorem of the first part of the paper (see Theorem \ref{theorem:mainstrat}). In the statement, we put $ 1 \leq b \leq n$, and denote by $ \mathbf{m} $ a tuple of non-zero integers encoding the distribution of $n$ points on the fiber $\mathcal{X}[b]^{sm}_0$. We say that a pair $(b,\mathbf{m})$ is \emph{unobstructed} if the corresponding connected component $ \mathscr{S}^{\mathbf{m}} $ of $\mathrm{Hilb}^{n}(\mathcal{X}[b]^{sm}_0)$ actually supports part of the Kummer locus. This can in fact be checked by a numerical condition (Definition \ref{def:admisstuple}).

\begin{theorem}
The boundary of the Kummer locus admits the stratification
$$ \mathcal{K}^{n-1} \setminus \mathcal{K}_{\circ}^{n-1} = \bigcup_{b, \mathbf{m}} \mathcal{K}(b, \mathbf{m}), $$
where $(b, \mathbf{m})$ runs over all unobstructed pairs and where each stratum $\mathcal{K}(b, \mathbf{m})$ is smooth, irreducible and $\mathbb G[n]$-stable.    
\end{theorem}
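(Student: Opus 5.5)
The plan is to define the strata explicitly, then check that they cover the boundary and have the stated properties. The stratification of $C[n]$ by coordinate hyperplanes pulls back to a stratification of $\mathcal{H}^n$. Over a point of $C[n]$ where exactly the coordinates indexed by $I$ with $|I| = b+1$ vanish, we use the $\mathbb G[n]$-action together with Li's standard embedding $\tau_I \colon C[b] \to C[n]$ to translate the remaining coordinates to $1$. This identifies the fiber of $\mathcal{H}^n$ with a union of connected components $\mathscr{S}^{\mathbf{m}}$ of $\mathrm{Hilb}^{n}(\mathcal{X}[b]^{sm}_0)$, indexed by the distributions $\mathbf{m}$ of the $n$ points among the components of $\mathcal{X}[b]_0$ that satisfy the GIT stability criterion of \cite{GHH}. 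We then define
\[
\mathcal{K}(b,\mathbf{m}) := \mathbb G[n]\cdot \bigl(\mathcal{K}^{n-1}\cap \mathscr{S}^{\mathbf{m}}\bigr).
\]
By construction this set is $\mathbb G[n]$-stable, and the sets $\mathcal{K}(b,\mathbf{m})$ are disjoint and cover $\mathcal{K}^{n-1}\setminus\mathcal{K}^{n-1}_\circ$. The real content of the theorem is therefore an explicit description of $\mathcal{K}^{n-1}\cap \mathscr{S}^{\mathbf{m}}$.

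We describe this intersection by arcs. Take $S=\Spec R \to \mathcal{K}^{n-1}$ with generic point in $\mathcal{K}^{n-1}_\circ$ and closed point in $\mathscr{S}^{\mathbf{m}}$. First treat the case where the composite $S \to C[b]\to\mathbb{A}^{b+1}$ lies in a line $L$. By Section \ref{sec:KulikovNeron}, $\mathcal{X}[b]_S$ is then a Kulikov model and $\mathcal{X}[b]^{sm}_S$ is its Néron model. The sum map therefore extends, and the limit point lies in the fiber over the identity of
\[
\mathscr{S}^{\mathbf{m}} \to \mathcal{X}[b]^{sm}_0 ,
\]
computed with the group law of the Néron model of $\mathcal{X}[b]_S$. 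The component group of $\mathcal{X}[b]^{sm}_0$ is cyclic. The component of $\mathcal{X}[b]^{sm}_0$ in which the sum of a cycle of type $\mathbf{m}$ lands is $\sum_i i\,m_i$, taken modulo the component group, with weights given by the slopes of $L$. Requiring this to be the identity component is the numerical condition of Definition \ref{def:admisstuple}, which is exactly unobstructedness.

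For a general arc we reduce to the linear case. After a ramified base change and a $\mathbb G[b]$-translation, which is allowed because the strata are defined as $\mathbb G[n]$-orbits, the relevant leading-order data of the arc agrees with that of a line: only the valuations of the coordinates and their leading coefficients affect which component of $\mathcal{X}[b]_0$ the sum lands in. Conversely, for unobstructed $(b,\mathbf{m})$, each point of the resulting locus lifts along a linear arc, because the Kummer fibration over a Néron model is smooth. This gives
\[
\mathcal{K}^{n-1}\cap\mathscr{S}^{\mathbf{m}} = \bigcup_{\lambda} \Sigma^{-1}(\lambda),
\]
where $\Sigma$ is the sum map to $\mathcal{X}[b]_0^{sm}$ and the union is over a torsor of the torus acting on the fiber $(\mathbb{C}^*)$-factors. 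The $\mathbb G[n]$-saturation of this locus is then the total space of a fibration over the orbit.

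Smoothness and irreducibility follow from this description. $\mathscr{S}^{\mathbf{m}}$ is a product of Hilbert schemes of points on smooth quasi-projective surfaces, each an open subset of a $\mathbb{P}^1$-bundle over an elliptic curve or of a component of $\mathcal{X}_0$, so it is smooth and irreducible. The sum map to the connected group $\mathcal{X}[b]^{sm,0}_0$, a $\mathbb{C}^*$-extension of an elliptic curve, is equivariant for translation by $\mathcal{X}[b]^{sm,0}_0$ acting diagonally. The map is therefore isotrivial and smooth, and its fibers are irreducible, by the standard argument for $\mathrm{Kum}$ on abelian varieties. Saturating by $\mathbb G[n]$ gives a bundle over a homogeneous space, which stays smooth and irreducible.

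The main obstacle is the reduction from arbitrary arcs to lines, together with the converse lifting. The issue is to show that exactly the unobstructed $\mathbf{m}$ occur and that the boundary locus is a full $\mathbb G[n]$-orbit of a single fiber of the sum map, rather than a union of several such pieces. This is where the non-flatness of $\mathcal{K}^{n-1}$ over $C[n]$ appears. To handle it, I would first try to track the sum map through the uniformization of Faltings–Chai and Künnemann, which describes $\mathcal{X}[b]$ explicitly enough to compute limits.
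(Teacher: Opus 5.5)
There is a genuine gap, and it is the step you defer as the main obstacle: a general arc cannot be reduced to one lying on a line in $C[b]$. By Lemma~\ref{lemma:standardformmaptobase} you can bring the arc to the form $t_i\mapsto\pi^{r_i}$. The valuation vector $\mathbf{r}=(r_1,\dots,r_{b+1})$ is then invariant under $\mathbb G[b](R)$, and a ramified base change only rescales it, whereas an arc on a generic line has all $r_i$ equal.

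Your line case therefore only yields the condition of Definition~\ref{def:admisstuple} for $\mathbf{r}=(1,\dots,1)$, namely $\mathscr{W}^+(\mathbf{m})+\mathscr{W}^-(\mathbf{m})\equiv 0 \pmod{2N(b+1)}$. This is strictly stronger than unobstructedness. The stratum $\mathbf{m}'=(0,2,1,0,0,0)$ of Subsection~\ref{subsec:Examples} is unobstructed via $\mathbf{r}=(3,1,1)$, yet for $(1,1,1)$ one gets $2\cdot 1+1\cdot 2=4\not\equiv 0\pmod 6$. So neither your necessity statement nor the converse lifting ``along a linear arc'' holds as formulated.

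Translating by $\mathbb G[b](K)$ to equalize the valuations does not rescue this. Such a translation changes neither $Z_\eta\subset\mathcal{Y}_K$ nor the composite $S\to C$, hence not the limit in the N\'eron model over $S$. But for non-constant $\mathbf{r}$, some inserted level lies on a component of $\mathcal{Y}'_r$ that is contracted in $\mathcal{Y}[b]_S$ for the new arc. Stability forces every level to be occupied, so the new limit is not stable. Relatedly, $\mathcal{Y}[b]^{sm}_0$ has no canonical group law (Example~\ref{Exa:differentgrstr}), so your sum map $\Sigma$ to it is undefined without choosing a line. For a general arc the relevant group has component group $\mathbb{Z}/2Nr$, and $\mathcal{Y}[b]^{sm}_0$ is only an open subset of it, in general not a subgroup.

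The missing idea is to refine the expansion rather than the arc. Via $C[r-1]\to C[b]$, $t_k\mapsto\prod_{j=1}^{r_k}t'_{\rho_k+j}$, the arc lifts to the diagonal line $L\subset C[r-1]$. Proposition~\ref{prop:main-blow-up} then identifies $\mathcal{Y}'_r=\mathcal{Y}[r-1]_S\to\mathcal{Y}[b]_S$ as a resolution that is an isomorphism over $\mathcal{Y}[b]^{sm}_S$. Hence $Z$ is an $S$-point of $\mathrm{Kum}^{n-1}$ of the N\'eron model $(\mathcal{Y}'_r)^{sm}$, whose component group is $\mathbb{Z}/2Nr$ (Lemma~\ref{lemma:kernelZ}). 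This gives exactly Definition~\ref{def:admisstuple} (Proposition~\ref{prop:numobstr}). Conversely, smoothness of $\mathrm{Kum}^{n-1}(\mathcal{A}(L))$ over $L$ lets you lift points of $\mathscr{S}_{\mathbf{m}(\mathbf{r})}\cap\mathrm{Kum}^{n-1}(\mathcal{A}(L)_0)$ to arcs staying inside the open $\mathcal{Y}[b]^{sm}_S$.

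The other issue you leave open, that one gets a single irreducible piece, needs two further inputs.
\begin{itemize}
\item The torus saturation must be computed using the equivariant inclusion $\mathcal{Y}[b]^{sm}_0\subset\mathcal{Y}[r-1]^{sm}_0$ over $\mathbb G[r-1]\to\mathbb G[b]$, together with Lemma~\ref{lemma:translationinterpret} and Proposition~\ref{prop:genline-hom}. It is $\mu^{-1}(e)$ or $\mu^{-1}(T)$ according to whether $\alpha^+(\mathbf{m},k)=\alpha^-(\mathbf{m},k)$ for all $k$ (Proposition~\ref{prop:stratumdfn}). Your description as a union of fibres over a torus torsor is right only in the wide case; in the narrow case the torus does not move the sum and one gets a single fibre.
\item The locus must be shown independent of the chosen admissible $\mathbf{r}$ (Lemma~\ref{lem:independencer}).
\end{itemize}
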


We can give more specific information about the strata $\mathcal{K}(b, \mathbf{m})$. In fact, they are of the form $ O(b, \mathbf{m}) \times \mathbb G[n-b]$, where the second factor is a torus of rank $n-b$. The first factor is obtained from an addition map $\mu \colon \mathscr{S}^{\mathbf{m}} \to G^{\circ}$, where the target is a semi-abelian surface with $1$-dimensional toric part $T$. More precisely, $ O(b, \mathbf{m}) $ is either $\mu^{-1}(e_{G^{\circ}})$ or $\mu^{-1}(T)$, see Proposition-Definition \ref{prop:stratumdfn}. 

\subsubsection{Combinatorial analysis} 

The second part of the paper is devoted to combinatorial and geometric understanding of our degenerations. As a combinatorial tool to understand the strata, and the intersections of their closures, we introduce the notation called \emph{line charts}, which is an enhancement of the notion of ``expanded graphs'' introduced in \cite[Section 1.3.1]{GHH}.

Indeed, each stratum in the relative Hilbert scheme $I^n_{\mathcal{Y}/C}$ is associated with a \emph{line chart} to display intuitively how the $n$ points are distributed on different components of the expansions. It detects whether a certain stratum is unobstructed or not, hence tells whether such a stratum survives in $K^{n-1}_{\mathcal{Y}/C}$. This is presented in detail in Sections \ref{subsec:deepeststrata} and \ref{subsec:higherdimstrata}. From this information we will be able to count the number of the deepest strata in $K^{n-1}_{\mathcal{Y}/C}$, which is equal to the number of cells in top dimension in the dual complex; see Proposition \ref{prop:number-deepest}. In the case of $n=3$, we can even give a full computation for the number of strata in each dimension; see Proposition \ref{prop:number}.

Moreover, we will also see that the so-called \emph{line subcharts} correspond to smoothings of a certain stratum, which further tells the boundary relation for cells in the dual complex of $I^n_{\mathcal{Y}/C}$ and $K^{n-1}_{\mathcal{Y}/C}$. This is presented in Subsection \ref{subsec:numerical-smoothing}x. In particular, by classifying all smoothings of an arbitrary deepest stratum, we obtain the closure of the corresponding top-dimensional cell in the dual complex, which becomes a building block of the dual complex for the global degeneration. This point of view further allows us give a local comparison between the dual complexes of $I^n_{\mathcal{Y}/C}$ and $K^{n-1}_{\mathcal{Y}/C}$; more precisely, each cell in the latter dual complex is obtained as the intersection of a hyperplane and the corresponding cell in the former; see Proposition \ref{prop:from-H-to-K}.

\subsubsection{} We use the tools developed in Section \ref{Sec:linechart} to study in detail our degenerations for $n=2$ and $n=3$. When $n=2$ the generic fiber is a Kummer surface, and some features of the Kummer locus are different compared to when $n > 2$. In particular, we mention that $\mathcal{K}^1$ is smooth over $C[2]$ and that its fiber over the origin $0 \in C[2]$ is \emph{empty}.

Our main result for Kummer surfaces is as follows (Theorem \ref{theorem:Kummerdeg}).
\begin{theorem}
The GIT quotient $ K^1_{\mathcal{Y}/C} $ is a projective Kulikov model over the pointed curve $C$.    
\end{theorem}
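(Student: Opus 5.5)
To show that $K^1_{\mathcal{Y}/C}\to C$ is a projective Kulikov model, I will check four things: projectivity over $C$, regularity of the total space, that the special fiber is a reduced strict normal crossing divisor, and triviality of the relative canonical bundle. I will deduce them from the structure of the Kummer locus $\mathcal{K}^1\subset\mathcal{H}^2$ together with the stratification theorem above. Projectivity is immediate. $K^1_{\mathcal{Y}/C}$ is by definition a closed subscheme of the GIT quotient $I^2_{\mathcal{Y}/C}$, which is projective over $C$.

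\textbf{Regularity.} For $n=2$ I will use the special features stated above: $\mathcal{K}^1$ is smooth over $C[2]$, and its fiber over the origin $0\in C[2]$ is empty. Hence $\mathcal{K}^1$ lies over the locus of $C[2]$ where at most one coordinate vanishes. Since $C[2]\to\mathbb{A}^3$ is étale and the parameter $t$ on $C$ pulls back to $t_1t_2t_3$, this locus is smooth. The total space of $\mathcal{K}^1$ is then smooth, because it is smooth over a smooth base. The torus $\mathbb{G}[2]$ acts on $\mathcal{K}^1\subset\mathcal{H}^2$, and stability equals semistability, so all stabilizers are finite. I will show they are in fact trivial on $\mathcal{K}^1$ by inspecting the line charts of the unobstructed pairs $(b,\mathbf m)$ with $b\le 2$. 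The stabilizer of a stable point is detected by how the two points sit on the expanded components, and for $n=2$ there are very few configurations. Once the stabilizers are trivial, $\mathcal{K}^1\to K^1_{\mathcal{Y}/C}$ is a principal $\mathbb{G}[2]$-bundle, by Luna's étale slice theorem. So $K^1_{\mathcal{Y}/C}$ is smooth, and the composite to $C$ is flat because the quotient is the closure of the generic part.

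\textbf{Special fiber.} I will describe the fiber over $0$ via the strata $\mathcal{K}(b,\mathbf m)=O(b,\mathbf m)\times\mathbb{G}[2-b]$. The strata with $b=1$ map to the locus where exactly one coordinate vanishes. Locally there the map is $t=t_i\cdot(\text{unit})$, and $\mathcal{K}^1$ is smooth over the base, so $t$ pulled back is a reduced smooth divisor near these points. Points where two coordinates vanish ($b=2$, fiber over a codimension-two stratum of $C[2]$) are where two branches meet. There $t=t_it_j\cdot u$ with $t_i,t_j$ part of a regular system of parameters, which gives normal crossings. Triple points would need the origin, which is excluded. Reducedness and the normal crossing property are preserved under the free quotient. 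To get \emph{strict} normal crossings I will identify the components concretely. Each is the closure of a quotient $O(1,\mathbf m)\times\mathbb{G}[1]/\mathbb{G}[2]$, and $O(1,\mathbf m)$ is $\mu^{-1}(e)$ or $\mu^{-1}(T)$ inside $\mathscr S^{\mathbf m}$. This should give elliptic ruled surfaces, or rational surfaces when both points lie on components with a toric-part condition. I will check that each is smooth, i.e.\ that no component self-intersects.

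\textbf{Canonical bundle (main obstacle).} I expect the hardest step to be showing $\omega_{K^1_{\mathcal{Y}/C}/C}\cong\mathcal{O}$. My first approach is to use that $I^2_{\mathcal{Y}/C}$ carries a relative symplectic (log) form extending the one on $\mathrm{Hilb}^2$ of the Néron model $\mathcal{Y}^{sm}$. I would restrict the corresponding generator over the Kummer locus, as in the Beauville construction, and show that it extends without zeros or poles across the special fiber. This reduces to checking that it has vanishing order zero along every component. For components meeting $\mathrm{Kum}^1(\mathcal{Y}^{sm}/C)$ this is clear. For the new components, which come only from the expansion, the component-by-component check is the genuinely new part. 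As an alternative, I would show that the special fiber is a type II Kulikov configuration: a chain or cycle of surfaces with double curves that are elliptic and anticanonical on each component. Adjunction then gives $K_{\mathcal{X}}\cdot$(component)$=0$, and the component-wise numerically trivial $K$ combined with $K$ trivial on the generic fiber yields triviality, using that $h^1(\mathcal{O})=0$ on the Kummer fibers.
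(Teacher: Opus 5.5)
Your projectivity, regularity and normal-crossing arguments follow the paper's route: smoothness of $\mathcal{K}^1$ over $C[2]$ from Proposition \ref{prop:Kummerprops}, emptiness over $0\in C[2]$, trivial stabilizers, and Luna's slice theorem. However, your bookkeeping of strata is off by one, and this is what hides the key point.

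Emptiness over the origin only removes the point where all three coordinates vanish. So $\mathcal{K}^1$ lies over the locus where at most \emph{two} coordinates vanish; your regularity paragraph says one, which contradicts your next paragraph.
\begin{itemize}
\item The strata with $b=0$ lie where exactly one coordinate vanishes. Their quotients $O(0,\mathbf m)=F_i^\circ$ are the open parts of the irreducible components of $(K^1_{\mathcal{Y}/C})_0$.
\item The strata with $b=1$ lie where exactly two coordinates vanish. By Lemma \ref{lemma:KumSurb1} they are narrow with $O(1,\mathbf m)\cong G^\circ$, so $\bigl(O(1,\mathbf m)\times\mathbb G[1]\bigr)/\mathbb G[2]\cong G^\circ/\mathbb G[1]\cong D$. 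This is a double curve, not a component as you assert.
\end{itemize}
Strictness of the normal crossings does not need a concrete identification of the components anyway. The preimage in $\mathcal{K}^1$ of each $\{t_i=0\}$ is smooth and $\mathbb G[2]$-stable, so its free quotient is smooth.

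The genuine gap is the canonical bundle step, which you leave open as the ``main obstacle''. For $n=2$ there are no components ``which come only from the expansion''. Every component is the closure of a $b=0$ stratum, so it meets $\mathrm{Kum}^1(\mathcal{Y}^{sm}/C)$. The complement of this open subscheme in $K^1_{\mathcal{Y}/C}$ is the union of the images of the $b=1$ strata. These are finitely many curves in a threefold, so the complement has codimension $2$.

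This is the missing idea, and it finishes the proof immediately, exactly as in the paper. $K^1_{\mathcal{Y}/C}$ is regular, hence normal. So a generator of the relative canonical sheaf on $\mathrm{Kum}^1(\mathcal{Y}^{sm}/C)$ and its inverse both extend across a codimension-$2$ set, which gives $\omega_{K^1_{\mathcal{Y}/C}/C}\cong\mathcal{O}$.

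Your first approach would reduce to exactly this once you see that the set of ``new components'' is empty. As written, though, it postpones a component-by-component check that you never carry out. Your adjunction alternative would additionally require proving that the double curves are anticanonical on every component, including the rational end components. You do not establish this, and it is unnecessary.
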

We remark that degenerations of Kummer surfaces had already been studied by Overkamp \cite{Overkamp}. He also used uniformization techniques, but allowed also the type III case. 

In contrast, the case $n=3$ shows several new features which are not visible when discussing Kummer surfaces. We already proved in Section \ref{Sec:linechart} that the dual complex is built up from $2$-simplices, which were classified into four fundamental types. Describing how these $2$-simplices glue is a somewhat intricate task, essentially of a combinatorial nature. This is treated in detail in Section \ref{sec:dualcplxIII} where we
give a complete description of the dual complex. This finally allows us to deduce the following result (Theorem \ref{thm:dual-complex-n3}):

\begin{theorem}
The dual complex associated to $ K^2_{\mathcal{Y}/C} $ is PL-homeomorphic to a standard $2$-simplex, in particular, it is $2$-dimensional.    
\end{theorem}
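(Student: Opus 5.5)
Our plan is to compute the dual complex $\Delta$ of the special fiber of $K^2_{\mathcal{Y}/C}$ cell by cell, using the stratification of Theorem \ref{theorem:mainstrat} descended to the GIT quotient, and then identify the resulting regular cell complex with a triangulated $2$-simplex. First we list the strata of the special fiber. For $n=3$ the unobstructed pairs $(b,\mathbf{m})$ with $1\le b\le 3$ are encoded by line charts. Since the strata are $\mathbb G[n]$-stable of the form $O(b,\mathbf{m})\times \mathbb G[n-b]$, they descend to locally closed strata of $K^2_{\mathcal{Y}/C}$. Components of the special fiber correspond to vertices of $\Delta$ and the deepest strata to $2$-cells; Proposition \ref{prop:number} supplies the counts in each dimension. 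The boundary relations (which stratum lies in the closure of which) come from the line subcharts, that is, from the smoothings classified in Subsection \ref{subsec:numerical-smoothing}.

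Second, we describe each closed top cell. By Proposition \ref{prop:from-H-to-K}, each cell of $\Delta$ is the intersection of the corresponding cell of the dual complex of $I^3_{\mathcal{Y}/C}$ with a hyperplane. By \cite{GHHZ21}, that dual complex is $\mathrm{Sym}^3$ of the dual complex of $\mathcal{Y}_0$, namely $\mathrm{Sym}^3(S^1)$. Since $\Delta(\mathcal Y_0)$ is a circle, the hyperplane should be the fiber of the summation map $\mathrm{Sym}^3(S^1)\to S^1$ over a point. This gives the guiding picture: $\Delta$ should be PL-homeomorphic to the fiber $\{x_1+x_2+x_3=0\}\subset \mathrm{Sym}^3(\R/\Z)$. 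Lifting to $\{(x_i)\in (\R/\Z)^3 : \sum x_i=0\}$ gives a $2$-torus, and the quotient by $S_3$ of that torus is a $2$-simplex. Indeed, the quotient of the torus $(\R/\Z)^3_{\Sigma=0}$ by the $A_2$ Weyl group is the fundamental alcove, a triangle. This is the conceptual reason behind Theorem \ref{thm:dual-complex-n3}.

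Third, we make this rigorous. We define a piecewise linear map from $\Delta$ to the alcove $\{0\le x_1\le x_2\le x_3,\ \sum=\text{const}\}$ (suitably normalized) by sending each vertex to the point determined by its line chart, namely the positions of the three points in the expanded circle of components, and extending linearly on cells. We then check three things. First, each of the four fundamental types of $2$-simplices from Section \ref{Sec:linechart} maps homeomorphically onto a small triangle of a subdivision of the alcove. Second, the gluing along edges is exactly the gluing of adjacent small triangles. Third, every small triangle is hit exactly once, which we verify by matching the count of Proposition \ref{prop:number-deepest}. Boundary edges of $\Delta$, which lie in only one $2$-cell, should correspond to the walls of the alcove, where two points coincide modulo translation.

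The main obstacle is the gluing step. We must show that the edge identifications in $\Delta$, computed from line subcharts, agree with those of the alcove subdivision, and in particular that no extra identifications make $\Delta$ non-manifold or closed up. First, we would handle this by an explicit case analysis over the finite list of line charts for $n=3$, checking pairwise shared edges. Second, we would confirm the resulting surface is a disk by verifying Euler characteristic $1$ from the counts in Proposition \ref{prop:number} and that the boundary is a single circle.
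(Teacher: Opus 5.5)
Your guiding picture is correct, and it is a genuinely different route from the paper's. The paper never passes through $\mathrm{Sym}^3(S^1)$. It assembles the dual complex by hand: it classifies the local charts around the Type 4 vertices (full disks, half-disks, sectors), glues them row by row using the algorithm of Proposition \ref{prop:dual-complex-n3}, and obtains connectedness from the count $\binom{N+2}{2}$ of Type 4 vertices.

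As written, however, your Step 3 does not prove the theorem.

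\emph{The target set is misdescribed.} The set $\{0\le x_1\le x_2\le x_3,\ \sum x_i=c\}$ is not a fundamental domain. For $c=2N$ it is only half of the fiber: for example, the configuration $\{4N/3,4N/3,4N/3\}$ in $\R/2N\Z$ has no representative in it. So your vertex-wise map cannot land there. The alcove is $\{x_1\le x_2\le x_3\le x_1+2N,\ \sum x_i=0\}$.

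\emph{The decisive verification is deferred.} You still need that the identifications among cells of $\Delta(K^2_{\mathcal{Y}/C})$ are exactly those of adjacent small triangles, and that the map is injective. You defer this to an edge-by-edge case analysis you do not carry out, and that analysis is the whole content of the paper's proof. The count in Proposition \ref{prop:number-deepest} gives ``hit exactly once'' only after you know injectivity on open cells and the number of small triangles in the alcove subdivision; you establish neither. Likewise, the Euler characteristic fallback presupposes that $\Delta$ is a connected surface with one boundary circle, which is again the local-chart analysis plus a connectedness argument.

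The missing idea is to use the summation map as a global tool rather than a heuristic; this removes the case analysis entirely.

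\emph{The cells are slices of one global level set.} By Proposition \ref{prop:numobstr}, a point $z=\mathbf{r}/r$ of the cell $\Delta_H(\mathcal{L})$ corresponds to the configuration placing the $p$-th point at $2\tau_p+\varepsilon_p(z_1+\dots+z_{|x_p|})\in\R/2N\Z$. Cellwise this gives a homeomorphism $\Delta(I^3)\cong\mathrm{Sym}^3(\R/2N\Z)$. The sum of the positions is then the affine function $2\mathfrak{T}+\sum_v z_v y_v$, where $y_v$ are the heights of the line-chart vertices. So admissibility (Definition \ref{def:admisstuple}) says exactly that $\Sigma^{-1}(0)$ meets the open cell, where $\Sigma\colon\mathrm{Sym}^3(\R/2N\Z)\to\R/2N\Z$ is the summation map. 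The hyperplane $\{\sum_v z_v(y_v-2k)=0\}$ splits the vertices into the same $V_+,V_0,V_-$ as $\Lambda$ in Proposition \ref{prop:from-H-to-K}. Hence each $\Delta_K(\mathcal{L})$ is combinatorially $\Delta_H(\mathcal{L})\cap\Sigma^{-1}(0)$.

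\emph{No extra or missing pieces for $n=3$.} A cell meets at most one admissible level, because the $y_v$ span an interval of length at most $3<2N+2$. Faces lying entirely in $\Sigma^{-1}(0)$ are narrow and already belong to some $\Delta_K(\mathcal{L}')$ by Lemma \ref{lem:correct-dim}.

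\emph{Conclusion.} The gluing of $\Delta(K^2_{\mathcal{Y}/C})$ is therefore inherited from $\Delta(I^3)$. With $V=\{\sum x_i=0\}\subset\R^3$ and $Q=\Z^3\cap V$, you get $\Delta(K^2_{\mathcal{Y}/C})\cong\Sigma^{-1}(0)=V/(2NQ\rtimes\mathcal{S}_3)$, which is the alcove, with no edge-by-edge check.

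Done this way, your approach gives a conceptual proof that also explains the $(n-1)$-simplex of \cite{BrownMazzon} for general $n$. One caveat: for $n\ge5$ and small $N$, a single cell can meet two levels. The paper's explicit construction additionally yields the precise triangulation and its $\mathcal{S}_3$ versus $\mathcal{S}_2$ symmetry depending on $N \bmod 3$.
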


We would like to mention that this result fits well with the work of Brown and Mazzon in \cite{BrownMazzon}. There, they proved that if $A$ is an abelian surface over a complete discretely valued field $K$, then the so-called essential skeleton of the Berkovich analytification of $\mathrm{Kum}^n(A)$ is $PL$-homeomorphic to the standard $n$-simplex for any $n$. We remark that the essential skeleton coincides with the dual complex of any minimal $dlt$-model of $X$ over the ring of integers of $K$.

For both, $n=2$ and $n=3$, we describe the birational geometry of the irreducible components of the special fiber of our degeneration. This is done in Section \ref{sec:stratageom}.

\subsection{Open questions and future directions} 
At the end of this introduction, we would like to mention some interesting questions arising from our work, that we would like to investigate in the future. First, we assume that $\mathcal{Y}/C$ is a type II Kulikov degeneration of abelian surfaces satisfying Assumption \ref{assum:Uniformization}, so that all results in the paper are applicable. We write $ K^{n-1} = K^{n-1}_{\mathcal{Y}/C}$ for simplicity. Clearly, it is natural to ask what geometric properties the total space of our degeneration $K^{n-1}\to C$ has. The methods in this paper are effective in describing the stratification of the special fiber of $ K^{n-1} $, but it would be nice to have a better understanding of the pair $(K^{n-1}, (K^{n-1})_0)$. We expect that it is \emph{dlt} for $n=3$, whereas for $n>3$ this can not hold in general by Example \ref{eg:4d-example}. It seems reasonable that this could be amended by some blowup of the Kummer locus (or related objects), but this needs to be studied more systematically. 

It would also be interesting to have a direct link between the dual complexes of the degenerations $I^n_{\mathcal{Y}/C}$ and $K^{n-1}_{\mathcal{Y}/C}$ over $C$. A natural guess is that there is an involution on the former, whose invariant locus yields (at least up to some modification) the latter. 

The assumption that $\mathcal{Y}/C$ is constructed through uniformization (Assumption \ref{assum:Uniformization}) is probably not necessary. However, for certain computational aspects (see Lemma \ref{lem:fundpropuniformization}), the technique of uniformization provides hands-on control that is crucial for our construction, and it is currently not clear to us how to avoid it. 

Lastly, a natural problem is to extend the results in this paper to the case where the degeneration $\mathcal{Y}/C$ of abelian surfaces is Kulikov of type III. We remark that all results concerning uniformization, as outlined in \ref{subsec:Uniformization}, are equally valid in the type III case.

\subsection{Structure of paper} The article is structured as follows. In Section \ref{sec:prelandnot}, we recall some fundamental properties of Li's expanded degenerations of a simple degeneration, as well as the analysis of GIT stability on the $n$-th Hilbert scheme of the $n$-th expansion. We also establish some useful results on (partial) resolutions of finite base changes of simple degenerations. In Section \ref{sec:KulikovNeron}, we discuss Kulikov models and N\'eron models of $1$-parameter families of abelian varieties. We explain that Kulikov models can always be found using Mumford's technique of \emph{uniformization}, which also guarantees other properties useful for our purposes. In Section \ref{sec:expandedNeron}, we explain how certain restrictions of expanded degenerations of families of abelian varieties are related to Kulikov and N\'eron models. In Section \ref{sec:Kummerlocus}, we define the Kummer locus and study its properties. We explain that it admits a partition into smooth, irreducible and stable parts. In Section \ref{Sec:linechart}, we develop the combinatorial framework for studying the degeneration obtained by taking the GIT quotient of the Kummer locus. In particular, we introduce the notion of line charts. In Section \ref{subsec:Kummersurfaces}, we discuss the case of Kummer surfaces, and we demonstrate that our degeneration is a Kulikov model. In Section \ref{sec:dualcplxIII} we study the case when $n=3$ in some detail. We compute the dual complex of our degeneration explicitly, and show that it is PL-homeomorphic to the standard $2$-simplex. In Section \ref{sec:stratageom}, we study the geometry of the strata of the boundary of our degeneration, when $n=2$ or $3$. In Appendix \ref{appendix-new}, we compute the closure of each stratum in the boundary of the Kummer locus for $n=3$.

\subsection{Convention}
Throughout the paper we will work over an algebraically closed field $k$ of characteristic $0$. 

\subsection{Acknowledgments} 
    The first author is partially supported by the PRIN2022 project 2022PEKBJ: Symplectic varieties: their interplay with Fano manifolds and derived categories. The second author was partially supported by DFG grant Hu 337/7-2. The third author is partially supported by National Natural Science Foundation of China (Grant No.~12371046)

\section{Preliminaries}\label{sec:prelandnot}
In this section, we recall the construction of J.~Li's \emph{expanded 
degenerations}, first introduced in \cite{Li01}, and prove a few properties fundamental for the applications in later parts of the paper.

\subsection{Expanded degenerations}\label{subsec:expandeddegen}
The main reference for the material presented in this subsection is \cite{GHH}. The reader is advised to consult \emph{loc. cit.} for further details.

\subsubsection{}\label{subsubsec:strictsimpledegen}

Let $0 \in C$ denote a smooth curve with a distinguished point. 

\begin{definition}\label{def:simpledegen}
A flat morphism $ f \colon \mathcal{X} \to C$ from a smooth algebraic space is called a strict simple degeneration if
\begin{enumerate}
    \item The map $f$ is smooth outside $\mathcal{X}_0 = f^{-1}(0)$,
    \item The central fiber $\mathcal{X}_0$ is reduced, has normal crossing singularities and the singular locus $\mathcal{D} \subset \mathcal{X}_0$ is smooth,
    \item All irreducible components of $\mathcal{X}_0$ are smooth.
\end{enumerate}
\end{definition}
In particular, a strict simple degeneration is a strict semi-stable degeneration. The condition on the singular locus ensures that at most two components can intersect in a point, hence the combinatorics of the irreducible components and their intersections can be encoded in the dual graph $\Gamma(\mathcal{X}_0)$.

In this paper, we additionally assume that $C$ is affine and connected, that $f$ is projective, and, moreover, that $\Gamma(\mathcal{X}_0)$ is \emph{bipartite}. This means that we can write $\mathcal{X}_0 = Y \cup Y'$, where $Y$, resp.~$Y'$, is a non-empty disjoint union of irreducible components. Let $[Y]$, resp. $[Y']$, denote the set of vertices in the dual complex which correspond to the components of $Y$, resp. $Y'$. Being bipartite is equivalent to the existence of an orientation on $\Gamma(\mathcal{X}_0)$ such that all arrows are oriented from $[Y]$ to $[Y']$ (or the other way around).

\subsubsection{}

Fix an \'etale morphism $ C \to \mathbb{A}^1 = \mathrm{Spec}(k[t])$. Shrinking $C$ if necessary, we can assume that $0 \in C$ is the unique pre-image of the origin $t=0$. We frequently view $\mathcal{X}$ as an $\mathbb{A}^1$-scheme via this map. For any $ n \geq 0$, let $ \mathbb{A}^{n+1} \to \mathbb{A}^1 $ be the map corresponding to 
$$ k[t] \to k[t_1, \ldots, t_{n+1}];~~~~t \to t_1 \cdot \ldots \cdot t_{n+1}. $$
We set $C[n] = \mathbb{A}^{n+1} \times_{\mathbb{A}^1} C $. We briefly explain how Li's $n$-th expanded degeneration of $ f[n] \colon \mathcal{X}[n] \to C[n] $ is constructed. 

The scheme $ \mathcal{X}[1] $ is defined as the blowup 
of $\mathcal{X} \times_{\mathbb{A}^1} \mathbb{A}^2 $ in the Weil divisor $ Y \times V(t_2) $, and composition with projection onto the first factor yields $p_1 \colon \mathcal{X}[1] \to \mathcal{X}$. Assume by induction that $\mathcal{X}[n-1] \to C[n-1]$ and $p_{n-1} \colon \mathcal{X}[n-1] \to \mathcal{X} $ have been constructed. Let $ \mathbb{A}^{n+1} \to \mathbb{A}^n$ be defined by multiplication of the last two coordinates. Then we let $\mathcal{X}[n]$ be the blowup of $\mathcal{X}[n-1] \times_{\mathbb{A}^n} \mathbb{A}^{n+1}$ in the Weil divisor $p_{n-1}^{-1}(Y) \times V(t_{n+1})$, and composing the projection onto the first factor with $p_{n-1}$ yields $p_n$. One moreover checks that $\mathcal{X}[n] \to \mathbb{A}^{n+1} $ factors canonically through a \emph{projective} morphism $ f[n] \colon \mathcal{X}[n] \to C[n]$. 

\subsubsection{} We denote by $0 \in C[n]$ the unique preimage of the origin in $\mathbb{A}^{n+1}$. The fiber of $f[n] \colon \mathcal{X}[n] \to C[n]$ over $0$ can be described as follows. If we replace $\mathcal{X}$ by the smooth locus $\mathcal{X}^{sm}$ of $f$,
the map $ \mathcal{X}[n] \to \mathcal{X} $ is just base change along $C[n] \to C$. However, each component $D$ of the singular locus of $\mathcal{X}_0$ gets replaced by a chain of $\mathbb{P}^1$-bundles $\Delta^1_D, \ldots, \Delta^n_D$ over $D$, intersecting along the sections at $0$ and $\infty$. For a fixed $i$, we write $\Delta^i$ for the (disjoint) union of the $\Delta^i_D$, as $D$ runs over the irreducible components of the singular locus $\mathcal{D}$.

\subsubsection{}

It is a fundamental feature of expanded degenerations that the torus $\mathbb G[n] = (\mathbb{G}_m)^n $ acts equivariantly on $\mathcal{X}[n] \to C[n]$. The action on $C[n]$ is simply the base change of the action on $ \mathbb{A}^{n+1} $ given by 
$$ (\sigma_1, \ldots, \sigma_n) * (t_1, \ldots, t_{n+1}) = (\sigma_1 t_1, \sigma_2 \sigma_1^{-1} t_2, \ldots, \sigma_n^{-1} t_{n+1}). $$
The construction of the action on $\mathcal{X}[n]$ is more involved, here we only explain how $\mathbb G[n]$ acts in the fibers. If we let $(u_i:v_i)$ denote homogeneous coordinates for the $\mathbb{P}^1$-fibers of $\Delta^i$, the action is described by
$$ (\sigma_1, \ldots, \sigma_n) * (u_i:v_i) = (\sigma_i u_i:v_i). $$

\subsubsection{}\label{subsubsec:GITstability}
The choice of a relatively very ample line bundle $\mathcal{L}$ on $\mathcal{X}$ gives rise to a relatively ample line bundle $\mathcal{M}$ on $\mathrm{Hilb}^n(\mathcal{X}[n]/C[n])$, see \cite[Section 2.2.1]{GHH}. One can therefore define the (semi)stable locus for the action of $\mathbb G[n]$ on the Hilbert scheme, induced by the action on $f[n]$. It turns out that the (semi)stable locus does not depend directly on $\mathcal{L}$, and that it admits a combinatorial description, which we recall next. 

Let $Z \in \mathrm{Hilb}^n(\mathcal{X}[n]/C[n])$ correspond to a subscheme $Z \subset \mathcal{X}[n]_q$ for some $q \in C[n]$. We write
$$ \{a_1, \ldots, a_r\} = \{i \mid t_i(q) = 0 \}, $$
and moreover set $a_0 = 1$ and $a_{r+1} = n+1$. Then the \emph{numerical support} of $[Z]$ is defined as the vector $\mathbf{v}_{\mathbf{a}}$ whose $i$-th component is $a_i - a_{i-1}$, for $ 1 \leq i \leq r+1 $.

One says that $Z$ has \emph{smooth support} if it is a subscheme of the smooth locus $\mathcal{X}[n]^{sm}$ of $f[n]$. In this case, each $P \in \mathrm{Supp}(Z)$ belongs to $\Delta^{a_{i(P)}}$ for a \emph{unique} integer $ 0 \leq i(P) \leq r$. The \emph{combinatorial support} of $Z$ is defined as 
$$ \mathbf{v}(Z) = \sum_P n_P \mathbf{e}_{i(P)} \in \mathbb{Z}^{r+1}, $$
where $\mathbf{e}_{j} = (0, \ldots, 1, \ldots, 0)$ denotes the $j$-th standard basis vector.

\begin{theorem}\label{theorem:stabilitycondition}
A point $Z \in \mathrm{Hilb}^n(\mathcal{X}[n]/C[n])$ is stable iff it has smooth support and its numerical support coincides with its combinatorial support, i.e., if $\mathbf{v}(Z) = \mathbf{v}_a$. A point is moreover stable if it is semi-stable.  
\end{theorem}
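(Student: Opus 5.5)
This is the main stability result of \cite{GHH}, and the plan is to reprove it with the Hilbert--Mumford criterion applied to one-parameter subgroups of the torus $\mathbb G[n]$. Fix $Z$ in the fiber over $q\in C[n]$. First I describe the stabilizer of $q$ in $\mathbb G[n]$. Since $\sigma$ multiplies $t_i$ by $\sigma_i\sigma_{i-1}^{-1}$ (with the conventions $\sigma_0=\sigma_{n+1}=1$), the stabilizer is the subtorus on which $\sigma_i=\sigma_{i-1}$ whenever $t_i(q)\neq 0$. It therefore has rank $r$ and is parametrized by the values $\sigma_{a_1},\ldots,\sigma_{a_r}$, with the $\sigma$'s constant on each block $a_{j}\le i<a_{j+1}$. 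This subtorus acts on the fiber $\mathcal{X}[n]_q$ by rescaling the $r$ inserted $\mathbb{P}^1$-bundle chains $\Delta^{a_1},\ldots,\Delta^{a_r}$.

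Only one-parameter subgroups $\lambda$ for which $\lim_{s\to0}\lambda(s)q$ exists contribute to the criterion. After moving $Z$ inside its orbit to a limit point, I may assume $\lambda$ lies in the stabilizer of $q$. For such $\lambda$ I compute the weight $\mu^{\mathcal M}(Z,\lambda)$ using the explicit linearization of \cite[Section 2.2.1]{GHH}. Here $\mathcal M$ is essentially a determinant of $p_*(\mathcal{O}_Z\otimes\mathcal{L}[n]^{\otimes m})$ twisted by a character, and $\mathcal{L}[n]$ is $p_n^*\mathcal L$ modified along the exceptional divisors $\Delta^i$. The weight then splits as a sum of local contributions from the points of $Z$.

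A point $P$ of $Z$ lying in the interior of $\Delta^{a_i}$ contributes a weight that is linear in $\lambda$ with coefficient $n_P$ in direction $i$. The character twist contributes the numerical-support vector $\mathbf v_{\mathbf a}$. The resulting formula should read
\[
\mu(Z,\lambda)=\langle \mathbf v(Z)-\mathbf v_{\mathbf a},\,\lambda\rangle
\]
after identifying $\lambda$ with a vector in $\mathbb{Z}^{r+1}/\mathbb{Z}(1,\ldots,1)$. If $Z$ has smooth support, $\mathbf v(Z)$ and $\mathbf v_{\mathbf a}$ both have total mass $n$. Hence $\mu\ge0$ for all $\lambda$ if and only if $\mathbf v(Z)=\mathbf v_{\mathbf a}$, and in that case equality $\mu=0$ forces $\lambda$ trivial. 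This gives stability, and it shows that semistability coincides with stability.

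If $Z$ meets the singular locus at a point of $\Delta^{a_i}\cap\Delta^{a_{i+1}}$, the local weight picks up a contribution from both adjacent directions. I choose $\lambda$ that scales only one of the neighbouring bubbles, pushing the point to the boundary. For this $\lambda$ the weight is strictly negative, so $Z$ is unstable.

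The main obstacle is the honest computation of the weights. The linearization on $\mathcal{L}[n]$ along the blown-up divisors, together with the reduction to limits of $\lambda(s)q$ when $\lambda$ is not in the stabilizer, both require careful bookkeeping. The cleanest route is induction on $n$ using Li's standard embeddings $C[b]\to C[n]$, which reduce everything to the case $q=0$ in $C[b]$.
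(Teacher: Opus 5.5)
The paper does not prove this statement; it quotes it from \cite{GHH}. A Hilbert--Mumford analysis for $\mathbb G[n]$ is the natural route, but your central step, the formula $\mu(Z,\lambda)=\langle \mathbf v(Z)-\mathbf v_{\mathbf a},\lambda\rangle$, cannot be correct, and the stability conclusions do not follow from it.

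\emph{Why a linear formula fails.} Suppose $\lambda$ fixes $q$. The Hilbert scheme fibre over $q$ is projective, so both $\lim_{s\to 0}\lambda(s)Z$ and $\lim_{s\to\infty}\lambda(s)Z$ exist. The sum $\mu(Z,\lambda)+\mu(Z,\lambda^{-1})$ is the degree of $\mathcal M$ pulled back along the orbit map $\mathbb P^1\to \mathrm{Hilb}^n(\mathcal X[n]_q)$. Since $\mathcal M$ is relatively ample, this is strictly positive unless $Z$ is $\lambda$-fixed. A formula linear in $\lambda$ forces this sum to vanish, so it can hold at best at $\lambda$-fixed points. But the subschemes that should be stable, with points in the interior of inserted components, are exactly the non-fixed ones. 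Over $q=0$, where all of $\mathbb G[n]$ fixes $q$, your formula would rule out every stable point.

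Your own argument shows the contradiction. If $\mathbf v(Z)=\mathbf v_{\mathbf a}$, your formula gives $\mu(Z,\lambda)=0$ for every $\lambda$. That is strict semistability, so neither ``$\mu=0$ forces $\lambda$ trivial'' nor ``semistable implies stable'' follows.

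\emph{What is missing.} The weight must be evaluated at the limit. A point in the interior of an inserted $\mathbb P^1$-bundle flows to the $0$- or the $\infty$-section according to the sign of the corresponding coordinate of $\lambda$. Its contribution is therefore one of two linear forms differing by that coordinate, and $\mu(Z,\lambda)$ is convex and only piecewise linear in $\lambda$. It is essentially the support function of a Minkowski sum of segments, shifted by a character coming from the linearization. There is one segment per point on an inserted level, and a single point for each point on an end component or on the double locus. Stability means this character lies in the interior of that polytope. Full-dimensionality is what forces every inserted level to be occupied. Getting semistable $=$ stable requires the linearization to keep this character off all boundaries.

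\emph{The reduction to the stabilizer is also not harmless.} The stabilizer of $q$ has rank $r-1$, not $r$. The $\sigma_i$ are constant on the $r+1$ blocks cut out by $a_1,\dots,a_r$, but the first and last blocks contain $\sigma_0=\sigma_{n+1}=1$. For example, $q=0\in C[1]$ has $r=2$ and stabilizer $\mathbb G[1]$. Correspondingly, $\mathcal X[n]_q$ has only $r-1$ inserted levels, so identifying $\lambda$ with $\mathbb Z^{r+1}/\mathbb Z(1,\dots,1)$ adds a direction that does not exist.

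For $r=1$ the stabilizer is trivial, yet the criterion is not vacuous. Take $n=1$ and $q$ with $t_1\neq 0=t_2$: a point on $\Delta^2$ is unstable. The only nontrivial one-parameter subgroups for which $\lim_{s\to0}\lambda(s)q$ exists are those degenerating $q$ to $0$. So $\mathbf v(Z)=\mathbf v_{\mathbf a}$ is detected, at least in part, by $\lambda$ that do not fix $q$.

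Replacing $Z$ by its limit $Z_0$ over $q_0$ is therefore where the real work lies, not bookkeeping. The numerical support becomes that of $q_0$. $Z_0$ may have points on end components or on the double locus, so $\mathbf v(Z_0)$ need not even be defined. You still owe a comparison between the criterion at $(Z_0,q_0)$ and the condition $\mathbf v(Z)=\mathbf v_{\mathbf a}$ at $(Z,q)$.

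\emph{Non-smooth support.} Your instability argument does not work as stated. A point on the double locus is already fixed by the whole stabilizer, so there is nothing to push to the boundary. Instability has to come from such a point contributing a single linear form instead of a segment. For non-reduced $Z$ you also need to control the lower-order terms in the weight of $\mathcal M$.
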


\begin{remark} A subscheme $Z$ in the fiber $\mathcal{X}[n]_0$ is stable if and only if it has smooth support, and $Z \cap \Delta^i$ is non-empty for each $i = 1, \ldots, n$. The combinatorial criterion in less degenerate fibers can be seen as a \emph{smoothing} of this.

To illustrate, assume that $\mathcal{X}_0$ consists of two irreducible components intersecting along $D$. Then $\mathcal{X}[1]_0$ is a chain of three components $\Delta^0, \Delta^1, \Delta^2$ where $\Delta^1$ is a $\mathbb{P}^1$-bundle over $D$ (the unique inserted component) and where $\Delta^0$, resp.~$\Delta^2$, intersects along the $0$-section, resp.~$\infty$-section. A point $P \in \mathcal{X}[1]_0$ is stable if and only if $ P \in \Delta^1 $, away from the double loci. The coordinate $t_1$ is a smoothing parameter for $\Delta^0 \cup \Delta^1$, and a point in a fiber $\mathcal{X}[1]_{(*, 0)}$, where $ (*,0) \in C[1] \cap \{ t_1 \neq 0, t_2 = 0\}$, is stable if and only if it is supported on the smoothing, away from $\Delta^2$. Likewise, $t_2$ is a smoothing parameter for $\Delta^1 \cup \Delta^2$, and a point in a fiber $\mathcal{X}[1]_{(0,*)}$ is stable if and only if it is supported on the smoothing, away from $\Delta^0$. 
\end{remark}

We denote by $\mathcal{H}^n = \mathrm{Hilb}^n(\mathcal{X}[n]/C[n])^{st}(\mathcal{M})$ the stable locus, and by
$$ I^n_{X/C} = \mathcal{H}^n/\mathbb G[n] $$
the GIT quotient. By \cite{GHHZ21}, it is a proper dlt-model when the relative dimension of $\mathcal{X} \to C$ is at most $2$, in which case it also forms a relative compactification of $\mathrm{Hilb}^n(\mathcal{X}^{sm}/C)$, whose complement is closed of codimension $2$.

\subsubsection{}\label{subsubseq:standardemb}

We recall some useful facts from \cite[Section 1]{Li01} and \cite[Section 2]{Li13} on so-called \emph{standard embeddings}. Let $ I \subset \{1, \ldots, n+1 \}$ be a subset of cardinality $m+1$, and let $I^{\circ}$ denote its complement. We denote by $ \mathbb{A}^{n+1}_{U(I)}$ the open subset of $ \mathbb{A}^{n+1} $ defined by requiring that $t_i \neq 0 $ for each $i \in I^{\circ}$. 

There is an isomorphism
$$ \tilde{\tau}_I \colon \mathbb{A}^{m+1} \times \mathbb G[n-m] \to \mathbb{A}^{n+1}_{U(I)} $$
given by the (order-preserving) map sending the coordinates of $\mathbb{A}^{m+1}$ to the subset of coordinates of $\mathbb{A}^{n+1}$ indexed by $I$, and the coordinates of $\mathbb G[n-m]$ to the subset of coordinates indexed by $I^{\circ}$. Restricting $ \tilde{\tau}_I$ to the identity element of the torus yields the \emph{standard embedding}
$$ \tau_I \colon \mathbb{A}^{m+1} \to \mathbb{A}^{n+1}. $$
Similar maps are defined for $C[m]$ and $C[n]$ by pullback, these will still be denoted by $ \tilde{\tau}_I $ and $ \tau_I $ respectively. 

The following properties will be particularly important for us. The standard embedding $ \tau_I $ induces an isomorphism (over $C[m]$)
\begin{equation}\label{equation:standardembeq} 
(\tau_I^*\mathcal{X}[n], \tau_I^*p_n) \cong (\mathcal{X}[m], p_m). 
\end{equation}
The inverse of $ \tilde{\tau}_I $ gives rise to a map $ \iota_I \colon C[n]_{U(I)} \to C[m] $ such that 
\begin{equation}\label{equation:pullbackiso}
    \mathcal{X}[n] \vert_{C[n]_{U(I)}} \cong \iota_I^{*} \mathcal{X}[m]
\end{equation}
compatible with projection to $\mathcal{X}$ (see \cite[Corollary 1.7]{Li01}).
\subsubsection{}
Let $0 \in C[m]$ denote the origin, and put $ \mathcal{U}(I)_0 = \iota_I^{-1}(0)$. As a consequence of (\ref{equation:pullbackiso}), we see that
\begin{equation}\label{equation:restrictionsplitting}
\mathcal{X}[n] \vert_{\mathcal{U}(I)_0} = \mathcal{X}[m]_0 \times \mathcal{U}(I)_0. 
\end{equation}
This splitting is moreover compatible with the torus action, as we now explain. In fact, recall that we can identify $\mathbb G[n]$ with the subtorus of elements in $\mathbb{G}_m^{n+1}$ multiplying to $1$. Then the permutation of the $n+1$ factors implicitly defined by the construction of $ \tilde{\tau}_I $ yields an isomorphism
$$ \kappa \colon \mathbb{G}_m^{n+1} \to \mathbb{G}_m^{n+1}, $$
restricting to an isomorphism of the rank $n$ subtori of elements multiplying to $1$. Then we have $\mathbb G[n] = \mathbb G[m]' \times \mathbb G[n-m]'$, where $\mathbb G[m]'$, resp.~$\mathbb G[n-m]'$, is the pre-image under $\kappa$ of the product of the $m$ first factors, resp.~$n-m$ last factors. Here $\mathbb G[m]'$ is the torus acting on $\mathcal{X}[m]$ in (\ref{equation:standardembeq}) whereas in (\ref{equation:restrictionsplitting}) $\mathbb G[m]'$ and $\mathbb G[n-m]'$ act on the first and second factor respectively.

\subsection{Normal forms of maps to $C[n]$}
Let $R = k[[T]]$, and consider a map 
$$ \alpha[n] \colon S = \mathrm{Spec}(R) \to C[n]. $$
In this subsection, we shall show that there is an element $\sigma \in \mathbb G[n](R)$ such that $\alpha[n]^{\sigma}$ has a particularly simple shape.

\subsubsection{}
For simplicity, we denote by $\alpha[n]$ also the induced map $ S \to \mathbb{A}^{n+1}$. We assume that this is given by $ t_i \mapsto d_i T^{a_i}$, with $d_i$ a unit and $a_i \geq 0$, where moreover $\sum_{i=1}^{n+1} a_i > 0$. Note that by Hensel's lemma we can find a unit $d \in k[[T]]$ such that 
$$ d^{a_1 + \ldots + a_{n+1}} = d_1 \cdot \ldots \cdot d_{n+1}. $$

\begin{lemma}\label{lemma:standardformmaptobase}
Define $ \sigma = (\sigma_1, \ldots, \sigma_n) \in \mathbb G[n](R)$ by 
$$ \sigma_l = \prod_{i=1}^{l} \frac{d^{a_i}}{d_i}. $$
Then $ \alpha[n]^{\sigma} \colon S \to \mathbb{A}^{n+1} $ is the map given by
$$ t_i \mapsto d^{a_i} T^{a_i} = (dT)^{a_i}$$
for every $i$.
\end{lemma}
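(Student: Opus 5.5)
This is a direct computation with the explicit torus action on $\mathbb{A}^{n+1}$ recalled above. Write $\sigma_0 = \sigma_{n+1} = 1$. Then the action $(\sigma_1,\ldots,\sigma_n)*(t_1,\ldots,t_{n+1}) = (\sigma_1 t_1, \sigma_2\sigma_1^{-1}t_2,\ldots,\sigma_n^{-1}t_{n+1})$ takes the uniform form $t_i \mapsto \sigma_i\sigma_{i-1}^{-1} t_i$ for $1\le i\le n+1$. So $\alpha[n]^{\sigma}$ is the map $t_i \mapsto \sigma_i\sigma_{i-1}^{-1} d_i T^{a_i}$, and it suffices to check that $\sigma_i\sigma_{i-1}^{-1} d_i = d^{a_i}$ for every $i$.

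First, $\sigma$ lies in $\mathbb G[n](R)$. Indeed, each $\sigma_l$ is a product of units of $R = k[[T]]$: the $d_i$ are units by assumption, and $d$ is a unit by the Hensel argument preceding the lemma.

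For $1 \le i \le n$, the telescoping definition gives $\sigma_i\sigma_{i-1}^{-1} = d^{a_i}/d_i$, using the convention $\sigma_0 = 1$ (the empty product) when $i=1$. Hence $\sigma_i\sigma_{i-1}^{-1}d_i = d^{a_i}$, as required.

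The only step needing an extra input is the last coordinate $i = n+1$, where the coefficient is $\sigma_n^{-1}$. We must show
$$\sigma_n^{-1} d_{n+1} = d^{a_{n+1}}, \quad\text{equivalently}\quad \prod_{i=1}^{n}\frac{d^{a_i}}{d_i} = \frac{d_{n+1}}{d^{a_{n+1}}}.$$
This holds exactly because $d^{a_1+\cdots+a_{n+1}} = d_1\cdots d_{n+1}$, which is how $d$ was chosen. This matches the fact that the torus action preserves the product $t_1\cdots t_{n+1}$.

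Finally, all of this happens on $\mathbb{A}^{n+1}$. The action on $C[n] = \mathbb{A}^{n+1}\times_{\mathbb{A}^1} C$ is the base change of the action on $\mathbb{A}^{n+1}$, and it is trivial on the $C$-factor. The $\sigma$-translate of $\alpha[n]$ therefore still maps to $C[n]$, with the same composite $S \to C$. No real obstacle is expected.
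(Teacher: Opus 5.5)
Your proposal is correct and is exactly the elementary computation the paper leaves to the reader (its proof just says the details are omitted). The telescoping $\sigma_i\sigma_{i-1}^{-1}=d^{a_i}/d_i$ handles the first $n$ coordinates. For the last coordinate, the defining identity $d^{a_1+\cdots+a_{n+1}}=d_1\cdots d_{n+1}$ gives $\sigma_n^{-1}d_{n+1}=d^{a_{n+1}}$, which is precisely the required verification.
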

\begin{proof}
This follows from elementary computations, we omit the details.   
\end{proof}

Writing $T' = d T$, the map $\alpha^{\sigma}$ is described by $t_i \mapsto (T')^{a_i}$. Note that $T'$ is again a uniformizer of $R$ and that $R = k[[T']]$ as well. In other words, after acting by an $R$-valued element of the torus $\mathbb G[n]$, we can assume that the coordinates of the map are pure powers of the uniformizer of $R$.

\subsubsection{}
Let $ I \subset \{1, \ldots, n+1\} $ be the subset such that $ i \in I $ if and only if $a_i > 0$. Let $ \vert I \vert = b+1 $. Then $\alpha[n]^{\sigma}$ factors through the standard embedding 
$$ \tau_I \colon \mathbb{A}^{b+1} \to \mathbb{A}^{n+1}. $$

Since the compositions of $\alpha[n]$ and $\alpha[n]^{\sigma}$ with the multiplication map $ \mathbb{A}^{n+1} \to \mathbb{A}^{1}$ are equal, $\alpha[n]^{\sigma}$ lifts to a map $S \to C[n]$, which, similarly as above, factors through $\tau_I \colon C[b] \to C[n]$.

\subsection{Generic lines}\label{subsec:genlinesbasics}

\subsubsection{}
By a \emph{generic line} $L$ in $\mathbb{A}^{n+1}$ we mean a $1$-dimensional linear subspace not contained in any coordinate hyperplane $t_i=0$. It can be represented by a basis vector $(a_1, \ldots, a_{n+1})$, where each $a_i \in k^*$. We will refer to the generic line corresponding to $(1, \ldots, 1)$ as the \emph{diagonal} line.

\subsubsection{}

If $\sigma \in \mathbb G[n](k)$, and $L$ is the generic line corresponding to $(a_1, \ldots, a_{n+1})$ then $\sigma $ maps $L$ to the generic line $L^{\sigma}$ represented by $ (\sigma_1 a_1, \sigma_2 \sigma_1^{-1} a_2, \ldots, \sigma_n^{-1} a_{n+1})$. This action is \emph{transitive} on the set of generic lines.

Indeed, let $(a_1, \ldots, a_{n+1})$ and $(b_1, \ldots, b_{n+1})$ be two tuples of nonzero elements of $k$. Then, if we define $\sigma = (\sigma_1, \ldots, \sigma_n) \in \mathbb G[n](k)$ by
$$ \sigma_l = x^l \prod_{i = 1}^l \frac{b_i}{a_i} $$ 
for every $ l \in \{1, \ldots, n\}$, it is straightforward to verify that 
$$ (\sigma_1, \ldots, \sigma_n) \cdot (a_1, \ldots , a_{n+1}) = x (b_1, \ldots, b_{n+1}), $$
where
$$ x = (\prod_{i = 1}^{n+1} \frac{a_i}{b_i})^{\frac{1}{n+1}} \in k^*. $$

\subsubsection{}

Let $R = k[[T]]$, and assume we are given a map
$$\alpha[n] \colon S \to \mathbb{A}^{n+1} $$ 
where $t_i \mapsto a_i T$, with $a_i$ a unit in $R$ for every $i$. By Lemma \ref{lemma:standardformmaptobase}, we can find $\sigma \in \mathbb G[n](R)$ such that $\alpha[n]^{\sigma}$ is the map $ t_i \mapsto a T$ for some unit $a \in R $. Hence it factors through the diagonal line in $ \mathbb{A}^{n+1} $.

\subsubsection{} 
Let $ L \subset \mathbb{A}^{n+1} $ be a generic line. Then the preimage of $L$ under the \'etale map $C[n] \to \mathbb{A}^{n+1}$ is a smooth curve, and we shall also call the connected component containing the (unique) preimage of $0 \in \mathbb{A}^{n+1}$ a generic line. The torus $\mathbb G[n]$ acts transitively on the set of generic lines in $C[n]$, since the preimage of $0 \in \mathbb{A}^{n+1}$ is preserved by each $\sigma \in \mathbb G[n]$. 

\subsection{Restriction of expansions and blowing up}\label{subsec:restrictionblowup}

Let $\mathcal{X} \to C$ denote a strict simple degeneration. We keep the notation and assumptions from Subsection \ref{subsec:expandeddegen}.

\subsubsection{}
Let $R = k[[\pi]] $ and assume that a non-constant map  $ \alpha \colon S = \mathrm{Spec}(R) \to C $ is given. Replacing $\pi$ with another uniformizer if necessary, we can assume that the composition of $\alpha$ with $ C \to \mathbb{A}^1$ is given by $ t \mapsto \pi^r $ for some $r > 0$. 

Let $b \geq 1$ and $r_i > 0$, for $i=1, \ldots, b+1$, be integers such that $ \sum_i r_i = r $. Then $S \to \mathbb{A}^{1}$ factors through the map $S \to \mathbb{A}^{b+1}$ given by $ t_i \to \pi^{r_i} $.

Together with $\alpha$ this latter map induces
$$ \alpha[b] \colon S \to C[b]. $$ 
The pull back of $\mathcal{X}[b]$ along $\alpha[b]$ will be denoted $ \mathcal{X}[b]_S $ if no confusion can occur.

\subsubsection{}

The canonical morphism $ \mathcal{X}[b] \to \mathcal{X} $ yields $ \mathcal{X}[b] \to \mathcal{X} \times_C C[b]  $. Pulling back along $S \to C[b]$, we obtain 
$$ \mathcal{X}[b]_S \to \mathcal{X}_S. $$ 
Below, we shall, among other things, show that this map is a composition of blowing up maps. To do this, we define a series of blowups
$$ \mathcal{X}_{b+1} \to \ldots \to \mathcal{X}_2 \to \mathcal{X}_1 $$ 
where $ \mathcal{X}_1 = \mathcal{X}_S $.

Let us put $ a_k = r - \sum_{i=1}^k r_i $, and denote by $Y_1$ the inverse image of $Y$ under $ \mathcal{X}_S \to \mathcal{X} $. We define $ \mathcal{X}_2 \to \mathcal{X}_1 $ to be the blowup in the center $ I(Y_1) + (\pi^{a_1}) $. Let $Y_2$ be the closed subscheme of $ \mathcal{X}_2 $ defined as the pre-image of $Y$ under the map $ \mathcal{X}_2 \to \mathcal{X} $. By induction, assume that $ Y_i \subset \mathcal{X}_i $ has been constructed. Then we define $ \mathcal{X}_{i+1} \to \mathcal{X}_i $ to be the blowup with center $ I(Y_i) + (\pi^{a_i}) $, and we let $Y_{i+1}$ be the pre-image of $Y$ under the induced map $ \mathcal{X}_{i+1} \to \mathcal{X} $.

\subsubsection{}

Alternatively, we can lift $\alpha \colon S \to C $ to $C[r-1]$ by setting $t_i' \mapsto \pi $ for each $ 1 \leq i \leq r $. We will see that the resulting $ \mathcal{X}_{r}' = \mathcal{X}[r-1]_S$ is a strict simple degeneration over $S$. Let moreover $C[r-1] \to C[b]$ be defined by 
$$ t_i \mapsto \prod_{l=1}^{r_i} t'_{r_1 + \ldots + r_{i-1} + l}. $$
Then $\alpha[b]$ factors via $\alpha[r-1]$, and in fact we have a natural map $ \mathcal{X}_{r}' \to \mathcal{X}_{b+1}$ that will be described in more detail below.

\subsubsection{}
The facts we shall use in the sequel are gathered in Proposition \ref{prop:main-blow-up}. We first state a lemma which will be useful for the proof. Consider the ring $ B = R[x,y]/(\pi^r - xy) $. If $r=1$, $B$ is regular, so we assume $r > 1$ and let $ a < r $ be a strictly positive integer. Since the elements $ y, \pi^a $ form a regular sequence in $R[x,y]$, it is easy to describe the blowup of the spectrum of $B$ in the ideal $(y, \pi^a)$. 

\begin{lemma}\label{lemma:basicblowup}
The blowup of $ \mathrm{Spec}(B)$ in $ (y, \pi^a) $ is the the projective spectrum of
$$ B' = R[x,y][U_1,V_1]/(yV_1-\pi^aU_1, \pi^{r-a}V_1-xU_1). $$    
\end{lemma}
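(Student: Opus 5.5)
The plan is to use the standard description of the blowup of an affine scheme in an ideal generated by a regular sequence, and then to check that passing to the quotient by $\pi^r - xy$ gives the claimed presentation. Set $A = R[x,y]$ and $J = (y,\pi^a) \subset A$. Since $y,\pi^a$ is a regular sequence in $A$, the Rees algebra is
$$ \bigoplus_{d \geq 0} J^d = A[U_1,V_1]/(yV_1 - \pi^a U_1), $$
with $U_1 \mapsto y$ and $V_1 \mapsto \pi^a$ in degree one. The blowup of $\mathrm{Spec}(B)$ in $JB$ is the strict transform of the closed subscheme $V(\pi^r - xy)$ inside $\mathrm{Bl}_J \mathrm{Spec}(A) = \mathrm{Proj}$ of this algebra. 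I will therefore compute the strict transform and show that its ideal is generated by the single extra relation $\pi^{r-a}V_1 - xU_1$.

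The first step is to factor the total transform of $\pi^r - xy$. On the chart $V_1 \neq 0$ we have $y = \pi^a u$ with $u = U_1/V_1$, so
$$ \pi^r - xy = \pi^a(\pi^{r-a} - xu). $$
On the chart $U_1 \neq 0$ we have $\pi^a = y v$ with $v = V_1/U_1$, so
$$ \pi^r - xy = \pi^{r-a}\pi^a - xy = y(\pi^{r-a}v - x). $$
In homogeneous form, the degree-one element $\pi^{r-a}V_1 - xU_1$ is exactly $(\pi^r - xy)/(\text{local generator of the exceptional divisor})$ on each chart. The second step is to verify that the strict transform is cut out by it. On each chart the exceptional divisor is the Cartier divisor $\pi^a = 0$, respectively $y = 0$. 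The strict transform is the closure of the complement of the exceptional divisor, so its ideal is the saturation of $(\pi^r - xy)$ with respect to the exceptional divisor. This equals $(\pi^{r-a} - xu)$, respectively $(\pi^{r-a}v - x)$, provided the quotient by this element has no torsion supported on the exceptional divisor.

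I expect this torsion check to be the only real point. On the $V_1$-chart the ring is $R[x,u]/(\pi^{r-a} - xu)$, since $y = \pi^a u$ is eliminated. This is a domain, because $\pi^{r-a} - xu$ is irreducible in the UFD $R[x,u]$ (it is linear in $x$ with coprime coefficients). Hence $\pi^a$ is a nonzerodivisor. On the $U_1$-chart the ring is $R[x,y,v]/(\pi^a - yv,\ x - \pi^{r-a}v) \cong R[y,v]/(\pi^a - yv)$, again a domain in which $y$ is a nonzerodivisor. So the strict transform is given chartwise by these equations, and they glue to
$$ \mathrm{Proj}\, R[x,y][U_1,V_1]/(yV_1 - \pi^aU_1,\ \pi^{r-a}V_1 - xU_1) = \mathrm{Proj}\, B', $$
noting that $\pi^r - xy$ already lies in this ideal on both charts. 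Finally, the two standard charts $D_+(U_1), D_+(V_1)$ cover $\mathrm{Proj}\, B'$, so the identification is global.
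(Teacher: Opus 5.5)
Your proposal is correct and is essentially the computation the paper has in mind but omits. The paper only points out that $y,\pi^a$ is a regular sequence in $R[x,y]$, and you carry this out: you present the Rees algebra as $A[U_1,V_1]/(yV_1-\pi^aU_1)$, identify the blowup of $\mathrm{Spec}(B)$ with the strict transform of $V(\pi^r-xy)$, and check on each chart that saturating the total transform gives exactly the dehomogenizations of $\pi^{r-a}V_1-xU_1$, using the torsion-freeness of the domains $R[x,u]/(\pi^{r-a}-xu)$ and $R[y,v]/(\pi^a-yv)$.
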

\begin{proof}
This is a straightforward computation, we omit the details.   
\end{proof}

The facts we shall use in the sequel are gathered in the following result:

\begin{prop}\label{prop:main-blow-up}
Keep the notation above.
\begin{enumerate}
\item Let $ \mathcal{X}_{b+1} \to \mathcal{X}_S $ be the blowup corresponding to the sequence $r_1, \ldots, r_b$. Then $ \mathcal{X}_{b+1} \cong \mathcal{X}[b]_S $.

\item Let $ \mathcal{X}_{r}' \to \mathcal{X}_S $ be the blowup corresponding to the sequence $ r_i' = 1 $, for $ i = 1, \ldots, m-1 $, where $m = r-1$. Then $ \mathcal{X}_{r}' $ is semi-stable over $S$, and the exceptional locus over each component $D$ of the double locus in the special fiber of $ \mathcal{X}_S $ is a chain of $r-1$ $ \mathbb{P}^1 $-bundles over $D$ (intersecting along sections).

\item The map $ \mathcal{X}_{r}' \to \mathcal{X}_S $ factors uniquely through the blowup $ \mathcal{X}_{b+1} \to \mathcal{X}_S $. Moreover, the induced morphism $ \gamma \colon \mathcal{X}_{r}' \to \mathcal{X}_{b+1} $ restricts to an isomorphism over $ (\mathcal{X}_{b+1})^{\mathrm{sm}} $.

\end{enumerate}
\end{prop}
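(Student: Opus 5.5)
All three statements are local on $\mathcal{X}_S$ and can be checked étale locally. Away from the double locus $\mathcal{D}$ the blowups are isomorphisms, because there $I(Y_i)+(\pi^{a_i})$ is either the unit ideal or, on $Y$, a Cartier divisor. Near a point of $\mathcal{D}$ we may therefore replace $\mathcal{X}_S$ by $\mathrm{Spec}(B)\times \mathbb{A}^1$ with $B=R[x,y]/(\pi^r-xy)$ and $Y=V(y)$; the smooth factor $\mathbb{A}^1$ (coordinates along $D$) plays no role. The plan is to compute everything explicitly in this model by iterating Lemma \ref{lemma:basicblowup}, and then compare with the analogous local description of Li's expansion $\mathcal{X}[b]$.

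For (1), apply Lemma \ref{lemma:basicblowup} with $a=a_1=r-r_1$. The blowup is covered by two charts. On $U_1=1$ the equations give $y=\pi^{a_1}V_1$ and $x=\pi^{r-a_1}V_1^{-1}$, so $Y_2$ becomes the Cartier divisor $V(V_1)$ together with $\pi$-components. On $V_1=1$ we get $\pi^{r_1}=xU_1$ and $y=\pi^{a_1}U_1$, which is of the same shape as $B$: an $A_{a_1-1}$-type chart $U_1\cdot(y/U_1)=\pi^{a_1}$ together with a smooth chart $xU_1=\pi^{r_1}$ glued along $\mathbb{P}^1$. Inductively, the pre-image $Y_i$ is cut out in the singular chart by the new "$y$", and blowing up $(Y_i,\pi^{a_i})$ splits off a further factor $\pi^{r_i}$. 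After $b$ steps the local model is a chain of charts $x_0x_1'=\pi^{r_1},\ x_1x_2'=\pi^{r_2},\dots$. Li's $\mathcal{X}[b]$ near $\mathcal{D}$ is, by construction (iterated blowups of $p^{-1}(Y)\times V(t_{i+1})$), the chain with $x_{i-1}y_i=t_i$. Pulling back along $t_i\mapsto \pi^{r_i}$ therefore gives the same chart description. To turn this into a global isomorphism rather than a chart-by-chart coincidence, I would argue by induction on $b$ using the inductive definition of $\mathcal{X}[b]$. The key fact needed is that blowup commutes with the flat base change $S\to C[b]$ after taking closures; more precisely, both sides are the blowups of the pullback of the Weil divisor $p^{-1}(Y)\times V(t_{b+1})$, which restricts to the ideal $I(Y_b)+(\pi^{r_{b+1}+\dots})$. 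Checking that the pulled-back ideal is exactly the center $I(Y_b)+(\pi^{a_b})$, with no embedded discrepancy, is where care is needed, and I expect this base-change compatibility of the blowup to be the main obstacle. I would handle it by verifying directly in the explicit local charts that both blowups are $\mathrm{Proj}$ of the same Rees algebra, since $S\to C[b]$ is not flat.

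For (2), take all $r_i'=1$. The explicit model from (1) consists of charts $x_{i-1}y_i=\pi$, which are regular with reduced normal crossing special fiber. So $\mathcal{X}_r'$ is semistable, and each $V_1$-chart contributes one $\mathbb{P}^1$-bundle over $D$ (the coordinate $\mathbb{A}^1$ is untouched). This yields a chain of $r-1$ such bundles meeting along the $0$- and $\infty$-sections.

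For (3), the existence of the factorization follows from the universal property of blowups. It suffices that the pullback of each center $I(Y_i)+(\pi^{a_i})$ to $\mathcal{X}_r'$ is invertible. Since $\mathcal{X}_r'$ is regular (by (2)), this is the same as checking that the ideal is locally principal, which holds chart by chart: in the chart $x_{j-1}y_j=\pi$ the ideal becomes generated by a monomial $\pi^{c}x^{e}$ because the relevant pullback of $y$ is a monomial in $\pi$ and the local coordinate. Uniqueness holds because $\mathcal{X}_{b+1}\to\mathcal{X}_S$ is birational and everything is separated and integral. Finally, $\gamma$ is projective and birational. In the local model, $\gamma$ is the blowup of the $A_{r_i-1}$ singularities $x_{i-1}y_i=\pi^{r_i}$, which is an isomorphism away from the singular locus $\pi=x_{i-1}=y_i=0$, and this singular locus is exactly the complement of $(\mathcal{X}_{b+1})^{\mathrm{sm}}$ near $\mathcal{D}$. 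Away from $\mathcal{D}$ both maps are isomorphisms. Hence $\gamma$ is an isomorphism over the smooth locus.
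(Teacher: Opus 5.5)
Your plan is essentially the paper's proof: reduce étale-locally to $R[x,y]/(\pi^r-xy)$, iterate Lemma~\ref{lemma:basicblowup} to present $\mathcal{X}_{b+1}$ as an explicit multi-$\mathrm{Proj}$ over $\mathcal{X}_S$, identify it with the equations of \cite[Prop.~1.7]{GHH} after substituting $t_i\mapsto\pi^{r_i}$, and obtain (3) from the universal property of blowups together with the explicit chart maps $W'_l\to W_k$; the substitution step is exactly how your base-change worry is resolved, because Li's expansion is cut out by explicit equations that simply pull back under substitution. Do correct your first-step chart computation: on $U_1=1$ one gets $yV_1=\pi^{a_1}$ and $x=\pi^{r_1}V_1$ (the chart where the next blowup happens), while on $V_1=1$ one gets $xU_1=\pi^{r_1}$ and $y=\pi^{a_1}U_1$, which is singular unless $r_1=1$; likewise, in (3) the complement of $(\mathcal{X}_{b+1})^{\mathrm{sm}}$ is not literally the singular locus, since it also contains the double loci with $r_i=1$, but $\gamma$ is trivially an isomorphism over those, so your conclusion stands.
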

\begin{proof}
We explain the main ingredients of the proof, leaving some straightforward verifications to the reader. 

First, we pick an open cover of $\mathcal{X}$, similarly as in \cite[1.1.3]{GHH}. In particular, we can arrange this in such a way that any point $P$ in the double locus of $\mathcal{X}_0$ has an open neighborhood $U$ where $U_0$ consists of exactly two irreducible components components which are moreover principal divisors on $U$. Then $U$ admits an \'etale map $ U \to V = \mathrm{Spec} (k[t,x,y]/(t-xy)) $ sending $P$ to the origin $O$ and where $x$ and $y$ pull back to the local equations (with the same names) on $U$. (We omit the other smooth coordinates of $V$ from our notation.)

We will first prove our statements for the local models $V$. Then, by \'etale functoriality of pullback, expanded degenerations \cite[Prop.~1.13]{GHH} and blowups, it will follow that our statements are true also for the open neighborhoods $U$ (compatible with overlaps). Concerning \'etale functoriality of expanded degenerations, we remark that $U_S \to V_S$ is still \'etale, and pulling back $V[b]_S \to V_S$ along this map yields $U[b]_S \to U_S$.

Put $ \mathcal{V}_1 = V_S $. Then we have a blowup tower
$$ \mathcal{V}_{b+1} \to \ldots \to \mathcal{V}_1 $$
where $ \mathcal{V}_{k+1} \to \mathcal{V}_{k} $ is the blowup in $ (y, \pi^{a_k}) $. By repeated use of Lemma \ref{lemma:basicblowup} above, one shows that $ \mathcal{V}_{k+1} $ is the projective spectrum of 
$$ R[x,y][U_1,V_1] \ldots [U_k,V_k] $$
modulo the relations $ \pi^{r_1} V_1 - x U_1 $, $ U_i V_{i-1} - V_i U_{i-1} \pi^{r_i} $ for each $ 1 < i \leq k $, and $ y V_k - \pi^{a_k} U_k $. In particular, we immediately see from \cite[Prop.~1.7]{GHH} that $V[b]_S = \mathcal{V}_{b+1}$. Assertions (1) and (2) follow from the local case.

Concerning Assertion (3), the existence of $ \gamma \colon \mathcal{X}_{r}' \to \mathcal{X}_{b+1} $ is immediate from the universal property of blowups. Indeed, at each step $ \mathcal{X}_{i+1} \to \mathcal{X}_{i} $ we blow up $ I(Y_i) + (\pi^{a_i}) $, but by construction this becomes a Cartier divisor on $ \mathcal{X}_{r}' $. To prove the last part of Assertion (3), it again suffices to treat the local case.

To describe $\gamma \colon \mathcal{V}_{r}' \to \mathcal{V}_{b+1}$, we use the fact that the blowup $ V[b]_S $ is covered by affine charts $ W_k $, $k \in \{1, \ldots, b\} $, where $ W_k $ is determined by the conditions $ U_i \neq 0 $ for $ i \leq k-1 $ and $ V_i \neq 0 $ for $ i \geq k $. Likewise, $\mathcal{V}_{r}'$ is covered by charts that will be denoted $ W_l' $. 

For a fixed $k$ and 
$$ l \in \{r_1 + \ldots + r_{k-1} + 1, \ldots, r_1 + \ldots + r_{k}\}, $$
the map $\gamma$ locally corresponds to the ring homomorphism
$$ A(W_k) = R[u_k, v_{k-1}]/(u_k v_{k-1} - \pi^{r_k}) \to A(W_l') = R[u_l',v_{l-1}']/(u_l' v_{l-1}' - \pi), $$
where $v_{k-1} = \pi^{l - 1 - (r_1 + \ldots + r_{k-1})} \cdot v_{l-1}' $ and $u_k = \pi^{r_1 + \ldots + r_{k} - l} \cdot u_l'$. The last part of the statement is easily derived from this.
\end{proof}

\section{Kulikov models and N\'eron models}\label{sec:KulikovNeron}

Let $f \colon \mathcal{X} \to C$ be a strict simple degeneration as defined in \ref{subsubsec:strictsimpledegen}. For the rest of the paper, we will be concerned with the case when $\mathcal{X} \setminus f^{-1}(0)$ is an abelian scheme over $C \setminus 0$. In this section, we explain that (at least after a finite ramified base change) $f$ can be chosen in such a way that its relative smooth locus is a group scheme. We will also need to make sure that $f$ satisfies a few additional fine properties. Our tool for producing a suitable degeneration is the theory of \emph{uniformization}, in the sence of Mumford and Faltings-Chai.

\subsection{N\'eron models} 
In this subsection we recall some generalities about N\'eron models. We assume that $ R $ is a Dedekind domain with fraction field $K$, and consider an abelian $K$-variety $A$. We write $S = \mathrm{Spec}(R)$.

\subsubsection{}
We  denote the N\'eron model of $A$ by $ \mathcal{A} $. This is a smooth group scheme of finite type over $S$ 
such that $ \mathcal{A} \times_S K \cong A $ and such that, for any smooth $T \to S$, the canonical map
$$ \mathrm{Hom}(T,\mathcal{A}) \to \mathrm{Hom}(T \times_S K, A) $$
is bijective. The latter is often called the \emph{N\'eron mapping property}.

The N\'eron model exists, it is unique up to isomorphism and the group structure of $A$ extends uniquely to $ \mathcal{A} $. We refer to \cite{neron} for details. 

\subsubsection{} The formation of the N\'eron model does not in general commute with base change along flat morphisms $\phi \colon S' \to S$ of Dedekind schemes. However, for our purposes we point out that it does commute when $\phi$ either is \'etale, or if $R$ is local and $\phi$ is the canonical map from the formal completion.

\subsubsection{}
Let $s \in S$ be a closed point. For simplicity we assume the residue field $k(s)$ to be algebraically closed (this will always be the case for us). Then the fiber $ \mathcal{A}_s $ is a smooth group scheme of finite type over $k(s)$. It fits into an exact sequence
\begin{equation}\label{seq:idcpt} 0 \to \mathcal{A}_s^{\circ} \to \mathcal{A}_s \to \Phi(A) \to 0 
\end{equation}
where $\mathcal{A}_s^{\circ}$ is the connected component of identity, and $\Phi(A)$ is a finite abelian group - the \emph{component group}.

\subsubsection{}
We denote by $\mathcal{A}^{\circ}$ the open subscheme of $\mathcal{A}$ characterized by the property that $(\mathcal{A}^{\circ})_s = \mathcal{A}_s^{\circ}$ for every $s \in S$. Replacing, if necessary, $K$ by a finite Galois extension, we can and will make the assumption 
that $A$ has semi-abelian reduction over $S$, i.e. that $\mathcal{A}_s^{\circ}$ is a semi-abelian variety for every closed point $s \in S$. By definition, this means that $\mathcal{A}_s^{\circ}$ is an extension
\begin{equation} 0 \to \mathcal{T}_s \to \mathcal{A}_s^{\circ} \to \mathcal{B}_s \to 0
\end{equation}
of an abelian variety $\mathcal{B}_s$ by a torus $\mathcal{T}_s$. 

\subsubsection{}
Since $\mathcal{A}_s^{\circ}$ is semi-abelian, it is well known that \eqref{seq:idcpt} is split exact (see for instance \cite{LiuLor}). Hence, there exists an isomorphism
$$ \mathcal{A}_s = \mathcal{A}_s^{\circ} \times \Phi(A) $$
of $k(s)$-group schemes. When $\Phi(A)$ is \emph{cyclic}, which will always be the case in this paper, any splitting corresponds to lifting a generator $\gamma$ to a point of $\mathcal{A}_s(k(s))$ of the same order.

\subsubsection{}
Let $ S' \to S $ be a flat map of Dedekind schemes, and let $K'$ be the function field of $S'$. We denote by $ \mathcal{A}_{S'} $ the pullback of $\mathcal{A}$ and by $\mathcal{A}'$ the N\'eron model of $ A' = A \times_K K'$. Then, by the N\'eron mapping property of $\mathcal{A}'$, there is a unique morphism
$$ b_{S'/S} \colon \mathcal{A}_{S'} \to \mathcal{A}' $$
extending the canonical isomorphism on generic fibers. The map $ b_{S'/S} $ is called the \emph{base change} map. It is a homomorphism of $S'$-group schemes, but in general not an \emph{isomorphism}. However when $ \mathcal{A}^{\circ} $ is semi-abelian, it is well known that $ b_{S'/S} $ is an open immersion, and restricts to an isomorphism 
$$ \mathcal{A}^{\circ} \cong (\mathcal{A}')^{\circ}. $$

\subsubsection{}
Assume that $ \mathcal{A} $ is smooth over $S \setminus \{s\}$, and that  
$s' \in S'$ is the unique pre-image of $s$. Then $b_{S'/S}$ induces an inclusion 
$$ \Phi(A) \hookrightarrow \Phi(A') := \mathcal{A}'_{s'}/(\mathcal{A}'_{s'})^{\circ}. $$
The cokernel has cardinality equal to $[K':K]^{\tau}$, where $\tau = \mathrm{dim}(\mathcal{T}_s)$ (see \cite{HaNi-comp}).

\subsubsection{} In this paper, we will always work in the setting where $\mathcal{A}$ is smooth over the complement of a unique point $s$, and that $\mathcal{A}^{\circ}$ is a semi-abelian $S$-scheme.

\subsection{Kulikov models}
Let $A$, $K$ and $S$ be as at the beginning of the previous subsection. We assume in addition that these objects are defined over an algebraically closed field $k$ of characteristic zero. 

\subsubsection{}
Let $ \mathcal{Y}/S $ be an algebraic space which is flat and proper over $S$. We assume that $ \mathcal{Y} $ is a model of $A$, i.e., there exists an isomorphism $ \mathcal{Y} \times_S K \cong A $.

\begin{definition}\label{def:KulikovMod}
We call $ \mathcal{Y}$ a Kulikov model of $A$ over $S$ if:
\begin{enumerate}
\item $ \mathcal{Y} $ is non-singular and, for any closed point $s$, $ \mathcal{Y}_s $ is a reduced snc-divisor.

\item The relative canonical sheaf $ \omega_{\mathcal{Y}/S} $ is trivial.
\end{enumerate}
\end{definition}

Replacing $K$ by a finite separable extension if necessary, any abelian variety $A$ admits a Kulikov model, and we can moreover assume that it is projective over the base (this will be explained in \ref{subsubsec:projectiveKulikov} below). 
Shrinking the base if necessary, we can, and will, assume there is a unique degenerate fiber.

\subsubsection{}

Contrary to the case of the N\'eron model, Kulikov models are not unique in general. However, if $ \mathcal{Y} $ is a Kulikov model of $A$, it is nevertheless closely related to the N\'eron model $ \mathcal{A} $. Namely, let $\mathcal{Y}^{sm}$ denote the relative smooth locus, and let 
$$ \alpha \colon \mathcal{Y}^{sm} \to \mathcal{A} $$
be the unique morphism extending the identity on generic fibers. Then $\alpha$ is an isomorphism. When $R$ is a complete discrete valuation ring, this is proved in \cite[Prop.~5.1.8]{HN}. From this one easily derives that the statement also holds when $S$ is a smooth curve of finite type over $k$, see \cite{CY3}.

\subsection{Type II degenerations}\label{subsec:typeII}
From now on, we assume that $ \delta = \mathrm{dim}(A) \in \{1,2\}$. 

\subsubsection{} The possibilities for the degenerate fiber of a Kulikov model $\mathcal{Y}$ have been classified into $\delta + 1$ distinct  types according to the complexity of the intersections of the components of $\mathcal{Y}_s$ (cf. e.g. \cite{ChiLaz}). If $\mathcal{Y}$ is smooth over $S$, it is called type I. 
If it is not smooth, but $\mathcal{Y}_s$ is without triple intersections, it is called type II. Note that $\mathcal{Y}$ is then also a simple degeneration. (For abelian surfaces, one calls the case with triple intersection type III.) 

\begin{remark}
Even though the Kulikov model is not unique, the \emph{type} of the degeneration is unique.    
\end{remark} 

\subsubsection{}
\label{subsubsection:explanationcycle}
Assume $\mathcal{Y} \to S $ is a type II Kulikov model as above. Then the special fiber 
$$ \mathcal{Y}_s = \sum_{i=0}^{M-1} G_i $$
has the following structure. Each component $G_i$ is a $\mathbb{P}^1$-bundle over an abelian variety $D$ of dimension $\delta - 1$, and the 
components $G_i$ intersect pairwise along sections, forming a cycle. More precisely, 
let $D_{i,0}$ and $D_{i,\infty}$ denote the $0$ and $\infty$ sections of $G_i$. Then $G_i$ is glued to $G_{i+1} $ along $D_{i,\infty}$ and $D_{i+1,0}$ (where we compute $i$ modulo $M$).

For each $i$, the open part $ G_i^{\circ} = G_i \setminus \{ D_{i,0} \cup D_{i, \infty} \}$ is a semi-abelian variety, with extension
\begin{equation}
0 \to T \to G_i^{\circ} \to D \to 0 
\end{equation}
where $T$ is a rank $1$ torus.

\subsubsection{}
Performing a quadratic base change if necessary, we can, and will, assume that $ M = 2N $, i.e., that $\mathcal{Y}_s $ has an even number of components.
This assumption is equivalent to the dual complex $\Delta(\mathcal{Y}_s)$ being bipartite, and in particular allows us to use all results from Section \ref{sec:prelandnot}.
Since we can identify $\mathcal{Y}^{sm}$ with the N\'eron model $ \mathcal{A} $, we shall always choose indices such that $G_0^{\circ}$ is the identity component of $ \mathcal{A}_s $.

\subsection{Uniformization}\label{subsec:Uniformization}

We next explain how one can use Mumford's construction, in the formulation of Faltings-Chai \cite{FaltingsChai} and K\"unnemann \cite{Kunnemann}, to construct a suitable Kulikov model $\mathcal{Y}$ of $A$. We refer to these sources for an exhaustive treatment, and only introduce the notation and properties necessary for our purposes. 

 Unless mentioned otherwise, we assume in this subsection that $S$ is the spectrum of a complete discrete valuation ring $R$ with maximal ideal $\mathfrak{m}$ and residue field $k$. For any $S$-scheme $\mathcal{W}$, we can consider the formal completion with respect to $\mathcal{W}_0 = \mathcal{W} \times_R R/\mathfrak{m}$.

\subsubsection{}

Assume that $A$ admits \emph{some} type II Kulikov model over $S$, as in the previous subsection. As before, we denote the N\'eron model of $A$ by $ \mathcal{A}$. The formal completion of the semi-abelian scheme $\mathcal{A}^{\circ}$ is canonically an extension of a formal abelian scheme by a formal torus. As explained in \cite[Chapter II]{FaltingsChai}, this extension is algebraizable, which gives the so-called \emph{Raynaud extension} 
$$ 0 \to \mathcal{T} \to \widetilde{\mathcal{A}}^{\circ} \to \mathcal{B} \to 0. $$
Here $\mathcal{B}$ is an abelian scheme and $\mathcal{T}$ is a split torus with character lattice $X$. In our situation, the torus has relative dimension one over $S$, hence $X$ is a lattice of rank $1$. There is also a Raynaud extension associated with the dual abelian variety $A^t$, and we denote by $Y$ the character lattice of its torus part. The polarization gives rise to a bilinear map
$$ b \colon X \times Y \to \mathbb{Z}, $$
see \cite[Chapter II, Remark 6.3]{FaltingsChai}.

\subsubsection{}
Following \cite[Chapter III]{FaltingsChai} and \cite[Section 2]{Kunnemann}, we explain how one can produce an explicit Kulikov model $\mathcal{Y}$ (necessarily of type II) by means of Mumford's construction.  
As a first step, one constructs a relatively complete model $\widetilde{\mathcal{Y}}$ of $\widetilde{\mathcal{A}}^{\circ}$. This is a (non-unique) partial compactification with the property that the action of $\widetilde{\mathcal{A}}^{\circ}$ on itself extends to $\widetilde{\mathcal{Y}}$. Additionally, there is an action by $Y$ on $\widetilde{\mathcal{Y}}$.

For each $n \geq 0$, write $S_n = \mathrm{Spec}(R/\mathfrak{m}^{n+1})$. The $Y$-quotient of $\widetilde{\mathcal{Y}} \times_S S_n$, as an \emph{fpqc}-sheaf, is representable by a projective $S_n$-scheme $\mathcal{Y}_n$. Taking the limit, we obtain an \'etale morphism $\widetilde{\mathfrak{Y}} \to \mathfrak{Y} $ of formal $S$-schemes, where the target is algebraizable by a unique projective $S$-scheme $\mathcal{Y}$. We say that $\mathcal{Y}$ is \emph{uniformized} by $\widetilde{\mathcal{Y}} $.

To construct $\widetilde{\mathcal{Y}}$, one first selects a suitable cone decomposition $ \{ \sigma_{\alpha} \}$ of 
$$ \mathscr{C} = \left(X^{\vee}_{\mathbb{R}} \times \mathbb{R}_{>0} \right) \cup \{0\}. $$ 
The affine toric schemes $\mathcal{Z}(\sigma_{\alpha})$ glue to a partial compactification, denoted $\mathcal{Z}$, of $ \mathcal{T}$. The model $\widetilde{\mathcal{Y}}$, in turn, is obtained by gluing the contraction products 
$\widetilde{\mathcal{Y}}(\sigma_{\alpha}) = \widetilde{\mathcal{A}}^{\circ} \times^{\mathcal{T}} \mathcal{Z}(\sigma_{\alpha}) $ (cf. e.g. \cite[Paragraphs 1.19-20]{Kunnemann}). 

\subsubsection{}
By \cite[Thm. 4.6]{Kunnemann}, after replacing $S$ by a finite ramified extension if needed, we can assume that $\widetilde{\mathcal{Y}}$ and $\mathcal{Y}$ are regular schemes with special fibers being reduced strict normal crossings divisors. Then $\mathcal{Y}$ is a Kulikov model \cite[Thm. 5.1.6]{HN} and the action of $\mathcal{A}^{\circ}$ on itself extends to $\mathcal{Y}$ \cite[Thm. 4.2]{Kunnemann}. 

The special fiber $\mathcal{Y}_0$ is a quotient of $\widetilde{\mathcal{Y}}_0$ under the action of $Y$. This action  simultaneously moves forward $2N$ steps, where we recall that $2N$ is the number of components in the special fiber, and translates "sideways" by a fixed element of the elliptic curve (the latter property will not be directly relevant for us). The irreducible components $G_i$ inherit from $\widetilde{\mathcal{Y}}_s$ a natural notion of $0$-section and $\infty$-section, where the $\infty$-section of $G_i$ gets identified with the $0$-section of $G_{i+1}$.

\subsubsection{}\label{subsubsec:projectiveKulikov}
When $S$ is a curve over $k$, and $A$ an abelian scheme over $S \setminus \{ 0\}$, it is explained in \cite[Thm. 4.2]{Kunnemann} that we can "glue in" the above construction over the local complete ring at $0 \in S$. This gives a projective Kulikov model, which we shall also denote $\mathcal{Y}$.

\begin{assumption}\label{assum:Uniformization}
For the rest of this paper, we assume that $\mathcal{Y}$ is a Kulikov model constructed by uniformization, as outlined above.
\end{assumption}

\subsubsection{}
When $\mathcal{Y}$ is uniformizable it is easier to relate certain aspects of the global geometry of $\mathcal{Y}$ and the group structure of $\mathcal{A}$. 

\begin{lemma}\label{lem:fundpropuniformization}
If $ \mathcal{Y} $ is uniformizable, the following properties hold:

\begin{enumerate}
    \item There is an isomorphism 
    $$\Phi(A) \cong \mathbb{Z}/ 2N \mathbb{Z} $$ 
    and $ \mathcal{A}_s \to \Phi(A) $ maps $ [G^{\circ}_i] $ to $ [i] $ in $ \mathbb{Z}/2N \mathbb{Z} $.

\item Let $ P  \in T \subset G_0^{\circ}$. 
For any $Q \in G_i^{\circ}$, translation by $P$ has the property that if $P \to 0$ (resp.~$\infty$) in its $\mathbb{G}_m$-fiber on $ G_0^{\circ} $, then $Q + P \to 0$ (resp.~$\infty$) in its $\mathbb{G}_m$-fiber on $ G_i^{\circ} $.

\item The involution $-1_{\mathcal{A}} \colon \mathcal{A} \to \mathcal{A}$ extends to an involution 
$$ j \colon \mathcal{Y} \to \mathcal{Y}. $$
 
 \end{enumerate}
\end{lemma}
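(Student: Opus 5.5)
We will argue on the uniformizing object $\widetilde{\mathcal{Y}}$ and descend through the quotient by $Y$, using that $\mathcal{Y}$ is obtained by algebraizing $\widetilde{\mathfrak{Y}}/Y$. Throughout we work over the complete local ring at $0\in C$; the global statements follow because $\mathcal{Y}$ is glued from this local construction and the open part over $C\setminus 0$ is the abelian scheme.

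For (1), we start from the Mumford construction. The special fibre $\widetilde{\mathcal{Y}}_0$ is an infinite chain of $\mathbb{P}^1$-bundles $\widetilde G_i$, $i\in\mathbb{Z}$, over $D$, indexed by the maximal cones of the decomposition of $\mathscr{C}$. Their open parts $\widetilde G_i^{\circ}$ are the torus orbits of $\mathcal{Z}_0$, contracted with $\widetilde{\mathcal{A}}^{\circ}_0$. Formally, $\mathcal{A}$ is the quotient of the Néron model of $\widetilde{\mathcal{A}}^{\circ}$ (the smooth locus of $\widetilde{\mathcal{Y}}$) by the period group $Y$. Hence $\mathcal{A}_s$ is the disjoint union of the $\widetilde G_i^{\circ}$ modulo $Y$, and this gives $\Phi(A)\cong \mathbb{Z}/\Lambda$, where $\Lambda\subset\mathbb{Z}$ is the image of $Y$ acting on the index set. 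Since $Y$ shifts indices by $2N$, we get $\Lambda=2N\mathbb{Z}$. The group law on component groups corresponds to adding indices, because the $\widetilde{\mathcal{A}}^{\circ}$-action on $\widetilde{\mathcal{Y}}$ is induced by the torus action on $\mathcal{Z}$. Points of $\widetilde{\mathcal{T}}(K)$ with valuation $k$ move the orbit indexed by $i$ to the one indexed by $i+k$, up to the normalization of the cone decomposition; after base change we may assume every cone has lattice length one. With $G_0^{\circ}$ the identity component, this yields $[G_i^{\circ}]\mapsto [i]$.

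For (2), the translation action of $\mathcal{A}^{\circ}$ on $\mathcal{Y}$ extends the group law (Künnemann, Thm.~4.2). Restricted to $T\subset G_0^{\circ}$ it is the action of the torus $\mathcal{T}_0$ on each torus-orbit stratum of the toric scheme $\mathcal{Z}_0$. On the orbit $\widetilde G_i^{\circ}\cong \widetilde{\mathcal{A}}^{\circ}_0$, this torus acts by multiplication in the $\mathbb{G}_m$-fibre direction with the \emph{same} character for every $i$. This holds because all orbits in $\mathcal{Z}_0$ correspond to rays of the same orientation in the one-dimensional $X^{\vee}_{\mathbb{R}}$, and the $0$- and $\infty$-sections are attached compatibly (the $\infty$-section of $\widetilde G_i$ is the $0$-section of $\widetilde G_{i+1}$). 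So $P\to 0$ forces $Q+P\to 0$ in the fibre, and likewise for $\infty$. This is a local toric computation in a chart $k[u,v]/(uv-\pi)$. The action of $Y$ commutes with it, so the statement descends.

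For (3), we would construct an involution of $\widetilde{\mathcal{Y}}$ lifting $-1$ on $\widetilde{\mathcal{A}}^{\circ}$. Inversion on $\mathcal{T}$ corresponds to $-1$ on $X^{\vee}$, and hence to $(x,t)\mapsto(-x,t)$ on $\mathscr{C}$. If the cone decomposition is chosen symmetric under this map (and still $Y$-periodic and smooth), then $-1$ extends to $\mathcal{Z}$ and to the contraction product $\widetilde{\mathcal{Y}}$. It normalizes the $Y$-action, since $-1$ conjugates the action of $y$ to that of $-y$. Therefore it descends to each $\mathcal{Y}_n$, to the formal scheme $\mathfrak{Y}$, and by Grothendieck existence (the scheme is projective) to an automorphism $j$ of $\mathcal{Y}$. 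Its restriction to the generic fibre is $-1_A$, and $j^2=\mathrm{id}$ by separatedness.

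The main obstacle is arranging this symmetric decomposition in (3). In rank one, the decomposition is determined by a $Y$-invariant subdivision of $\mathbb{R}$ with vertices at integer points, after the base change making $\mathcal{Y}$ regular. The symmetric choice is simply to take all integer points. We must still check that this choice is compatible with the cone decomposition Künnemann uses to obtain a projective regular model, i.e.\ that the relevant piecewise-linear polarization function can be taken even.
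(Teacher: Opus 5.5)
Your proposal is correct. For (2) it is the paper's argument: using the extension of the $\mathcal{A}^\circ$-action (the paper cites K\"unnemann's Prop.~3.5(iii)), reduce to the action of $\mathcal{T}$ on the toric scheme $\mathcal{Z}$. In the star of every ray $(l,1)$ the cocharacter of $\mathcal{T}$ points towards the same neighbouring ray, so limits go to the same section on every component.

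For (1) the routes differ slightly.
\begin{itemize}
\item The paper quotes Faltings--Chai (III, Cor.~8.2) for $\Phi(A)\cong\mathrm{coker}(Y\to X^\vee)$. It then observes that, because the given $\mathcal{Y}$ is regular with reduced special fibre, the rays are exactly the $(l,1)$, $l\in X^\vee$, consecutive ones differ by $1$, and the ray $(0,1)$ gives the identity component.
\item You instead count components of the smooth locus modulo $Y$ and obtain the group law from translation by torus points. Stated cleanly: sending $k$ to the component containing the specialisation of the image of $\pi^k\in\mathcal{T}(K)$ gives a surjective homomorphism $\mathbb{Z}\to\Phi(A)$ with kernel $2N\mathbb{Z}$. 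This avoids citing Faltings--Chai.
\end{itemize}
There are two slips. First, components of $\widetilde{\mathcal{Y}}_0$ correspond to rays, not to maximal cones; the two-dimensional cones give the double loci. Second, you may not ``base change so that every cone has lattice length one'', since that changes $\mathcal{Y}$ and $N$. You do not need to: regularity and reducedness of the given special fibre already force it, which is precisely the paper's observation.

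For (3) the paper simply cites K\"unnemann, Thm.~4.2. Your direct construction is a valid reconstruction of that argument: the toric involution of $\mathcal{Z}$ induced by $(x,t)\mapsto(-x,t)$, its extension to the contraction product, the fact that it normalises the $Y$-action, descent to each $\mathcal{Y}_n$, and Grothendieck existence.

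The obstacle you flag at the end is not real in this setting, for two reasons.
\begin{itemize}
\item The decomposition is not yours to choose, and the one forced on the given $\mathcal{Y}$ (all rays $(l,1)$) is automatically symmetric under $(x,t)\mapsto(-x,t)$.
\item Algebraising the descended formal automorphism uses only properness of $\mathcal{Y}$ over $R$, via Grothendieck's existence theorem for morphisms, which you already invoke. So no evenness of the polarization function is needed.
\end{itemize}
Symmetry of the decomposition would only be a genuine extra condition in higher toric rank.
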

\begin{proof}
The bilinear map $b \colon Y \times X \to \mathbb{Z} $ induces an injective map $ Y \to X^{\vee}$, and \cite[Chapter III, Cor. 8.2]{FaltingsChai} asserts that its cokernel is isomorphic to $\Phi(A)$. By assumption, the cone decomposition $ \{ \sigma_{\alpha} \}$ of 
$$ \mathscr{C} = \left(X^{\vee}_{\mathbb{R}} \times \mathbb{R}_{>0} \right) \cup \{0\} $$ 
is semi-stable, meaning that the resulting toroidal compactification is semi-stable.
This implies that the primitive generator of each $1$-dimensional cone has the form $(l,1)$ where $l \in X^{\vee}$. If $\sigma_{\alpha}$ is $2$-dimensional, with primitive ray generators $(l_1,1)$ and $(l_2,1)$, then regularity implies that $ l_1 \cdot 1 - l_2 \cdot 1 = \pm 1$. Therefore, the $1$-dimensional cones are indexed precisely by $X^{\vee}$. Moreover, the cone with generator $(0,1)$ corresponds to $\widetilde{\mathcal{A}}^{\circ}_0$, see \cite[1.15]{Kunnemann} and $\widetilde{\mathcal{A}}^{\circ}_0$ is mapped to $\mathcal{A}^{\circ}_0$ under the quotient map \cite[III.5.3]{FaltingsChai}. Statement (1) is immediate from this description.

To prove Assertion (2) we use \cite[Proposition 3.5 (iii)]{Kunnemann}. 
More precisely, by this result it suffices to show that the action of $\mathcal{T}$ on the toroidal partial compactification $\mathcal{Z}$ of $\mathcal{T}$ has the analogous property. This is an explicit computation using the construction of $\mathcal{Z}$ given in \cite[1.12-1.14]{Kunnemann}.

Assertion (3) is proved in \cite[Thm.~4.2]{Kunnemann}. 
\end{proof}

\subsubsection{} 

We will also need a simple base change property of $\mathcal{Y} \to S $. Let us write $ R = k[[t]] $ and $ R' = k[[\pi]]$,
and let
$$ \alpha \colon S' = \mathrm{Spec}(R) \to S$$
be defined by $ t \mapsto \pi^r $, for some $r>0$. We consider the blowup map
$$ \mathcal{Y}' \to \mathcal{Y}_{S'} $$
from Proposition \ref{prop:main-blow-up} (2) (leaving out $r$ from the notation). In terms of the obvious toroidal structure on $ \mathcal{Y}_{S'} $ (being a base change of the semi-stable $S$-scheme $\mathcal{Y}$), this map can also be interpreted as the toridal resolution minimally refining the two-dimensional cones in the cone decomposition associated with $ \mathcal{Y}_{S'} $.

\begin{lemma}\label{lem:unifblowup}
Let $\widetilde{\mathcal{Y}}'$ denote the relatively complete model obtained from the pullback $\widetilde{\mathcal{Y}}_{S'}$ by the minimal toroidal desingularization. Then $\mathcal{Y}'$ is uniformized by $\widetilde{\mathcal{Y}}'$.

\end{lemma}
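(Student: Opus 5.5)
The strategy is to show that both constructions are compatible with Mumford's quotient procedure. Both modifications are toroidal and $Y$-equivariant. Hence the modification of the uniformizing model descends, formally, to the modification of $\mathcal{Y}_{S'}$. Uniqueness of algebraization then finishes the argument.

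First, base change. Since $\mathcal{Y}$ is uniformized by $\widetilde{\mathcal{Y}}$, the base change $\mathcal{Y}_{S'}$ is uniformized by $\widetilde{\mathcal{Y}}_{S'}$ in a weak sense. Indeed, Mumford's construction is compatible with base change: the Raynaud extension, the $Y$-action and the étale map $\widetilde{\mathfrak{Y}} \to \mathfrak{Y}$ all pull back along $S' \to S$. So the formal completion of $\mathcal{Y}_{S'}$ is the $Y$-quotient of the formal completion of $\widetilde{\mathcal{Y}}_{S'}$. The catch is that $\widetilde{\mathcal{Y}}_{S'}$ is no longer regular. In cone language, base change $t \mapsto \pi^r$ rescales the last coordinate of $X^\vee_{\mathbb{R}}\times\mathbb{R}_{>0}$. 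A $2$-dimensional cone with generators $(l,1),(l+1,1)$ becomes, in the new lattice, the cone spanned by $(rl,1)$ and $(r(l+1),1)$.

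Second, refine the fan. The minimal toroidal desingularization subdivides each such cone by the rays $(rl+j,1)$ for $j=1,\dots,r-1$. The result is again a semi-stable, $Y$-invariant cone decomposition, since $Y$ acts on $X^\vee$ by translations compatible with the rescaling. Hence $\widetilde{\mathcal{Y}}'$ is a relatively complete model in the sense of Faltings--Chai and Künnemann, and the $\widetilde{\mathcal{A}}^\circ$- and $Y$-actions extend to it. The conditions to verify are the positivity and completeness axioms, which are inherited from the original decomposition. Mumford's construction then produces a projective regular model $\mathcal{Y}''$ uniformized by $\widetilde{\mathcal{Y}}'$.

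Third, identify $\mathcal{Y}''$ with $\mathcal{Y}'$. Because the subdivision is $Y$-equivariant and the quotient map is étale, the toroidal blowup $\widetilde{\mathcal{Y}}' \to \widetilde{\mathcal{Y}}_{S'}$ descends to a morphism of formal schemes $\mathfrak{Y}'' \to \mathfrak{Y}_{S'}$. Étale-locally on the target, this morphism is the same toroidal modification as $\mathcal{Y}'\to\mathcal{Y}_{S'}$. To see this, note that Proposition \ref{prop:main-blow-up} shows the latter is computed étale-locally on the model $xy=\pi^r$. There it is precisely the blowup tower producing the chain of $r-1$ $\mathbb{P}^1$-bundles, i.e. the same fan subdivision. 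Since blowups in the ideals $I(Y_i)+(\pi^{a_i})$ commute with formal completion and with étale base change, the formal completions of $\mathcal{Y}'$ and $\mathcal{Y}''$ agree over $\mathfrak{Y}_{S'}$. Grothendieck existence then gives $\mathcal{Y}'\cong\mathcal{Y}''$, using uniqueness of the algebraization of a formal projective scheme together with the projectivity of the blowup over $\mathcal{Y}_{S'}$.

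I expect the main obstacle to be the third step: checking carefully that the ideals defining the iterated blowup of Proposition \ref{prop:main-blow-up} correspond, on the toric side, exactly to the minimal subdivision of the cones. The first thing I would try is to work directly in the affine toric charts $\mathcal{Z}(\sigma_\alpha)$, where both sides are explicit monomial computations.
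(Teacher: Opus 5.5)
Your argument is correct in outline, but it takes a longer route than the paper's, and the main point runs in the opposite direction.

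\emph{The paper's route.} The paper never builds a second model. It completes the blowup $\mathcal{Y}'\to\mathcal{Y}_{S'}$ to an admissible formal blowup $\mathfrak{Y}'\to\mathfrak{Y}_{S'}$. It then pulls this back along the \'etale map $\widetilde{\mathfrak{Y}}_{S'}\to\mathfrak{Y}_{S'}$, using that blowups commute with flat base change and completion. The result is the completion of the analogous blowup $\widetilde{\mathcal{Y}}'\to\widetilde{\mathcal{Y}}_{S'}$. Finally it uses that forming the fpqc quotient by $Y$ commutes with base change, which gives $\mathcal{Y}'_n=\widetilde{\mathcal{Y}}'_n/Y$ for every $n$. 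Since $\mathcal{Y}'$ is a blowup of a projective scheme, it is already the algebraization. So the paper needs no polarization on the refined fan, no second run of Mumford's construction, and no Grothendieck-existence comparison.

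\emph{How your argument relates.} Your third step already contains the key input: that, \'etale-locally, the pullback of the blowup tower of Proposition~\ref{prop:main-blow-up} is the minimal subdivision. The paper takes this as evident from its earlier remark that $\mathcal{Y}'\to\mathcal{Y}_{S'}$ is the minimal toroidal refinement, and your plan to check it by monomial computations in the charts $\mathcal{Z}(\sigma_\alpha)$ is the right way to make it honest. Once you have this, your $\mathfrak{Y}''$ is simply $\widehat{\mathcal{Y}'}$, so your second step is logically superfluous.

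\emph{What your detour buys, and a correction.} It gives an independent check that $\widetilde{\mathcal{Y}}'$ is a genuine relatively complete model with projective quotient, which the lemma statement takes for granted. However, your claim that the positivity axioms are "inherited" from the original decomposition is too quick. The old $Y$-quasi-periodic polarization function is piecewise linear but not strictly convex on the refined decomposition, since it is linear across the new rays. You must add a small $Y$-periodic correction that is strictly convex at the new rays; this is standard, cf.\ the proof of \cite[Thm.~4.6]{Kunnemann}.
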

\begin{proof}
In this proof, we will use some standard facts on formal schemes and admissible formal blowups, see for instance \cite[Part II]{Boschbook}.

The first step, consisting of the base change from $S$ to $S'$, is essentially discussed in the proof of \cite[Thm.~4.6]{Kunnemann}. In particular, one notes that the lattice $Y$ does not change. Let us explain why $\mathcal{Y}_{S'}$ is uniformized by $\widetilde{\mathcal{Y}}_{S'}$. 
This is because $\widetilde{\mathcal{Y}}_n \to \mathcal{Y}_n$ is an \emph{fpqc} quotient (by $Y$), and, therefore, commutes with arbitrary base change. 
The statement therefore follows after pulling back $ \widetilde{\mathfrak{Y}} \to \mathfrak{Y}$ along $S' \to S$, 
and considering truncations modulo powers of the maximal ideal.

The second step is to consider the blowup $ \mathcal{Y}' \to \mathcal{Y}_{S'} $. After completion, it determines an admissible formal blowup $\mathfrak{Y}' \to \mathfrak{Y}_{S'}$. 
Pulling back along the \'etale morphism $ \widetilde{\mathfrak{Y}}_S \to \mathfrak{Y}_S$ yields an admissible formal blowup $\widetilde{\mathfrak{Y}}' \to \widetilde{\mathfrak{Y}}_S $. 

Clearly, the latter is also the formal completion of the blowup $ \widetilde{\mathcal{Y}}' \to \widetilde{\mathcal{Y}}_S$ (defined similarly to $ \mathcal{Y}' \to \mathcal{Y}_S $). To conclude, we use again universality of the formation of \emph{fpqc}-quotients, together with the previous case already established. I.e., for each $n > 0$ we can identify $\mathcal{Y}'_n$ with the quotient of $\widetilde{\mathcal{Y}}'_n$ under the action of $Y$.
\end{proof}

\subsubsection{} 

For later use, we record the fact that the bipartite structure of $ \mathcal{Y}_s $ is compatible with the involution $j$. We write $G_{even}$ and $G_{odd}$ for the disjoint union of the $G_i$ with $i$ even and odd, respectively.

\begin{lemma}\label{lemma:bipar-pres}
The subschemes $G_{even}$ and $G_{odd}$ are preserved by the involution $j$.   
\end{lemma}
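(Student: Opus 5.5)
Since $j$ is an automorphism of $\mathcal{Y}$ over $S$, it permutes the irreducible components $G_0, \ldots, G_{2N-1}$ of $\mathcal{Y}_s$. The plan is to compute this permutation explicitly and check that it preserves the parity of the index. The strategy is to use the identification of $\mathcal{Y}^{sm}$ with the N\'eron model $\mathcal{A}$ together with Lemma \ref{lem:fundpropuniformization}. By construction, $j$ restricts on $\mathcal{Y}^{sm} = \mathcal{A}$ to $-1_{\mathcal{A}}$; on the special fibre this is inversion on the group $\mathcal{A}_s$.

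The first step is to see that inversion on $\mathcal{A}_s$ maps the connected component $G_i^{\circ}$ to a connected component. Since $\mathcal{A}_s \to \Phi(A)$ is a group homomorphism, inversion maps the component with class $[i]$ to the component with class $-[i]$. By Lemma \ref{lem:fundpropuniformization}(1), $\Phi(A) \cong \mathbb{Z}/2N\mathbb{Z}$ with $[G_i^{\circ}] \mapsto [i]$. Hence $j(G_i^{\circ}) = G_{-i}^{\circ}$, with indices taken modulo $2N$.

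The second step passes to closures. Since $G_i$ is the closure of the dense open $G_i^{\circ}$ and $j$ is a homeomorphism of $\mathcal{Y}_s$, we get $j(G_i) = G_{-i \bmod 2N}$. Because $2N$ is even, the residue $-i \bmod 2N$ has the same parity as $i$. Therefore $j(G_{even}) = G_{even}$ and $j(G_{odd}) = G_{odd}$.

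The only delicate point is the first step: one has to check that the isomorphism in Lemma \ref{lem:fundpropuniformization}(1) is a group isomorphism compatible with the indexing, so that inversion really corresponds to $i \mapsto -i$. This is guaranteed by the statement that the specialization map is a homomorphism and sends $[G_i^{\circ}]$ to $[i]$. The evenness of the number of components, arranged by the quadratic base change in Subsection \ref{subsec:typeII}, is what makes the parity argument work.
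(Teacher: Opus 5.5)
Your proposal is correct and follows essentially the same route as the paper: restrict to $\mathcal{Y}^{sm} = \mathcal{A}$, where $j$ acts as inversion, and use the group law on $\mathcal{A}_s$ together with the indexing from Lemma \ref{lem:fundpropuniformization}(1) to see that $G_i^{\circ}$ is sent to $G_{2N-i}^{\circ}$, which has the same parity as $G_i^{\circ}$ because $2N$ is even. You also spell out the passage from the open parts $G_i^{\circ}$ to their closures $G_i$, a step the paper leaves implicit.
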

\begin{proof}
It suffices to prove the statement after restricting to $\mathcal{Y}^{sm} = \mathcal{A}$. Let $P$ be a point in $ \mathcal{A}_s $, and let us assume that $ P \in G_i^{\circ} $. By the group law on $ \mathcal{A}_s $, we must have that $ - P \in G_{2N-i}^{\circ}$, thus the lemma follows immediately. 
\end{proof}

\section{Expanded degenerations and N\'eron models}\label{sec:expandedNeron}
In this section, we assume that $ \mathcal{Y} \to C $ is a projective Kulikov model of type II, with $C$ a smooth affine connected curve of finite type over $k$. We also assume that the special fiber $ \mathcal{Y}_0 $ has an even number of components. Then, for each $n > 0$, we can form the $n$-th expanded degeneration $ \mathcal{Y}[n] \to C[n] $ as explained in Section \ref{sec:prelandnot}. 

\subsection{N\'eron models over generic lines}
We shall first establish some useful facts about the restriction of expanded degenerations to generic lines.

\subsubsection{}

\begin{lemma}\label{lem:smallres}
The relative canonical sheaf of $ \mathcal{Y}[n] \to C[n] $ is trivial.
\end{lemma}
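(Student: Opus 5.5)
We need to show that $\omega_{\mathcal{Y}[n]/C[n]}$ is trivial. The plan is induction on $n$, using the inductive construction of $\mathcal{Y}[n]$ as a blowup of $\mathcal{Y}[n-1]\times_{\mathbb{A}^n}\mathbb{A}^{n+1}$ in the Weil divisor $p_{n-1}^{-1}(Y)\times V(t_{n+1})$. The key point is that this blowup is a \emph{small} resolution: it is an isomorphism away from a locus of codimension at least $2$ in the total space. Relative canonical sheaves of small birational modifications between normal (Gorenstein) schemes agree, as reflexive sheaves are determined off codimension $2$. The base case $n=0$ is the hypothesis that $\mathcal{Y}\to C$ is Kulikov, so $\omega_{\mathcal{Y}/C}\cong\mathcal{O}_{\mathcal{Y}}$.

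For the inductive step, first note that $\omega_{\mathcal{Y}[n-1]/C[n-1]}$ is trivial by induction. The map $C[n]\to C[n-1]$, induced by $\mathbb{A}^{n+1}\to\mathbb{A}^n$ (multiplying the last two coordinates), is flat. Hence the fibre product $\mathcal{W}=\mathcal{Y}[n-1]\times_{C[n-1]}C[n]$ is flat over $C[n]$, and it is Gorenstein because its fibres are those of $\mathcal{Y}[n-1]$. Base change of the relative dualizing sheaf then gives $\omega_{\mathcal{W}/C[n]}\cong\mathcal{O}_{\mathcal{W}}$.

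Next, we use the local description from \cite[Prop.~1.7]{GHH} (the same charts as in the proof of Proposition \ref{prop:main-blow-up}). Étale locally, $\mathcal{W}$ near its singular locus looks like $\{xy=t_nt_{n+1}\}$ times smooth factors. The blowup of the Weil divisor $\{y=t_{n+1}=0\}$ is the standard small resolution of this conifold-type singularity. Its exceptional locus is a $\mathbb{P}^1$-bundle over the codimension-$3$ locus $\{x=y=t_n=t_{n+1}=0\}$, so the exceptional locus has codimension $2$, and $\beta\colon\mathcal{Y}[n]\to\mathcal{W}$ is an isomorphism over the complement of a codimension-$3$ subset. We also know $\mathcal{Y}[n]$ is smooth and $f[n]$ is flat with Gorenstein (normal crossing) fibres, so $\omega_{\mathcal{Y}[n]/C[n]}$ is a line bundle.

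Finally, $\omega_{\mathcal{Y}[n]/C[n]}$ and $\beta^*\omega_{\mathcal{W}/C[n]}\cong\mathcal{O}$ agree on the complement of the exceptional locus $E$, which has codimension $2$ in the smooth scheme $\mathcal{Y}[n]$. A trivializing section therefore extends across $E$ by Hartogs (normality), and its inverse extends as well, which gives triviality. The main obstacle is to verify carefully that the blowup is small in codimension $\geq 2$ globally, and not merely in the local model. This follows because the local charts cover $\mathcal{Y}[n-1]$ and blowing up commutes with étale base change, so the computation reduces to the explicit toric model.
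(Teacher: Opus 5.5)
Your proof is correct and follows the same idea as the paper. The paper's one-line proof says that $\mathcal{Y}[n]\to\mathcal{Y}\times_C C[n]$ is a small resolution, so the trivial $\omega_{\mathcal{Y}/C}$ pulls back to the base change and extends across the codimension-$\geq 2$ exceptional locus; you check smallness one blowup at a time, comparing $\mathcal{Y}[n]$ inductively with $\mathcal{Y}[n-1]\times_{C[n-1]}C[n]$ instead of directly with $\mathcal{Y}\times_C C[n]$, which also spells out the base-change and Hartogs steps the paper leaves implicit.
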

\begin{proof}
This follows from the fact that 
$$ \mathcal{Y}[n] \to \mathcal{Y} \times_C C[n] $$ 
is a small resolution.
\end{proof}

Let $ L \to C[n] $ be a generic line.

\begin{prop}\label{prop:LNeronmod}
The restriction $ \mathcal{Y}[n]_L \to L $ of $ \mathcal{Y}[n] \to C[n]$ to the generic line $ L $ is a Kulikov model. In particular, 
$$ \mathcal{Y}[n]^{sm}_L \to L $$ 
is a N\'eron model.
\end{prop}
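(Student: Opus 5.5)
We first move $L$ into a standard position using the torus action, and then identify the restricted family with the blowup from Proposition \ref{prop:main-blow-up}. The torus $\mathbb G[n]$ acts on $\mathcal{Y}[n]\to C[n]$ equivariantly and transitively on generic lines (Subsection \ref{subsec:genlinesbasics}). Since being a Kulikov model is invariant under isomorphism of families, we may therefore assume that $L$ is the diagonal line, i.e.\ the component through $0$ of the preimage of $\{t_1=\cdots=t_{n+1}\}$. Let $u$ be the local coordinate on $L$ corresponding to the common value of the $t_i$. Then $L\to C$ is given near $0$ by $t\mapsto u^{n+1}$. Away from $0\in L$, the line maps into the locus where all $t_i\neq 0$. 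There $\mathcal{Y}[n]$ is simply $\mathcal{Y}\times_C C[n]$, which is smooth over the base with fibers abelian surfaces. So everything reduces to a neighbourhood of $0\in L$.

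\textbf{Regularity and snc special fiber.} Work étale/formally locally at $0$, with $S=\mathrm{Spec}\, k[[u]]$ and $r=n+1$, $r_i=1$. By Proposition \ref{prop:main-blow-up}(1), $\mathcal{Y}[n]_S\cong \mathcal{Y}_{n+1}$, the iterated blowup of $\mathcal{Y}_S$ with all $r_i=1$. By part (2), this is the semi-stable model $\mathcal{Y}'_{r}$. Concretely, in the local charts from the proof of Proposition \ref{prop:main-blow-up}, each chart has the form $R[u_k,v_{k-1}]/(u_kv_{k-1}-\pi)$, which is regular with reduced normal crossing special fiber. The components are the strict transforms $G_i$ together with the inserted $\mathbb{P}^1$-bundles $\Delta^j_D$. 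These are smooth and meet transversally along sections, so the special fiber is a reduced snc divisor.

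Regularity holds at every point of the total space over $L$, not just at the closed points we checked. The fibers of $\mathcal{Y}[n]\to C[n]$ over $L$ near $0$ are nonsingular or of the described type, and the formal computation governs the local rings. Since formal completion is faithfully flat, regularity and the snc property descend from $\mathcal{Y}[n]_S$ to $\mathcal{Y}[n]_L$ near the special fiber. Properness over $L$ comes from projectivity of $f[n]$. This establishes condition (1) of Definition \ref{def:KulikovMod}.

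\textbf{Triviality of the canonical sheaf.} By Lemma \ref{lem:smallres}, $\omega_{\mathcal{Y}[n]/C[n]}$ is trivial. The morphism $\mathcal{Y}[n]\to C[n]$ is flat with Gorenstein fibers, since its fibers are normal crossing varieties. Hence the relative dualizing sheaf commutes with base change, and
$$\omega_{\mathcal{Y}[n]_L/L}\cong \omega_{\mathcal{Y}[n]/C[n]}|_{\mathcal{Y}[n]_L}$$
is trivial. This gives condition (2), so $\mathcal{Y}[n]_L\to L$ is a Kulikov model.

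\textbf{The N\'eron model statement.} This follows from the result recalled in Section \ref{sec:KulikovNeron}: for a Kulikov model $\mathcal{Y}$, the canonical map $\mathcal{Y}^{sm}\to\mathcal{A}$ is an isomorphism (\cite[Prop.~5.1.8]{HN}, extended to smooth curves as in \cite{CY3}). Applied to $\mathcal{Y}[n]_L$, it shows that $\mathcal{Y}[n]^{sm}_L\to L$ is the N\'eron model of its generic fiber.

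The step needing the most care is the identification of $\mathcal{Y}[n]_L$ with the $r_i=1$ blowup. In particular, we must check that the base-change description, which is formal/local, genuinely yields regularity of the global total space $\mathcal{Y}[n]_L$. This requires compatibility of Li's construction with étale pullback and completion, as used in the proof of Proposition \ref{prop:main-blow-up}.
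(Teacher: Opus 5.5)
Your proof is correct and follows the paper's argument. Regularity and the reduced normal crossing special fiber come from the local equations of the expansion; the special fiber is $\mathcal{Y}[n]_0$, hence snc. Triviality of $\omega$ follows from Lemma \ref{lem:smallres} by base change, and the N\'eron statement is the Kulikov--N\'eron comparison recalled in Section \ref{sec:KulikovNeron}.

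Your reduction to the diagonal line via the torus action, and the detour through Proposition \ref{prop:main-blow-up} and formal completion, are valid but not needed. The local equations of \cite[Prop.~1.5]{GHH} can be checked directly on any generic line.
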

\begin{proof}
The claims that $ \mathcal{Y}[n]_L \to L $ has regular total space and that the special fiber is a reduced normal crossings divisor are easily checked from the usual local equations \cite[Prop.~1.5]{GHH}. Since the special fiber coincides with $\mathcal{Y}[n]_0$, we know the special fiber to be strict normal crossing. By Lemma \ref{lem:smallres} above $\mathcal{Y}[n] \to C[n]$ has trivial relative canonical sheaf, so the same holds for the base change $ \mathcal{Y}[n]_L \to L $.
\end{proof}

To simplify notation, we shall often write $ \mathcal{A}(L) $ for the N\'eron model $ \mathcal{Y}[n]^{sm}_L $ over $L$.

\begin{remark}
The model $ \mathcal{X}_{r}' \to S $ constructed in Proposition \ref{prop:main-blow-up} is in particular a Kulikov model, since
$$ \mathcal{X}_{r}' \cong X[r-1]_S,$$
where the lift $ S \to C[r-1]$ factors through the \emph{diagonal line} in $C[r-1]$.  
\end{remark}

\subsubsection{}
The small resolution 
\begin{equation}\label{eq:smallres}
\mathcal{Y}[n] \to \mathcal{Y} \times_C C[n]   
\end{equation}
restricts to an isomorphism over $ \mathcal{Y}^{sm}_{C[n]} = \mathcal{Y}^{sm} \times_C C[n]$. Thus we obtain an open immersion
$$
\mathcal{Y}^{sm}_{C[n]} \to \mathcal{Y}[n]^{sm}.  
$$

Restricting \eqref{eq:smallres} to any generic line $ L \to C[n]$ gives a  resolution (which is no longer small)
$$ \mathcal{Y}[n]_L \to \mathcal{Y}_L. $$
Similar to above, this resolution is an isomorphism over $ \mathcal{Y}^{sm}_L $, 
which gives rise to an open immersion
\begin{equation}\label{eq:basechmap}
\mathcal{Y}^{sm}_L \to \mathcal{Y}[n]^{sm}_L.    
\end{equation}

\subsubsection{}
The composition $L \to C[n] \to C$ is a flat and ramified map of smooth curves. Therefore, we can identify (\ref{eq:basechmap}) with the base change morphism 
$$ b_{L/C} \colon \mathcal{A} \times_C L \to \mathcal{A}(L) $$
extending the canonical isomorphism over $L \setminus \{0\}$. 

\subsection{Relating N\'eron models over different generic lines.}

Let $L$ be a generic line in $C[n]$, and pick a $k$-valued point $\sigma \in \mathbb G[n]$. Then the automorphism
$$ \sigma \colon C[n] \to C[n] $$
restricts to an isomorphism $L \to L'$ of $L$ with some other generic line $L'$. For simplicity, this map will also be denoted $\sigma$; note that it commutes with the respective maps to $C$. 

We shall next compare the two N\'eron models $\mathcal{A}(L) $ and $\mathcal{A}(L') $.

\subsubsection{} 
Recall that the map $\mathcal{Y}[n] \to C[n]$ is $\mathbb G[n]$-equivariant. Thus by restriction we obtain a diagram
\begin{equation}\label{diagram:LLprime}
\xymatrix{
    \mathcal{Y}[n]_L^{sm}  \ar[r]^{\sigma} \ar[d] & \mathcal{Y}[n]_{L'}^{sm}   \ar[d] \\
   L   \ar[r]^{\sigma} & L'.
}    
\end{equation}
We point out that this diagram is not Cartesian. However, 
if we replace $\mathcal{Y}$ by $ \mathcal{Y}^{sm} $ at the beginning, and perform the same constructions, then $ \mathcal{Y}^{sm}[n] $ is simply the base change $\mathcal{Y}^{sm} \times_C C[n] $. In this situation the diagram is Cartesian and reduces to the isomorphism 
$$ \mathcal{Y}^{sm}_L \cong \mathcal{Y}_{L'}^{sm} \times_{L'} L $$
of $L$-group schemes. 
Note that the unit section of $ \mathcal{Y}^{sm} \to C $ pulls back to the unit sections of $\mathcal{Y}^{sm}_L$ and $\mathcal{Y}_{L'}^{sm}$, respectively.

\subsubsection{}
Diagram (\ref{diagram:LLprime}) gives rise (by the universal property of the fiber product) to an $L$-morphism
$$ \sigma_{L,L'} \colon \mathcal{A}(L) \to \mathcal{A}(L') \times_{L'} L. $$

\begin{prop}\label{prop:genline-hom}
The map $\sigma_{L,L'}  $ is an isomorphism of $L$-group schemes.
\end{prop}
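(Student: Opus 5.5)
The plan is to reduce the statement to the Néron mapping property, using the equivariance of $f[n]$ under $\mathbb G[n]$. The isomorphism $\sigma\colon L \to L'$ commutes with the maps to $C$, so $\mathcal{A}(L') \times_{L'} L$ is the pullback of a Néron model along an isomorphism of Dedekind schemes. It is therefore itself a Néron model over $L$, namely of its generic fiber $\mathcal{A}(L')_{\eta'} \times_{\eta'} \eta$. By Proposition \ref{prop:LNeronmod}, $\mathcal{A}(L)$ is also a Néron model over $L$. Both are smooth separated group schemes of finite type over $L$, so it suffices to check two things: that $\sigma_{L,L'}$ is an isomorphism of group schemes on generic fibers, and that it is a morphism between Néron models.

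First, I would check the generic fibers. Over $L \setminus \{0\}$ the expansion is trivial: $\mathcal{Y}[n]$ restricted to the locus where all $t_i \neq 0$ coincides with $\mathcal{Y} \times_C C[n]$, since the small resolution \eqref{eq:smallres} is an isomorphism there. The $\mathbb G[n]$-action on this locus is just the action on the second factor. Hence, restricted to $L\setminus\{0\}$, the diagram \eqref{diagram:LLprime} is the Cartesian square discussed above for $\mathcal{Y}^{sm}$, and $\sigma_{L,L'}$ becomes the canonical identification
$$\mathcal{Y}_L \setminus \mathcal{Y}_0 \cong (\mathcal{Y}_{L'}\setminus\mathcal{Y}_0) \times_{L'} L$$
of pullbacks of the abelian scheme $\mathcal{Y}\setminus \mathcal{Y}_0$ along maps to $C$ that agree. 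This identification respects the group law and the unit section, because both come from $\mathcal{Y}\setminus\mathcal{Y}_0 \to C\setminus 0$. So on generic fibers $\sigma_{L,L'}$ is an isomorphism of abelian varieties.

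Next, I would extend to the whole of $L$. By the Néron mapping property of $\mathcal{A}(L')\times_{L'}L$, the generic isomorphism extends uniquely to an $L$-morphism $\mathcal{A}(L) \to \mathcal{A}(L') \times_{L'} L$. Symmetrically, using $\sigma^{-1}$ and the mapping property of $\mathcal{A}(L)$, its inverse extends as well. By uniqueness the two compositions are the identity, since they restrict to the identity generically and the schemes involved are separated and smooth. The map $\sigma_{L,L'}$ also agrees generically with this extension, so by uniqueness it \emph{is} this extension. Compatibility with multiplication, inverse and unit holds generically and hence everywhere, because two morphisms from a smooth scheme, such as $\mathcal{A}(L)\times_L \mathcal{A}(L)$, into a separated scheme that agree on a dense open subset agree everywhere, the source being reduced. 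This shows $\sigma_{L,L'}$ is an isomorphism of $L$-group schemes.

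The main obstacle is a technical point in the previous step. One has to confirm that $\sigma$ maps $\mathcal{Y}[n]^{sm}_L$ into $\mathcal{Y}[n]^{sm}_{L'}$, so that $\sigma_{L,L'}$ is actually defined on smooth loci as in \eqref{diagram:LLprime}. For this I would note that $\sigma$ is an automorphism of $\mathcal{Y}[n]$ over the automorphism $\sigma$ of $C[n]$, so it preserves the relative smooth locus of $f[n]$, and then restrict to $L$. Beyond that, the argument is the uniqueness statement in the Néron mapping property.
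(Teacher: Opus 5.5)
Your proposal is correct and follows essentially the paper's argument: $\mathcal{A}(L')\times_{L'}L$ is a N\'eron model because $L\to L'$ is an isomorphism (the paper cites \'etale base change), $\sigma_{L,L'}$ is the canonical group isomorphism over $L\setminus\{0\}$, and the N\'eron mapping property of the target forces the unique extension to be a homomorphism. Your extra steps (extending the inverse, checking that $\sigma$ preserves the smooth locus) make explicit what the paper leaves implicit; in fact being an isomorphism of $L$-schemes is already immediate, since $\sigma$ is an automorphism of $\mathcal{Y}[n]$ covering the automorphism $\sigma$ of $C[n]$, so the real content is the homomorphism property.
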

\begin{proof}
First, we note that the pullback $ \mathcal{A}(L') \times_{L'} L $ is the N\'eron model of its generic fiber, since $ L \to L'$ is an isomorphism, and the formation of N\'eron models is preserved under \'etale base change (see \cite[Prop.~1.2/2]{neron}).

Second, the map $\sigma_{L,L'} $ extends the canonical isomorphism in the generic fiber, which, as we noted above, is a homomorphism. 
Therefore, by the N\'eronian property of $ \mathcal{A}(L') \times_{L'} L $, 
 the homomorphism on the generic fiber extends \emph{uniquely} to an $L$-morphism between the group schemes, and this map is necessarily also a homomorphism (again because of the N\'eronian property of the target, by a standard argument). 
\end{proof}

\subsubsection{} 

Over $ 0 \in L $, the map $\sigma_{L,L'} $ is precisely the automorphism
$$ \sigma_0 \colon \mathcal{Y}[n]_0^{sm} \to \mathcal{Y}[n]_0^{sm} $$
on the (smooth part of) the fiber of the expanded degeneration over $0 \in C[n]$, or, what amounts to the same, the fiber over $0$ of $\sigma$ in \eqref{diagram:LLprime}. We point out that, since $\sigma_{L,L'}$ is a homomorphism, the group structures on $\mathcal{Y}[n]_0^{sm}$ induced by the N\'eron models $ \mathcal{A}(L) $ and $\mathcal{A}(L')$, respectively, are forced to be \emph{different} in general. This is illustrated in Example \ref{Exa:differentgrstr} below.

\subsection{Torus action on a cycle}\label{subsec:torus-action-expansion}

For any $ b \geq 0 $, the torus $\mathbb G[b] \cong (\mathbb{G}_m)^b$ acts on each inserted component of $ \mathcal{Y}[b]_0 $ via projection to one of the rank $1$-factors. On the other hand, if $L$ denotes the diagonal line in $C[b]$, then $\mathcal{Y}[b]$ is uniformizable (combine Proposition \ref{prop:main-blow-up} and Lemma \ref{lem:unifblowup}). By Lemma \ref{lem:fundpropuniformization}, the rank $1$ subtorus $T \subset G_0^{\circ}$ acts by translation in the fibers. In this subsection, we clarify the relationship between these two actions.

\subsubsection{}\label{subsubsec:notationcomp}

We first introduce some notation. The fiber $ \mathcal{Y}[b]_0  $ has the structure of a cycle of components denoted $G_0^b, \ldots, G_{2N(b+1)-1}^b$, see \ref{subsubsection:explanationcycle}. Letting 
$ 0 \leq \tau \leq N-1 $, we write 
$$G^b_{\tau,i} = G^b_{2 \tau (b+1) + i}, $$ 
where $ 0 \leq i \leq b $, and 
$$ G^b_{\tau,-i} = G^b_{2 \tau (b+1) - i},$$
where $ 1 \leq i \leq b + 1 $ (when $\tau=0$, we identify $ G^b_{-i} = G^b_{2N(b+1)-i}$). When $b$ is clear from the context, we sometimes write $G'_{\tau,i}$ and $G'_{\tau,-i}$, for simplicity. 

Via the resolution $ \mathcal{Y}[b] \to \mathcal{Y} \times_C C[b]$, we can identify $G^b_{\tau,0}$ and $G^b_{\tau,-(b+1)}$ with the strict transforms of $G_{2 \tau}$ and $G_{2\tau-1}$, respectively. Moreover, $G^b_{\tau,0}$ and $G^b_{\tau+1, -(b+1)}$ are linked by the chain $G^b_{\tau,1}, \ldots, G^b_{\tau,b}$, whereas $G^b_{\tau, -(b+1)}$ and $G^b_{\tau,0}$ are linked by $G^b_{\tau,-b}, \ldots, G^b_{\tau,-1}$.

\subsubsection{}
For any $0 \leq \tau \leq N-1$, $1 \leq i \leq b$ and $d \in D$, the $\mathbb{P}^1$-fiber $(G^b_{\tau, \pm i})_d$ has homogeneous coordinates $(u_i \colon v_i)$ being a fiber of an $i$-th inserted component. In these coordinates, $\sigma = (\sigma_1, \ldots, \sigma_b) \in \mathbb G[b]$ acts by
$$ (u_i \colon v_i) \mapsto (\sigma_i u_i \colon v_i) = (u_i \colon \sigma_i^{-1} v_i). $$
Let $\mathbb{P}$ denote the fiber of the identity component $G^b_{0,0} = G_0$ over the identity $e_D \in D$. Since $\mathcal{Y}[b]_0$ is the special fiber of $\mathcal{Y}[b]_L$, which is obtained through uniformization, it follows from the explicit construction of $\mathcal{Y}[b]$ (see \cite[Prop.~1.5]{GHH}) that we have a canonical isomorphism $\mathbb{P} \cong (G^b_{\tau, i})_d$ sending $(0,\infty)$ to $(0,\infty)$. We moreover have a canonical isomorphism $\mathbb{P} \cong (G^b_{\tau, -i})_d$ sending $(0,\infty)$ to $(\infty,0)$.

\subsubsection{}
For any $g \in T$, we denote by $t_g$ translation by $g$ on $ (G^b_{\tau, \pm i})_d^{\circ}$, using the group structure of $\mathcal{Y}[b]_L$.

\begin{lemma}\label{lemma:translationinterpret}
Let $ \sigma = (\sigma_1, \ldots, \sigma_b) \in \mathbb G[b]$. Then
\begin{enumerate}
    \item $ \sigma \vert_{(G_{\tau,i})_d^{\circ}} = t_{\sigma_i}$, and 
    \item $ \sigma \vert_{(G_{\tau,-i})_d^{\circ}} = t_{- \sigma_i}$.
\end{enumerate}
\end{lemma}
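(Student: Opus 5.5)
We prove the statement by identifying the inserted components with the identity component through the uniformization of $\mathcal{Y}[b]_L$, and then comparing two actions of the rank one torus on the fibre $\mathbb{P}$. Both $\sigma|_{(G_{\tau,\pm i})_d^{\circ}}$ and $t_g$ for $g\in T$ are automorphisms of the $\mathbb{G}_m$-torsor $(G_{\tau,\pm i})^{\circ}_d$ over the point $d$. The automorphism group of a $\mathbb{G}_m$-torsor is $\mathbb{G}_m$ itself, acting by scaling. So it suffices to determine, in each case, which character of $T\cong\mathbb{G}_m$ the given map corresponds to. Since the actions commute with the respective identifications, we may compute after transporting everything to $\mathbb{P}$ via the canonical isomorphisms $\mathbb{P}\cong (G^b_{\tau,\pm i})_d$ from the preceding subsubsection.

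First we pin down how translation by $T$ acts on each fibre. On $\mathbb{P}^{\circ}=T$ itself, $t_g$ is multiplication by $g$ in the coordinate $(u:v)$ normalized so that $0,\infty$ are the two boundary points. By Lemma \ref{lem:fundpropuniformization}(2), applied to the uniformized model $\mathcal{Y}[b]_L$ (uniformizable by Proposition \ref{prop:main-blow-up} and Lemma \ref{lem:unifblowup}), translation by $g\in T$ on any component $G_j^{\circ}$ is a $\mathbb{G}_m$-action compatible with the boundary. Concretely, as $g\to 0$ (resp.\ $\infty$) in $T$, the point $Q+g\to 0$ (resp.\ $\infty$) of its fibre. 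Here $0$ and $\infty$ are the sections inherited from $\widetilde{\mathcal{Y}}$, namely the $0$-section of $G_j$ and the section glued to $G_{j+1}$.

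Via the canonical isomorphism $\mathbb{P}\cong(G_{\tau,i})_d$ preserving $(0,\infty)$, the map $t_g$ is therefore scaling by $g$ in the coordinate $u_i/v_i$. Via $\mathbb{P}\cong(G_{\tau,-i})_d$ swapping $0$ and $\infty$, the map $t_g$ becomes scaling by $g^{-1}$ in $u_i/v_i$. To make this rigorous, we check that the scaling character is exactly $\pm1$ and not some other power. For this we use the explicit toric description of $\mathcal{Z}$ in \cite[1.12-1.14]{Kunnemann}. Each $G_j$ corresponds to a ray $(l,1)$ with $l\in X^\vee$, and the torus $\mathcal{T}$ acts on the orbit of that ray through the character lattice $X$, which has rank one with primitive generator. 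Hence the action has weight $\pm1$, and the sign is determined by the $0/\infty$ behaviour established above.

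Second, we compare with the $\mathbb G[b]$-action. By definition, $\sigma$ acts on $(G_{\tau,\pm i})_d$ by $(u_i:v_i)\mapsto(\sigma_i u_i:v_i)$, so $u_i/v_i$ is scaled by $\sigma_i$. We must also check that the coordinates $(u_i:v_i)$ of \cite[Prop.~1.5]{GHH} are matched with those of $\mathbb{P}$ with the orientation claimed, namely $u_i=0$ corresponds to the section glued towards $G_{\tau,i-1}$ for positive indices, and the reverse for negative ones. Granting this, comparing the two scalings gives $\sigma|_{(G_{\tau,i})^{\circ}_d}=t_{\sigma_i}$. On $G_{\tau,-i}$ the $\mathbb G[b]$-action is still scaling by $\sigma_i$ in $u_i/v_i$, whereas translation by $g$ there scales by $g^{-1}$. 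Hence $\sigma$ there is translation by the inverse of $\sigma_i$ in $T$, written additively as $t_{-\sigma_i}$.

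The main obstacle is the bookkeeping of orientations: showing that the $\mathbb{P}^1$-coordinates of the expansion on the two chains $G_{\tau,\pm i}$ agree with the orientation of the uniformized model. This requires tracing local equations $u_kv_{k-1}=\pi^{r_k}$ from the proof of Proposition \ref{prop:main-blow-up} against the toric charts of the refined cone decomposition in Lemma \ref{lem:unifblowup}. I would carry out that comparison in one local chart near $D_{i,0}$ and then propagate along the cycle using the $Y$-action, which shifts by $2N$ and preserves orientations.
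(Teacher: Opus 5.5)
Your proposal is correct and follows essentially the same route as the paper. The paper notes that the identifications $T\cong (G_{\tau,i})^{\circ}_d$ and $T\cong (G_{\tau,-i})^{\circ}_d$ send $U/V$ to $a\,u_i/v_i$ and $a'\,v_i/u_i$ respectively (via the orientation-preserving, resp.\ orientation-reversing, canonical isomorphisms with $\mathbb{P}$ recorded just before the lemma), and then compares with $(u_i:v_i)\mapsto(\sigma_i u_i:v_i)$, exactly as you do. Your toric detour to show that the weight is $\pm1$ is unnecessary, since the orbit map of a simply transitive $\mathbb{G}_m$-action on $\mathbb{G}_m$ is automatically of the form $g\mapsto c\,g^{\pm1}$. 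The orientation bookkeeping you single out as the main obstacle is exactly the fact the paper takes from the explicit local equations of the expansion. It is a two-case check ($x$ versus $y$ at the two kinds of double curves), not something propagated along the cycle by the $Y$-action.
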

\begin{proof}
Let $(U \colon V) $ be homogeneous coordinates on $\mathbb{P}$. Then the isomorphism $T \cong (G_{\tau,i})_d^{\circ}$ sends the coordinate $\frac{U}{V}$ to $a \frac{u_i}{v_i}$ for some $a \in k$, whereas $T \cong (G_{\tau,-i})_d^{\circ}$ sends $\frac{U}{V}$ to $a' \frac{v_i}{u_i}$, for some $a' \in k$. The statement is immediate from this.
\end{proof}

\subsubsection{} We discuss an example to illustrate various aspects of our discussion so far.

\begin{example}\label{Exa:differentgrstr}  
We assume that $\mathcal{Y}_s$ consists of two components $G_0$ and $G_1$ isomorphic to $\mathbb{P}^1$ (so the generic fiber is an elliptic curve). Let $L$ be the diagonal line in $C[1]$; then $(\mathcal{Y}[1]_L)_0$ is a cycle consisting of four components, denoted by 
$$ G'_{-2}, G'_{-1}, G'_{0} ~\mathrm{and}~ G'_{1}. $$
Here $G'_{-2}$ and $G'_{0}$ are the strict transforms of $G_1$ and $G_0$ respectively.

We pick a group element $\sigma \in \mathbb G[1](k)$ and write $\lambda$ for the point on $ (G'_0)^{\circ}$ with coordinate $\sigma$. The action of $\sigma$ is trivial on $(G'_{-2})^{\circ}$ and $(G'_{0})^{\circ}$, translation by $\lambda$ on $(G'_{1})^{\circ}$ and translation by $-\lambda$ on $(G'_{-1})^{\circ}$. For $P \in (G'_{1})^{\circ}$, we also write this as $\sigma(P) = P +_L \lambda $, where $+_L$ denotes the group law on $ \mathcal{A}(L)_0 $. Recall that we have an isomorphism of groups 
$$ \mathcal{A}(L)_0 \cong (G'_0)^{\circ} \times \mathbb{Z}/4 \mathbb{Z}. $$ 

Let $L'$ be the image of $L$ under $\sigma$; it is again a generic line. We let $+_{L'}$ denote the group law on $ \mathcal{A}(L')_0 $. We claim that this is different from $+_L$  in general. To see this, let $Q$ be another point on $(G'_{1})^{\circ}$. Then we compute
$$ P +_L Q = \sigma(P +_L Q) = \sigma((P -_L \lambda) +_L (Q -_L \lambda) +_L 2\lambda) $$
where the first equality is due to the fact that $P +_L Q \in G'_{-2} $ where $\sigma$ is trivial. The morpism $ \mathcal{A}(L)_0 \to \mathcal{A}(L')_0 $ on special fibers equals $ (\sigma_{L,L'})_0$, which by Proposition \ref{prop:genline-hom} is a homomorphism. Hence $ P +_L Q$ equals also
$$ \sigma(P -_L \lambda) +_{L'} \sigma(Q -_L \lambda) +_{L'} 2 \lambda = P +_{L'} Q +_{L'} 2 \lambda. $$
This means that $P +_L Q$ only coincides with $P +_{L'} Q$ when $\lambda$ is a $2$-torsion point (in which case the equations in Subsection \ref{subsec:genlinesbasics} show that $ L = L'$, and the map is an automorphism of order $2$. 
But from Lemma \ref{lemma:translationinterpret}, we see that this can only be the case for finitely many $\sigma$. 

\end{example}

\subsection{Extending the involution}\label{subsec:invoext}

We next show that the involution $ j \colon \mathcal{Y} \to \mathcal{Y} $ behaves well under the formation of the expansions $ \mathcal{Y}[n] $. We will use the results in \cite[Section 1.4.3]{GHH}, and follow the notation there. In particular, we recall that for each $n$, there is a natural map $p_n \colon \mathcal{Y}[n] \to \mathcal{Y}$.

\begin{lemma}\label{lem:involutiongeneral}
For each $n \geq 0$, the pullback $ j \times id $ lifts along the resolution $ \mathcal{Y}[n] \to \mathcal{Y} \times_C C[n] $. The resulting involution $j[n] \colon \mathcal{Y}[n] \to \mathcal{Y}[n] $ commutes with $j[n'] $, for any $ n' \leq n$,  via the maps $ \mathcal{Y}[n] \to \mathcal{Y}[n'] $.
\end{lemma}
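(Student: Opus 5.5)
We argue by induction on $n$, following the inductive construction of $\mathcal{Y}[n]$ recalled in Subsection \ref{subsec:expandeddegen}. For $n=0$ we take $j[0]=j$, given by Lemma \ref{lem:fundpropuniformization} (3). Suppose $j[n-1]$ has been constructed on $\mathcal{Y}[n-1]$, lifting $j \times id$ on $\mathcal{Y}\times_C C[n-1]$. Then $j[n-1]\times id$ is an involution of $\mathcal{Y}[n-1]\times_{\mathbb{A}^n}\mathbb{A}^{n+1}$ over $\mathbb{A}^{n+1}$. The space $\mathcal{Y}[n]$ is the blowup of this fiber product in the Weil divisor $p_{n-1}^{-1}(Y)\times V(t_{n+1})$, where $Y=G_{even}$ (or $G_{odd}$) is one half of the bipartite decomposition.

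\textbf{Step 1: the centre is invariant.} By Lemma \ref{lemma:bipar-pres}, $j$ preserves $G_{even}$ and $G_{odd}$. Since $j[n-1]$ lifts $j\times id$ and therefore commutes with $p_{n-1}$, it preserves $p_{n-1}^{-1}(Y)$. The factor $V(t_{n+1})$ is untouched because $j[n-1] \times id$ acts trivially on the base. Hence the blowup centre is stable under $j[n-1]\times id$.

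\textbf{Step 2: lift through the blowup.} By the universal property of blowing up, or simply by functoriality of blowups under automorphisms preserving the centre, $j[n-1]\times id$ lifts uniquely to an involution $j[n]$ of $\mathcal{Y}[n]$. This $j[n]$ is compatible with the blowup map $\mathcal{Y}[n]\to \mathcal{Y}[n-1]\times_{\mathbb{A}^n}\mathbb{A}^{n+1}$. Composing with $p_{n-1}$, $j[n]$ commutes with $p_n$ and lies over $C[n]$, so it lifts $j\times id$ along the resolution $\mathcal{Y}[n]\to\mathcal{Y}\times_C C[n]$. The identity $j[n]^2=id$ follows from uniqueness of lifts.

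\textbf{Step 3: compatibility for $n' \le n$.} By construction, $j[n]$ commutes with $j[n-1]$ via the map $\mathcal{Y}[n]\to\mathcal{Y}[n-1]$; this map is the blowup followed by projection. Iterating gives compatibility with $j[n']$ for every $n'\le n$, for whichever maps $\mathcal{Y}[n]\to\mathcal{Y}[n']$ are meant. In \cite[Section 1.4.3]{GHH} these maps are the compositions of such blowup-projections, possibly after permuting coordinates. If they also include the other natural contractions, we instead argue by uniqueness. Both $j[n']\circ\phi$ and $\phi\circ j[n]$ agree with $j\times id$ on the dense open subset lying over $\mathcal{Y}^{sm}$, or over the generic locus of $C[n]$ where all spaces coincide. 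Since $\mathcal{Y}[n]$ is reduced and $\mathcal{Y}[n']$ is separated over $C$, the two morphisms agree everywhere.

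\textbf{Main obstacle.} The delicate point is that $\mathcal{Y}[n]$ is an algebraic space, and the centre is only a Weil divisor on a singular space. So we must justify functoriality of the blowup with respect to an automorphism preserving the ideal sheaf. This holds because an automorphism $\phi$ with $\phi^{-1}I\cdot\mathcal{O}=I$ induces an isomorphism of Rees algebras, and this is étale local. If necessary, we check it in the étale local models $V=\mathrm{Spec}\,k[t,x,y]/(t-xy)$ used in Proposition \ref{prop:main-blow-up}.

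A second point is choosing the correct bipartite half in the construction; Lemma \ref{lemma:bipar-pres} handles either choice. Finally, the identification of $j$ with $-1_{\mathcal{A}}$ shows that components are permuted as $G_i \mapsto G_{2N-i}$, which reverses the cycle orientation. But we never need orientation, only invariance of $Y$, which is exactly what Step 1 uses.
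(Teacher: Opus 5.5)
Your proposal is correct and follows essentially the same route as the paper: induction on $n$, invariance of the blowup centre $p_{n-1}^{-1}(G_{even})\times V(t_{n+1})$ under $j[n-1]\times id$ (via Lemma \ref{lemma:bipar-pres} and the equivariance of $p_{n-1}$), and the universal property of blowing up to lift the involution uniquely. Your density-plus-separatedness argument for compatibility with $j[n']$ spells out what the paper leaves implicit in ``the result follows''.
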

\begin{proof}
We recall from \cite[Section 1.4.3]{GHH} that $ \mathcal{Y}[n] $ is the blowup of $ \mathcal{Y}[n-1] \times_{\mathbb{A}^1} \mathbb{A}^2 $ with center $ p_{n-1}^{-1}(G_{even}) \times V(t_{n+1})$. 

For $n=1$, we observed in Lemma \ref{lemma:bipar-pres} that $G_{even}$ 
is preserved under $j$. Thus the same holds for $G_{even} \times V(t_2) $ 
and we see that $j \times id$ lifts uniquely to the involution $j[1]$. In particular, 
$$ p_1 \colon \mathcal{Y}[1] \to \mathcal{Y} $$
is equivariant for the action $ j[1] $ on the source and $j$ on the target, and therefore $ p_1^{-1}(G_{even})$
is preserved under $j[1]$.

Consider now the general case $ n \geq 1$. When we blow up $ \mathcal{Y}[n-1] \times_{\mathbb{A}^1} \mathbb{A}^2 $, we may therefore suppose by induction that the center $ p_{n-1}^{-1}(G_{even}) \times V(t_{n+1})$
is preserved by $ j[n-1] \times id$, and the result follows.
\end{proof}

\subsubsection{}
Let $ L \to C[n]$ be a generic line. As we observed earlier, $ \mathcal{A}(L)  = \mathcal{Y}[n]^{sm}_L $ is a N\'eron model of its generic fiber. It therefore has an inverse
$$ \mathrm{inv}(L) \colon \mathcal{A}(L) \to \mathcal{A}(L). $$

\begin{lemma}\label{lemma:invoL}
The involution $ \mathrm{inv}(L)$ equals the restriction of $ j[n] $ to $\mathcal{Y}[n]^{sm}_L$.
\end{lemma}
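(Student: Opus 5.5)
The argument compares two morphisms $\mathcal{A}(L) \to \mathcal{A}(L)$ over $L$ that agree on the generic fibre, and uses the N\'eron mapping property to force them to coincide.

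\textbf{Step 1: $j[n]$ restricts to an endomorphism of $\mathcal{A}(L)$ over $L$.} By Lemma \ref{lem:involutiongeneral}, $j[n]$ is the lift of $j \times \mathrm{id}$ along $\mathcal{Y}[n] \to \mathcal{Y} \times_C C[n]$. Hence $j[n]$ commutes with the projection to $C[n]$, so it preserves the restriction $\mathcal{Y}[n]_L$. Being an automorphism over $C[n]$, it also preserves the relative smooth locus of $\mathcal{Y}[n] \to C[n]$. Therefore it induces an $L$-automorphism $j[n]_L$ of $\mathcal{Y}[n]^{sm}_L = \mathcal{A}(L)$.

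\textbf{Step 2: the two maps agree on the generic fibre.} Over $L \setminus \{0\}$, the resolution $\mathcal{Y}[n]_L \to \mathcal{Y}_L$ is an isomorphism. There $j[n]$ is identified with the base change of $j$. By Lemma \ref{lem:fundpropuniformization}(3), $j$ restricted to the generic fibre of $\mathcal{Y} \to C$ is $-1_A$, the inverse of the abelian scheme $\mathcal{Y} \setminus \mathcal{Y}_0 \to C \setminus 0$. Pullback along $L \setminus 0 \to C \setminus 0$ respects group structures; indeed, the unit sections pull back to unit sections, as noted before Proposition \ref{prop:genline-hom}. So on the generic fibre of $\mathcal{A}(L)$, $j[n]_L$ is the inversion, which is the generic fibre of $\mathrm{inv}(L)$.

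\textbf{Step 3: conclude by the N\'eron mapping property.} By Proposition \ref{prop:LNeronmod}, $\mathcal{A}(L)$ is the N\'eron model of its generic fibre. Since $\mathcal{A}(L)$ is smooth over $L$, we may take $T = \mathcal{A}(L)$ as test scheme in the N\'eron mapping property. This gives that $\mathrm{Hom}_L(\mathcal{A}(L), \mathcal{A}(L)) \to \mathrm{Hom}(\mathcal{A}(L)_K, \mathcal{A}(L)_K)$ is injective. Both $j[n]_L$ and $\mathrm{inv}(L)$ restrict to the same map on the generic fibre, so they are equal. Alternatively, one can argue by separatedness together with the schematic density of the generic fibre in the flat $L$-scheme $\mathcal{A}(L)$.

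\textbf{Main obstacle.} I expect the only delicate point to be Step 2. Namely, one must make sure that the identification of the generic fibre of $\mathcal{Y}[n]_L$ with $\mathcal{Y} \times_C (L\setminus 0)$ is compatible both with the involutions and with the group laws, including the choice of unit section. I would check this directly from the construction: $\mathcal{Y}[n]$ is obtained by blowups with centres supported over the coordinate hyperplanes, which are disjoint from $L \setminus 0$, and $j[n]$ is the unique lift of $j \times \mathrm{id}$. The rest of the argument is formal.
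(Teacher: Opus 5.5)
Your proposal is correct and is essentially the paper's argument: the paper notes that over $L\setminus 0$ both maps are the pullback of $j|_{\mathcal{Y}\setminus\mathcal{Y}_s}$, and then concludes with \cite[Prop.~3.3.11]{liu}, which says that two morphisms from a reduced scheme to a separated scheme that agree on a dense open subset coincide. That is exactly the ``separatedness plus density'' alternative you mention in Step~3, and your N\'eron-mapping-property phrasing (injectivity of $\mathrm{Hom}_L(\mathcal{A}(L),\mathcal{A}(L)) \to \mathrm{Hom}(\mathcal{A}(L)_K,\mathcal{A}(L)_K)$) is just an equivalent packaging of the same uniqueness statement.
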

\begin{proof}
By construction, the two maps coincide over $L \setminus 0$, in which case they are both the pullback of $j$ restricted to $ \mathcal{Y} \setminus \mathcal{Y}_s $ over $C \setminus 0$. So the statement again follows from \cite[Prop.~3.3.11]{liu}. 
\end{proof}

\begin{remark}
The involution sends a component $ G^n_{(t,i)} $ 
to $ G^n_{(N-t,-i)} $.
\end{remark}

\section{The Kummer locus}\label{sec:Kummerlocus}
In this section, we introduce and study in detail the \emph{Kummer locus} inside the stable locus of the $n$-th Hilbert scheme. It is defined as the closure of the Kummer construction for the group scheme $ \mathcal{Y}^{sm} \times_C C[n]$ over $C[n]$. In the main result of this section, Theorem \ref{theorem:mainstrat}, and a key result of the paper, we give a partition of the Kummer locus in terms of smooth irreducible and $\mathbb G[n]$-stable varieties, compatible with and relative to the natural stratification of the base $C[n]$ induced by the coordinate hyperplanes.

\subsection{Preliminaries} \label{subsec:Kumconstr}

\subsubsection{}

Let $ \mathcal{G}$ be a commutative group scheme, smooth and of finite type over a connected regular Noetherian scheme $B$. In our applications $B$ will be either a smooth $k$-variety or the spectrum of a discrete valuation ring with residue field $k$, so we assume that every closed point in $B$ has an algebraically closed residue field. We shall assume (the only case we will need) for each $b \in B$ that the fiber $ \mathcal{G}_b $ is equidimensional of dimension $\delta$, with $\delta \in \{1,2\}$, and that the identity component $ \mathcal{G}_b^{\circ} $ is a semi-abelian variety.

The summation map
$$ \mathcal{G}^n \to \mathcal{G};~~~~~(g_1, \ldots, g_n) \mapsto g_1 + \ldots + g_n $$
descends to the symmetric product, and hence gives rise to 
$$ \Sigma \colon \mathrm{Hilb}^n(\mathcal{G}/B) \to \mathcal{G}. $$
We shall often, somewhat informally, also refer to $\Sigma$ as the summation map. 

\begin{definition}
The $(n-1)$-st Kummer scheme of $ \mathcal{G}$ is the kernel
$$ \mathrm{Kum}^{n-1}(\mathcal{G}) = \Sigma^{-1}(e_{\mathcal{G}}), $$
where $e_{\mathcal{G}}$ is the unit section.
\end{definition}

\subsubsection{}

\begin{prop}\label{prop:Kummernsmooth}
 The scheme $ \mathrm{Kum}^{n-1}(\mathcal{G}) $ is smooth over $B$, of relative dimension $(n-1) \cdot \delta$.    
\end{prop}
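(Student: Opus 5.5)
The plan is to show that the summation map $\Sigma \colon \mathrm{Hilb}^n(\mathcal{G}/B) \to \mathcal{G}$ is smooth. Then $\mathrm{Kum}^{n-1}(\mathcal{G})$, being the pullback of $\Sigma$ along the unit section $e_{\mathcal{G}} \colon B \to \mathcal{G}$, is smooth over $B$. Since $\mathcal{G}\to B$ is smooth of relative dimension $\delta$ and $\mathrm{Hilb}^n(\mathcal{G}/B)\to B$ is smooth of relative dimension $n\delta$ (relative dimension $\delta\le 2$, Fogarty), $\Sigma$ will then have relative dimension $(n-1)\delta$. This gives the claimed dimension.

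To prove smoothness of $\Sigma$, I would use the standard translation trick. Consider the action morphism
$$ a \colon \mathcal{G} \times_B \mathrm{Hilb}^n(\mathcal{G}/B) \to \mathrm{Hilb}^n(\mathcal{G}/B), \qquad (g,Z) \mapsto t_g(Z). $$
It satisfies $\Sigma(t_g Z) = ng + \Sigma(Z)$. Now form the fibered product $P = \mathrm{Kum}^{n-1}(\mathcal{G}) \times_B \mathcal{G}$ and the map $\Psi\colon P \to \mathrm{Hilb}^n(\mathcal{G}/B)$, $(Z,g)\mapsto t_gZ$. Together with the map $P\to\mathcal{G}$, $(Z,g)\mapsto g$, this fits into a Cartesian square: $\mathrm{Hilb}^n(\mathcal{G}/B) \times_{\mathcal{G}, [n]} \mathcal{G} \cong \mathrm{Kum}^{n-1}(\mathcal{G}) \times_B \mathcal{G}$. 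The isomorphism is $(W,g) \mapsto (t_{-g}W, g)$, with inverse $(Z,g)\mapsto (t_gZ,g)$, and it is compatible with projection to $\mathcal{G}$. Thus the base change of $\Sigma$ along the multiplication map $[n]\colon \mathcal{G}\to\mathcal{G}$ is the projection $\mathrm{Kum}^{n-1}(\mathcal{G})\times_B\mathcal{G}\to\mathcal{G}$, which is a base change of $\mathrm{Kum}^{n-1}(\mathcal{G})\to B$.

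This reduces the problem to two facts. First, $[n]$ is étale (and surjective onto its image). This holds in characteristic $0$: $[n]$ induces multiplication by $n$ on $\mathrm{Lie}$, so it is étale on each fiber, and hence étale by the fibral criterion since source and target are flat over $B$. Second, some flatness must be established. The circularity is that smoothness of $\mathrm{Kum}$ is what we want, so I would instead argue directly on tangent spaces. Both $\mathrm{Hilb}^n(\mathcal{G}/B)$ and $\mathcal{G}$ are smooth over $B$, so it suffices, by the fibral criterion over $B$, to show $\Sigma_b\colon \mathrm{Hilb}^n(\mathcal{G}_b)\to\mathcal{G}_b$ is smooth for each point $b$. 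Over a field this is a morphism of smooth varieties, so smoothness is equivalent to surjectivity of the differential at every point. Differentiating $\Sigma(t_gZ) = ng+\Sigma(Z)$ in $g$ at $g=e$ shows that the image of $d\Sigma_Z$ contains $n\cdot T_{\Sigma(Z)}$ (translated Lie algebra), which is everything since $n$ is invertible in $k$. Hence $\Sigma_b$ is smooth, so $\Sigma$ is smooth by the fibral criterion for flatness, applied to smooth source and target over $B$.

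The main obstacle is this fibral step. The fibers $\mathcal{G}_b$ may be disconnected (component groups), so I need to ensure $\Sigma$ is surjective on the relevant components, or else simply state smoothness as a local property; smoothness does not require surjectivity. One must also be careful that $\mathrm{Hilb}^n(\mathcal{G}/B)$ is indeed smooth over $B$ when $\mathcal{G}$ is only quasi-projective. I would cite that the Hilbert scheme of points on a smooth quasi-projective family of relative dimension $\le 2$ exists and is smooth. Finally, nonemptiness of the fibers of $\mathrm{Kum}$ is not needed for the statement as phrased.
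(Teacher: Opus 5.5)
Your proof is correct, and its skeleton matches the paper's. Both arguments use the smoothness of $\mathrm{Hilb}^n(\mathcal{G}/B)$ and of $\mathcal{G}$ over $B$ together with the fibral criterion to reduce to smoothness of the summation map on a single fiber $\mathcal{G}_b$. Both then obtain $\mathrm{Kum}^{n-1}(\mathcal{G})$ by base change along the unit section.

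The difference is in the fiberwise step. The paper follows the classical argument for abelian surfaces:
\begin{enumerate}
\item take a component $\mathscr{C}$ of $\mathrm{Hilb}^n(\mathcal{G}_b)$ that maps to $\mathcal{G}_b^{\circ}$;
\item check that $\mathscr{C} \to \mathcal{G}_b^{\circ}$ is surjective;
\item use generic smoothness in characteristic $0$ (\cite[Cor.~III.10.7]{Hartshorne}) to get smoothness over a dense open subset;
\item spread this to all of $\mathcal{G}_b^{\circ}$ by translations.
\end{enumerate}
You instead compute the differential pointwise. Composing $\Sigma_b$ with the orbit map $g \mapsto t_g Z$ gives $t_{\Sigma(Z)} \circ [n]$, whose differential at $e$ is $n$ times an isomorphism. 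Hence $d\Sigma_Z$ is surjective at every point.

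Your route is more elementary and somewhat more general:
\begin{itemize}
\item it needs neither the dominance/surjectivity check nor the divisibility of $\mathcal{G}_b^{\circ}$, so the semi-abelian hypothesis plays no role;
\item it gives smoothness of $\Sigma$ on all of $\mathrm{Hilb}^n(\mathcal{G}/B)$, not just over the identity component;
\item it uses only that $n$ is invertible in the residue fields, whereas the paper needs characteristic $0$ for generic smoothness.
\end{itemize}
The paper's route instead buys a direct appeal to a standard theorem.

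The Cartesian square with $[n]$ in your second paragraph plays no role in the final argument and can be dropped. As you noticed, it only yields an equivalence between smoothness of $\Sigma$ and smoothness of $\mathrm{Kum}^{n-1}(\mathcal{G})$, not a proof of either. Moreover, $[n]$ need not be surjective on $\mathcal{G}_b$ when $n$ is not coprime to the order of the component group, so it is not even a covering in general.
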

\begin{proof}
By our assumptions, the $n$-th Hilbert scheme is smooth over $B$, so, using the \emph{Fibral Flatness Theorem} \cite[039A]{Sta}, it suffices to prove  smoothness with $ \mathcal{G} $ replaced by a (closed) fiber $ \mathcal{G}_b $. We will explain how the classical proof for abelian surfaces can be modified to our situation. 
First of all, note that $\mathrm{Hilb}^n(\mathcal{G}_b)$ is a disjoint union of smooth irreducible components. Let $\mathscr{C}$ be a component that maps under the summation map to $\mathcal{G}_b^{\circ}$. An elementary argument shows that 
$$ \mathscr{C} \to \mathcal{G}_b^{\circ} $$
is dominant, and in fact surjective. Since both, source and target, are smooth, and $\mathrm{char}(k) = 0$, the map is 
smooth over a dense open subset of $\mathcal{G}_b^{\circ} $, 
by \cite[Cor.~III.10.7]{Hartshorne}. Using the group structure of $\mathcal{G}_b^{\circ} $, 
this implies that all fibers are smooth, by translation.
\end{proof}

\subsection{Definition of the Kummer locus} 
We let $ \mathcal{Y} \to C $ be a projective type II Kulikov model with general fiber an abelian variety of dimension $ \delta = 1$ or $2$ (see Subsection \ref{subsec:typeII}). 
\subsubsection{}

The open subscheme $\mathcal{Y}^{sm}_{C[n]}$ of $ \mathcal{Y}[n]$ is a group scheme over $C[n]$, and we can form the Kummer scheme $ \mathrm{Kum}^{n-1}(\mathcal{Y}^{sm}_{C[n]}) $ as in Subsection \ref{subsec:Kumconstr}. 

We recall some notation which is used in the sequel. We write $\mathcal{H}^n = \mathrm{Hilb}^n(\mathcal{Y}[n]/C[n])^{st}$ for the stable locus and         $C[n]^*$ for the pre-image of $C \setminus 0$ under $C[n] \to C$. Let us also point out that any $C[n]$-scheme is viewed as a $C$-scheme via this map.

\begin{definition}
The \emph{generic Kummer locus}
is defined by 
$$ \mathcal{K}^{n-1}_{\circ} := \mathrm{Kum}^{n-1}(\mathcal{Y}^{sm}_{C[n]}) \cap \mathcal{H}^{n}. $$
We call the closure $\mathcal{K}^{n-1} $ of $ \mathcal{K}^{n-1}_{\circ} $ in $ \mathcal{H}^{n} $ the \emph{Kummer locus}.
\end{definition}

We list some immediate properties.

\begin{lemma}\label{lem:Kummerlocproperties}
The Kummer locus $\mathcal{K}^{n-1}$ is 
\begin{enumerate}
    \item integral,
    \item flat over $C$, and
    \item stable under the $\mathbb G[n]$-action.
\end{enumerate}
\end{lemma}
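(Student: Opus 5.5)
The plan is to reduce all three properties to properties of the generic Kummer locus $\mathcal{K}^{n-1}_{\circ}$, and then pass to the closure. Write $U = \mathcal{H}^n \cap \mathrm{Hilb}^n(\mathcal{Y}^{sm}_{C[n]}/C[n])$. This is an open subscheme of $\mathcal{H}^n$, and $\mathcal{K}^{n-1}_{\circ}$ is closed in it.

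\textbf{Integrality.} First I restrict to the open part over $C[n]^*$. There $\mathcal{Y}[n]$ coincides with $\mathcal{Y}\times_C C[n]^*$, which is an abelian scheme. By Theorem \ref{theorem:stabilitycondition} every subscheme is stable over $C[n]^*$, since the numerical support there is $(n)$. So over $C[n]^*$ the generic Kummer locus is $\mathrm{Kum}^{n-1}$ of an abelian scheme of relative dimension $\delta$. By Proposition \ref{prop:Kummernsmooth} it is smooth over $C[n]^*$ with connected fibres, since the classical generalized Kummer varieties are irreducible. As $C[n]^*$ is smooth and irreducible, this open piece is smooth and irreducible.

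I then need to show that $\mathcal{K}^{n-1}_{\circ}$ is the closure of this piece inside $U$, i.e. that no component of $\mathcal{K}^{n-1}_{\circ}$ lies entirely over the coordinate hyperplanes. Here I use Proposition \ref{prop:Kummernsmooth} again: $\mathcal{K}^{n-1}_{\circ}$ is an open subscheme of the scheme $\mathrm{Kum}^{n-1}(\mathcal{Y}^{sm}_{C[n]})$, which is smooth over the irreducible base $C[n]$. A smooth, hence flat, morphism is open, so every component dominates $C[n]$. Each component therefore meets the locus over $C[n]^*$, where $\mathcal{K}^{n-1}_{\circ}$ is irreducible. Hence $\mathcal{K}^{n-1}_{\circ}$ is irreducible; it is also reduced, being smooth.

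The closure of an integral scheme with its reduced structure is integral, which gives (1). The step needing the most care is exactly this density argument: one must check that the definition via "$\cap\,\mathcal{H}^n$" really produces an open subset of the smooth scheme $\mathrm{Kum}^{n-1}(\mathcal{Y}^{sm}_{C[n]})$. This holds because $\mathcal{H}^n$ is open in the relative Hilbert scheme.

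\textbf{Flatness.} The base $C$ is a smooth curve, so flatness over $C$ amounts to the structure sheaf having no $t$-torsion. An integral scheme is flat over a Dedekind base as soon as it dominates the base. $\mathcal{K}^{n-1}$ dominates $C$ because it contains a nonempty open subset lying over $C\setminus 0$. This gives (2).

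\textbf{$\mathbb G[n]$-stability.} The group structure on $\mathcal{Y}^{sm}_{C[n]} = \mathcal{Y}^{sm}\times_C C[n]$ is pulled back from $C$. The $\mathbb G[n]$-action on this open subscheme is the base change of the action on $C[n]$, and it covers the identity on $\mathcal{Y}^{sm}$. It therefore commutes with the summation map $\Sigma$ and preserves the unit section, so it preserves $\mathrm{Kum}^{n-1}(\mathcal{Y}^{sm}_{C[n]})$. It also preserves $\mathcal{H}^n$, because the stable locus of a group action is invariant. Hence $\mathcal{K}^{n-1}_{\circ}$ is $\mathbb G[n]$-stable. Since $\mathbb G[n]$ acts by automorphisms of $\mathcal{H}^n$, it maps closures to closures, so $\mathcal{K}^{n-1}$ is stable as well. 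This gives (3).
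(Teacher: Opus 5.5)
Your proof is correct and follows the paper's route. Integrality comes from taking the closure of the integral piece over $C[n]^*$, flatness from dominance over the Dedekind base $C$, and (3) from the $\mathbb G[n]$-stability of $\mathcal{K}^{n-1}_{\circ}$ passing to the closure. Where you go further is in justifying that $\mathcal{K}^{n-1}_{\circ}\vert_{C[n]^*}$ is dense in $\mathcal{K}^{n-1}_{\circ}$ via Proposition~\ref{prop:Kummernsmooth} over $C[n]$, a step the paper uses tacitly; conversely, the paper makes the last step of (3) scheme-theoretic by noting that the action map $\mathbb G[n]\times\mathcal{K}^{n-1}\to\mathcal{H}^n$ has reduced source.
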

\begin{proof}
The restriction $ \mathcal{K}^{n-1}_{\circ} \vert_{C[n]^*}$ coincides with the pullback to $C[n]^*$ of the restriction $\mathrm{Kum}^{n-1}(\mathcal{Y}^{sm}) \vert_{C \setminus \{0\}}$. Since the latter is integral, the same holds for its closure $\mathcal{K}^{n-1} $. As it moreover dominates the Dedekind scheme $C$, flatness of $\mathcal{K}^{n-1} $ over $C$ is immediate \cite[Prop.~4.3.9]{liu}.

For part (3), observe that $ \mathcal{K}^{n-1}_{\circ} $ is $\mathbb G[n]$-stable. Thus every $\sigma \in \mathbb G[n]$ preserves $\mathcal{K}^{n-1} $, which implies that $ \mathbb G[n] \times \mathcal{K}^{n-1} \to \mathcal{H}^{n} $ has set-theoretic image $ \mathcal{K}^{n-1} $. This is also the scheme theoretic image, as the source of the action map is reduced, so in conclusion $ \mathcal{K}^{n-1} $ is stable under the torus action. 
\end{proof}

\begin{remark}
It is in fact \emph{not} true that $\mathcal{K}^{n-1}$ is flat over $C[n]$. In later sections, we shall study in detail the case $n=3$, where one can observe that the fiber dimension of $\mathcal{K}^2 \to C[3]$ is not constant.
\end{remark}

\subsubsection{}\label{subsubsec:Kummerlocquot} We write $K^{n-1}_{\mathcal{Y}/C}$ for the induced GIT quotient $\mathcal{K}^{n-1}/\mathbb G[n]$. The above lemma implies the following result.

\begin{prop}
The quotient $K^{n-1}_{\mathcal{Y}/C}$ is integral and flat over $C$. Moreover, the natural map $K^{n-1}_{\mathcal{Y}/C} \to I^{n}_{\mathcal{Y}/C}$ is a closed immersion.
\end{prop}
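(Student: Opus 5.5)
The plan is to deduce everything from Lemma \ref{lem:Kummerlocproperties} together with standard facts about GIT quotients of a stable locus.

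We start by recalling that $\mathcal{H}^n$ consists entirely of stable points, and that stability equals semi-stability (Theorem \ref{theorem:stabilitycondition}). Hence the quotient map $q\colon \mathcal{H}^n \to I^n_{\mathcal{Y}/C}$ is a geometric quotient, and it is affine and universally submersive. Since $\mathcal{K}^{n-1}$ is a closed $\mathbb G[n]$-stable subscheme of $\mathcal{H}^n$ (Lemma \ref{lem:Kummerlocproperties}(3)), its image $q(\mathcal{K}^{n-1})$ is closed. We work locally over an affine open $\mathrm{Spec}(A)\subset I^n_{\mathcal{Y}/C}$ with preimage $\mathrm{Spec}(B)$, so that $A=B^{\mathbb G[n]}$. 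Let $J\subset B$ be the invariant ideal of $\mathcal{K}^{n-1}$. Reductivity of the torus in characteristic $0$ (Reynolds operator, exactness of invariants) gives
$$(B/J)^{\mathbb G[n]} = A/J^{\mathbb G[n]}.$$
Therefore $\mathcal{K}^{n-1}/\mathbb G[n] \to I^n_{\mathcal{Y}/C}$ is locally $\mathrm{Spec}(A/J^{\mathbb G[n]})\to \mathrm{Spec}(A)$, a closed immersion. These local statements glue because $q$ is affine and the construction is compatible with restriction to invariant opens. This proves the closed immersion claim.

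Next we show integrality. The ring $(B/J)^{\mathbb G[n]}$ is a subring of the domain $B/J$, since $\mathcal{K}^{n-1}$ is integral by Lemma \ref{lem:Kummerlocproperties}(1). So the quotient is reduced and irreducible: it is the continuous image of an irreducible space under the surjection $\mathcal{K}^{n-1}\to K^{n-1}_{\mathcal{Y}/C}$. Thus $K^{n-1}_{\mathcal{Y}/C}$ is integral.

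Finally we show flatness over $C$. The scheme $K^{n-1}_{\mathcal{Y}/C}$ is integral and its generic point maps to the generic point of $C$. Indeed, $\mathcal{K}^{n-1}$ dominates $C$, and the map $\mathcal{K}^{n-1}\to C$ factors through the quotient because $\mathbb G[n]$ acts trivially on $C$. An integral scheme dominating a Dedekind scheme is flat over it \cite[Prop.~4.3.9]{liu}, exactly as in the proof of Lemma \ref{lem:Kummerlocproperties}(2).

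The only step needing care is the first: identifying the GIT quotient of the closed invariant subscheme with the scheme-theoretic image in $I^n_{\mathcal{Y}/C}$. This rests on exactness of taking torus invariants, i.e.\ surjectivity of $B^{\mathbb G[n]}\to (B/J)^{\mathbb G[n]}$, which holds since $\mathbb G[n]$ is linearly reductive in characteristic $0$.
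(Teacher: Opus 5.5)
Your proposal is correct and follows the same route as the paper, which simply deduces the statement from Lemma \ref{lem:Kummerlocproperties} together with standard properties of good quotients in characteristic zero, citing Mumford's GIT. You have spelled out those standard facts explicitly: exactness of invariants for the linearly reductive torus (giving the closed immersion $\mathrm{Spec}(A/(J\cap A))\subset\mathrm{Spec}(A)$), invariant subrings of domains being domains, and the flatness criterion over a Dedekind base.
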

\begin{proof}
Since all schemes involved are of finite type over $k$, which is of characteristic zero, the statement follows from Lemma \ref{lem:Kummerlocproperties} and standard properties of quotients (see for instance \cite[Chapter 0, \S 2  and Chapter 1, \S 4]{GIT}).
\end{proof}

\subsection{The boundary of the Kummer locus}\label{subsubsec:Kummerboundary}
In the remainder of this section we describe the boundary $ \mathcal{K}^{n-1} \setminus \mathcal{K}^{n-1}_{\circ} $ of the Kummer locus. 

\subsubsection{} 
We first recall the following useful fact. For any closed point $ x $ 
in the boundary, we can find a complete discrete valuation ring $R$ with residue field $k$ and a morphism $ S = \mathrm{Spec}(R) \to \mathcal{K}^{n-1} $ mapping the closed point to $x$ and the generic point $\eta$ to the (dense) open subset $ \mathcal{K}^{n-1}_{\circ} \vert_{C[n]^*} $. Indeed, since $ \mathcal{K}^{n-1} $ is quasi-projective over the algebraically closed field $k$, and $char(k) = 0 $, one can show this by combining resolution of singularities with Bertini's theorem.

\subsubsection{}\label{subsubsec:ZinKum} Let $S = \mathrm{Spec}(R)$ be the spectrum of a complete discrete valuation ring with residue field $k$, and let $ Z $ be an $S$-valued point of $ \mathcal{H}^n $. We denote by $Z_{\eta}$ and $ Z_s $ the fibers over the generic and closed point respectively. We assume that $\eta$ has image in $C[n]^*$ under the projection $ \mathcal{H}^n \to C[n] $, and, moreover, that $Z_{\eta} $ maps to $\mathcal{K}^{n-1} \vert_{C[n]^*} $.

After, if necessary, acting with a point in $\mathbb G[n](R)$, we can by Lemma \ref{lemma:standardformmaptobase} assume that the map $S \to C[n]$ corresponding to $Z$ factors through a standard embedding 
$$ C[b] \subset C[n], $$
where $s$ maps to $ 0 \in C[b]$. Thus we can view $Z$ as a closed subscheme of $\mathcal{Y}[b]_S$, finite and flat over $S$. Since $ Z $ is stable, we have $Z \subset \mathcal{Y}[b]_S^{sm}$, and $Z_s$ is subject to the stability criterion in Theorem \ref{theorem:stabilitycondition} (which only depends on the underlying cycle).

\subsubsection{}\label{subsubsec:randbspeci}
Let $ \mathcal{Y}'_r \to \mathcal{Y}[b]_S $ be the resolution from Proposition \ref{prop:main-blow-up} (we do not need to specify $r$ in this discussion). Then we can view $Z$ as a subscheme of $(\mathcal{Y}'_r)^{sm} $ via the open inclusion
$$ \mathcal{Y}[b]_S^{sm} \subset (\mathcal{Y}'_r)^{sm}. $$

\begin{lemma}\label{lemma:kernelZ}
The scheme $ Z  $ is an $S$-valued point of $ \mathrm{Kum}^{n-1}((\mathcal{Y}'_r)^{sm}/S) $.
\end{lemma}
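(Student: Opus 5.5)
We must show that $Z$, viewed in $(\mathcal{Y}'_r)^{sm}$, lies in the kernel of the summation map. The setting is as follows: $(\mathcal{Y}'_r)^{sm}$ is a N\'eron model over $S$. Indeed, by the Remark after Proposition \ref{prop:LNeronmod}, $\mathcal{Y}'_r\cong \mathcal{Y}[r-1]_S$ with $S\to C[r-1]$ factoring through the diagonal line, so it is a Kulikov model. It is therefore a smooth commutative group scheme over $S$ with semi-abelian identity component, and Subsection \ref{subsec:Kumconstr} applies with $B=S$. We get a summation morphism $\Sigma\colon \mathrm{Hilb}^n((\mathcal{Y}'_r)^{sm}/S)\to (\mathcal{Y}'_r)^{sm}$, and $\mathrm{Kum}^{n-1}=\Sigma^{-1}(e)$ is a closed subscheme.

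\textbf{Step 1 (generic fibre).} Over $\eta$ the maps $\mathcal{Y}'_r\to\mathcal{Y}[b]_S\to\mathcal{Y}_S$ are isomorphisms; by Proposition \ref{prop:main-blow-up} these blowups are supported over the special fibre. The group structure on $(\mathcal{Y}'_r)^{sm}_\eta$ is the pullback of that on $\mathcal{Y}^{sm}\vert_{C\setminus 0}$, because the N\'eron model group law extends the generic one uniquely. By hypothesis $Z_\eta$ maps to $\mathcal{K}^{n-1}\vert_{C[n]^*}$, which is the pullback of $\mathrm{Kum}^{n-1}(\mathcal{Y}^{sm})\vert_{C\setminus 0}$. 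We also need that the $\mathbb G[n](R)$-translation used in \ref{subsubsec:ZinKum} preserves the Kummer condition over $C[n]^*$. This holds because $\mathbb G[n]$ acts on $\mathcal{Y}^{sm}_{C[n]}$ over $C[n]^*$ through the base only (it is base-changed from $\mathcal{Y}$), so the action commutes with summation. Hence $\Sigma(Z_\eta)=e_\eta$.

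\textbf{Step 2 (extension).} The section $\Sigma\circ Z\colon S\to (\mathcal{Y}'_r)^{sm}$ and the unit section $e$ are two $S$-points of a separated $S$-scheme that agree on $\eta$. Since $S$ is reduced and $\eta$ is dense, they are equal. Equivalently, we can invoke the N\'eron mapping property, or \cite[Prop.~3.3.11]{liu} as in Lemma \ref{lemma:invoL}. Thus $Z$ factors through $\Sigma^{-1}(e)=\mathrm{Kum}^{n-1}((\mathcal{Y}'_r)^{sm}/S)$. Note that $Z$ lies in $\mathcal{Y}[b]^{sm}_S$ and is finite flat over $S$, so it defines an $S$-point of the relative Hilbert scheme of $(\mathcal{Y}'_r)^{sm}$ via the open inclusion in \ref{subsubsec:randbspeci}.

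\textbf{Main obstacle.} The delicate point is Step 1: making sure that the group law on $(\mathcal{Y}'_r)^{sm}_\eta$ coincides with the one for which $Z_\eta$ is known to sum to zero. There are two things to check. First, the torus translation in \ref{subsubsec:ZinKum} must not alter the generic group law. Second, the identification of $\mathcal{Y}'_r$ with an expansion along a diagonal line must respect the isomorphism with $\mathcal{Y}_\eta$. I would handle both by noting that all the maps involved are isomorphisms over $\eta$ compatible with the projections to $\mathcal{Y}$ (see \eqref{equation:pullbackiso}). Separatedness then reduces Step 2 to a formality.
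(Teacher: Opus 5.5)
Your proposal is correct and follows the paper's own argument. The paper observes that $Z_\eta$ lies in the kernel of the summation map and concludes because $\mathrm{Kum}^{n-1}((\mathcal{Y}'_r)^{sm}/S)$ is closed in the Hilbert scheme, which is equivalent to your separatedness argument that $\Sigma\circ Z=e$; your Step 1 just makes explicit the compatibilities of the generic group law and of the torus translation that the paper leaves implicit.
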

\begin{proof}
The generic fiber $Z_{\eta}$ is in the kernel of the summation map, so the statement follows from the fact that $ \mathrm{Kum}^{n-1}((\mathcal{Y}'_r)^{sm}/S) $ is closed in $\mathrm{Hilb}^n((\mathcal{Y}'_r)^{sm}/S)$.
\end{proof}

\subsubsection{Orbit of the Kummer locus over a line}
We briefly explain how N\'eron models over generic lines can be used to obtain a partial understanding of the boundary of the Kummer locus. However, considering only generic lines turns out to be too restrictive. Later on in this section we modify this approach and explain the general procedure for computing the boundary.

Let $ C[b] \subset C[n] $ be a standard embedding, and
$ L \subset C[b] $ a generic line. Pulling back $ \mathcal{Y}[n]^{sm} $ along the (closed) composition 
$ L \to C[n] $
yields the N\'eron model $ \mathcal{A}(L) \to L $. Then
$$\mathrm{Kum}^{n-1}(\mathcal{A}(L)) \subset \mathrm{Hilb}^{n}(\mathcal{Y}[n]^{sm}/C[n])$$
is a closed immersion. Intersecting with the stable locus $\mathcal{H}^{n}$, we obtain 
\begin{equation}\label{inclusion:GenLine}
\mathrm{Kum}^{n-1}(\mathcal{A}(L))^{st} \subset \mathcal{H}^{n}.    \end{equation}
Since (\ref{inclusion:GenLine}) is a closed immersion, the special fiber $ \mathrm{Kum}^{n-1}(\mathcal{A}(L)_s)^{st} $ will be contained in the Kummer locus. 

Let $ \sigma \in \mathbb G[b](k) $ and set $L' = \sigma(L)$. By Proposition \ref{prop:genline-hom}, the induced map
$$ \sigma \colon \mathcal{A}(L)_s \to \mathcal{A}(L')_s $$
is an isomorphism of group schemes and yields 
a corresponding isomorphism
$$ \sigma \colon \mathrm{Kum}^{n-1}(\mathcal{A}(L)_s)^{st} \to \mathrm{Kum}^{n-1}(\mathcal{A}(L')_s)^{st}. $$

By the above discussion, the orbit 
$$ \bigcup_{\sigma \in \mathbb G[n]} \sigma(\mathrm{Kum}^{n-1}(\mathcal{A}(L)_s)^{st}) $$
is independent of $L$ and contained in the Kummer locus. 

\subsection{Numerical obstruction}\label{subsec:newlabeling}
Consider an $S$-valued point $Z$ of $\mathcal{H}^n$ as in \ref{subsubsec:ZinKum}. By Lemma \ref{lemma:kernelZ}, the fiber $Z_s$ must be in the kernel of the summation map 
$$ \mathrm{Hilb}^n((\mathcal{Y}'_r)^{sm}_s) \to (\mathcal{Y}'_r)^{sm}_s, $$ 
in particular, it must map to the identity component. We will now derive from this a \emph{numerical obstruction}, which allows us to rule out certain strata of $ \mathcal{H}^n $ over $C[n] \setminus C[n]^*$ from the boundary of $\mathcal{K}^{n-1}$.

\subsubsection{}
We first introduce notation for the strata of $ \mathcal{H}^n $.
Consider the standard embedding $ \tau_I \colon C[b] \to C[n]$. 
Recall from \ref{subsubsec:notationcomp} that over the origin $0 \in C[b]$, the fiber $ \mathcal{Y}[b]_0 = (\tau_I^*\mathcal{Y}[n])_0 $ contains a chain of $b$ inserted components mapping to each component of the double locus of $\mathcal{Y}_s$. Thus $ \mathcal{Y}[b]_0 $ is a cycle of 
components $G^b_i$, where $0 \leq i < 2N(b+1)$. 
The strict transforms of the components of $ \mathcal{Y}_s $ are indexed by $ k(b+1)$ for $ 0 \leq k \leq 2N-1 $.

Now let 
$$ \mathbf{m} = (m_0, \ldots, m_j, \ldots, m_{2N(b+1)-1})$$
be a tuple of integers $m_j \geq 0$ such that $ \sum_j m_j = n$. Then $ \mathrm{Hilb}^n(\mathcal{Y}[b]_0^{sm})$ is a disjoint union of strata denoted
$$ \mathscr{S}_{\mathbf{m}} = \prod_j \mathrm{Hilb}^{m_j}((G^b_j)^{\circ}).$$

\subsubsection{}

Assume that $Z_s$ belongs to $ \mathscr{S}_{\mathbf{m}} $. The map $S \to C[b]$ is
given by 
$$ t_i \mapsto \pi^{r_i}, $$ 
where $ r_i > 0$ for $i=1, \ldots, b+1$. By Proposition \ref{prop:main-blow-up}, the integers $r_i$ tell us precisely how many exceptional components are inserted over each component of the double locus by the blowup
\begin{equation}\label{eq:resolution-b-r}
\mathcal{Y}'_r \to \mathcal{Y}[b]_S. 
\end{equation}

We need to keep track of the strict transforms of the components of $ \mathcal{Y}[b]_0 $ in (\ref{eq:resolution-b-r}). For simplicity, we denote the components of $ (\mathcal{Y}'_r)_s = \mathcal{Y}[r-1]_0$ by $\mathscr{G}_i = G_i^{r-1}$.

If we formally set $ r_0 = r_{b+2} = 0$, it is immediate to check that:
\begin{enumerate} 
\item The strict transform of $G^b_{2 \tau (b+1) + k}$ is $\mathscr{G}_{2 \tau r + \sum_{l=0}^k r_l}$
\item The strict transform of $G^b_{2 \tau (b+1) - k}$ is $\mathscr{G}_{2 \tau r - \sum_{l=1}^k r_l}$.
\end{enumerate}

\subsubsection{}
We define integers
$$ \mathscr{W}^+(\mathbf{m}) = \sum_{\tau=0}^{N-1} \sum_{k=0}^b m_{2\tau(b+1) + k} \cdot \left( 2\tau r + \sum_{l=0}^k r_l \right)$$
and 
$$ \mathscr{W}^-(\mathbf{m}) = \sum_{\tau=0}^{N-1} \sum_{k=1}^{b+1} m_{2 \tau (b+1) - k} \cdot \left( 2 \tau r - \sum_{l=1}^k r_l \right). $$

\begin{remark}
We point out that when $\tau=0$, the indexing of the components $G_i^b$, resp.~$G_{i}^{r-1}$, should be interpreted modulo $2N(b+1)$, resp.~$2Nr$, in the obvious way. Thus, in particular, $m_{-k} = m_{2N(b+1) - k}$ for $ 1 \leq k \leq b+1$. Additionally, as we are only interested in the value of $\mathscr{W}^-(\mathbf{m})$ modulo $2Nr$, we do not replace $- \sum_{l=1}^k r_l$ by $ 2Nr - \sum_{l=1}^k r_l$.
\end{remark}

The numerical obstruction for a stratum to have non-empty intersection with the Kummer locus can be formulated as follows:
\begin{prop}\label{prop:numobstr}
If $Z_s$ belongs to the stratum $ \mathscr{S}_{\mathbf{m}} $, then 
$$ \mathscr{W}^+(\mathbf{m}) + \mathscr{W}^-(\mathbf{m}) \equiv 0 \pmod {2Nr}. $$
\end{prop}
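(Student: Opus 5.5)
The plan is to combine Lemma \ref{lemma:kernelZ} with the description of the component group in Lemma \ref{lem:fundpropuniformization}(1), applied to the resolved model $\mathcal{Y}'_r$. By Lemma \ref{lemma:kernelZ}, $Z$ is an $S$-point of $\mathrm{Kum}^{n-1}((\mathcal{Y}'_r)^{sm}/S)$. Hence the special fiber $Z_s$ lies in the kernel of the summation map $\Sigma\colon \mathrm{Hilb}^n((\mathcal{Y}'_r)^{sm}_s)\to (\mathcal{Y}'_r)^{sm}_s$. Composing with the projection to the component group, the image of $\Sigma(Z_s)$ in $\Phi$ must be zero.

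First, I will identify the group structure on the special fiber. The model $\mathcal{Y}'_r\cong \mathcal{Y}[r-1]_S$ arises from the lift of $S$ through the diagonal line of $C[r-1]$ (see the Remark after Proposition \ref{prop:LNeronmod}). It is therefore a Kulikov model, and $(\mathcal{Y}'_r)^{sm}$ is the Néron model of the generic fiber. By Lemma \ref{lem:unifblowup} (together with Proposition \ref{prop:main-blow-up}), $\mathcal{Y}'_r$ is uniformized. Its special fiber is a cycle of $2Nr$ components $\mathscr{G}_0,\dots,\mathscr{G}_{2Nr-1}$, indexed so that $\mathscr{G}_0$ is the strict transform of $G_0$, the identity component. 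Lemma \ref{lem:fundpropuniformization}(1) then gives $\Phi\cong \mathbb{Z}/2Nr\mathbb{Z}$, with $[\mathscr{G}_i^{\circ}]\mapsto [i]$. The main point to check here is that the cyclic labelling of components coming from uniformization agrees with the labelling $\mathscr{G}_i=G_i^{r-1}$ used for the expansion. Both orderings go around the cycle in the same direction starting at the identity component, so after possibly reversing orientation (which does not affect the vanishing of a sum) they coincide.

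Next, I compute the class of $\Sigma(Z_s)$ in $\Phi$. Since the map $\mathcal{A}_s\to\Phi$ is a homomorphism, the class of $\Sigma(Z_s)$ is $\sum_P n_P\,[i(P)]$, where $i(P)$ is the index of the component $\mathscr{G}_{i(P)}$ of $(\mathcal{Y}'_r)_s$ containing $P$. The stratum condition $Z_s\in\mathscr{S}_{\mathbf{m}}$ says that exactly $m_j$ points, counted with multiplicity, lie on $(G^b_j)^{\circ}$. By Proposition \ref{prop:main-blow-up}(3), $\gamma$ is an isomorphism over the smooth locus, so these points lie on the strict transform of $G^b_j$ inside $(\mathcal{Y}'_r)^{sm}_s$. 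I will use the two enumerated formulas for the strict transforms of $G^b_{2\tau(b+1)\pm k}$. Summing over $j$, split into the ranges $j=2\tau(b+1)+k$ with $0\le k\le b$ and $j=2\tau(b+1)-k$ with $1\le k\le b+1$, gives exactly $\mathscr{W}^+(\mathbf{m})+\mathscr{W}^-(\mathbf{m})$ modulo $2Nr$. These ranges cover every index modulo $2N(b+1)$ exactly once. The convention on negative indices in the preceding Remark is harmless, because only the residue modulo $2Nr$ matters.

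Finally, setting this class equal to $0\in\mathbb{Z}/2Nr\mathbb{Z}$ yields the claimed congruence. I expect the only real obstacle to be the compatibility of component labels between the uniformized Néron model and the expansion indexing. This requires that $\mathscr{G}_0$ really is the identity component of $(\mathcal{Y}'_r)^{sm}_s$. That holds because the unit section of $\mathcal{Y}^{sm}$ pulls back to the unit section and lands in the strict transform of $G_0$.
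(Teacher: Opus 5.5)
Your proposal is correct and follows the paper's proof. Lemma \ref{lemma:kernelZ} puts $Z_s$ in the kernel of the summation map on $(\mathcal{Y}'_r)^{sm}_s$, and taking its class in the component group through the strict-transform indices $\mathscr{G}_{2\tau r\pm\sum r_l}$ gives $\mathscr{W}^+(\mathbf{m})+\mathscr{W}^-(\mathbf{m})\equiv 0 \pmod{2Nr}$. Beyond the paper, you make explicit two points it leaves implicit: the identification $\Phi\cong\mathbb{Z}/2Nr\mathbb{Z}$ with $[\mathscr{G}_i^{\circ}]\mapsto[i]$ via Lemma \ref{lem:unifblowup} and Lemma \ref{lem:fundpropuniformization}(1), and the fact that the uniformization labelling agrees with the expansion labelling up to a reflection that does not affect the vanishing.
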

\begin{proof}
By assumption
$$ \mathrm{length}(Z_s \cap \mathscr{G}_{2 \tau r + \sum_{l=0}^k r_l}) = m_{2 \tau (b+1) + k}, $$ 
$$ \mathrm{length}(Z_s \cap \mathscr{G}_{2 \tau r - \sum_{l=0}^k r_l}) = m_{2 \tau (b+1) - k} $$
and $ Z_s \cap \mathscr{G}_j = \emptyset $ for any other $j$. Since, by Lemma \ref{lemma:kernelZ}, $Z$ is in the kernel of the summation map for $(\mathcal{Y}'_r)^{sm}_0$,
this means that 
$$ \mathscr{W}^+(\mathbf{m}) + \mathscr{W}^-(\mathbf{m}) \equiv \sum_{j=0}^{2Nr-1} \mathrm{length}(Z_s \cap \mathscr{G}_j) \cdot j \equiv 0 \pmod {2Nr}. \qedhere$$ 
\end{proof}

\subsection{Examples}\label{subsec:Examples}
We provide examples both of strata that are numerically obstructed, as well as unobstructed ones. Let $ n=3 $ and $N=1$, in which case $\mathcal{Y}_0$ has two components. 

Each standard embedding $ C[2] \subset C[3]$ is determined by a condition $ t_i = 1$. Over $0 \in C[2]$, we insert two components over each component of the double locus of $\mathcal{Y}_0$. Hence the fiber of $ \mathcal{Y}[3] \vert_{C[2]} \cong \mathcal{Y}[2] $ over $ 0 \in C[2] $ is a cycle of $6$ components $G^2_i$, where $ 0 \leq i \leq 5 $.

\subsubsection{}
Over the origin of the standard embedding given by $t_4=1$, we have the stratum $ \mathscr{S}_{(0,1,1,1,0,0)}$. It parametrizes subschemes consisting of a point on each of the components $G^2_1, G^2_2$ and $G^2_3$. Since $ 1 + 2 + 3 = 6$, we can let $r_i = 1$ for all $i$. The stratum is therefore unobstructed due to the fact that $\mathcal{Y}[2]$ restricted to any generic line in $C[2]$ is already a Kulikov model. 

\subsubsection{}
In $\mathcal{H}^3$, we now consider two particular strata $ \mathscr{S}_{\mathbf{m}}$ where $ \mathbf{m} = (1,1,1,0,0,0) $, and $ \mathscr{S}_{\mathbf{m}'}$ where $ \mathbf{m}' = (0,2,1,0,0,0) $. They live over the origins in the standard embeddings corresponding to $t_1=1$ and $t_2=1$ respectively. In both cases, we assume there exists a map
$$ \alpha \colon S \to C[2] \subset C[3] $$
and a stable subscheme $ Z \subset \mathcal{Y}[2]_S $ such that $Z_{\eta}$ is in the Kummer locus and the specialization has numerical type either $ \mathbf{m}$ or $ \mathbf{m}'$. We will show that there is an obstruction to the first case occurring, but no obstruction to the second one.

Let $\mathscr{G}_i$ denote the components of $ (\mathcal{Y}'_r)_s $. Then the strict transforms of $G^2_0, G^2_1$ and $G^2_2$ are $ \mathscr{G}_0, \mathscr{G}_{r_1} $ and $\mathscr{G}_{r_1+r_2}$, respectively. The remaining components will play no part in our computation. As in \ref{subsec:restrictionblowup}, we set $ r = r_1 + r_2 + r_3 $. Then the total number of irreducible components in $ (\mathcal{Y}'_r)_s $ equals  $ 2 r$. 

\subsubsection{} We first show that $ \mathscr{S}_{\mathbf{m}}$ is obstructed. The condition is that
$$ 1 \cdot 0 + 1 \cdot r_1 + 1 \cdot (r_1 + r_2 ) = 2 r_1 + r_2 $$
is zero modulo $2 r$.
However, the equation 
$$ 2 r_1 + r_2 = 2 r_1 + 2 r_2 + 2 r_3 $$
can be rewritten 
$$ 0 = r_2 + 2 r_3 $$
which is impossible since $r_i > 0$ for each $i$. A similar argument shows that
$$ 2 r_1 + r_2 = k \cdot 2r $$
has no solutions, for any $k \geq 1$.

\subsubsection{}
There is, however, no obstruction to $ \mathscr{S}_{\mathbf{m}'}$. Indeed, similar arguments as above yield the equality 
$$ r_1 + r_1 + (r_1 + r_2) = 2 r_1 + 2 r_2 + 2 r_3, $$
or equivalently 
$$ r_1 = r_2 + 2 r_3. $$
Here we can choose, for instance, $ (r_1, r_2, r_3) = (3, 1, 1) $. For this choice, $ (\mathcal{Y}'_r)_0^{sm} $ has $10$ components, and $Z_s$ would have length two when restricted to the third component and length one when restricted to the fourth. Of course, this only shows that there is no "numerical" obstruction to the existence of $Z$. To show that such $Z$ do exist requires a finer analysis, and will be discussed in the remainder of this section.

\subsection{Description of the intersection}\label{subsec:stratification}

\subsubsection{} We keep the notation from \ref{subsec:newlabeling}. Let $ C[b] \subset C[n]$ be a standard embedding, and consider a stratum $ \mathscr{S}_{\mathbf{m}}$ of $ \mathcal{H}^n $ over $ 0 \in C[b]$. 
\begin{definition}\label{def:admisstuple}
A tuple $\mathbf{r} = (r_1, \ldots, r_{b+1})$ of strictly positive integers is called \emph{admissible} with respect to $\mathbf{m}$ if 
$$ \mathscr{W}^+(\mathbf{m}) + \mathscr{W}^-(\mathbf{m}) \equiv 0 \pmod {2Nr}. $$
If there exists at least one such admissible tuple, we call the stratum $ \mathscr{S}_{\mathbf{m}}$ \emph{unobstructed}. 
\end{definition}

\subsubsection{}
We fix an admissible tuple $\mathbf{r}$. Consider the map 
$$ C[r-1] \to C[b], $$
which, for each $ 1 \leq k \leq b + 1$, and writing $ \rho_{k} = \sum_{i=0}^{k-1} r_i $, sends
$$ (\ldots, t'_{\rho_{k} + 1}, \ldots, t'_{\rho_{k} + r_k}, \ldots ) \mapsto t_k = \prod_{j=1}^{r_k} t'_{\rho_{k} + j} $$
(recall the convention that $r_0=0$).
It is straightforward to verify that this map is compatible with the respective torus actions under the projection
\begin{equation}\label{eq:torusproj}
 \mathbb G[r-1] \to \mathbb G[b]   
\end{equation}
sending, for $1 \leq k \leq b$,
$$ (\ldots, \sigma'_{\rho_{k} + 1}, \ldots, \sigma'_{\rho_{k} + r_k}, \ldots ) \mapsto (\ldots, \sigma_k = \sigma'_{\rho_{k} + r_k}, \ldots ). $$

\subsubsection{} The pullback $\mathcal{Y}[b]_{C[r-1]}$ has an action by $\mathbb G[r-1]$ via (\ref{eq:torusproj}). In particular, this makes the open immersion (see Proposition \ref{prop:main-blow-up})
\begin{equation}\label{eq:stratuminclusion}
 \mathcal{Y}[b]^{sm}_0 \subset \mathcal{Y}[r-1]^{sm}_0    
\end{equation}
$\mathbb G[r-1]$-equivariant, and we can identify the $\mathbb G[r-1]$-action on the source with the "original" $\mathbb G[b]$-action. Under the inclusion (\ref{eq:stratuminclusion}), the stratum $\mathscr{S}_{\mathbf{m}}$ corresponds to the stratum $\mathscr{S}_{\mathbf{m}(\mathbf{r})}$, where $ \mathbf{m}(\mathbf{r}) $ is defined by:
$$ m(\mathbf{r})_{2 \tau r + \rho_{k+1}} = m_{2 \tau (b+1) + k},$$
$$ m(\mathbf{r})_{2 \tau r - \rho_{k+1}} = m_{2 \tau (b+1) - k} $$
and otherwise $m(\mathbf{r})_j=0$. (Note that $\rho_{k} + r_k = \rho_{k+1}$ by definition.)

\subsubsection{}
We let $L$ denote the diagonal line in $C[r-1]$, and denote 
the group law on $\mathcal{A}(L)_0$ by $+$. For simplicity, we write the summation map associated with $\mathrm{Hilb}^n(\mathcal{Y}[r-1]_0^{sm})$ additively, i.e., 
$$ \bigcup_{i=1}^{n} P_i \mapsto P_1 + \ldots + P_{n}. $$
For any stratum $\mathscr{S}_{\overline{\mathbf{m}}}$ (where $\overline{\mathbf{m}}$ is not necessarily of the form $\mathbf{m}(\mathbf{r})$ considered above)
let $\mu$ denote the restriction of the summation map to $\mathscr{S}_{\overline{\mathbf{m}}}$.
If $\mu$ has target $ \mathcal{G}_0^{\circ}$ we note that
$$\mathscr{S}_{\overline{\mathbf{m}}} \cap \mathrm{Kum}^{n-1}(\mathcal{A}(L)_0) = \mu^{-1}(e), $$ 
with $e \in \mathcal{G}_0^{\circ}$ the identity point.

\subsubsection{}
It is useful to note that $\mathcal{G}_0^{\circ} = G_0^{\circ}$; the identity component does not change under ramified extensions. Moreover, we recall from \ref{subsubsection:explanationcycle} that $T$ denotes the maximal torus in the Chevalley decomposition of the semi-abelian variety $G_0^{\circ}$. 

\begin{lemma}\label{lem:Tfibersirreducible}
Both $\mu^{-1}(e)$ and $\mu^{-1}(T)$ are smooth closed subvarieties of $\mathscr{S}_{\overline{\mathbf{m}}}$.
\end{lemma}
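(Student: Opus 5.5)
The plan is to reduce everything to the fact that $\mu$ is a smooth morphism, by the translation argument of Proposition \ref{prop:Kummernsmooth}. Closedness is immediate: $\mu \colon \mathscr{S}_{\overline{\mathbf{m}}} \to \mathcal{G}_0^{\circ}$ is a morphism, and both $\{e\}$ and $T$ are closed in $G_0^{\circ}$. Here $T$ is closed because it is the kernel of $G_0^{\circ} \to D$. Smoothness of both preimages will then follow by base change once $\mu$ is known to be smooth, since $\{e\}$ and $T$ are smooth over $k$.

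For smoothness of $\mu$, first note that $\mathscr{S}_{\overline{\mathbf{m}}} = \prod_j \mathrm{Hilb}^{m_j}((\mathscr{G}_j)^{\circ})$ is a product of Hilbert schemes of points on smooth surfaces (or curves). It is therefore smooth and irreducible, since each factor is. Next, $G_0^{\circ}$ acts on each $(\mathscr{G}_j)^{\circ}$ by translation in the group $\mathcal{A}(L)_0$. Each $(\mathscr{G}_j)^{\circ}$ is a coset of $G_0^{\circ}$, so this action is simply transitive. The action induces an action on every factor $\mathrm{Hilb}^{m_j}((\mathscr{G}_j)^{\circ})$, and hence on $\mathscr{S}_{\overline{\mathbf{m}}}$.

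To get equivariance, I will not translate all points. Instead I pick one index $j$ with $m_j > 0$ and let $g \in G_0^{\circ}$ act only on the factor $\mathrm{Hilb}^{m_j}$, by translation of the subscheme by $g$. Then $\mu(g\cdot Z) = \mu(Z) + m_j g$. Multiplication by $m_j$ on the connected commutative group $G_0^{\circ}$ is surjective with finite kernel, since it is isogenous on both the torus and the abelian part. Hence $\mu$ is equivariant for the surjective homomorphism $g \mapsto m_j g$. As a consequence, every point of the target lies in the orbit of the image, so $\mu$ is surjective. In characteristic $0$, generic smoothness (\cite[Cor.~III.10.7]{Hartshorne}) gives a dense open $U \subset G_0^{\circ}$ over which $\mu$ is smooth. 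Translating by the action above, equivariance moves $U$ over all of $G_0^{\circ}$, so $\mu$ is smooth everywhere.

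This gives smoothness of $\mu^{-1}(e)$ and $\mu^{-1}(T)$ as schemes. I expect the main subtlety to be the identification of the target of $\mu$ with a single coset of $G_0^{\circ}$ in $\mathcal{A}(L)_0$. When the target is $G_0^{\circ}$ itself this is exactly the hypothesis; in general one works with the coset instead, and the argument is unchanged, with $e$ and $T$ meaningful only in the former case. A second point of care is that the translation action preserves each component, which holds because $G_0^{\circ}$ is connected. If irreducibility were also wanted, it would not follow directly. It could be treated separately using that the fibers are homogeneous under the kernel of the addition map, but the statement only asks for smoothness and closedness.
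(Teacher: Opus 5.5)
Your closedness and smoothness arguments are correct. Translating a single factor $\mathrm{Hilb}^{m_j}$ makes $\mu$ equivariant for $g \mapsto m_j g$, and generic smoothness plus translation then shows $\mu$ is smooth. This is the content of Proposition~\ref{prop:Kummernsmooth}, which the paper simply quotes.

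The gap is that you explicitly set aside irreducibility, and that is the actual content of the lemma. ``Subvariety'' means an integral closed subscheme, and the paper's proof is devoted almost entirely to showing that $\mu^{-1}(e)$ and $\mu^{-1}(T)$ are irreducible. This is exactly how the lemma is used: in the proof of Proposition-Definition~\ref{prop:stratumdfn} it is cited to conclude that these schemes are ``smooth, closed and irreducible'', which feeds into the irreducibility of the strata in Theorem~\ref{theorem:mainstrat}. Smoothness alone does not give this, since a smooth scheme may be disconnected. Your suggested fix via homogeneity also fails on the Hilbert scheme, for two reasons. First, the group compatible with $\mu$, namely $\{(g_j)_j : \sum_j m_j g_j = 0\}$ acting by translation on each factor, does not act transitively on $\mu^{-1}(e)$, because non-reduced subschemes are not translates of reduced ones. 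Second, this group can be disconnected: for a single factor with $m_j = n$ it is the $n$-torsion of $G_0^{\circ}$.

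The missing idea is to pass through the Hilbert--Chow morphism $h$, writing $\mu = \mu_s \circ h$ with $\mu_s$ the summation on $\prod_j \mathrm{Sym}^{m_j}$.
\begin{itemize}
\item On $\prod_j (\mathcal{G}_j^{\circ})^{m_j}$, the kernel of addition is isomorphic to $(G_0^{\circ})^{n-1}$ (solve for the last coordinate), so it is irreducible.
\item Its image $\mu_s^{-1}(e)$ in the symmetric product is therefore irreducible.
\item $\mu^{-1}(e) = h^{-1}(\mu_s^{-1}(e))$ is connected, because $h$ is proper and surjective with connected fibers (Fogarty). Being smooth and connected, it is irreducible. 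The paper phrases this as: the strict transform of $\mu_s^{-1}(e)$ is a connected component of dimension $\delta(n-1)$, and a second component would contradict connectedness of the fibers of $h$.
\end{itemize}
For $\mu^{-1}(T)$, argue as follows. Since $\mu^{-1}(T) \to T$ is smooth, each connected component is open and flat over $T$, hence dominates $T$. If there were two components, a general closed fiber would meet both and so be disconnected. But your own equivariance shows that every fiber $\mu^{-1}(m_j g)$ is isomorphic to $\mu^{-1}(e)$, which is connected. Hence $\mu^{-1}(T)$ is connected and smooth, and therefore irreducible.
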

\begin{proof}
Consider first the summation map
$$ \mu_s \colon \prod_{j} \mathrm{Sym}^{\overline{m}_j}(\mathcal{G}^{\circ}_j) \to \mathcal{G}^{\circ}_0. $$
In this case, the summation map is descended from 
$$ \mu_p \colon \prod_j (\mathcal{G}^{\circ}_j)^{\overline{m}_j} \to \mathcal{G}^{\circ}_0 $$
modulo the action of the product of symmetric groups
$ \prod_j S_{m_j} $. One checks that the kernel $\mu_p^{-1}(e) $ is isomorphic to $G_0^{n-1}$, hence $\mu_s^{-1}(e)$ is irreducible as well. 

Next, consider the Hilbert-Chow morphism
$$ h \colon \prod_{j} \mathrm{Hilb}^{\overline{m}_j}(\mathcal{G}^{\circ}_j)  \to \prod_{j} \mathrm{Sym}^{\overline{m}_j}(\mathcal{G}^{\circ}_j), $$
and write $ \mu = \mu_s \circ h $. Then we claim that $ \mu^{-1}(e) $ is the strict transform of $\mu_s^{-1}(e)$ under $h$. Indeed,
$ \mu^{-1}(e)$ equals $ h^{-1}(\mu_s^{-1}(e))$, and it follows from Proposition \ref{prop:Kummernsmooth} that the former is smooth of dimension $\delta(n-1)$. Thus it must be a disjoint union of its irreducible components. By properness of the Hilbert-Chow morphism, the strict transform $\overline{\mu_s^{-1}(e)}$ (which is non-empty) maps surjectively onto $\mu_s^{-1}(e)$. Clearly $\overline{\mu_s^{-1}(e)}$ is a connected component, as it is irreducible of dimension $\delta(n-1)$. The existence of another connected component would contradict the fact that $h$ has connected fibers \cite{Fogarty}. 

To prove the statement for $\mu^{-1}(T)$, recall that we already proved that $\mu$ is smooth, hence the same is true for the pullback $ \mu^{-1}(T) \to T $. Flatness implies that each irreducible component of $ \mu^{-1}(T) $ dominates $T$. Assume that $ \mu^{-1}(T) $ is reducible. Then it is in fact disconnected, and the general fiber over $T$ must be disconnected. But by the previous case, and by translation, each fiber is connected. 
\end{proof}

\subsubsection{}
Let $ \mathbf{m} $ and $ \mathbf{r} $ be as above. We will next compute the restriction of the image of $\mathrm{Kum}^{n-1}(\mathcal{A}(L)_0)$ under the $\mathbb G[r-1]$-action to $\mathscr{S}_{\mathbf{m}(\mathbf{r})}$. In particular, we find it is a closed and $\mathbb G[r-1]$-stable variety.

In the statement and proof of Proposition-Definition  \ref{prop:stratumdfn} below, we will use, for each $ k \in \{1, \ldots, b\}$, the notation 
$$ \alpha^+(\mathbf{m},k) = \sum_{\tau =0}^{N-1} m_{2 \tau (b+1) + k}$$ 
and 
$$ \alpha^-(\mathbf{m},k) = \sum_{\tau =0}^{N-1} m_{2 \tau (b+1) - k}. $$ 
In the proof, we will also use the fact (see Lemma \ref{lemma:translationinterpret}) that the action of $\mathbb G[r-1]$ on each inserted component $\mathcal{G}_{j}^{\circ}$ can be expressed in terms of the action of the subtorus $T \subset G_0^{\circ} $.

\begin{definition-prop}\label{prop:stratumdfn}
The intersection
$$ O(\mathbf{m}, \mathbf{r}) = \mathscr{S}_{\mathbf{m}(\mathbf{r})} \cap (\mathbb G[r-1] \cdot \mathrm{Kum}^{n-1}(\mathcal{A}(L)_0)) $$
is a smooth and $\mathbb G[r-1]$-stable closed subvariety of $\mathscr{S}_{\mathbf{m}(\mathbf{r})}$. Moreover:
\begin{enumerate}
    \item If $ \alpha^+(\mathbf{m},k) =  \alpha^-(\mathbf{m},k) $ for all $k$, then $O(\mathbf{m}, \mathbf{r}) = \mu^{-1}(e)$.  
    \item If $ \alpha^+(\mathbf{m},k) \neq \alpha^-(\mathbf{m},k) $ for at least one $k$, then $O(\mathbf{m}, \mathbf{r}) = \mu^{-1}(T)$.
\end{enumerate}
We say that $O(\mathbf{m}, \mathbf{r})$ is \textbf{narrow} in case $(1)$ and \textbf{wide} in case $(2)$.
\end{definition-prop}
\begin{proof}
Assume that 
$ P_1 + \ldots + P_{n} = e $. If $\sigma = (\sigma_1, \ldots, \sigma_{r-1})$ in $ \mathbb G[r-1]$, then 
\begin{equation}\label{eqn:action}
    \sigma(P_1) + \ldots + \sigma(P_{n}) = \sum_i P_i + \sum_{k=1}^b (\alpha^+(\mathbf{m},k) - \alpha^-(\mathbf{m},k)) \sigma_k. 
\end{equation}
Over the generic line $L' = \sigma(L)$ we have a group law $+' = +_{L'}$ which in general is different from $+$ when restricted to the special fibers. However, Proposition \ref{prop:genline-hom} states that $\sigma \colon \mathcal{A}(L)_0 \to \mathcal{A}(L')_0$ is a group isomorphism, which implies that
$$ e = \sigma(P_1 + \ldots + P_{n}) = \sigma(P_1) +' \ldots +'\sigma(P_{n}). $$
    
In the first case, we consequently find that
$$ e = \sigma(P_1) + \ldots + \sigma(P_{n}) = \sigma(P_1) +' \ldots +' \sigma(P_{n}). $$
In other words, $ \mu^{-1}(e)$ is stable under $\mathbb G[r-1]$, and does not depend on the choice of generic line.
In the second case we put 
$$ \lambda = \sum_{k=1}^b (\alpha^+(\mathbf{m},k) - \alpha^-(\mathbf{m},k)) \sigma_k. $$
Using Equation (\ref{eqn:action}), one then concludes that $\sigma$ takes $\mu^{-1}(e)$ to  $\mu^{-1}(\lambda)$. Since $\lambda$ can be made arbitrary, this sweeps out $\mu^{-1}(T) $, as asserted. 

The schemes $\mu^{-1}(e)$ and $\mu^{-1}(T) $ are smooth, closed and irreducible by Lemma \ref{lem:Tfibersirreducible}. By the description and computations above, we see that they are also stable under the $\mathbb G[r-1]$-action, and, moreover, that, in the respective situations, they coincide with $ O(\mathbf{m}, \mathbf{r}) $. 
\end{proof}

\subsection{The stratification}\label{sec:stratfinal}

Using the equivariant identification of $ \mathscr{S}_{\mathbf{m}} $ with $\mathscr{S}_{\mathbf{m}(\mathbf{r})} $, we can see $\mathrm{Kum}^{n-1}(\mathcal{A}(L)_0)$ as a closed subscheme of $ \mathscr{S}_{\mathbf{m}} $ (and hence of the stable locus $\mathcal{H}^n$), and identify its $\mathbb G[b]$-orbit with $O(\mathbf{m},\mathbf{r})$.

\subsubsection{}

We first give a lemma showing that the orbit associated to each unobstructed stratum does not depend on the choice of admissible tuple. 

\begin{lemma}\label{lem:independencer}
The subscheme $O(\mathbf{m}, \mathbf{r})$ 
of $ \mathscr{S}_{\mathbf{m}} $ is independent of $\mathbf{r}$. 
\end{lemma}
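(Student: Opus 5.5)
The plan is to reduce the lemma to the explicit description in Proposition-Definition \ref{prop:stratumdfn}. There, $O(\mathbf{m},\mathbf{r})$ was computed inside $\mathscr{S}_{\mathbf{m}(\mathbf{r})}$ as either $\mu^{-1}(e)$ or $\mu^{-1}(T)$. Which case occurs depends only on whether $\alpha^+(\mathbf{m},k)=\alpha^-(\mathbf{m},k)$ for all $k$. These integers are defined purely in terms of $\mathbf{m}$, so the narrow/wide dichotomy is already independent of $\mathbf{r}$. It therefore suffices to show that, under the equivariant identification $\mathscr{S}_{\mathbf{m}}\cong\mathscr{S}_{\mathbf{m}(\mathbf{r})}$ coming from the open immersion (\ref{eq:stratuminclusion}), the subvarieties $\mu^{-1}(e)$ and $\mu^{-1}(T)$ of $\mathscr{S}_{\mathbf{m}}$ do not depend on $\mathbf{r}$. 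A priori the summation map $\mu$ does depend on $\mathbf{r}$, because it uses the group law on $\mathcal{A}(L)_0$ for the diagonal line $L\subset C[r-1]$.

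First I would compare two admissible tuples $\mathbf{r}$ and $\mathbf{r}'$. The group law on $\mathcal{Y}[b]_0^{sm}$ induced from the diagonal line in $C[r-1]$ coincides with the one induced by the N\'eron model $\mathcal{A}(L_{\mathbf{r}})$, where $L_{\mathbf{r}}\to C[b]$ is the (non-generic) curve $t_i\mapsto \pi^{r_i}$. Indeed, by Proposition \ref{prop:main-blow-up}(3), $\mathcal{Y}[b]_S^{sm}\subset(\mathcal{Y}'_r)^{sm}$ is the open immersion $b_{S'/S}$-type base change map. The restriction of the group law to the identity component $G_0^\circ$ is the same in both cases, since the identity component is unchanged under ramified extensions.

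The key step is then to understand the translation action of each component. I would show that two points $P\in (G^b_j)^\circ$, $Q\in (G^b_{j'})^\circ$ have sum lying in a fixed component, with the sum determined up to a translation by an element of $T$. Using Lemma \ref{lem:fundpropuniformization}(2) and Lemma \ref{lemma:translationinterpret}, both group laws restricted to a non-identity component $(G^b_j)^\circ$ are torsors under $G_0^\circ$. The two laws differ, as in Example \ref{Exa:differentgrstr}, by a translation by an element of $T$ that depends only on the components involved. So for the two choices, the summation maps $\mu_{\mathbf{r}},\mu_{\mathbf{r}'}\colon \mathscr{S}_{\mathbf{m}}\to G_0^\circ$ differ by translation by a fixed $\lambda\in T$. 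Concretely, I would realize both as the base changes along two parametrized curves and relate them via an $R$-point of the torus, in the spirit of Lemma \ref{lemma:standardformmaptobase} and Proposition \ref{prop:genline-hom}. Consequently $\mu_{\mathbf{r}}^{-1}(T)=\mu_{\mathbf{r}'}^{-1}(T)$, which settles the wide case.

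For the narrow case I would use the $\mathbb G[b]$-equivariance. In the proof of Proposition-Definition \ref{prop:stratumdfn}, when $\alpha^+=\alpha^-$ for all $k$, the set $\mu^{-1}(e)$ is independent of the generic line chosen. The same computation (\ref{eqn:action}) shows that the discrepancy $\lambda$ between $\mu_{\mathbf{r}}$ and $\mu_{\mathbf{r}'}$ is a combination $\sum_k(\alpha^+(\mathbf{m},k)-\alpha^-(\mathbf{m},k))c_k$, which vanishes. Hence $\mu_{\mathbf{r}}^{-1}(e)=\mu_{\mathbf{r}'}^{-1}(e)$.

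The main obstacle is the middle step: rigorously identifying the discrepancy between the two group laws as a translation of that specific shape. I would attempt it by passing to a common refinement. I would take a tuple $\mathbf{r}''$ with $C[r''-1]$ mapping to both $C[r-1]$ and $C[r'-1]$, and argue via the N\'eron base change maps that both laws are restrictions of laws over lines in $C[r''-1]$. Those lines are related by a torus element, so Proposition \ref{prop:genline-hom} applies.
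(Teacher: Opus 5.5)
\textbf{The gap is in the key step.} It is exactly the step you flag yourself as the main obstacle, and the proposed way around it does not work.

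In a common refinement $C[rr'-1]$, the two block-product maps to $C[b]$ send the diagonal to the curves $t_k=\pi^{r'r_k}$ and $t_k=\pi^{rr'_k}$. These have different exponent vectors, so they do not lie in one $\mathbb G[b]$-orbit: the torus only rescales coefficients, cf.\ Lemma~\ref{lemma:standardformmaptobase}. Equivalently, the two induced embeddings $\mathcal{Y}[b]^{sm}_0\hookrightarrow\mathcal{Y}[rr'-1]^{sm}_0$ send a given $G^b_j$ to \emph{different} components, whereas $\mathbb G[rr'-1]$ preserves every component. Example~\ref{Exa:differentgrstr} and Proposition~\ref{prop:genline-hom} only compare $L$ with $\sigma(L)$, so they give no information here.

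What you do get for free is that $\mu_{\mathbf r}-\mu_{\mathbf r'}$ is a constant in $G_0^\circ$, because the $G_0^\circ$-torsor structure on each $(G^b_j)^\circ$ does not depend on $\mathbf r$. The claim that this constant lies in $T$, or has the form $\sum_k(\alpha^+(\mathbf m,k)-\alpha^-(\mathbf m,k))c_k$ with $c_k\in T$, is not justified. (A smaller point: $\mathcal{Y}[b]^{sm}_S$ is not a N\'eron model. The only group law available is that of $(\mathcal{Y}'_r)^{sm}_0$, restricted to the open subset $\mathcal{Y}[b]^{sm}_0$.)

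\textbf{What the paper does instead.} The paper uses neither the torus nor a narrow/wide case split. With $a=rr'(b+1)$ it fixes a splitting $\mathcal{A}(\tilde L)_s\cong G_0^\circ\times\mathbb{Z}/2Na$. It compares the $\mathbf r$-embedding with the subgroup embedding coming from the diagonal of $C[b]$. It asserts that the former only relabels components, $l\tfrac{a}{b+1}\mapsto j_l$, without moving the $G_0^\circ$-coordinate. It then concludes because $\sum_l m_lj_l\equiv 0 \pmod{2Na}$ by admissibility.

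Be aware that this relabeling assertion is exactly where your obstacle lives. Measure each $P_i$ by its offset $g_i\in G_0^\circ$ from the specialization of the torus point $\pi^{c_i}$; this gives a base point on $G^b_{j_i}$ that does not depend on $\mathbf r$. One then finds $\mu_{\mathbf r}=\sum_i g_i+q_{\mathbf r}\,\mathrm{sp}(t^{2N})$ with $q_{\mathbf r}=(\mathscr W^+(\mathbf m)+\mathscr W^-(\mathbf m))/2Nr$. The element $\mathrm{sp}(t^{2N})\in G_0^\circ$ maps to minus the sideways shift of the uniformization in $D$, so it need not lie in $T$.

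Since $q_{\mathbf r}$ depends on $\mathbf r$ only through $\sum_k\tfrac{\rho_{k+1}}{r}(\alpha^+(\mathbf m,k)-\alpha^-(\mathbf m,k))$, your narrow-case conclusion is correct. The wide case is also fine whenever the admissible neutral line is unique, for example when $n\le 4$. It is not fine in general. Take $N=1$, $n=b=5$, $x_i=i$: the tuples $(1,1,1,1,4,1)$ and $(10,1,1,1,1,1)$ are both admissible and give $q_{\mathbf r}=1$ and $q_{\mathbf r}=2$ (cf.\ the remark after Proposition~\ref{prop:number-deepest}). The discrepancy is then $\pm\mathrm{sp}(t^{2N})$, which is not in $T$ as soon as the sideways shift is nonzero. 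In that case your key step is false, and so is the relabeling claim, since it would force the two loci to coincide.
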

\begin{proof}

For any $ a = a' \cdot r$, with $a'>0$, let
$$ \mathfrak{a} \colon C[a-1] \to C[r-1] $$
be the map sending, for each $ 0 \leq m < r$, 
$$ u_{ma' + 1}, \ldots, u_{ma' + a'} \mapsto t_{m+1} = \prod_{l=1}^{a'} u_{ma' + l}. $$

The diagonal line $\tilde{L} \subset C[a-1]$ maps to $L$ under $\mathfrak{a}$. Pulling back to $\tilde{L}$ yields maps (see Proposition \ref{prop:main-blow-up})
$$ \mathcal{Y}[a-1]_{\tilde{L}} \to \mathcal{Y}[r-1]_{\tilde{L}} \to \mathcal{Y}[b]_{\tilde{L}}. $$
Both maps are trivial when restricted to the smooth locus of the respective targets, so on special fibers we obtain inclusions
\begin{equation}\label{eqn:prooffiberinclusions}
\mathcal{Y}[b]^{sm}_0 \subset \mathcal{Y}[r-1]^{sm}_0 \subset \mathcal{Y}[a-1]^{sm}_0.
\end{equation}
The latter inclusion can also be seen as the (special fiber of the) base change map
$$ \mathcal{A}(L) \times_L \tilde{L} \to \mathcal{A}(\tilde{L}). $$
Since $ \mathcal{A}(L)_s \subset \mathcal{A}(\tilde{L})_s$ is a \emph{subgroup}, we can, for our purposes, replace $C[r-1]$ with $C[a-1]$ (with $\mathbf{r}$ suitably modified). 

Letting $a'$ be a multiple of $b+1$, we can construct a map $\mathfrak{a}_0 \colon C[a-1] \to C[b]$ similarly to $\mathfrak{a}$ above. It maps $\tilde{L}$ to the diagonal line $L_0$ in $ C[b]$ and after pullback to $\tilde{L}$ we obtain a subgroup inclusion 
\begin{equation}\label{eqn:prooffiberinclusions2}
\mathcal{A}(L_0)_s \subset \mathcal{A}(\tilde{L})_s.   
\end{equation}
Observe, however, that in general (\ref{eqn:prooffiberinclusions}) and (\ref{eqn:prooffiberinclusions2}) are \emph{different} inclusions of $\mathcal{Y}[b]^{sm}_0 $ in $ \mathcal{Y}[a-1]^{sm}_0$. We shall next explain how they are related.

We denote by $G_i^{\circ}$ the components of $ \mathcal{A}(\tilde{L})_s$. 
Recall that we can find an isomorphism $\Phi(\tilde{L}) \cong \mathbb{Z}/2Na$, where $G_i^{\circ}$ corresponds to the class of $i$, and, moreover, an isomorphism 
$$ \mathcal{A}(\tilde{L})_s = G_0^{\circ} \times \mathbb{Z}/2Na $$ 
identifying $G_i^{\circ} = G_0^{\circ} \times \{i\}$. Then the inclusion in  (\ref{eqn:prooffiberinclusions2}) corresponds to the subgroup $G_0^{\circ} \times \frac{a}{b+1}\mathbb{Z}/2Na$. Consequently, each component $G_0^{\circ} \times \{l \cdot \frac{a}{b+1} \}$ of $\mathcal{A}(L_0)_s$ gets shifted to $G_0^{\circ} \times \{j_l\}$ under \ref{eqn:prooffiberinclusions}, by the map $ (g,l \cdot \frac{a}{b+1}) \mapsto (g,j_l)$ for some appropriate $j_l$. 

The summation map $ \mathrm{Hilb}^n(\mathcal{A}(\tilde{L})_s) \to \mathcal{A}(\tilde{L})_s$ restricts to 
$$ \prod_{l=0}^{2N(b+1)-1} \mathrm{Hilb}^{m_l}((G_0^{\circ}, j_l)) \to (G_0^{\circ},0), $$
whose kernel we denote by $\mathbf{K}_r$.

If $\mathbf{r}' $ is another admissible tuple, we can take $a = r'r(b+1)$, and similarly have integers $j'_l$ and a kernel $\mathbf{K}_{r'}$. However, via the natural identifications $G_i^{\circ} = (G_0^{\circ},i) \to G_0^{\circ} $, $ (g,i) \mapsto g$, both $\mathbf{K}_r$ and $\mathbf{K}_{r'}$ equal the kernel of 
$$ \prod_{l=0}^{2N(b+1)-1} \mathrm{Hilb}^{m_l}(G_0^{\circ}) \to G_0^{\circ}. $$
Indeed, this is immediate from the fact that $\sum_l m_l \cdot j_l$ and $\sum_l m_l \cdot j'_l$ both are zero in $\Phi(\tilde{L})$.
\end{proof}

Because of the above lemma, we simply write $O(b, \mathbf{m})$.

We say that the pair $(b,\mathbf{m})$ is unobstructed if $ C[b] \to C[n] $ is a standard embedding, and if $ \mathscr{S}_{\mathbf{m}} $ is a stratum of $\mathcal{H}^n$ over $0 \in C[b]$, which is unobstructed. Then $O(b, \mathbf{m})$ is defined as in Proposition \ref{prop:stratumdfn}. 

Let $I \subset \{1, \ldots, n+1\}$ be the subset of cardinality $b+1$ corresponding to the given standard embedding. Then, as explained in \ref{subsubseq:standardemb}, restricting to the (locally closed) subvariety $\mathcal{U}(I)_0 \subset C[n]$ yields a product
$$ \mathcal{Y}[n] \vert_{\mathcal{U}(I)_0} = \mathcal{Y}[b]_0 \times \mathcal{U}(I)_0. $$
Moreover, we can write the torus accordingly as a product $\mathbb G[n] = \mathbb G[b]  \times \mathbb G[n-b]$ of subtori acting on the respective factors (in fact, $\mathcal{U}(I)_0$ is a torsor under $\mathbb G[n-b]$).

From this description, we immediately find that the restriction of $\mathcal{K}^{n-1}$ to $ \mathcal{U}(I)_0 $
equals the $\mathbb G[n]$-stable, smooth and irreducible
scheme
\begin{equation}\label{eq:stratafull}
\mathcal{K}(b, \mathbf{m}) = O(b, \mathbf{m}) \times \mathcal{U}(I)_0. 
\end{equation}

\subsubsection{}
We can now formulate the main theorem of this section. 
\begin{theorem}\label{theorem:mainstrat}
The boundary of the Kummer locus
admits the stratification
$$ \mathcal{K}^{n-1} \setminus \mathcal{K}_{\circ}^{n-1} = \bigcup_{b, \mathbf{m}} \mathcal{K}(b, \mathbf{m}), $$
where $(b, \mathbf{m})$ runs over all unobstructed pairs and where each stratum $\mathcal{K}(b, \mathbf{m})$ is smooth, irreducible and $\mathbb G[n]$-stable.
\end{theorem}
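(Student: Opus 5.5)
The plan is to prove the two inclusions separately and then read off the properties of each stratum from Proposition-Definition \ref{prop:stratumdfn} and (\ref{eq:stratafull}).

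\emph{Inclusion ``$\subseteq$''.} Let $x$ be a closed point of $\mathcal{K}^{n-1}\setminus\mathcal{K}^{n-1}_{\circ}$. As explained at the start of Subsection \ref{subsubsec:Kummerboundary}, there is a map $S=\mathrm{Spec}(R)\to\mathcal{K}^{n-1}$ with closed point going to $x$ and generic point going to $\mathcal{K}^{n-1}_{\circ}\vert_{C[n]^*}$. By Lemma \ref{lemma:standardformmaptobase}, after acting by an element of $\mathbb G[n](R)$ (harmless, since $\mathcal{K}^{n-1}$ and the candidate strata are $\mathbb G[n]$-stable), the map $S\to C[n]$ factors through a standard embedding $C[b]\subset C[n]$ via $t_i\mapsto\pi^{r_i}$ with all $r_i>0$, and $s\mapsto 0$. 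Then $Z_s$ lies in some stratum $\mathscr{S}_{\mathbf{m}}$.

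Proposition \ref{prop:numobstr} shows that $\mathbf{r}$ is admissible for $\mathbf{m}$, so $(b,\mathbf{m})$ is unobstructed. By Lemma \ref{lemma:kernelZ}, $Z_s$ lies in the kernel of the summation map on $(\mathcal{Y}'_r)^{sm}_s=\mathcal{Y}[r-1]^{sm}_0$. The lift $S\to C[r-1]$, $t'_j\mapsto\pi$, factors through the diagonal line $L$, and $\mathcal{Y}'_r=\mathcal{Y}[r-1]_S$ carries the group law of $\mathcal{A}(L)$. Hence $Z_s\in \mathrm{Kum}^{n-1}(\mathcal{A}(L)_0)\cap\mathscr{S}_{\mathbf{m}(\mathbf{r})}\subset O(\mathbf{m},\mathbf{r})=O(b,\mathbf{m})$, using Lemma \ref{lem:independencer}. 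Undoing the torus translation and using the product decomposition over $\mathcal{U}(I)_0$ then gives $x\in\mathcal{K}(b,\mathbf{m})$.

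\emph{Inclusion ``$\supseteq$''.} Fix an unobstructed $(b,\mathbf{m})$ with admissible $\mathbf{r}$. I would show that $\mathrm{Kum}^{n-1}(\mathcal{A}(L)_0)^{st}\cap\mathscr{S}_{\mathbf{m}(\mathbf{r})}$ lies in $\mathcal{K}^{n-1}$, where $L\subset C[r-1]$ is the diagonal line.

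\begin{itemize}
\item First I need that stability in $\mathcal{Y}[r-1]$ for the image $\mathscr{S}_{\mathbf{m}(\mathbf{r})}$ matches stability of $\mathscr{S}_{\mathbf{m}}$ in $\mathcal{H}^n$. This holds because the identification is $\mathbb G[r-1]$-equivariant and the relevant points lie in $\mathcal{Y}[b]^{sm}_0$.
\item Next, the closed immersion (\ref{inclusion:GenLine}), applied on the image of $L$ in $C[b]\subset C[n]$, realises $\mathrm{Kum}^{n-1}(\mathcal{A}(L))^{st}$ inside $\mathcal{H}^n$. Its generic fibre lies in $\mathcal{K}^{n-1}_{\circ}$.
\item By Proposition \ref{prop:Kummernsmooth}, $\mathrm{Kum}^{n-1}(\mathcal{A}(L))$ is smooth over $L$, so every special-fibre point is a specialisation of generic ones and therefore lies in the closure $\mathcal{K}^{n-1}$.
\item Since $\mathcal{K}^{n-1}$ is $\mathbb G[n]$-stable (Lemma \ref{lem:Kummerlocproperties}), the whole orbit $O(b,\mathbf{m})\times\mathcal{U}(I)_0=\mathcal{K}(b,\mathbf{m})$ lies in $\mathcal{K}^{n-1}$.
\end{itemize}

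\emph{Properties of the strata.} Smoothness, irreducibility and $\mathbb G[n]$-stability of $\mathcal{K}(b,\mathbf{m})$ follow from Proposition-Definition \ref{prop:stratumdfn} and Lemma \ref{lem:Tfibersirreducible}. The second factor $\mathcal{U}(I)_0$ is a $\mathbb G[n-b]$-torsor, so it is smooth and irreducible as well. Distinct pairs $(b,\mathbf{m})$ give disjoint strata, since they lie over different strata of $C[n]$ or in different components $\mathscr{S}_{\mathbf{m}}$.

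The main obstacle is the ``$\supseteq$'' step. There I must check that specialisation along the generic line inside $C[r-1]$ really lands in $\mathcal{H}^n$ over $C[b]\subset C[n]$: the line $L$ maps into $C[b]$ as a non-generic curve $t_i\mapsto t^{r_i}$, not a line. So I would use the curve $S\to C[b]$ directly with $Z$ constructed in $\mathcal{Y}'_r$ and check that $Z$ stays inside $\mathcal{Y}[b]^{sm}_S$. This uses the isomorphism over $(\mathcal{Y}_{b+1})^{sm}$ from Proposition \ref{prop:main-blow-up}(3), together with the fact that the special point lies in the image of $\mathcal{Y}[b]^{sm}_0$.
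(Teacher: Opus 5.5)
Your proposal is correct and follows the paper's route. The inclusion $\subseteq$ goes through Lemma~\ref{lemma:kernelZ}, Proposition~\ref{prop:numobstr}, Proposition-Definition~\ref{prop:stratumdfn} and Lemma~\ref{lem:independencer}. The inclusion $\supseteq$ goes by specialising the Kummer scheme over a diagonal line and then using $\mathbb G[n]$-stability.

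Your last paragraph correctly supplies a detail the paper leaves implicit. The lift $Z$ is finite over $S$ and its special fibre lies in the open subscheme $\mathcal{Y}[b]^{sm}_S\subset(\mathcal{Y}'_r)^{sm}$, so $Z$ lies entirely in it. Hence $Z$ is an $S$-point of $\mathcal{H}^n$, and stability of its generic fibre follows from openness of the stable locus.

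Drop your first bullet. Stability in $\mathrm{Hilb}^n(\mathcal{Y}[r-1]/C[r-1])$ plays no role, and it typically fails (for instance when $r-1>n$). The only stability needed is that of $\mathscr{S}_{\mathbf{m}}$ in $\mathcal{H}^n$, which holds by assumption.
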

\begin{proof}
The strata $\mathcal{K}(b, \mathbf{m})$ are defined as in (\ref{eq:stratafull}) above. Then the result follows from Lemma \ref{lemma:kernelZ}, Propositions \ref{prop:numobstr} and \ref{prop:stratumdfn}, and Lemma \ref{lem:independencer}.  
\end{proof}

\section{Degeneration from combinatorial point of view}\label{Sec:linechart}

We follow the notations in the previous section, namely, $\delta$ is the relative dimension of the family of abelian varieties $\mathcal{Y}/C$, $2N$ is the number of irreducible components in the degenerate fiber, $n$ is the number of points in a configuration represented by a closed point in the relative Hilbert scheme or Kummer variety. Let $K^{n-1}_{\mathcal{Y}/C} $ denote the GIT quotient of the Kummer locus $\mathcal{K}^{n-1} $. In this section we will introduce the dual complex of the central fiber $(\mathcal{K}^{n-1}_{\mathcal{Y}/C})_0$, denoted $\Delta(\mathcal{K}^{n-1}_{\mathcal{Y}/C})$ for simplicity, and give an algorithm for computing it.

For this purpose, we associate what we call a \emph{line chart} to each stratum in the degenerate fiber of the family of Kummer varieties. This is a combinatorial object that contains much information about the stratum; for example, we can tell immediately from the line chart whether the stratum is admissible, whether it is wide or narrow, how to smooth or specialize it, etc. Moreover, it is also a helpful tool for us to study the dual complex of the degenerate fiber; for instance, it helps to count the number of cells in the dual complex in each dimension, the shape of the cells, and the relation between the dual complexes for the degenerations of Kummer varieties and Hilbert schemes.

\subsection{Preliminaries}\label{subsec:prelsstratadescr}
We shall continue to use, for any $b$, the notation introduced in Section \ref{subsubsec:notationcomp} (and further expanded in Section \ref{subsec:newlabeling}) for the components of $\mathcal{Y}[b]_0$. However, we will introduce a more convenient notation for the strata of the boundary of $\mathcal{K}^{n-1}$.

\subsubsection{Notation}
Let $ b \leq n $, and let $x_i$, for $ i \in \{1,2, \dots, n\} $, be integers such that $ - b - 1 \leq x_i \leq b $. We require that
$$ \vert x_1 \vert \leq \vert x_2 \vert \leq \dots \leq \vert x_n \vert. $$
We shall often write $ \mathfrak{X} = \{x_1,x_2, \dots, x_n\}$ and $ \vert \mathfrak{X} \vert = \{\vert x_1 \vert, \vert x_2 \vert, \dots, \vert x_n \vert \}$.

We denote by $\tau = (\tau_1, \tau_2, \dots, \tau_n)$ a tuple of not necessarily distinct integers satisfying $ 0 \leq \tau_i \leq N-1 $, and write $ \mathfrak{T} = \tau_1 + \tau_2 + \dots + \tau_3$.

Let $ C[b] \to C[n] $ be a standard embedding. In Section \ref{sec:stratfinal}, we used the notation $ \mathcal{K}(b, \mathbf{m})$ for strata of $\mathcal{K}^{n-1}$ over $ (0 \in C[b]) \times \mathbb G[n-b] $. We now introduce an alternative notation, which is better suited for the various computations needed for precisely describing the irreducible components in the special fiber of $\mathcal{K}^{n-1} \to C$, and their intersections.

\begin{definition}\label{def:stratumnew}
We denote by $ X_b(\tau, x_1,x_2, \dots ,x_n) $ the stratum of $\mathcal{K}^{n-1}$ whose generic point over $0 \in C[b]$ is a reduced subscheme $ \bigcup_i P_i $ with $$ P_i \in G'_{\tau_i, x_i} = G'_{2 \tau_i (b+1) + x_i} $$ for each $i = 1, 2, \dots, n$. The corresponding stratum of $\mathcal{H}^n$ will be denoted $ \mathcal{X}_b(\tau, x_1,x_2, \dots, x_n) $. 
\end{definition}

Whenever $\tau = (\tau_1,\tau_2, \dots, \tau_n)$ is clear from the context, it will be dropped from the notation. We remark that even though the $x_i$'s are ordered according to absolute values, we do not impose such a condition on the $\tau_i$'s. In fact, as we shall see later, when $n=3$, studying the effect of permuting the entries of $(\tau_1,\tau_2,\tau_3)$ is fundamental for understanding the dual complex. 

\begin{example}
We give a brief example to illustrate the new notation. We keep the notation and assumptions from Section \ref{subsec:Examples}. Recall that for $\mathbf{m}' = (0,2,1,0,0,0) $, we saw that the stratum $ \mathcal{K}(2,\mathbf{m}')$ is contained in $ \mathcal{K}^2 $. In the notation of Definition \ref{def:stratumnew} this is the stratum $ X_2(\tau, 1,1,2)$, where $ \tau = (0,0,0) $. 
\end{example}

\subsubsection{Dimensions}
We give a precise formula for the dimension of each stratum of $\mathcal{K}^{n-1}$.

\begin{lemma}\label{lemma:stratdimension}
Under the above notations we have
$$ \dim X_b(x_1, x_2, \dots, x_n) = \delta (n-1) + (n-b) + \varepsilon, $$
where $\varepsilon=0$ for a narrow stratum and $\varepsilon=1$ for a wide stratum.

In particular, assuming $n=3$, we have $\varepsilon = 0$ if and only if either 
\begin{enumerate}
	\item $b=0$; or
	\item $b=1$ and $\mathfrak{X} = \{-1,1, x\} $, where $x$ is even.
\end{enumerate}
\end{lemma}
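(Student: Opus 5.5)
The dimension count will follow from the product decomposition \eqref{eq:stratafull}, $\mathcal{K}(b,\mathbf{m}) = O(b,\mathbf{m}) \times \mathcal{U}(I)_0$, together with Proposition-Definition \ref{prop:stratumdfn}. The stratum $X_b(\tau, x_1,\dots,x_n)$ is $\mathcal{K}(b,\mathbf{m})$ for the tuple $\mathbf{m}$ counting how many of the indices $2\tau_i(b+1)+x_i$ equal each $j$. The factor $\mathcal{U}(I)_0$ is a torsor under $\mathbb G[n-b]$, so it contributes $n-b$. It then remains to show that $\dim O(b,\mathbf{m}) = \delta(n-1)$ in the narrow case and $\delta(n-1)+1$ in the wide case.

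For the narrow case, $O(b,\mathbf{m}) = \mu^{-1}(e)$. Here $\mu \colon \mathscr{S}_{\mathbf{m}} \to G_0^{\circ}$ is surjective and smooth; this was shown in the proof of Proposition \ref{prop:Kummernsmooth}, via generic smoothness and translation. Since $\dim \mathscr{S}_{\mathbf{m}} = \delta n$ and $\dim G_0^{\circ} = \delta$, the fibre has dimension $\delta(n-1)$; this is also stated in Lemma \ref{lem:Tfibersirreducible}. For the wide case, $O(b,\mathbf{m}) = \mu^{-1}(T)$. Since $\mu$ is smooth and surjective and $T$ is a $1$-dimensional closed subvariety of $G_0^{\circ}$, the preimage has dimension $\delta(n-1)+1$. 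This proves the general formula. (When $b=0$ there are no inserted components, the conditions of case (1) hold vacuously, and the stratum is narrow.)

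For $n=3$ I would analyze the criterion $\alpha^+(\mathbf{m},k) = \alpha^-(\mathbf{m},k)$ for all $1 \le k \le b$. The quantity $\alpha^{\pm}(\mathbf{m},k)$ counts the points lying on the inserted components of level $\pm k$, summed over $\tau$. Narrowness therefore means that for each $k \in \{1,\dots,b\}$ the number of $x_i$ equal to $k$ equals the number equal to $-k$, where the values $0$ and $-(b+1)$ (the strict transforms of the original components) impose no condition. Stability, by Theorem \ref{theorem:stabilitycondition}, forces every level $1,\dots,b$ to be occupied, so each $k$ needs at least one point $+k$ and at least one point $-k$. With only three points this forces $2b \le 3$, hence $b \le 1$. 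If $b=1$, two of the points are $\pm 1$ and the remaining point $x$ lies on a non-inserted component, so $x \in \{0,-2\}$, which is even. Conversely $\{-1,1,x\}$ with $x$ even is narrow. This yields exactly the cases (1) and (2).

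The main point to verify carefully is the matching between the notation $X_b(\tau,\mathfrak{X})$ and the indices of $\mathbf{m}$. In particular one must check that the value $x_i = -(b+1)$ corresponds to an original component and not an inserted one, and that stability in the Hilbert-scheme sense amounts precisely to occupancy of every inserted level. Everything else is a routine dimension count.
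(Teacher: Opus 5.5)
Your proposal is correct and follows the same route as the paper. The dimension comes from the splitting $\mathcal{K}(b,\mathbf{m}) = O(b,\mathbf{m}) \times \mathcal{U}(I)_0$, with $\mathcal{U}(I)_0$ a $\mathbb G[n-b]$-torsor, combined with Proposition-Definition \ref{prop:stratumdfn}; you also spell out the fibre-dimension count via smoothness of $\mu$, which the paper leaves implicit. For $n=3$ the paper uses the same two facts (stability occupies every inserted level, and narrowness forces $\alpha^+(\mathbf{m},k)=\alpha^-(\mathbf{m},k)$), checking $b=0,1,2,3$ one at a time where you compress this into the bound $2b\le 3$. The only step you leave unstated is that for $b=1$ the third point cannot also lie on level $1$, which is immediate because three points cannot split evenly between $+1$ and $-1$.
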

\begin{proof}
The formula for the dimension follows from the observation that $\dim \mathbb G[n-b] = n - b$. The value of $\varepsilon$ is determined by Proposition \ref{prop:stratumdfn}.

In the following we assume $n=3$. If $b=0$, we necessarily have $\varepsilon = 0$. If $b=3$, stability implies that $ \vert \mathfrak{X} \vert = \{1,2,3\} $, hence $\varepsilon = 1$. If $b=2$, stability implies that $\{1,2 \} \subset \vert \mathfrak{X} \vert$, and again we necessarily have $\varepsilon = 1$.

It remains to treat $b=1$, which is slightly different. We start by recalling that, by stability (see Section \ref{subsec:expandeddegen}), $ 1 \in \vert \mathfrak{X} \vert$. Assume first that $ \{-1,1\} \subset \mathfrak{X} $ and that the unique element of $ \mathfrak{X} \setminus \{-1,1\} $ is $0$ modulo $2$. Then $\varepsilon = 0$. In all remaining cases, it is straightforward to verify that $\varepsilon = 1$.
\end{proof}

An immediate consequence of the above result is the following
\begin{corollary}
	The dimension of the image of $X_b(x_1, x_2, \dots, x_n)$ in the GIT quotient $K^{n-1}_{\mathcal{Y}/C}$ is 
\begin{equation*}
	\delta (n-1) -b + \varepsilon,
\end{equation*}
where $\varepsilon=0$ for a narrow stratum and $\varepsilon=1$ for a wide stratum. In particular, the maximal (resp. minimal) possible dimension is $\delta(n-1)$ (resp. $(\delta -1)(n-1)$).
\end{corollary}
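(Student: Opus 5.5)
The corollary follows by subtracting the dimension of the group orbits from Lemma~\ref{lemma:stratdimension}; afterwards I optimise the resulting expression over $b$ and $\varepsilon$.

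\emph{Step 1: orbit dimension.} The GIT quotient $K^{n-1}_{\mathcal{Y}/C}=\mathcal{K}^{n-1}/\mathbb G[n]$ is a geometric quotient of the stable locus. By Theorem~\ref{theorem:stabilitycondition}, stability coincides with semistability on $\mathcal{H}^n$. Hence every orbit is closed, and stabilisers are finite. Each stratum $X_b(x_1,\dots,x_n)$ is $\mathbb G[n]$-stable by Theorem~\ref{theorem:mainstrat}, so its image in $K^{n-1}_{\mathcal{Y}/C}$ is the quotient of the stratum by the torus. Since all orbits have dimension $\dim\mathbb G[n]=n$, the image has dimension
\[
\delta(n-1)+(n-b)+\varepsilon-n=\delta(n-1)-b+\varepsilon .
\]
If one prefers not to invoke finiteness of stabilisers abstractly, one can check it directly from the product decomposition $\mathcal{K}(b,\mathbf m)=O(b,\mathbf m)\times\mathcal{U}(I)_0$. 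On this decomposition $\mathbb G[n-b]$ acts simply transitively on the torsor $\mathcal{U}(I)_0$. Meanwhile $\mathbb G[b]$ acts on each inserted component by translation via Lemma~\ref{lemma:translationinterpret}. A stable configuration meets every inserted component $\Delta^i$, so the $\mathbb G[b]$-stabiliser of a point is finite.

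\emph{Step 2: extremal values.} For the bounds, note that $b=0$ forces $\varepsilon=0$, which gives the value $\delta(n-1)$. For $b\ge1$ we have $-b+\varepsilon\le 0$, so $\delta(n-1)$ is the maximum. For the minimum I will bound $b\le n-1$. A stable subscheme over $0\in C[b]$ must meet each of the $b$ inserted components, which forces $b\le n$. When $b=n$, every point lies on a distinct inserted level $1,\dots,n$, so the multiplicities satisfy $\alpha^+(\mathbf m,k)+\alpha^-(\mathbf m,k)=1$ for each $k$. In particular $\alpha^+\neq\alpha^-$, so the stratum is wide and $\varepsilon=1$. Combining these cases, $-b+\varepsilon\ge -(n-1)$ in all cases. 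Indeed, for $b\le n-1$ we have $-b+\varepsilon\ge -b\ge-(n-1)$, and for $b=n$ we have $-n+1$. This gives the minimum $(\delta-1)(n-1)$.

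The main obstacle is attainment of the bounds, that is, exhibiting unobstructed strata that realise them. The maximum is attained by the open strata over $C[n]^*$, which correspond to $b=0$. The minimum is attained by any unobstructed stratum with $b=n$. By the remark after Theorem~\ref{theorem:stabilitycondition}, such a stratum has one point on each $\Delta^i$, and these exist by choosing, as in the examples of \S\ref{subsec:Examples}, the $\tau_i$ so that the numerical condition of Definition~\ref{def:admisstuple} holds with $\mathbf r=(1,\dots,1)$. I expect checking this admissibility for general $n,N$ to be the only nontrivial verification.
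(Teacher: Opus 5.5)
Your Steps 1 and 2 are correct and are the paper's argument. Subtract $\dim\mathbb G[n]=n$ from Lemma~\ref{lemma:stratdimension}, which is legitimate because stabilisers are finite on the stable locus. Then get the bounds from $0\le b\le n$ together with the facts that $b=0$ forces a narrow stratum and $b=n$ forces a wide one. That is all the paper proves for the ``maximal/minimal possible dimension''. Your last paragraph, on attainment, goes further than the paper, and as written it does not work.

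\textbf{The $b=0$ strata.} These do not lie over $C[n]^*$, where all $t_i\neq 0$. They lie over the locus where exactly one $t_i$ vanishes, inside the fibre over $0\in C$. The maximum is realised, for instance, by the $b=0$ stratum with all points on $G_0^{\circ}$, whose image is $\mathrm{Kum}^{n-1}(G_0^{\circ})$.

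\textbf{Adjusting the $\tau_i$ with $\mathbf r=(1,\dots,1)$.} This is the more serious problem: it cannot make a $b=n$ stratum admissible in general.
\begin{itemize}
\item With $r_l=1$, the condition of Definition~\ref{def:admisstuple} reads $\sum_i\varepsilon_i i+2(n+1)\sum_i\tau_i\equiv 0\pmod{2N(n+1)}$.
\item The $\tau_i$ therefore only shift the left side by multiples of $2(n+1)$. You still need $\sum_i\varepsilon_i i\equiv 0\pmod{2(n+1)}$.
\item Since $\sum_i\varepsilon_i i\equiv n(n+1)/2\pmod 2$, this is impossible whenever $n\equiv 1,2\pmod 4$.
\item For example, for $n=5$ no deepest stratum is admissible with $\mathbf r=(1,\dots,1)$, although there are $28N^4$ of them (Proposition~\ref{prop:number-deepest}).
\item For $n=2$ there is no unobstructed stratum with $b=n$ at all, since the fibre of $\mathcal K^1$ over $0\in C[2]$ is empty. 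So your claim that the minimum is attained at $b=n$ fails there. It is attained instead by the narrow $b=1$ strata $F^1_i$ of Lemma~\ref{lemma:KumSurb1}.
\end{itemize}

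\textbf{How to fix attainment for $n\ge 3$.} Let $\mathbf r$ vary rather than fixing it. By Lemma~\ref{lem:existence-solutions-general} and \eqref{eqn:lc-condition}, any complete line chart that properly crosses some line $y=2k$ admits an admissible $\mathbf r$. An example is a chart beginning with the vertices $(0,0),(1,1),(2,0),(3,-1)$, with $k=0$. After that, $\tau$ is fixed by requiring $\sum_i\tau_i+k\equiv 0\pmod N$.
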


\begin{proof}
	The dimension formula follows immediately from Lemma \ref{lemma:stratdimension}. Since $0 \leq b \leq n$, the extremal dimensions follows from the observation that a stratum with $b=0$ (resp. $b=n$) must be narrow (resp. wide).
\end{proof}

For later convenience, we introduce another simplification of our notation, which we find useful when performing various computations. Namely, we set $ X_0 = A $, $X_1 = B$, $X_2 = C$, $X_3 = D$, etc.

\subsection{Deepest strata}\label{subsec:deepeststrata}

In this section, we introduce the combinatorial object ``line chart'' for describing strata, and apply it to study the deepest strata. We start with a general observation.

\subsubsection{Condition for the existence of solutions}

The following elementary lemma will be used to determine the admissibility of strata.

\begin{lemma}\label{lem:existence-solutions-general}
	Let $m \in \Z_+$ and $b_1, \dots, b_m \in \Z$. Then the equation
	\begin{equation}\label{eqn:general-indefinite}
		r_1 b_1 + \dots + r_m b_m = 0
	\end{equation}
	has solutions $r_1, \dots, r_m \in \Z_+$ if and only if
	\begin{equation}\label{eqn:two-lines-general}
	\begin{aligned}
		\text{either} \qquad &\min\{b_1, \dots, b_m\} < 0 < \max\{b_1, \dots, b_m\} \\
		\text{or} \qquad &\min\{b_1, \dots, b_m\} = 0 = \max\{b_1, \dots, b_m\}.
	\end{aligned} 
	\end{equation}
\end{lemma}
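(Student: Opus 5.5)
The plan is to prove the two implications separately, using only sign considerations in one direction and an explicit construction in the other.

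For necessity, suppose $r_1,\dots,r_m\in\Z_+$ satisfy \eqref{eqn:general-indefinite} and that \eqref{eqn:two-lines-general} fails. Then either all $b_i \geq 0$ with at least one strictly positive, or all $b_i \leq 0$ with at least one strictly negative. In the first case every term $r_i b_i$ is non-negative, and the term with $b_i>0$ is strictly positive since $r_i>0$; hence the sum is strictly positive, a contradiction. The second case is symmetric, replacing $b_i$ by $-b_i$.

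For sufficiency, the second alternative, where all $b_i=0$, is trivial: any positive tuple, e.g.\ $r_i=1$ for all $i$, works. So assume $\min b_i<0<\max b_i$, and partition the indices as $P=\{i : b_i>0\}$, $Q=\{i: b_i<0\}$ and $Z=\{i : b_i=0\}$, with $P,Q$ nonempty. For $i\in Z$ take $r_i=1$; these terms contribute nothing. Set
$$ \beta^+=\sum_{i\in P} b_i>0, \qquad \beta^-=\sum_{i\in Q} (-b_i)>0. $$
Now take $r_i=\beta^-$ for every $i\in P$ and $r_i=\beta^+$ for every $i\in Q$. Then
$$ \sum_i r_i b_i=\beta^-\beta^+-\beta^+\beta^-=0, $$
and all $r_i$ are positive integers, which gives the required solution.

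There is no real obstacle here. The only point needing care is to keep the zero coefficients from disturbing either direction: they are irrelevant to the sign argument for necessity, and they are absorbed by setting $r_i=1$ in the construction. This also explains why the degenerate case $\min=\max=0$ must be listed separately in \eqref{eqn:two-lines-general}.
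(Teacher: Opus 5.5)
Your proof is correct and follows the paper's structure: the same sign argument for necessity, then an explicit construction for sufficiency. The only difference is the construction itself.

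The paper fixes one index $i_+$ with $b_{i_+}>0$ and one index $i_-$ with $b_{i_-}<0$, and sets the remaining $\bar r_i=1$. It then enlarges $\bar r_{i_-}$ until $C=\sum_{i\neq i_+}\bar r_i b_i<0$, and takes $r_i=b_{i_+}\bar r_i$ for $i\neq i_+$ and $r_{i_+}=-C$.

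Your symmetric cross-weighting, $r_i=\beta^-$ on $P$ and $r_i=\beta^+$ on $Q$, reaches the same conclusion without any ``sufficiently large'' choice.
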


\begin{proof}
	For the necessity of \eqref{eqn:two-lines-general}, we assume that \eqref{eqn:general-indefinite} has a solution in positive integers. Then either the left side of the equation contains both positive and negative terms, or all terms are zero, hence condition \eqref{eqn:two-lines-general} holds.
	
	We turn to the sufficiency of \eqref{eqn:two-lines-general}. In fact, in the first case of \eqref{eqn:two-lines-general}, we assume $b_{i_-}<0<b_{i_+}$. First we set
	$$ \bar{r}_i = 1 \qquad \text{for each} \qquad i \neq i_+ \text{ or } i_-. $$
	Then we choose $ \bar{r}_{i_-} $ to be sufficiently large, such that
	$$ C \coloneqq \sum_{\substack{i=1 \\ i \neq i_+}}^{n+1} \bar{r}_ib_i < 0. $$
	Finally we observe that a solution to \eqref{eqn:general-indefinite} is given by
	$$ r_i = b_{i_+} \bar{r}_i \quad \text{for} \quad i \neq i_+ \qquad \text{and} \qquad r_{i_+} = -C. $$
	In the second case of \eqref{eqn:two-lines-general}, it is clear that a solution to \eqref{eqn:general-indefinite} exists.
\end{proof}

\subsubsection{Line chart in the case of $b=n$}

When $b=n$, the stability condition requires that $\lvert x_i \rvert = i$ for $i = 1, \dots, n$, hence we assume that $x_i = \varepsilon_i i$ for $\varepsilon_i \in \{\pm 1\}$. For such a stratum to be admissible, two conditions have to be satisfied: one condition concerning the values of $x_i$'s and the other concerning $\tau_i$'s. We first consider the condition for $x_i$'s, which is given by the equation
\begin{equation}\label{eqn:admissible-first}
    \varepsilon_1 r_1 + \varepsilon_2 (r_1 + r_2) + \dots + \varepsilon_n (r_1 + \dots + r_n) = 2kr,
\end{equation}
where $r = r_1 + \dots + r_{n+1}$ and $k \in \Z$. We rearrange the equation as
	\[
	    r_{n+1} \cdot 0 + r_n \cdot \varepsilon_n + r_{n-1} \cdot (\varepsilon_{n-    1} + \varepsilon_n) + \dots + r_1 \cdot (\varepsilon_1 + \dots + \varepsilon_n) = 2kr
	\]
and consider the set
$$ S = \{0, \varepsilon_n, \varepsilon_{n-1} + \varepsilon_n, \dots, \varepsilon_1 + \dots + \varepsilon_n\} $$
of coefficients on the left side. Observe that each element in $S$ is either $1$ larger or $1$ smaller than its previous element. Since the elements in $S$ cannot be all equal, we conclude by Lemma \ref{lem:existence-solutions-general} that \eqref{eqn:admissible-first} has solutions $r_1, \dots, r_{n+1} \in \Z_+$ if and only if
\begin{equation}\label{eqn:lc-condition}
	\min S < 2k < \max S.
\end{equation}

To visualize this condition, we associate to each variable $r_i$ an integral point $$(n+1-i, \varepsilon_i + \dots + \varepsilon_n)$$ in the coordinate plane, and connect all these vertices
\begin{equation}\label{eqn:points-line-chart}
    V = \{ (0,0),\ (1, \varepsilon_n),\ (2, \varepsilon_{n-1} + \varepsilon_n),\ \dots,\ (n, \varepsilon_1 + \dots + \varepsilon_n) \}
\end{equation}
by line segments from left to right. The resulting picture is called a \emph{line chart} associated to the stratum. The solvability condition \eqref{eqn:lc-condition} can be geometrically interpreted as the condition that the line chart is properly intersected by the horizontal line $y = 2k$, which we call the \emph{neutral line} associated to the line chart. A line chart satisfying this condition is said to be \emph{admissible}; or equivalently, it is a line chart associated to an admissible stratum.

Moreover, we note that in the case of $b=n$, each arrow in the line chart points diagonally to the next integral point in the direction of either northeast or southeast, so the first coordinates of its vertices as listed in \eqref{eqn:points-line-chart} exhaust all integers between $0$ and $n$. We say such a line chart is \emph{complete}. Later in Section \ref{subsec:higherdimstrata} we will also define line charts for higher dimensional strata, which will no longer be complete.

For the stratum to be admissible, it remains to determine the $\tau_i$'s for each fixed set of values $(\varepsilon_1, \dots, \varepsilon_n, k)$. They have to satisfy the condition
\begin{equation}\label{eqn:admissible-second}
    \tau_1 + \dots + \tau_n + k \equiv 0 \pmod N,
\end{equation}
where each $\tau_i$ is a residue class modulo $N$. Solutions certainly exists for each value of $k$.

\subsubsection{Example of computation for $n=3$}

The following concrete example shows how these combinatorial objects lead to geometric results.

\begin{example}\label{eg:complete-admissible}
    For $n=3$, we exhibit all line charts (together with the neutral line $y=2k$) that satisfy the condition \eqref{eqn:lc-condition} in Figure \ref{fig:admissible3}. Note that all these line charts spread on both sides of the neutral line. 
    
\begin{figure}
	\begin{tikzpicture}
        \draw[gray!30, step=1] (0,0) grid (3,3);
        \node at (-0.3,0) {$O$};
        \draw[thick] (0,0) -- (1,1);
        \draw[thick] (1,1) -- (2,2);
        \draw[thick] (2,2) -- (3,3);
        \draw (0,2) -- (3,2);
        \node at (3.75,2) {$y=2$};
        \node at (1.5,-0.5) {$D(1,2,3)$};
        \filldraw (0,0) circle [radius=2pt];
        \filldraw (1,1) circle [radius=2pt];
        \filldraw (2,2) circle [radius=2pt];
        \filldraw (3,3) circle [radius=2pt];
    \end{tikzpicture}
    \begin{tikzpicture}
        \draw[gray!30, step=1] (0,-1) grid (3,1);
        \node at (-0.3,0) {$O$};
        \draw[thick] (0,0) -- (1,1);
        \draw[thick] (1,1) -- (2,0);
        \draw[thick] (2,0) -- (3,-1);
        \draw (0,0) -- (3,0);
        \node at (3.75,0) {$y=0$};
        \node at (1.5,-1.5) {$D(-1,-2,3)$};
        \filldraw (0,0) circle [radius=2pt];
        \filldraw (1,1) circle [radius=2pt];
        \filldraw (2,0) circle [radius=2pt];
        \filldraw (3,-1) circle [radius=2pt];
    \end{tikzpicture}
    \begin{tikzpicture}
        \draw[gray!30, step=1] (0,-1) grid (3,1);
        \node at (-0.3,0) {$O$};
        \draw[thick] (0,0) -- (1,-1);
        \draw[thick] (1,-1) -- (2,0);
        \draw[thick] (2,0) -- (3,1);
        \draw (0,0) -- (3,0);
        \node at (3.75,0) {$y=0$};
        \node at (1.5,-1.5) {$D(1,2,-3)$};
        \filldraw (0,0) circle [radius=2pt];
        \filldraw (1,-1) circle [radius=2pt];
        \filldraw (2,0) circle [radius=2pt];
        \filldraw (3,1) circle [radius=2pt];
    \end{tikzpicture}
    \begin{tikzpicture}
        \draw[gray!30, step=1] (0,0) grid (3,-3);
        \node at (-0.3,0) {$O$};
        \draw[thick] (0,0) -- (1,-1);
        \draw[thick] (1,-1) -- (2,-2);
        \draw[thick] (2,-2) -- (3,-3);
        \draw (0,-2) -- (3,-2);
        \node at (3.75,-2) {$y=-2$};
        \node at (1.5,-3.5) {$D(-1,-2,-3)$};
        \filldraw (0,0) circle [radius=2pt];
        \filldraw (1,-1) circle [radius=2pt];
        \filldraw (2,-2) circle [radius=2pt];
        \filldraw (3,-3) circle [radius=2pt];
    \end{tikzpicture}
    \caption{Admissible line charts for $n=b=3$ in Example \ref{eg:complete-admissible}}
    \label{fig:admissible3}
\end{figure}
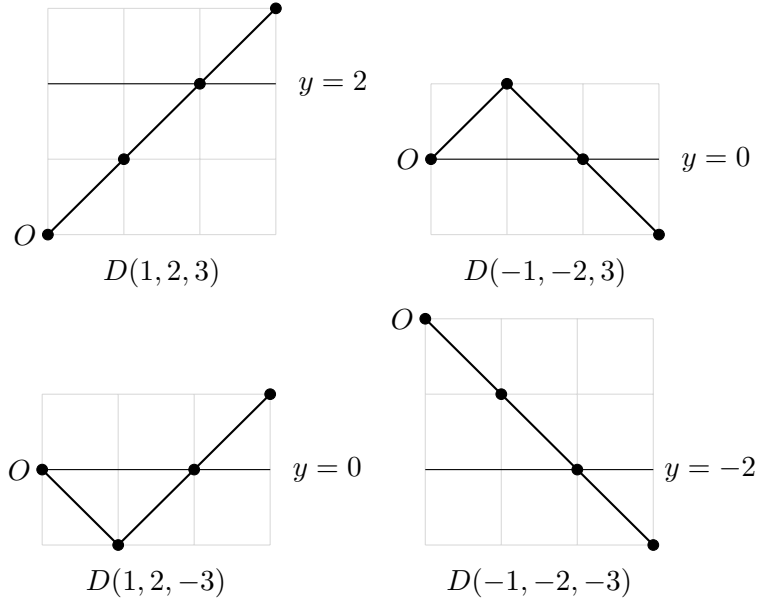
\end{example}

In geometric language, the above example leads immediately to the following result

\begin{prop}\label{prop:stratadeterm}
The inclusion $ D(\tau, x_1,x_2,x_3) \subset \mathcal{K}^2 $ holds if and only if one of the following conditions holds
\begin{enumerate}
    \item $ \mathfrak{X} = \pm \{1,2,-3\} $ and $ \mathfrak{T} \in \{ 0, N, 2N\}$, or
    \item $ \mathfrak{X} = \{1,2,3\} $ and $ \mathfrak{T} \in \{N-1, 2N-1\}$, or
    \item $\mathfrak{X} = \{-1,-2,-3\} $ and $ \mathfrak{T} \in \{1, N+1, 2N+1\}$.
\end{enumerate}

\end{prop}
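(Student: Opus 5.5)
By Theorem \ref{theorem:mainstrat}, the stratum $D(\tau,x_1,x_2,x_3)$ is contained in $\mathcal{K}^2$ exactly when the corresponding pair $(3,\mathbf{m})$ is unobstructed. For the "if" direction I will also use that $O(3,\mathbf{m})$ is non-empty: it contains $\mu^{-1}(e)$, which is non-empty by Lemma \ref{lem:Tfibersirreducible}. So the plan is to make the admissibility condition of Definition \ref{def:admisstuple} explicit for $b=n=3$ and then read the answer off the line charts of Example \ref{eg:complete-admissible}.

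First I translate $\mathscr{W}^+(\mathbf{m})+\mathscr{W}^-(\mathbf{m})$ into the $(\tau,\mathfrak{X})$ notation. Stability forces $x_i=\varepsilon_i i$ with $\varepsilon_i\in\{\pm1\}$. A point on $G'_{\tau_i,x_i}$ has strict transform $\mathscr{G}_{2\tau_i r+\varepsilon_i(r_1+\dots+r_i)}$. Hence admissibility of $\mathbf{r}$ reads
$$2r\mathfrak{T}+\sum_{i=1}^3\varepsilon_i(r_1+\dots+r_i)\equiv 0 \pmod{2Nr}.$$
This is equivalent to the existence of $k\in\Z$ with two properties. The first is that $\sum_i\varepsilon_i(r_1+\dots+r_i)=2kr$ has a solution in positive integers, which is \eqref{eqn:admissible-first}. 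The second is the congruence $\mathfrak{T}+k\equiv0\pmod N$, which is \eqref{eqn:admissible-second}. Rearranging as in the text, the first condition is equivalent via Lemma \ref{lem:existence-solutions-general} to $\min S<2k<\max S$.

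Next I enumerate the eight sign patterns $(\varepsilon_1,\varepsilon_2,\varepsilon_3)$ and compute $S=\{0,\varepsilon_3,\varepsilon_2+\varepsilon_3,\varepsilon_1+\varepsilon_2+\varepsilon_3\}$.
\begin{itemize}
\item For $(1,1,1)$ we get $S=\{0,1,2,3\}$, which forces $k=1$.
\item For $(-1,-1,-1)$ we get $S=\{0,-1,-2,-3\}$, which forces $k=-1$.
\item For $(-1,-1,1)$ and $(1,1,-1)$, that is $\mathfrak{X}=\pm\{1,2,-3\}$, the chart oscillates about $y=0$. Then $\min S=-1$ and $\max S=1$, which forces $k=0$.
\item For the remaining four patterns, $S$ is contained in $\{0,1\}$, $\{0,-1\}$, $\{-1,0,1\}$ with no value on both sides of an even number, or similar. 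One checks directly that no even integer lies strictly between $\min S$ and $\max S$, so these strata are obstructed.
\end{itemize}
This recovers Figure \ref{fig:admissible3}.

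Finally I solve the congruence on $\mathfrak{T}$. Since each $\tau_i$ lies in $\{0,\dots,N-1\}$, we have $0\le\mathfrak{T}\le 3N-3$.
\begin{itemize}
\item For $k=0$ we need $\mathfrak{T}\equiv0\pmod N$, giving $\mathfrak{T}\in\{0,N,2N\}$.
\item For $k=1$ we need $\mathfrak{T}\equiv -1\pmod N$, giving $\mathfrak{T}\in\{N-1,2N-1\}$; the value $3N-1$ is too large.
\item For $k=-1$ we need $\mathfrak{T}\equiv1\pmod N$, giving $\mathfrak{T}\in\{1,N+1,2N+1\}$; the value $2N+1$ fits only when $N\ge 2$, and otherwise the list is read inside the allowed range.
\end{itemize}
Conversely, any such $\mathfrak{T}$ together with the solution $\mathbf{r}$ produced by Lemma \ref{lem:existence-solutions-general} gives an admissible tuple.

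The main obstacle is bookkeeping rather than ideas. I must check carefully that the strict-transform indices in the case $b=n$ really are $2\tau_i r\pm(r_1+\dots+r_i)$, with the signs matching the conventions of \ref{subsec:newlabeling} for negative indices modulo $2Nr$. Otherwise the identification of $k$ with the neutral line, and hence the shifts $N-1$ versus $1$, could come out reversed. I also need to treat the degenerate case $N=1$ consistently, where $\tau$ is trivial and the lists in the statement should collapse to $\mathfrak{T}=0$.
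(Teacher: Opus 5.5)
This is essentially the paper's argument: the paper reads off $\mathfrak{X}$ from the line-chart criterion \eqref{eqn:lc-condition} (Example \ref{eg:complete-admissible}, giving $k=1,0,0,-1$) and then $\mathfrak{T}$ from the congruence \eqref{eqn:admissible-second}, and that is exactly what you derive from $2r\mathfrak{T}+\sum_i\varepsilon_i(r_1+\dots+r_i)\equiv 0 \pmod{2Nr}$. A few slips need fixing:
\begin{itemize}
\item The four obstructed sign patterns give $S=\{0,1\}$, $\{0,-1\}$, $\{0,1,2\}$ or $\{0,-1,-2\}$. None gives $\{-1,0,1\}$, which would in fact be admissible with $k=0$.
\item The value $2N+1$ lies in $[0,3N-3]$ only for $N\ge 4$, not $N\ge 2$ (compare the remark after the proposition).
\item As you correctly observe, for $N=1$ the list in (3) has to be read modulo $N$, since $\mathfrak{T}=0$ is then admissible.
\end{itemize}
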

\begin{proof}
	The choices for $\mathfrak{X}$ follows from \eqref{eqn:lc-condition} or Example \ref{eg:complete-admissible}, and the corresponding choices for $\mathfrak{T}$ follows from \eqref{eqn:admissible-second}.
\end{proof}

\begin{remark}
We point out that when $N \leq 3$, not all solutions $\mathfrak{T}$ will occur. This is due to the fact that $ 0 \leq \tau_i \leq N-1$, so that $\mathfrak{T} \leq 3N-3$. But the inequality $ 3 N - 3 \geq 2N+1$ implies that $N\geq 4$. 
\end{remark}

\subsubsection{Number of the deepest strata for $n \geq 3$}

As an application of the previous discussion, we count the number of deepest strata in the relative Kummer variety. 

A deepest stratum is a stratum of the minimal dimension. When $n \geq 3$, it is clear that a deepest stratum is precisely an admissible stratum with $b=n$ (which turns out to be a wide stratum). On the contrary, when $n=2$, the deepest strata are narrow and appear in the expansion with $b=1<n$.

In the following we will determine the number of strata in the case of $b=n$ for arbitrary $\delta$, $N$, and $n$, which is precisely the number of the deepest strata when $n \geq 3$. The case of $n=2$ will be discussed in Section \ref{subsec:Kummersurfaces} in detail.

\begin{prop}\label{prop:number-deepest}
    The number of admissible strata that require 
    an expansion of level $n$ is $N^{n-1} a_n$, where $a_n$ is recursively defined by $$a_{n+2} = 4a_n + 4 \binom{n}{\left[ \frac{n}{2} \right]}$$ with initial values $a_1=a_2=0$. For $n \geq 3$, it is also the number of the deepest strata. 
\end{prop}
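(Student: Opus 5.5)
The plan is to first reduce the count to a purely combinatorial quantity, then evaluate that quantity in closed form, and finally check that the closed form satisfies the stated recursion.

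\textbf{Reduction to lattice paths.} By the discussion preceding Example \ref{eg:complete-admissible}, a stratum with $b=n$ is determined by a sign vector $(\varepsilon_1,\dots,\varepsilon_n)\in\{\pm1\}^n$, an integer $k$, and the tuple $\tau$. The data must satisfy \eqref{eqn:lc-condition} and \eqref{eqn:admissible-second}. For fixed $(\varepsilon,k)$, condition \eqref{eqn:admissible-second} leaves $\tau_1,\dots,\tau_{n-1}$ free modulo $N$ and determines $\tau_n$. This gives exactly $N^{n-1}$ choices, so the count is $N^{n-1}a_n$ with
$$a_n=\sum_{\varepsilon}\#\{k : \min S<2k<\max S\}.$$
Reversing the order of the signs is a bijection on $\{\pm1\}^n$. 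It identifies $S$ with the set of vertex heights of an ordinary $\pm1$ lattice path $P$ of length $n$ starting at $0$. Hence $a_n$ counts pairs $(P,e)$ with $e$ even and $\min P<e<\max P$.

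\textbf{Counting per level.} Next I sum over $e$ instead of over $P$.
\begin{itemize}
\item For $e=2j>0$, the only way to fail is $\max P\le e$, since $P$ starts at $0<e$. So the contribution is $\#\{\max P>2j\}$.
\item Symmetrically, $e=-2j<0$ contributes $\#\{\min P<-2j\}$, which equals $\#\{\max P>2j\}$.
\item For $e=0$, $P$ fails exactly when it stays $\ge0$ or stays $\le0$. These two sets are disjoint for $n\ge1$, and each has $\binom{n}{\lfloor n/2\rfloor}$ elements by the classical count of nonnegative paths.
\end{itemize}
Therefore
$$a_n=2^n-2\binom{n}{\lfloor n/2\rfloor}+2\sum_{j\ge1}\#\{P:\max P\ge 2j+1\}.$$
By the reflection principle, $\#\{\max P\ge m\}=\#\{P_n\ge m\}+\#\{P_n>m\}$ for $m\ge1$, where $P_n$ is the endpoint. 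This gives an explicit finite sum of binomial coefficients $\binom{n}{(n+h)/2}$ over endpoint heights $h$, with explicit multiplicities depending on $h$.

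\textbf{Verifying the recursion.} Write $a_n=\sum_h c_n(h)\binom{n}{(n+h)/2}$. The identity $\binom{n+2}{(n+2+h)/2}=\binom{n}{\cdot-1}+2\binom{n}{\cdot}+\binom{n}{\cdot+1}$ then lets me compare $a_{n+2}$ with $4a_n$ term by term. The discrepancy should collapse to boundary terms near $h=0$, leaving $4\binom{n}{\lfloor n/2\rfloor}$. Equivalently, and as a cross-check, I would argue directly: appending two steps to a path multiplies the path count by $4$, and the only change to the number of admissible levels comes from paths whose range newly straddles an even level. Summed over all extensions, this should produce the correction $4\binom{n}{\lfloor n/2\rfloor}$. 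The initial values are $a_1=a_2=0$, since no even level lies strictly inside a range of width $\le2$ that contains $0$. As sanity checks, $a_3=4$ matches Figure \ref{fig:admissible3}, and the formula gives $a_4=8$. I expect this bookkeeping step, keeping parities and the $\lfloor n/2\rfloor$ cases straight, to be the main obstacle.

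\textbf{Deepest strata.} The last claim, that for $n\ge3$ these are exactly the deepest strata, follows from the dimension corollary after Lemma \ref{lemma:stratdimension}. Strata with $b=n$ are wide and have the minimal dimension $(\delta-1)(n-1)$ in the quotient. Any stratum with $b<n$ has dimension $\delta(n-1)-b+\varepsilon>(\delta-1)(n-1)$ once $n\ge3$: when $b\le n-2$ the difference is at least $1$, and for $b=n-1$ I would check, as in the $n=3$ case of Lemma \ref{lemma:stratdimension}, that stability forces $\varepsilon=1$.
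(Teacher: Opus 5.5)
Your level-by-level count is correct, and it is a genuinely different route from the paper's. The paper never computes $a_n$ in closed form; it gets the recursion directly by splitting a length-$(n+2)$ line chart according to its first two segments $(\varepsilon_{n+2},\varepsilon_{n+1})$.

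The gap is that the step carrying the whole content of the proposition — that your expression satisfies $a_{n+2}=4a_n+4\binom{n}{\lfloor n/2\rfloor}$ — is only asserted (``should collapse''). It does close. Your formula can be rewritten as
\[
a_n=\sum_h f(h)\binom{n}{(n+h)/2}-2\binom{n}{\lfloor n/2\rfloor},\qquad f(h)=\max(1,|h|-1).
\]
Applying $\binom{n+2}{j+1}=\binom{n}{j-1}+2\binom{n}{j}+\binom{n}{j+1}$ gives
\[
a_{n+2}-4a_n=\sum_h g(h)\binom{n}{(n+h)/2}-2\binom{n+2}{\lfloor n/2\rfloor+1}+8\binom{n}{\lfloor n/2\rfloor},
\]
where $g(h)=f(h+2)+f(h-2)-2f(h)$ equals $1,2,1$ for $|h|=1,2,3$ and $0$ otherwise. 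Expanding the middle binomial once more by Pascal's rule, the right-hand side equals $4\binom{n}{\lfloor n/2\rfloor}$ for either parity of $n$.

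Your ``append two steps'' cross-check does not replace this computation. If the steps are appended at the end, the gain in admissible levels depends on whether the endpoint is at, or one below, the maximum, and on parities. The paper's device is to \emph{prepend} the two steps, which translates the old path by $0$ or $\pm2$. Because the shift is even, all old admissible levels survive. The only possible new level is the old starting point, and it becomes admissible exactly when the old path stays weakly on the side opposite to the prefix; this contributes one ballot number per prefix. Your route buys an explicit formula for $a_n$; the paper's buys the recursion with almost no computation.

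A caveat that your proposal shares with the paper's proof concerns the reduction. A stratum with $b=n$ is determined by $(\varepsilon,\tau)$ alone; $k$ only has to \emph{exist} with $k\equiv-\sum_i\tau_i \pmod N$. For $n\ge5$ a single $\varepsilon$ can admit several $k$: for instance, all $\varepsilon_i=1$ with $n=5$ admits $k=1,2$. Values of $k$ that are congruent mod $N$ give the same stratum. For $N=1$, $n=5$ there are $24$ such strata, not $a_5=28$. So identifying the count with $N^{n-1}a_n$ is valid exactly when $N\ge\lfloor (n-1)/2\rfloor$. This is automatic for $n\le4$, where $k$ is unique.

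Your justification of $a_1=a_2=0$ is misstated: $[-1,1]$ has width $2$ and has $0$ in its interior. The real point is that a path of length at most $2$ stays weakly on one side of $0$ and has range of length at most $2$. Your closed form gives these values anyway.

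The deepest-strata argument is fine, including the check that stability forces strata with $b=n-1$ to be wide when $n\ge3$. It is more explicit than the paper's.
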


\begin{proof}    
    First of all, we will show that the number of choices for $(\varepsilon_1, \dots, \varepsilon_n; k)$ satisfying \eqref{eqn:lc-condition} is indeed $a_n$, as defined in the statement of this result. 
    
    For $n=1$ (resp. $2$), there are only $2$ (resp. $4$) complete line charts of length $n$, none of which are properly intersected by a neutral line $y=2k$ for any $k \in \mathbb{Z}$. Therefore the numbers of admissible line charts, or equivalently the numbers of admissible strata, are indeed given by $a_1 = a_2 = 0$.

    It remains to verify the recursive formula. There are $4$ possibilities for the first two segments of the line chart of length $n+2$, which are given by all possible combinations of signs $(\varepsilon_{n+2}, \varepsilon_{n+1})$.
    
    For the case of $(\varepsilon_{n+2}, \varepsilon_{n+1}) = (1,1)$, we can view the rest of the line chart as a line chart of length $n$ with its starting point shifted to $(2,2)$. There are two different types of admissible line charts of this kind.
    
    For the first type, we can shift the starting point of an admissible line chart of length $n$ to $(2,2)$ to produce an admissible line chart of length $n+2$ with $(\varepsilon_{n+2}, \varepsilon_{n+1}) = (1,1)$. In other words, any combination $(\varepsilon_1, \dots, \varepsilon_n; k)$ satisfying \eqref{eqn:lc-condition} produces a combination $(\varepsilon_1, \dots, \varepsilon_n, 1, 1; k+1)$ satisfying the same condition with $n$ replaced by $n+2$. Figure \ref{fig:first-type-3-5} exhibits such an admissible line chart of length $5$ obtained from an admissible line chart of length $3$. The number of such line charts is $a_n$. 
   
\begin{figure}
    \begin{tikzpicture}
        \draw[gray!30, step=1] (0,-1) grid (3,1);
        \node at (-0.3,0) {$O$};
        \draw[thick] (0,0) -- (1,-1);
        \draw[thick] (1,-1) -- (2,0);
        \draw[thick] (2,0) -- (3,1);
        \draw (0,0) -- (3,0);
        \node at (3.75,0) {$y=0$};
        \node at (1.5,-1.5) {admissible with $n=3$};
        \filldraw (0,0) circle [radius=2pt];
        \filldraw (1,-1) circle [radius=2pt];
        \filldraw (2,0) circle [radius=2pt];
        \filldraw (3,1) circle [radius=2pt];
    \end{tikzpicture}
    \begin{tikzpicture}
        \draw[gray!30, step=1] (0,-1) grid (5,3);
        \node at (-0.3,0) {$O$};
        \draw[thick,dashed] (0,0) -- (2,2);
        \draw[thick] (2,2) -- (3,1);
        \draw[thick] (3,1) -- (5,3);
        \draw (0,2) -- (5,2);
        \node at (5.75,2) {$y=2$};
        \node at (2.5,-1.5) {admissible with $n=5$};
        \filldraw (0,0) circle [radius=2pt];
        \filldraw (1,1) circle [radius=2pt];
        \filldraw (2,2) circle [radius=2pt];
        \filldraw (3,1) circle [radius=2pt];
        \filldraw (4,2) circle [radius=2pt];
        \filldraw (5,3) circle [radius=2pt];
    \end{tikzpicture}
    \caption{Example of an admissible line chart of length $5$ obtained from an admissible line chart of length $3$ by shifting its starting point from $(0,0)$ to $(2,2)$}
    \label{fig:first-type-3-5}
\end{figure}
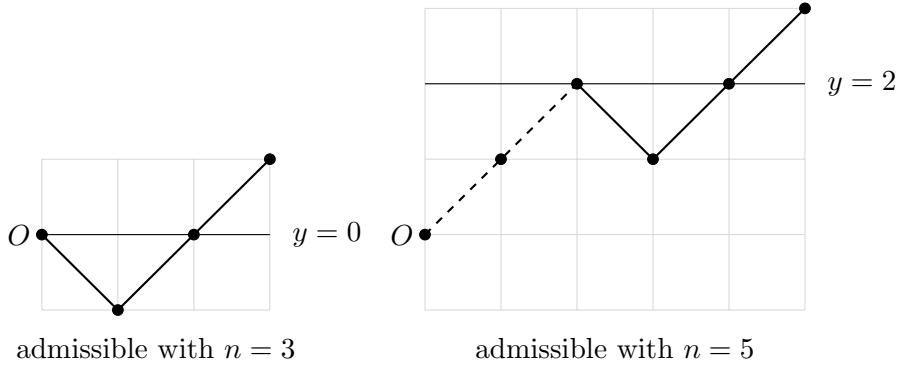
    
    For the second type, an inadmissible line chart of length $n$ could also become admissible by shifting it starting point from $(0,0)$ to $(2,2)$. Such an inadmissible line chart of length $n$ must lie in the region given by $y \geq 0$ with the corresponding $k=0$. In other words, any combination $(\varepsilon_1, \dots, \varepsilon_n; 0)$ satisfying the conditions $\varepsilon_i + \dots + \varepsilon_n \geq 0$ for each $i$ produces a combination $(\varepsilon_1, \dots, \varepsilon_n, 1, 1; 1)$ that satisfies \eqref{eqn:lc-condition}. Figure \ref{fig:first-type-not3-5} exhibits such an admissible line chart of length $5$ obtained from an inadmissible line chart of length $3$.
    
    The above discussion shows that the number of admissible line charts of length $n+2$ of the second type is the number of all line charts of length $n$ that lie in the region $y \geq 0$ (which are necessarily inadmissible with $k=0$), which turns out to be the classical ``ballot number'' $\binom{n}{\left[ \frac{n}{2} \right]}$. Indeed, we assume that such a line chart consists of $q$ line segments of slope $-1$ and $n-q$ line segments of slope $1$, then $0 \leq q \leq \left[ \frac{n}{2} \right]$. For each fixed integer value of $q$ in this interval, the number of line charts starting at $(0,0)$, ending at $(n-q,q)$ and lying in the region $y \geq 0$ is $\binom{n}{q} - \binom{n}{q-1}$, which can be easily obtained by the ``reflection principle''; see e.g. \cite[Lemma on page 72]{Feller-1968}. By summing over all possible values of $q$, we obtain the total number of such line charts, which equals $\binom{n}{\left[ \frac{n}{2} \right]}$.

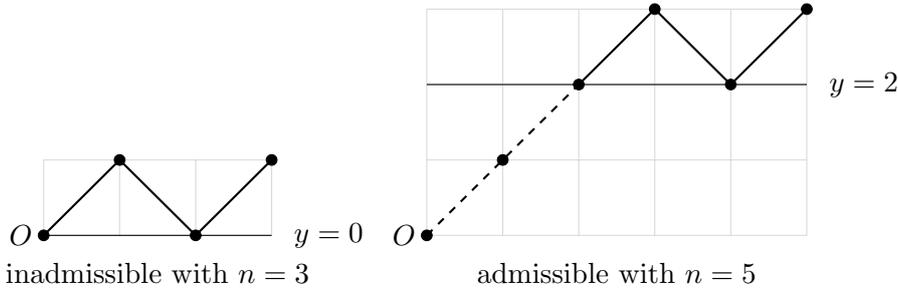
\begin{figure}
    \begin{tikzpicture}
        \draw[gray!30, step=1] (0,0) grid (3,1);
        \node at (-0.3,0) {$O$};
        \draw[thick] (0,0) -- (1,1);
        \draw[thick] (1,1) -- (2,0);
        \draw[thick] (2,0) -- (3,1);
        \draw (0,0) -- (3,0);
        \node at (3.75,0) {$y=0$};
        \node at (1.5,-0.5) {inadmissible with $n=3$};
        \filldraw (0,0) circle [radius=2pt];
        \filldraw (1,1) circle [radius=2pt];
        \filldraw (2,0) circle [radius=2pt];
        \filldraw (3,1) circle [radius=2pt];
    \end{tikzpicture}
    \begin{tikzpicture}
        \draw[gray!30, step=1] (0,0) grid (5,3);
        \node at (-0.3,0) {$O$};
        \draw[thick,dashed] (0,0) -- (2,2);
        \draw[thick] (2,2) -- (3,3);
        \draw[thick] (3,3) -- (4,2);
        \draw[thick] (4,2) -- (5,3);
        \draw (0,2) -- (5,2);
        \node at (5.75,2) {$y=2$};
        \node at (2.5,-0.5) {admissible with $n=5$};
        \filldraw (0,0) circle [radius=2pt];
        \filldraw (1,1) circle [radius=2pt];
        \filldraw (2,2) circle [radius=2pt];
        \filldraw (3,3) circle [radius=2pt];
        \filldraw (4,2) circle [radius=2pt];
        \filldraw (5,3) circle [radius=2pt];
    \end{tikzpicture}
    \caption{Example of an admissible line chart of length $5$ obtained from an inadmissible line chart of length $3$ by shifting its starting point from $(0,0)$ to $(2,2)$}
    \label{fig:first-type-not3-5}
\end{figure}

    Combining the numbers of admissible line charts of both types, there are precisely $a_n + \binom{n}{\left[ \frac{n}{2} \right]}$ values satisfying \eqref{eqn:lc-condition} with $(\varepsilon_{n+2}, \varepsilon_{n+1}) = (1,1)$.

    Similarly, for any of the other choices $$(\varepsilon_{n+2}, \varepsilon_{n+1}) = (1,-1), (-1,1), (-1,-1),$$ the number of admissible line charts is the same as the case of $(\varepsilon_{n+2}, \varepsilon_{n+1})=(1,1)$. The recursive formula for $a_n$ follows.

    In our situation the $x_i$'s are distinct, hence the $\tau_i$'s in \eqref{eqn:admissible-second} are ordered. An arbitrary choice of values for $n-1$ of them leads to a unique value for the last one. Therefore there are precisely $N^{n-1}$ choices.

    We multiply the number of choices for $\tau_i$'s and the number of choices for $\varepsilon_i$'s and $k$ to conclude the statement.
\end{proof}

\begin{remark}
    An interesting feature in the proof worth realizing is that the value of $k$ is determined by $\{ \varepsilon_1, \dots, \varepsilon_n \}$ when $n=3$ or $4$. It is not the case for $n \geq 5$. (For example, when $n=5$ and $\varepsilon_i = 1$ for $i = 1, \dots, 5$, we could have $k=1$ or $2$.) Moreover, the first few nontrivial values of $a_n$ are $a_3 = 4$, $a_4 = 8$, $a_5 = 28$.
\end{remark}

\subsection{Higher dimensional strata}\label{subsec:higherdimstrata}

The higher dimensional strata can be computed along the same lines as in the previous paragraph. We point out that we allow ourselves to deviate slightly from notation, in that we sometimes allow $ x_i = b+1$ or $ x_i = -b-1$. 

\subsubsection{Line chart associated to an arbitrary stratum} \label{subsubsec:line-chart-arbitrary}

For an arbitrary stratum, we have $b \leq n$. In such a case equation \eqref{eqn:admissible-first} has to be modified. For convenience we set $x_0 = 0$ and $x_{n+1} = b+1$, then we have the condition $\lvert x_0 \rvert \leq \lvert x_1 \rvert \leq \dots \leq \lvert x_n \rvert \leq \lvert x_{n+1} \rvert$. For each $i = 1, \dots, n+1$, if $\lvert x_i \rvert = \lvert x_{i-1} \rvert$, then we set $r_i = 0$ in \eqref{eqn:admissible-first} (in other words, we remove $r_i$ from the equation). The set $S'$ of coefficients of the remaining $r_i$'s is a subset of $S$, and a necessary and sufficient condition for the solvability of the new equation becomes
\begin{equation}\label{eqn:lc-condition-prime}
    \text{either } \quad \min S' < 2k < \max S' \quad \text{ or } \quad \min S' = 2k = \max S'.
\end{equation}
We can still visualize the condition using a line chart. Indeed, for each surviving $r_i$, we still associate to it an integral point as before. Then we connect the set of vertices
\begin{equation}\label{eqn:remaining-points-chart}
    V' = \{ (n+1-i, \varepsilon_i + \dots + \varepsilon_n) \mid \lvert x_{i-1} \rvert < \lvert x_i \rvert, i = 1, \dots, n \}
\end{equation}
from left to right by line segments. The resulting picture is again called a \emph{line chart} associated to the stratum. If a line chart satisfies condition \eqref{eqn:lc-condition-prime}, we say that the line chart is \emph{admissible}; or equivalently, it is a line chart associated to an admissible stratum. Moreover, we also observe that a line chart is never complete when $b<n$.

\subsubsection{The (non)uniqueness of line charts}\label{subsubsec:non-uniqueness}

From the above construction, we can tell that the line chart encodes the information of all $x_i$'s for each stratum, and ignores the $\tau_i$'s. 

In many cases, the values of $x_i$'s are uniquely determined by the stratum, therefore the associated line chart is also uniquely determined by the stratum. However, there is one exception: if $\lvert x_i \rvert = b+1$, then no matter whether $\varepsilon_i = \pm 1$, it always indicates that the $i$-th point is located on one of the original components (with an odd label).

Therefore, for a point located on such an original component, we have $x_i = \varepsilon_i (b+1)$ where $\varepsilon$ could be either $+1$ or $-1$. Assume that there are $p$ such points, then the first vertex of the corresponding line chart is $(p, \varepsilon_{n-p+1} + \dots + \varepsilon_n)$ by \eqref{eqn:remaining-points-chart}, whose second coordinate could take $p+1$ different possible values $\{ -p, -p+2, \dots, p-2, p\}$. In other words, the initial vertex of the line chart is always located in the diagonal quadrant $\{ (x,y) \in \Z^2 \mid \lvert y \rvert \leq x \}$, and can be moved up or down by multiples of $2$ units.

For each of the subsequent vertices in $V'$ as given in \eqref{eqn:remaining-points-chart}, since its second coordinate is given by a sum that also contains $\varepsilon_{n-p+1} + \dots + \varepsilon_n$, it is also moved up or down by multiples of $2$ units along with the first vertex of $V'$. As a result, the entire line chart could be shifted vertically in either direction, by multiples of $2$ units, and without changing its shape, subject to the restriction on the initial vertex mentioned above. To simplify the later discussion, we introduce the following terminology.

\begin{definition}\label{def:linechartequiv}
	Line charts that represent the same strata are said to be \emph{equivalent}.
\end{definition}

Indeed, as explained above, two line charts are equivalent if and only if one is obtained from the other by a vertical shift $2p$ units for any integer $p$.

\begin{example}
	We exhibit two examples of non-unique line charts associated to a stratum. 
	
	In Figure \ref{fig:same-stratum-1}, two distinct line charts are associated to the same stratum $C(1,2,3) = C(1,2,-3)$. In this example, the level of expansion is given by $b=2$, and we have precisely one point located on one of the original components with an odd label, namely $x_3 = \pm 3$. Therefore $p=1$, and the initial vertex of the line chart is $(1, \varepsilon_3)$, which could take the value of either $(1,1)$ or $(1,-1)$. At the same time, the entire line chart is also shifted vertically along with the initial vertex, without changing its shape. This phenomenon can be easily seen in Figure \ref{fig:same-stratum-1}.
	
	In Figure \ref{fig:same-stratum-2}, three distinct line charts are associated to the same stratum $A(0,1,1)=A(0,1,-1)=A(0,-1,-1)$. Indeed, in this example, the level of expansion is $b=0$ (i.e. no inserted components), and we have $x_1=0$, $x_2=\pm 1$, $x_3=\pm 1$. Since $p=2$, the initial vertex of the line chart (which is also the only vertex in $V'$) is given by $(2, \varepsilon_2+\varepsilon_3)$, which could take the value of $(2,2)$ or $(2,0)$ or $(2,-2)$. Therefore the entire line chart can also be shifted vertically to three different positions, as displayed in Figure \ref{fig:same-stratum-2}.
	
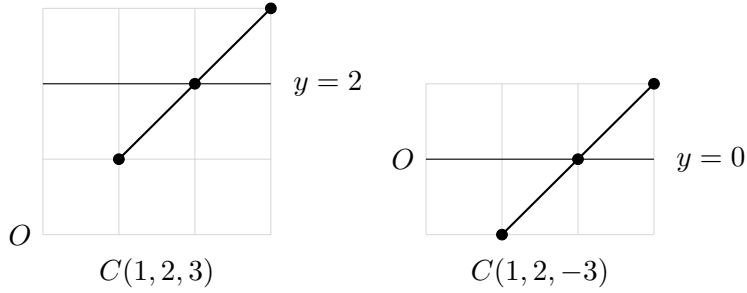
\begin{figure}
    \begin{tikzpicture}
        \draw[gray!30, step=1] (0,0) grid (3,3);
        \node at (-0.3,0) {$O$};
        \draw[thick] (1,1) -- (3,3);
        \draw (0,2) -- (3,2);
        \node at (3.75,2) {$y=2$};
        \node at (1.5,-0.5) {$C(1,2,3)$};
        \filldraw (1,1) circle [radius=2pt];
        \filldraw (2,2) circle [radius=2pt];
        \filldraw (3,3) circle [radius=2pt];
    \end{tikzpicture}
    \begin{tikzpicture}
        \draw[gray!30, step=1] (0,-1) grid (3,1);
        \node at (-0.3,0) {$O$};
        \draw[thick] (1,-1) -- (3,1);
        \draw (0,0) -- (3,0);
        \node at (3.75,0) {$y=0$};
        \node at (1.5,-1.5) {$C(1,2,-3)$};
        \filldraw (1,-1) circle [radius=2pt];
        \filldraw (2,0) circle [radius=2pt];
        \filldraw (3,1) circle [radius=2pt];
    \end{tikzpicture}
    \caption{An example of line charts for the same stratum}
    \label{fig:same-stratum-1}
\end{figure}

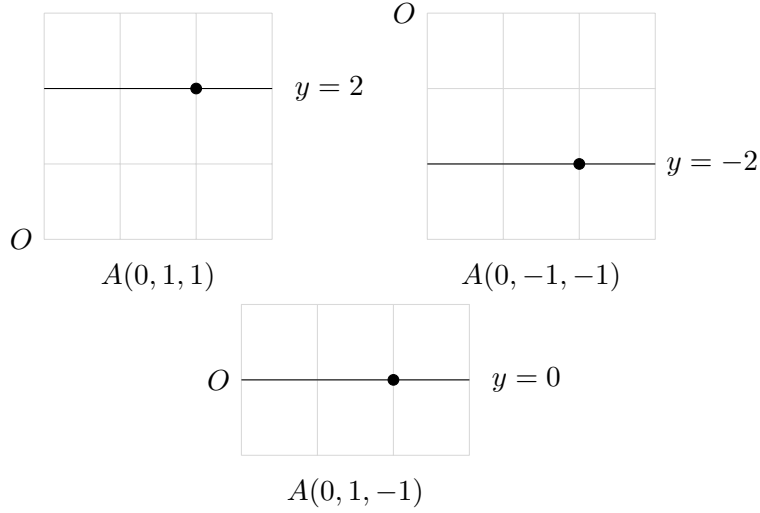
\begin{figure}
    \begin{tikzpicture}
        \draw[gray!30, step=1] (0,0) grid (3,3);
        \node at (-0.3,0) {$O$};
        \draw (0,2) -- (3,2);
        \node at (3.75,2) {$y=2$};
        \node at (1.5,-0.5) {$A(0,1,1)$};
        \filldraw (2,2) circle [radius=2pt];
    \end{tikzpicture}
    \begin{tikzpicture}
        \draw[gray!30, step=1] (0,0) grid (3,-3);
        \node at (-0.3,0) {$O$};
        \draw (0,-2) -- (3,-2);
        \node at (3.75,-2) {$y=-2$};
        \node at (1.5,-3.5) {$A(0,-1,-1)$};
        \filldraw (2,-2) circle [radius=2pt];
    \end{tikzpicture}
    \begin{tikzpicture}
        \draw[gray!30, step=1] (0,-1) grid (3,1);
        \node at (-0.3,0) {$O$};
        \draw (0,0) -- (3,0);
        \node at (3.75,0) {$y=0$};
        \node at (1.5,-1.5) {$A(0,1,-1)$};
        \filldraw (2,0) circle [radius=2pt];
    \end{tikzpicture}
    \caption{Another example of line charts for the same stratum}
    \label{fig:same-stratum-2}
\end{figure}

\end{example}

If we were to insist on avoiding the non-uniqueness of line charts associated to an arbitrary stratum, we could impose an extra requirement, for example, $\varepsilon_i=1$ if $\lvert x_i \rvert = b+1$. By the above discussion, this corresponds to requiring the initial vertex of any line chart to be located on the line $y=x$; or equivalently, the line chart is located at the highest possible position. However, we prefer not to impose such a restriction for two reasons. First of all, we will see that all these distinct line charts associated to the same stratum could show up as an output of the algorithm of numerical smoothing in Subsection \ref{subsec:numerical-smoothing}. Secondly, this flexibility allows us to glue various local pieces of the dual complex to a global picture, as we will see in Section \ref{sec:dualcplxIII}, where we will also mention the rules for how the triple $\tau=(\tau_1, \tau_2, \tau_3)$ changes when the line chart is moved upwards or downwards.

\subsubsection{Information contained in line charts}\label{subsubsec:sublinechart}

From the above construction, we have already seen that the information about the configuration of any stratum is contained in the corresponding line charts. Some information is straightforward; for example,
\begin{itemize}
	\item The level of expansion, denoted by $b$, is $1$ smaller than the number of vertices in the line chart;
	\item The difference in the $x$-coordinates of two neighboring vertices gives the count of points in the corresponding level of expansion;
	\item The difference in the $y$-coordinates gives the signed count of these points, where the sign is given by the sign of the $x_i$'s;
	\item The number of points on the original components of the degeneration with odd labels is the $x$-coordinate of the first vertex;
	\item The number of points on the original components of the degeneration with even labels is the difference between $n$ and the $x$-coordinate of the last vertex.
\end{itemize}

Also, as an immediate consequence of the above construction, it is important to observe that in conditions \eqref{eqn:lc-condition} and \eqref{eqn:lc-condition-prime}, both possibilities have their own geometric meaning.

\begin{lemma}\label{lemma:narrowwidedescr}
    Assume $\mathcal{L}$ is an admissible line chart, and $S$ is the set of $y$-coordinates of its vertices. Then $\min S < 2k < \max S$ iff the corresponding admissible stratum is wide, and $\min S = 2k = \max S$ iff the corresponding admissible stratum is narrow.
\end{lemma}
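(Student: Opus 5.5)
The plan is to match both sides of Lemma \ref{lemma:narrowwidedescr} against the criterion of Proposition-Definition \ref{prop:stratumdfn}. That criterion says the stratum is narrow if and only if $\alpha^+(\mathbf{m},k)=\alpha^-(\mathbf{m},k)$ for every inserted level $k\in\{1,\dots,b\}$. So it suffices to show that this family of equalities holds exactly when all vertices of the line chart have the same $y$-coordinate.

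The first step is to read off from the construction (\ref{eqn:remaining-points-chart}) what consecutive vertices encode. Two neighbouring vertices of $V'$ correspond to a block of indices $i$ with a common value $|x_i|=k$, where $1\le k\le b$. By the bullet list in \ref{subsubsec:sublinechart}:
\begin{itemize}
\item the horizontal difference between them is the number of points on level $k$;
\item the vertical difference is $\sum_{|x_i|=k}\varepsilon_i$, the signed count.
\end{itemize}
Points with $x_i=k$ lie on components $G'_{\tau,k}$, and those with $x_i=-k$ lie on $G'_{\tau,-k}$. Summing over $\tau$, the vertical difference is therefore exactly $\alpha^+(\mathbf{m},k)-\alpha^-(\mathbf{m},k)$.

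Two boundary contributions need a remark. Points with $x_i=0$ (even original components) contribute no vertex and no level. Points with $|x_i|=b+1$ (odd original components) only fix the height of the initial vertex. Neither enters any $\alpha^\pm(\mathbf{m},k)$ with $1\le k\le b$, so the vertex heights change along the chart precisely by the quantities $\alpha^+-\alpha^-$ at the levels $1,\dots,b$.

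Every level $1\le k\le b$ is occupied, since stability requires $Z\cap\Delta^k\neq\emptyset$. Hence every level does produce a segment. It follows that all $\alpha^+(\mathbf{m},k)=\alpha^-(\mathbf{m},k)$ if and only if the chart is horizontal, that is, $\min S=\max S$.

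Admissibility is condition \eqref{eqn:lc-condition-prime}. If $\min S=\max S$, the strict inequality $\min S<2k<\max S$ is impossible, so the only admissible option is $\min S=2k=\max S$, and the stratum is narrow. Conversely, if $\min S<\max S$, then admissibility forces the first alternative $\min S<2k<\max S$, and the stratum is wide. This proves both equivalences.

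The only delicate point is the bookkeeping of signs: one must check that $x_i=+k$ really corresponds to the components counted in $\alpha^+$, namely those of the form $G'_{2\tau(b+1)+k}$, and handle the equivalence of line charts under vertical shifts. Vertical shifts do not affect the argument, because the criterion depends only on height differences.
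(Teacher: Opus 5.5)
Your proof is correct and takes the paper's approach: the paper also identifies the vertical increments of the line chart with the signed counts $\sum_{|x_i|=t}\varepsilon_i$ on the inserted levels $1\le t\le b$, and matches this with the narrow criterion of Proposition-Definition~\ref{prop:stratumdfn}. You are more explicit than the paper's terse proof in two places: you use stability to ensure every inserted level contributes a segment, and you derive the wide case from the admissibility dichotomy \eqref{eqn:lc-condition-prime}, which the paper leaves implicit.
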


\begin{proof}
    By the construction of the set $S$, the condition $\min S = 2k = \max S$ is equivalent to
    \[
    \sum_{\lvert x_i \rvert = t} \varepsilon_i = \begin{cases}
        0 & \text{if } 1 \leq t \leq b, \\
        2k & \text{if } t = b+1,
    \end{cases}
    \]
    which is precisely the condition of a stratum being narrow.
\end{proof}

Given two line charts $\mathcal{L}_1$ and $\mathcal{L}_2$, assume their sets of vertices are $V_1$ and $V_2$ respectively. If $V_1 \subseteq V_2$, then we say $\mathcal{L}_1$ is a \emph{line subchart} of $\mathcal{L}_2$, denoted as $\mathcal{L}_1 \prec \mathcal{L}_2$. 
To determine all admissible line charts for any fixed value of $n \geq 3$, the following result is helpful.

\begin{lemma}\label{lem:correct-dim}
    Every line chart $\mathcal{L}'$ is a line subchart of a complete line chart $\mathcal{L}$. For $n \geq 3$, every admissible line chart $\mathcal{L}'$ is a line subchart of an admissible complete line chart $\mathcal{L}$.
\end{lemma}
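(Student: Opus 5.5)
The plan is to treat the two assertions separately. The first one is purely combinatorial; the second reduces to the first, together with a choice of completion adapted to the neutral line $y=2k$.

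For the first assertion, let $V' = \{v_0, \dots, v_b\}$ be the vertex set of $\mathcal{L}'$ as in \eqref{eqn:remaining-points-chart}, with $v_j = (p_j, q_j)$ and $p_0 < p_1 < \dots < p_b \le n$. By construction, $q_{j+1} - q_j$ is a sum of $p_{j+1} - p_j$ signs $\varepsilon_i$. Hence $|q_{j+1}-q_j| \le p_{j+1}-p_j$ and the two numbers have the same parity. The same holds for the origin and $v_0$: as observed in \ref{subsubsec:non-uniqueness}, $|q_0| \le p_0$ and $q_0 \equiv p_0 \pmod 2$. So every pair of consecutive points among $(0,0), v_0, \dots, v_b$ can be joined by a lattice path with steps $(1,\pm 1)$. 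We join them in this way and then extend from $v_b$ to $x=n$ by arbitrary steps. The result is a complete line chart $\mathcal{L}$ whose vertex set contains $V'$, i.e.\ $\mathcal{L}' \prec \mathcal{L}$. Concretely, this amounts to re-choosing the signs $\varepsilon_i$ within each block of equal $|x_i|$ with prescribed sum, and then refining the expansion to $b=n$.

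For the second assertion, let $\mathcal{L}'$ be admissible with neutral line $y=2k$. We use the dichotomy of Lemma \ref{lemma:narrowwidedescr}. If $\mathcal{L}'$ is wide, then $\min S' < 2k < \max S'$. Any completion $\mathcal{L}$ has vertex set $S \supseteq S'$, so $\min S < 2k < \max S$, and $\mathcal{L}$ satisfies \eqref{eqn:lc-condition} with the same $k$. Hence any completion from the first step works.

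The real work, and the step I expect to be the main obstacle, is the narrow case. Here all vertices of $\mathcal{L}'$ lie on $y=2k$, and we must choose the completion so that it strictly crosses this line, reaching both $y \ge 2k+1$ and $y \le 2k-1$. The available freedom lies in three places:
\begin{enumerate}
\item the initial path from $(0,0)$ to $v_0=(p_0,2k)$;
\item each gap between consecutive vertices, which has even length $\ge 2$ and can be filled by a bump above or a dip below;
\item the tail from $v_b$ to $x=n$, whose first step may be chosen up or down.
\end{enumerate}
I would do a short case analysis using $n \ge 3$. If $2k \neq 0$, the origin already lies strictly on one side of the neutral line. It then suffices to produce one excursion to the other side, using a gap or the tail; if there are no gaps and no tail, then $p_0 = n \ge 3$, and the initial path itself can overshoot $2k$. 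If $2k = 0$, we need excursions on both sides. These come from two independent pieces among (1)--(3): the initial segment (when $p_0 \ge 2$), the gaps, and the tail. The total length is $n \ge 3$, and parity forces at least two such pieces, or a single piece long enough to go up and then down past $0$. Checking that these cases are exhaustive, including small configurations such as the single vertex $(2,0)$ or $(2,2)$ for $n=3$, is where I expect the care to be needed. The remark that the statement fails for $n=2$ (where $a_2=0$) shows that the hypothesis $n\ge 3$ enters exactly here.
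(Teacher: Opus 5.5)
Your strategy is the paper's. The first assertion is immediate; you do not even need new lattice paths, since the complete chart with the same signs $\varepsilon_i$ already contains $V'$. The wide case follows from $S'\subseteq S$. In the narrow case you send the free pieces of the path to opposite sides of $y=2k$, which is the paper's ``flipping'' over the subintervals of $[0,n]$ cut out by the $x$-coordinates of $V'$, with the case where these lie in $\{0,n\}$ treated separately. Your analysis for $2k=0$ is correct, and so is the case $2k\neq 0$ when a gap or a tail exists.

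The step that fails is the sub-case $2k\neq 0$ with no gaps and no tail, i.e.\ $V'=\{(n,2k)\}$.
\begin{itemize}
\item The path from $(0,0)$ to $(n,2k)$ can overshoot $y=2k$ only if $n\geq |2k|+2$.
\item Parity forces $n$ to be even here, so the sub-case is vacuous for $n=3$.
\item For every even $n\geq 4$, however, the value $|2k|=n$ occurs. For example, take $n=4$, $V'=\{(4,4)\}$ and $k=2$, which is the narrow stratum $A(1,1,1,1)$.
\item In that situation the only complete chart through $(n,2k)$ is the monotone diagonal. For it, $2k$ is an extreme value of $S$, so no admissible completion with this neutral line exists at all. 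Your claim therefore cannot be repaired within the fixed chart.
\end{itemize}

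The missing ingredient is the equivalence of Definition~\ref{def:linechartequiv}. Here $p_0=n$ means all $n$ points lie on odd original components. Hence $\mathcal{L}'$ can be shifted vertically by a multiple of $2$, changing $k$ and the $\tau_i$ accordingly, for instance to $\{(n,0)\}$ with $k=0$. After that shift your $2k=0$ argument applies. So in this corner case the second assertion holds only up to equivalence of line charts.

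The paper's own treatment of this sub-case has the same blind spot. Its prescribed vertex $(n-3,2k+1)$ is not reachable from the origin when $|2k+1|>n-3$. The statement should therefore be read, and proved, modulo equivalence.
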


\begin{proof}
    Since an arbitrary line chart is obtained by omitting some vertices in a complete line chart, the first statement follows immediately. 

    For the second statement, assume that $\mathcal{L}'$ is admissible, we show that we can choose $\mathcal{L}$ also to be admissible. There are two possibilities. If the condition $\min S' < 2k < \max S'$ is satisfied, since $S' \subseteq S$, we also have $\min S < 2k < \max S$, hence $\mathcal{L}$ is automatically admissible. 
    
    Now we assume that the condition $\min S' = 2k = \max S'$ is satisfied. Note that $S'$ is the set of $y$-coordinates of vertices in $V'$. Let $R'$ be the set of $x$-coordinates of vertices in $V'$, then the values in $R'$ divide the interval $[0,n]$ into several subintervals. We observe that the restriction of the line chart $\mathcal{L}$ to each of the subintervals has nontrivial intersection with either 
    the half space $y>2k$ or the half space $y<2k$. If the number of subintervals is at least $2$, then by flipping the line chart over some of the subintervals across the neutral line $y=2k$, we can make the line chart $\mathcal{L}$ to satisfy the condition $\min S < 2k < \max S$, therefore $\mathcal{L}$ is admissible. If the number of subintervals is $1$, then $R' \subseteq \{0, n\}$. Then there are several possibilities to choose an admissible line chart $\mathcal{L}$ that contains $\mathcal{L}'$ as a line subchart. More precisely, if $0 \in R'$, then $k=0$. Since $n \geq 3$, we can choose $\mathcal{L}$ such that its set of vertices $V$ contains $\{(0,0), (1,1), (2,0), (3,-1)\}$ to make sure it is admissible. Similarly, if $n \in R'$, we can also choose $\mathcal{L}$ such that $V$ contains $\{ (n,2k), (n-1,2k-1), (n-2,2k), (n-3,2k+1) \}$. 
\end{proof}

The above result shows that, to determine all admissible line charts for any fixed value of $n \geq 3$, it suffices to find all admissible line subcharts of the complete admissible line charts. 

\subsubsection{Example of computation for $n=3$}

We illustrate the above method by the following examples. For $n=3$, all complete admissible line charts have been described in Example \ref{eg:complete-admissible}. In the following, we describe all admissible line subcharts for two of them, namely $D(1,2,3)$ and $D(1,2,-3)$. The admissible line subcharts for the other two cases $D(-1,-2,-3)$ and $D(-1,-2,3)$ can be analyzed in a similar manner.

\begin{example}\label{eg:all-admissible-123}
    Let $\mathcal{L}$ be the line chart corresponding to $D(1,2,3)$. It has one vertex above the neutral line $y=2$, one vertex on this line, and two vertices below this line. To find all line subcharts $\mathcal{L}'$ of $\mathcal{L}$ that correspond to wide strata, the set of its vertices $V'$ must contain $(3,3)$, and at least one between $(0,0)$ and $(1,1)$. However it is unimportant whether $(2,2)$ is contained in $V'$. Hence there are a total of $6$ admissible line subcharts in this case. To find all line subcharts $\mathcal{L}'$ of $\mathcal{L}$ which correspond to narrow strata, the only possibility is that $V' = \{ (2,2) \}$. Therefore, there are in total $7$ admissible line subcharts of $\mathcal{L}$. These line subcharts, as well as their associated strata, are displayed in Figure \ref{fig:subcharts-123} (including the case $\mathcal{L}'=\mathcal{L}$). 
\begin{figure}
    \begin{tikzpicture}
        \draw[gray!30, step=1] (0,0) grid (3,3);
        \node at (-0.3,0) {$O$};
        \draw[thick] (0,0) -- (1,1);
        \draw[thick] (1,1) -- (2,2);
        \draw[thick] (2,2) -- (3,3);
        \draw (0,2) -- (3,2);
        \node at (3.75,2) {$y=2$};
        \node at (1.5,-0.5) {$D(1,2,3)$};
        \filldraw (0,0) circle [radius=2pt];
        \filldraw (1,1) circle [radius=2pt];
        \filldraw (2,2) circle [radius=2pt];
        \filldraw (3,3) circle [radius=2pt];
    \end{tikzpicture}
    \begin{tikzpicture}
        \draw[gray!30, step=1] (0,0) grid (3,3);
        \node at (-0.3,0) {$O$};
        \draw[thick] (0,0) -- (1,1);
        \draw[thick] (1,1) -- (3,3);
        \draw (0,2) -- (3,2);
        \node at (3.75,2) {$y=2$};
        \node at (1.5,-0.5) {$C(1,1,2)$};
        \filldraw (0,0) circle [radius=2pt];
        \filldraw (1,1) circle [radius=2pt];
        \filldraw (3,3) circle [radius=2pt];
    \end{tikzpicture}
    \begin{tikzpicture}
        \draw[gray!30, step=1] (0,0) grid (3,3);
        \node at (-0.3,0) {$O$};
        \draw[thick] (0,0) -- (2,2);
        \draw[thick] (2,2) -- (3,3);
        \draw (0,2) -- (3,2);
        \node at (3.75,2) {$y=2$};
        \node at (1.5,-0.5) {$C(1,2,2)$};
        \filldraw (0,0) circle [radius=2pt];
        \filldraw (2,2) circle [radius=2pt];
        \filldraw (3,3) circle [radius=2pt];
    \end{tikzpicture}
    \begin{tikzpicture}
        \draw[gray!30, step=1] (0,0) grid (3,3);
        \node at (-0.3,0) {$O$};
        \draw[thick] (0,0) -- (3,3);
        \draw (0,2) -- (3,2);
        \node at (3.75,2) {$y=2$};
        \node at (1.5,-0.5) {$B(1,1,1)$};
        \filldraw (0,0) circle [radius=2pt];
        \filldraw (3,3) circle [radius=2pt];
    \end{tikzpicture}
    \begin{tikzpicture}
        \draw[gray!30, step=1] (0,0) grid (3,3);
        \node at (-0.3,0) {$O$};
        \draw[thick] (1,1) -- (2,2);
        \draw[thick] (2,2) -- (3,3);
        \draw (0,2) -- (3,2);
        \node at (3.75,2) {$y=2$};
        \node at (1.5,-0.5) {$C(1,2,3)$};
        \filldraw (1,1) circle [radius=2pt];
        \filldraw (2,2) circle [radius=2pt];
        \filldraw (3,3) circle [radius=2pt];
    \end{tikzpicture}
    \begin{tikzpicture}
        \draw[gray!30, step=1] (0,0) grid (3,3);
        \node at (-0.3,0) {$O$};
        \draw[thick] (1,1) -- (3,3);
        \draw (0,2) -- (3,2);
        \node at (3.75,2) {$y=2$};
        \node at (1.5,-0.5) {$B(1,1,2)$};
        \filldraw (1,1) circle [radius=2pt];
        \filldraw (3,3) circle [radius=2pt];
    \end{tikzpicture}
    \begin{tikzpicture}
        \draw[gray!30, step=1] (0,0) grid (3,3);
        \node at (-0.3,0) {$O$};
        \filldraw (2,2) circle [radius=2pt];
        \draw (0,2) -- (3,2);
        \node at (3.75,2) {$y=2$};
        \node at (1.5,-0.5) {$A(0,1,1)$};
    \end{tikzpicture}
    \caption{Admissible subcharts for $D(1,2,3)$ in Example \ref{eg:all-admissible-123}}
    \label{fig:subcharts-123}
\end{figure}
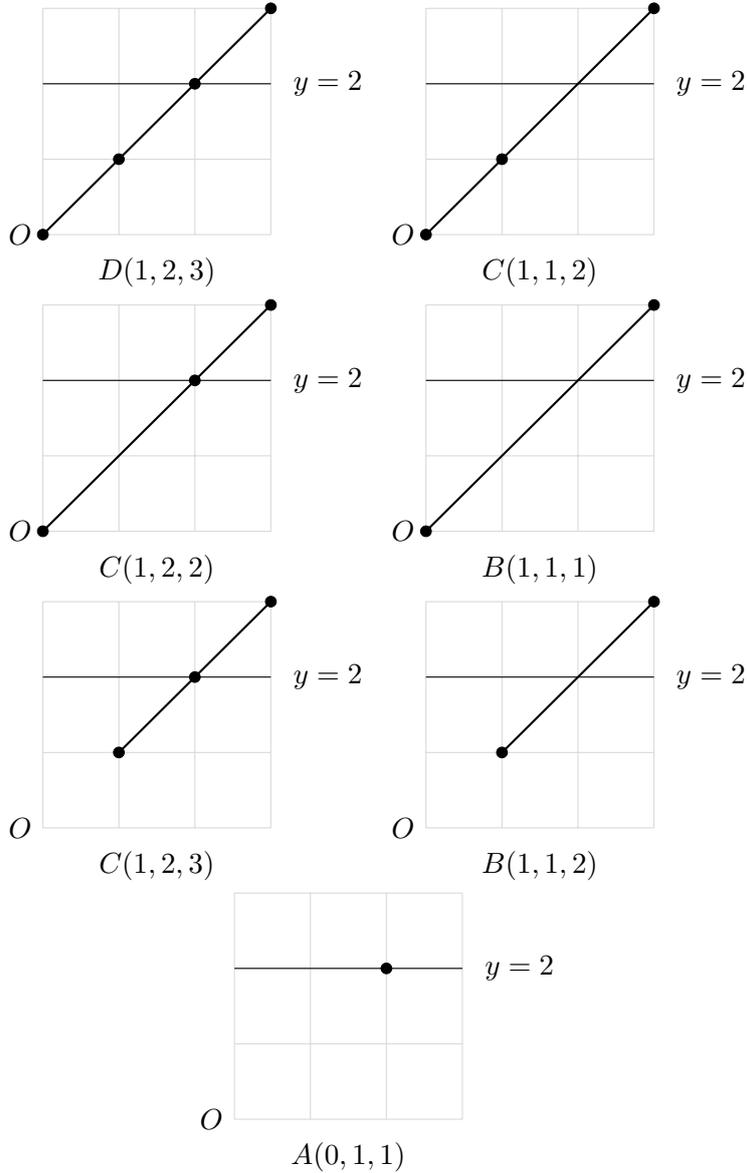
\end{example}

\begin{example}\label{eg:all-admissible-12minus3}
As a second example, let $\mathcal{L}$ be the line chart corresponding to $D(1,2,-3)$. In this case there is one vertex above the neutral line $y=0$, two vertices on this line, and one vertex below this line. To find all line subcharts $\mathcal{L}'$ that correspond to wide strata, both vertices $(1,1)$ and $(3,-1)$ have to survive, but $(0,0)$ and $(2,0)$ could be removed, hence there are in total $4$ admissible line subcharts in this case. To find all line subcharts that correspond to narrow strata, we need to remove both $(1,1)$ and $(3,-1)$, and keep at least one of the vertices on the line $y=0$,  
hence there are in total $3$ admissible line subcharts in this case. All these admissible line subcharts, as well as their associated strata, are displayed in Figure \ref{fig:subcharts-12minus3} (including the case $\mathcal{L}'=\mathcal{L}$).

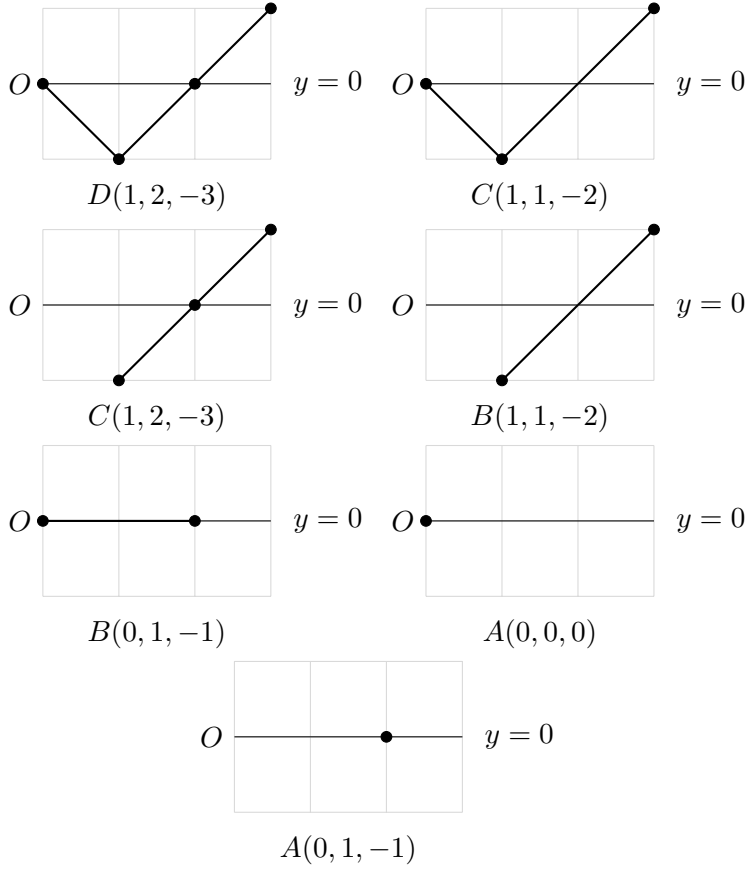
\begin{figure}
    \begin{tikzpicture}
        \draw[gray!30, step=1] (0,-1) grid (3,1);
        \node at (-0.3,0) {$O$};
        \draw[thick] (0,0) -- (1,-1);
        \draw[thick] (1,-1) -- (2,0);
        \draw[thick] (2,0) -- (3,1);
        \draw (0,0) -- (3,0);
        \node at (3.75,0) {$y=0$};
        \node at (1.5,-1.5) {$D(1,2,-3)$};
        \filldraw (0,0) circle [radius=2pt];
        \filldraw (1,-1) circle [radius=2pt];
        \filldraw (2,0) circle [radius=2pt];
        \filldraw (3,1) circle [radius=2pt];
    \end{tikzpicture}
    \begin{tikzpicture}
        \draw[gray!30, step=1] (0,-1) grid (3,1);
        \node at (-0.3,0) {$O$};
        \draw[thick] (0,0) -- (1,-1);
        \draw[thick] (1,-1) -- (2,0);
        \draw[thick] (2,0) -- (3,1);
        \draw (0,0) -- (3,0);
        \node at (3.75,0) {$y=0$};
        \node at (1.5,-1.5) {$C(1,1,-2)$};
        \filldraw (0,0) circle [radius=2pt];
        \filldraw (1,-1) circle [radius=2pt];
        \filldraw (3,1) circle [radius=2pt];
    \end{tikzpicture}
    \begin{tikzpicture}
        \draw[gray!30, step=1] (0,-1) grid (3,1);
        \node at (-0.3,0) {$O$};
        \draw[thick] (1,-1) -- (2,0);
        \draw[thick] (2,0) -- (3,1);
        \draw (0,0) -- (3,0);
        \node at (3.75,0) {$y=0$};
        \node at (1.5,-1.5) {$C(1,2,-3)$};
        \filldraw (1,-1) circle [radius=2pt];
        \filldraw (2,0) circle [radius=2pt];
        \filldraw (3,1) circle [radius=2pt];
    \end{tikzpicture}
    \begin{tikzpicture}
        \draw[gray!30, step=1] (0,-1) grid (3,1);
        \node at (-0.3,0) {$O$};
        \draw[thick] (1,-1) -- (2,0);
        \draw[thick] (2,0) -- (3,1);
        \draw (0,0) -- (3,0);
        \node at (3.75,0) {$y=0$};
        \node at (1.5,-1.5) {$B(1,1,-2)$};
        \filldraw (1,-1) circle [radius=2pt];
        \filldraw (3,1) circle [radius=2pt];
    \end{tikzpicture}
    \begin{tikzpicture}
        \draw[gray!30, step=1] (0,-1) grid (3,1);
        \node at (-0.3,0) {$O$};
        \draw[thick] (0,0) -- (2,0);
        \draw (0,0) -- (3,0);
        \node at (3.75,0) {$y=0$};
        \node at (1.5,-1.5) {$B(0,1,-1)$};
        \filldraw (0,0) circle [radius=2pt];
        \filldraw (2,0) circle [radius=2pt];
    \end{tikzpicture}
    \begin{tikzpicture}
        \draw[gray!30, step=1] (0,-1) grid (3,1);
        \node at (-0.3,0) {$O$};
        \draw (0,0) -- (3,0);
        \node at (3.75,0) {$y=0$};
        \node at (1.5,-1.5) {$A(0,0,0)$};
        \filldraw (0,0) circle [radius=2pt];
    \end{tikzpicture}
    \begin{tikzpicture}
        \draw[gray!30, step=1] (0,-1) grid (3,1);
        \node at (-0.3,0) {$O$};
        \draw (0,0) -- (3,0);
        \node at (3.75,0) {$y=0$};
        \node at (1.5,-1.5) {$A(0,1,-1)$};
        \filldraw (2,0) circle [radius=2pt];
    \end{tikzpicture}
    \caption{Admissible subcharts for $D(1,2,-3)$ in Example \ref{eg:all-admissible-12minus3}}
    \label{fig:subcharts-12minus3}
\end{figure}
\end{example}

We summarize the results from both examples in geometric language.

\begin{prop}\label{prop:stratadeterm2}
Besides the strata determined in Proposition \ref{prop:stratadeterm}, the remaining ones included in $\mathcal{K}^2$ are:
\begin{enumerate}
    \item $A(0,1,1)$, $B(1,1,1)$, $B(1,1,2)$, $C(1,1,2)$, $C(1,2,2)$ and $C(1,2,3)$ when $\mathfrak{T} \in \{N-1, 2N-1\}$,
    \item The negatives of the strata in $(1)$ when $\mathfrak{T} \in \{1, N+1, 2N+1\}$,
    \item $A(0,0,0)$, $A(0,1,-1)$, $B(1,1,-2)$, $B(0,1,-1)$, $C(1,2,-3)$ and  $C(1,1,-2)$ when $\mathfrak{T} \in \{0, N, 2N\}$,
    \item The negatives of the strata in $(3)$ when $\mathfrak{T} \in \{0, N, 2N\}$.
\end{enumerate}
\end{prop}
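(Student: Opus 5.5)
The plan is to reduce Proposition \ref{prop:stratadeterm2} to the two examples just worked out, together with the symmetry given by the involution. By Theorem \ref{theorem:mainstrat}, a stratum $X_b(\tau,x_1,x_2,x_3)$ lies in $\mathcal{K}^2$ exactly when it is unobstructed. By \eqref{eqn:lc-condition-prime}, this means two things. First, its line chart must be admissible for some neutral line $y=2k$. Second, the $\tau_i$ must satisfy the analogue of \eqref{eqn:admissible-second}, namely $\mathfrak{T}+k\equiv 0 \pmod N$.

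For the $x$-part I would invoke Lemma \ref{lem:correct-dim}, which applies since $n=3$. Every admissible line chart is then a line subchart of one of the four admissible complete charts of Example \ref{eg:complete-admissible}, with the same neutral line. So it suffices to list the admissible subcharts of these four charts. For $D(1,2,3)$ this list is Example \ref{eg:all-admissible-123}, and for $D(1,2,-3)$ it is Example \ref{eg:all-admissible-12minus3}.

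The remaining two complete charts, $D(-1,-2,-3)$ and $D(-1,-2,3)$, are the reflections $y\mapsto -y$ of the first two. This reflection sends admissible subcharts to admissible subcharts and replaces $k$ by $-k$. Geometrically it is realized by the involution $j[3]$ of Lemma \ref{lem:involutiongeneral}, via Lemma \ref{lemma:invoL} and the remark that $j$ sends $G_{(t,i)}$ to $G_{(N-t,-i)}$. This yields the negated strata in (2) and (4).

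Next I would read off $k$, and hence $\mathfrak{T}$ modulo $N$, from each case:
\begin{itemize}
\item subcharts of $D(1,2,3)$ have $2k=2$, so $\mathfrak{T}\equiv -1$;
\item subcharts of $D(1,2,-3)$ have $k=0$, so $\mathfrak{T}\equiv 0$;
\item the reflected subcharts have $\mathfrak{T}\equiv 1$ and $\mathfrak{T}\equiv 0$ respectively.
\end{itemize}
Since $0\le \tau_i\le N-1$, we have $0\le\mathfrak{T}\le 3N-3$. The admissible values are therefore exactly $\{N-1,2N-1\}$, $\{0,N,2N\}$ and $\{1,N+1,2N+1\}$, as in Proposition \ref{prop:stratadeterm}. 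Finally I would remove the $b=3$ strata, which are already in Proposition \ref{prop:stratadeterm}, and what remains should be the lists (1)--(4).

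The main obstacle is the non-uniqueness of line charts discussed in \ref{subsubsec:non-uniqueness}. A stratum with $p$ points on original components has charts that differ by vertical shifts of $2$, which changes $k$, and the same stratum can appear as a subchart of different complete charts. Examples are $C(1,2,3)=C(1,2,-3)$ and the three charts of $A(0,1,1)$. I must therefore check that the $\tau$-labelling absorbs the shift consistently. When the chart is moved by $2$, the sign of some $x_i=\pm(b+1)$ flips, and $G'_{\tau,b+1}=G'_{\tau+1,-(b+1)}$ forces $\tau_i$ to shift by one. This changes $\mathfrak{T}$ by $\pm1$, exactly compensating the change of $k$. With this checked, the same stratum appears in different lists only with the appropriately relabelled $\tau$. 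I also need to confirm that no stratum is missed, for instance $A(0,0,0)$ and $B(0,1,-1)$; these follow from Lemma \ref{lemma:narrowwidedescr} and the narrow cases of the examples.
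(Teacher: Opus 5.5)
Your proposal is correct and follows essentially the same route as the paper. The paper's proof reads off cases (1) and (3) from Examples \ref{eg:all-admissible-123} and \ref{eg:all-admissible-12minus3}, obtains (2) and (4) by the reflection $y\mapsto -y$, takes the values of $\mathfrak{T}$ from \eqref{eqn:admissible-second}, and relies on Lemma \ref{lem:correct-dim} for completeness. Your extra check is also correct: the relabelling $G'_{\tau,b+1}=G'_{\tau+1,-(b+1)}$ changes $\mathfrak{T}$ by exactly the amount that the vertical shift of an equivalent line chart changes $k$, so $\mathfrak{T}+k \bmod N$ is invariant; this makes explicit what the paper only notes in the remark after the proposition. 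Appealing to the involution $j[3]$ is harmless but unnecessary, since the reflection argument is purely combinatorial.
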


\begin{proof}
	The cases (1) and (3) are geometric interpretations of Examples \ref{eg:all-admissible-123} and \ref{eg:all-admissible-12minus3} respectively; the cases (2) and (4) can be analyzed similarly. The possible values for $\mathfrak{T}$ in all cases still follow from \eqref{eqn:admissible-second}.
\end{proof}

\begin{remark}
It is crucial to point out a subtle difference between Propositions \ref{prop:stratadeterm} and  \ref{prop:stratadeterm2}. Since we allow $x_i = b+1$ and $-b-1$, the strata appearing in Proposition \ref{prop:stratadeterm2} are not always disjoint cases. For example, it is straightforward to verify that the stratum $ A(\tau_3-1, \tau_1,\tau_2; 0,1,1)$ is the same as $A(\tau_3-1, \tau_1,\tau_2+1; 0,1,-1)$. In fact, these types of identifications will be used extensively later on, when gluing $2$-simplices in Section \ref{sec:dualcplxIII}.
\end{remark}

\subsection{Smoothing}\label{subsec:numerical-smoothing}

For any $n$, the map $ \mathcal{H}^n \to C$ is a semi-stable degeneration, since it is a composition of the smooth morphism $ \mathcal{H}^n \to C[n]$ and the semi-stable map $C[n] \to C$. Using the description of $ \mathcal{H}^n $ from \cite{GHH} and \cite{GHHZ21}, we can systematically keep track of the intersections of the irreducible components of the special fiber $ \mathcal{H}^n_0 $ over $0 \in C$, and \emph{e.g.} compute the dual complex. 

In the notation introduced above, the irreducible components are the closures of the maximal dimensional strata $\mathcal{X}_0$, 
which intersect along the strata $\mathcal{X}_{b}$, for $b \leq n $. 
Here we note that the family $ \mathcal{H}^n \to C$ is flat and thus all irreducible components have the same dimension. One can also formulate this in terms of \emph{smoothings}, where we say that $\mathcal{X}_{b'}(\tau, x'_1, \ldots,x'_n)$ is a smoothing of $\mathcal{X}_{b}(\tau, x_1, \ldots,x_n)$ if the latter is contained in the closure of the former.

\subsubsection{The smoothing mechanism}\label{subsubsec:numrules}
Let $C[b] \to C[n]$ be the standard embedding with coordinates $ \{t_{i_1}, \ldots, t_{i_l} \} \subset \{t_1, \ldots,t_{n+1}\} $, where $l=b+1$. For $ 1 \leq j \leq l$, the lemma below describes the smoothing of a stratum $ \mathcal{X}_b(x_1, \ldots, x_n)$ \emph{along} the coordinate $t_{i_j}$. By this we mean the unique stratum over $ (0 \in C[b-1]) \times \mathbb G[n-(b-1)] $ containing $ \mathcal{X}_b(x_1, \ldots, x_{n})$ in its closure, where $C[b-1] \to C[n]$ is the standard embedding with coordinates $ \{t_{i_1}, \ldots, \hat{t}_{i_j}, \ldots, t_{i_l} \} $.

\begin{lemma}\label{lemma:numsmoothing}
	We set \[
x_i' = \left\{
        \begin{array}{ll}
          x_i, & \mathrm{if} \quad \vert x_i \vert < j  \\[1em]
          
          x_i - 1,  & \mathrm{if} \quad \vert x_i \vert \geq j \quad \mathrm{and}  \quad x_j > 0 \\
          [1em]

          x_i + 1, & \mathrm{if} \quad \vert x_i \vert \geq j \quad \mathrm{and}  \quad x_j < 0.
				\end{array}
    \right.
\]
Then we have
\begin{itemize}
	\item For the strata in the relative Hilbert scheme, the smoothing of $ \mathcal{X}_b(x_1, \ldots, x_n)$ along $t_{i_j}$ equals the stratum $ \mathcal{X}_{b-1}(x_1', \ldots,x_n') $;
	\item For the strata in the relative Kummer variety, the smoothing of $ X_b(x_1, \ldots, x_n)$ along $t_{i_j}$ equals the stratum $ X_{b-1}(x_1', \ldots,x_n') $, provided that the latter is admissible.
\end{itemize}
\end{lemma}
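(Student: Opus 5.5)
The plan is to reduce to the local geometry of the expansion near the inserted components over a single point of the double locus, and read off the smoothing from the standard embeddings of \ref{subsubseq:standardemb}.

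\textbf{Local structure of the smoothing.} Setting $t_{i_j}\neq 0$ while keeping the remaining coordinates $t_{i_k}$, $k\neq j$, equal to zero amounts to passing from $C[b]$ to the standard embedding $C[b-1]$ with coordinates $\{t_{i_1},\ldots,\hat t_{i_j},\ldots,t_{i_l}\}$, twisted by the torus factor as in \eqref{equation:pullbackiso}. By the local equations from the proof of Proposition \ref{prop:main-blow-up}, in the chart where the chain $G'_{\tau,1},\ldots,G'_{\tau,b}$ meets $G'_{\tau,0}$ and $G'_{\tau+1,-(b+1)}$, the coordinate $t_{i_j}$ is the smoothing parameter of the node joining the $(j-1)$-st and $j$-th components of each chain. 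So along $t_{i_j}$ exactly the two components with $|x|=j-1$ and $|x|=j$ on the positive side merge, and likewise on the negative side. All components with $|x|<j$ keep their label. Those with $|x|\ge j$ drop their absolute index by one: $x\mapsto x-1$ for $x>0$ and $x\mapsto x+1$ for $x<0$.

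This reproduces the stated formula. Read the case condition as a condition on the sign of $x_i$ itself; the ``$x_j$'' in the statement is presumably a typo for $x_i$. One also checks that the $\tau_i$ are unchanged, since the relabelling stays within a single block $\tau$; the boundary cases $|x_i|=b+1$ are consistent with this via the identification $G'_{\tau,-(b+1)}$ with the strict transform of $G_{2\tau-1}$.

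\textbf{Hilbert scheme part.} A stable point $Z$ of type $\mathcal X_b(x_1,\ldots,x_n)$ has smooth support, so it lies away from the node being smoothed. It therefore deforms, in a neighbourhood flat over the $t_{i_j}$-direction, to a subscheme supported on the merged component. Its combinatorial support is obtained by adding the entries of $\mathbf v(Z)$ in positions $j-1$ and $j$, and the numerical support transforms in the same way. Theorem \ref{theorem:stabilitycondition} then shows the deformation is stable and lies in $\mathcal X_{b-1}(x'_1,\ldots,x'_n)$. Uniqueness follows because the strata over $(0\in C[b-1])\times\mathbb G[n-b+1]$ are distinguished by their discrete data $\mathbf m$, and the specialization map on supports is determined by the relabelling above.

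\textbf{Kummer part.} This is where I expect the main obstacle. We need to show that the closure of $X_{b-1}(x')$, taken inside $\mathcal K^{n-1}$, actually contains $X_b(x)$ whenever $X_{b-1}(x')$ is admissible. Since $X_b(x)$ is nonempty, it is admissible. By Theorem \ref{theorem:mainstrat} it is an orbit of $O(b,\mathbf m)\times\mathcal U(I)_0$, and by Lemma \ref{lem:Tfibersirreducible} and Proposition-Definition \ref{prop:stratumdfn} it is $\mu^{-1}(e)$ or $\mu^{-1}(T)$.

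To produce the specialization, I would take a point $Z_s$ of $X_b(x)$ and an admissible tuple $\mathbf r$ for it. I then choose a curve $S\to C[b]$ of the form $t_{i_k}\mapsto\pi^{r_k}$, and lift it through $\mathrm{Kum}^{n-1}$ of the Néron model over the diagonal line in $C[r-1]$, as in Lemma \ref{lem:independencer}. The point is to choose this curve so that its generic point lands in the $t_{i_j}\neq 0$ stratum rather than in $C[n]^*$. Concretely, I would use the two-parameter family obtained by composing with the map $C[r-1]\to C[b]$, so that the locus $t_{i_j}\neq0$, with all other coordinates zero, meets it in a stratum of type $X_{b-1}(x')$. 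Because the Kummer scheme is smooth over the base (Proposition \ref{prop:Kummernsmooth}), points of $\mathrm{Kum}^{n-1}$ in the special fibre lift along this family. The generic member then lies in $\mathcal K^{n-1}$ and, by the Hilbert scheme part, in $\mathcal X_{b-1}(x')$; hence it lies in $X_{b-1}(x')$.

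Admissibility of $X_{b-1}(x')$ is exactly what makes this consistent: for the merged tuple $\mathbf r'$, obtained by combining the two merged parts, the weight congruence of Proposition \ref{prop:numobstr} must hold. The delicate point is verifying that the chosen family really lies in $\mathcal K^{n-1}$, and not merely in $\mathrm{Kum}^{n-1}$ of the resolved model. For this I would rely on the irreducibility and $\mathbb G[n]$-stability of the strata, together with the fact that $\mathcal K^{n-1}$ is the closure of its generic part.
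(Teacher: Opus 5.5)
Your reading of the case condition ($x_j$ should be $x_i$) is right, and your Hilbert scheme sketch is acceptable; the paper simply cites \cite[Theorem 4.8]{GHHZ21} there. The genuine gap is in the Kummer part, and it goes beyond the ``delicate point'' you flag: the lifting mechanism itself cannot work.

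Proposition \ref{prop:Kummernsmooth} applies only to a smooth commutative group scheme over the base. You would need one over a curve whose generic point lies in the locus $t_{i_j}\neq 0$, $t_{i_k}=0$ for $k\neq j$, and none is available. The only group schemes in the paper are $\mathcal{Y}^{sm}_{C[n]}$, which never sees inserted components, and the N\'eron models $\mathcal{A}(L)$ over generic lines, whose generic points lie in $C[n]^*$. Moreover, the smooth locus of the expansion restricted to such a curve, whether inside $C[b]$ or inside your two-parameter family in $C[r-1]$, cannot be a group scheme specializing to $\mathcal{A}(L)_0$. For a smooth separated group scheme over a DVR, the closure of each generic component meets the special fibre in a coset of the subgroup of components met by the closure of the identity component. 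So all these closures contain the same number of special components. In your family, however, a merged generic component specializes to at least two special components, while an unmerged one specializes to exactly one.

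A quick sanity check confirms the failure. Your argument never uses the admissibility of $X_{b-1}(x')$ in an essential way, so it would produce points of $X_{b-1}(x')$ whenever $X_b(x)$ is nonempty. For instance, it would place $C(0,1,2)$, the smoothing of $D(1,2,3)$ along $t_1$, in $\mathcal{K}^2$, which is false. Your remark about a merged tuple $\mathbf r'$ does not repair this. Smoothing along $t_{i_j}$ deletes the vertex attached to $r_j$ from the line chart, and admissibility of the result is an independent condition.

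What is needed, and what the paper does through Proposition \ref{prop:C-goal} (i.e.\ $\mathrm{cl}(X_{b-1}(x'))\cap\mathcal{X}_b(x)=X_b(x)$), is a direct construction based on the explicit description of the strata in Proposition-Definition \ref{prop:stratumdfn}.
\begin{itemize}
\item In the local coordinates $(d,\theta_0,\dots,\theta_{b+1})$ on $\mathcal{Y}[b]$, a point of $X_b(x)^\circ$ is a tuple of distinct points with data $(d_i,\lambda_i)$. These satisfy $\sum d_i=0$ in $D$, and in the narrow case also $\prod\lambda_i^{\varepsilon_i}=1$.
\item Write down arcs $\alpha_i(\pi)$ with $t_{i_j}=\pi$, all other $t$'s zero, and $d$ constant. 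Rescale the $\theta$-coordinates by $t_j$ for points on the two components adjacent to the smoothed node, e.g.\ $\theta_{j-1}=\lambda_i/t_j$ when $\xi_i=j$, compatibly with $\theta_j=t_j\theta_{j-1}$.
\item For $\pi\neq 0$ the tuple lies in $\mathcal{X}_{b-1}(x')$. Because $X_{b-1}(x')$ is admissible, it equals $\mu^{-1}(T)$ or $\mu^{-1}(e)$, so membership reduces to explicit equations.
\item In the wide case only $\sum d_i=0$ must hold, and it does. In the narrow case the $\lambda$-condition survives because the extra factor is $\pi^{-m_j}$ (or $\pi^{m_{j-1}}$), and $m_j=0$ by narrowness.
\end{itemize}
Density of $X_b(x)^\circ$ then gives $X_b(x)\subseteq\overline{X_{b-1}(x')}$, and uniqueness of the smoothing follows from the Hilbert scheme case.
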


\begin{proof}
	For the case of the relative Hilbert scheme, see \cite[Theorem 4.8]{GHHZ21}.
	
	Moreover, we will prove in Proposition \ref{prop:C-goal} a relation between the Hilbert strata and the corresponding Kummer strata, namely:
\begin{equation}\label{eqn:intersection-of-strata}
	\mathrm{cl}(X_{b'}(x'_1, \ldots, x'_n)) \cap \mathcal{X}_{b}(x_1, \ldots, x_n) = X_{b}(x_1, \ldots, x_n)
\end{equation}
for any choice of $b'<b$, when the stratum $X_{b}(x_1, \ldots, x_n)$ is admissible (otherwise the intersection is empty). Then the case of the relative Kummer variety follows immediately from the case of the relative Hilbert scheme and \eqref{eqn:intersection-of-strata}.
\end{proof}

\subsubsection{} We explain how the smoothing of $D(1,2,3)$ works. The result for $D(-1,-2,-3)$ is entirely similar, except that all integers should be replaced by their negatives.

The stratum $D(1,2,3)$ has dimension $2 \delta + 1$ by Lemma \ref{lemma:stratdimension}. If we smooth along $t_1, t_2, t_3$ and $t_4$ we get $C(0,1,2)$, $C(1,1,2)$, $C(1,2,2)$ and $C(1,2,3)$, respectively. All these strata have dimension $2 \delta + 2$. Note that $C(0,1,2)$ is not in $\mathcal{K}^2$, by Proposition \ref{prop:stratadeterm2}.

Smoothing $C(1,1,2)$ along $t_{i_1} = t_1$, $t_{i_2} = t_3$ and $t_{i_3} = t_4$ yields $B(0,0,1)$, $B(1,1,1)$ and $B(1,1,2)$, respectively. Note that $B(0,0,1)$ is not in $\mathcal{K}^2$. Both strata $B(1,1,1)$ and $B(1,1,2)$ have dimension $2 \delta + 3$. Smoothing $C(1,2,2)$ along $t_1$, $t_2$ and $t_4$ yields $B(0,1,1)$, $B(1,1,1)$ and $B(1,2,2)$, respectively. Note that $B(0,1,1)$ and $B(1,2,2)$ are not in $\mathcal{K}^2$. Similarly, the only smoothing of $C(1,2,3)$ resulting in a stratum of $\mathcal{K}^2$ is along $t_2$, which gives $B(1,1,2)$.

There is only one further stratum appearing. Namely, smoothing $C(1,2,2)$ along $t_1$ and $t_4$, or smoothing $C(1,2,3)$ along $t_1$ and $t_4$, in both cases gives $A(0,1,1)$. This stratum has dimension $2 \delta + 3$.

\subsubsection{Interpretation in terms of line charts}\label{subsubsec:smoothing-line-charts}

In both cases, namely of Hilbert schemes and generalized Kummer varieties, the geometry of smoothings can be conveniently described in the language of line charts.

Assume $\mathcal{L}$ is a line chart associated to a certain stratum, whose set of vertices is $V$. Once a certain coordinate $t_j$ in the base $C[b]$ deforms from $0$ to a nonzero value, we have to set $r_j=0$ in equation \eqref{eqn:admissible-first}, which corresponds to removing the point $(n+1-j, \varepsilon_j + \dots + \varepsilon_n)$ from $V$. We therefore conclude that smoothing corresponds to taking line subcharts. 

In the case of Hilbert schemes, every possible smoothing remains a stratum. However, in the case of generalized Kummer varieties, only the admissible smoothings survive as strata.

Opposite to smoothing, specialization corresponds to adding vertices to $ \mathcal{L}$ with the missing values of $x$-coordinates.

\subsubsection{}\label{eg:admissible-123-again}

    The calculation in Example \ref{eg:all-admissible-123} clearly shows the limiting relation among the $7$ strata appearing in the smoothing of $D(1,2,3)$. More precisely, the line charts corresponding to $C(1,1,2)$, $C(1,2,2)$ and $C(1,2,3)$ are obtained by removing one vertex from that of $D(1,2,3)$, therefore their corresponding strata are smoothings of $D(1,2,3)$. Similarly, by removing vertices, the strata $B(1,1,1)$ and $B(1,1,2)$ are smoothings of $C(1,1,2)$; the strata $B(1,1,1)$ and $A(0,1,1)$ are smoothings of $C(1,2,2)$; the strata $B(1,1,2)$ and $A(0,1,1)$ are smoothings of $C(1,2,3)$.
    
    If we assign a $0$-cell to $B(1,1,1)$, $B(1,1,2)$ and $A(0,1,1)$, a $1$-cell to $C(1,1,2)$, $C(1,2,2)$ and $C(1,2,3)$ and a $2$-cell to $D(1,2,3)$, we see that the smoothing relations also are encoded combinatorially by a $2$-simplex (with specialization of a stratum corresponding to being part of the boundary of a cell), denoted $\mathcal{D}^+$ and visualized in \ref{subsubsec:dualcplx123} below. A similar analysis shows how the smoothings of $D(1,2,-3)$ correspond to the $2$-simplex denoted $\mathscr{D}^+$ in \ref{subsubsec:dualcplx12minus3}.
    
    In the next subsection, we shall initiate a systematic study of dual complexes in our context. 

\subsubsection{}\label{subsubsec:dualcplx123}
The dual complex $ \mathcal{D}^+ = \mathcal{D}(1,2,3)$ encoding the numerical smoothings of $D(1,2,3)$ can be described as follows: 

\begin{center}
		\begin{tikzpicture}
			\draw (90:2.5cm) -- (210:2.5cm) node[midway,sloped,above] {$C(1,2,2)$} -- (330:2.5cm) node[midway,sloped,below] {$C(1,1,2)$} -- (90:2.5cm) node[midway,sloped,above] {$C(1,2,3)$};
			\fill (90:2.5cm) circle (4pt);
			\fill (210:2.5cm) circle (4pt);
			\fill (330:2.5cm) circle (4pt);
			\draw (90:3cm) node {$A=A(0,1,1)$};
			\draw (210:3.5cm) node {$B_1=B(1,1,1)$};
			\draw (330:3.5cm) node {$B_2=B(1,1,2)$};
			\draw (0,-0.2) node{$D(1,2,3)$};
		\end{tikzpicture}
	\end{center}

Similarly, we can define a dual complex $ \mathcal{D}^- = \mathcal{D}(-1,-2,-3)$ encoding the smoothings of $D(-1,-2,-3)$; it is obtained from $ \mathcal{D}^+$ by replacing every $X_b(x_1,x_2,x_3)$ by its negative $X_b(-x_1,-x_2,-x_3)$.

\subsubsection{}\label{subsubsec:dualcplx12minus3}
The dual complex $ \mathscr{D}^+ = \mathscr{D}(1,2,-3)$ encoding the
smoothings of $D(1, 2, -3)$ is given as follows:

\begin{center}
		\begin{tikzpicture}
			\draw (90:2.5cm) -- (210:2.5cm) node[midway,sloped,above] {$B(0,1,-1)$} -- (330:2.5cm) node[midway,sloped,below] {$C(1,2,-3)$} -- (90:2.5cm) node[midway,sloped,above] {$C(1,1,-2)$} ;
			\fill (90:2.5cm) circle (4pt);
			\fill (210:2.5cm) circle (4pt);
			\fill (330:2.5cm) circle (4pt);
			\draw (90:3cm) node {$A_1=A(0,0,0)$};
			\draw (210:3.5cm) node {$A_2=A(0,1,-1)$};
			\draw (330:3.5cm) node {$B=B(1,1,-2)$};
			\draw (0,-0.2) node {$D(1,2,-3)$};
		\end{tikzpicture}
	\end{center}

Similarly, we can define a dual complex $ \mathscr{D}^- = \mathscr{D}(-1,-2,3)$ encoding the smoothings of $D(-1,-2,3)$; it is obtained from $ \mathscr{D}^+$ by replacing every $X_b(x_1,x_2,x_3)$ by its negative $X_b(-x_1,-x_2,-x_3)$.

\begin{remark}
    We point out the similarity and difference between the two cases above. Smoothing $D(1,2,3)$ and $D(1,2,-3)$ both gives a $2$-simplex as the dual complex. Later we will realize these $2$-simplices as subsets in $3$-simplices that are building blocks of the "full" dual complex of the relative Hilbert scheme, and discover that the way they embed in the $3$-simplices as subsets will be different. We will see that the reason for the difference is the different distribution of vertices above, below, and on the neutral line. 
   
\end{remark}

\subsubsection{}
For arbitrary $n$ the dual complex associated to a deepest stratum is not always an $(n-1)$-simplex, as demonstrated by the following example.

\begin{example}\label{eg:4d-example}
    When $n=4$, we consider a deepest admissible stratum $E(1,2,3,4)$ whose corresponding line chart $ \mathcal{L} $ has vertices 
    \[
    S = \{ (0,0), (1,1), (2,2), (3,3), (4,4) \}
    \]
    with $2k = 2$. By listing all its admissible line subcharts, we discover that the dual complex that corresponds to the above deepest stratum is a pyramid over a square (instead of a tetrahedron). Figure \ref{fig:4d-example} explicitly indicates the stratum that each vertex or edge represents. Moreover, the bottom face represents $D(1,2,2,3)$; the left face represents $D(1,1,2,3)$; the back face represents $D(1,2,3,4)$; the right face represents $D(0,1,2,3)$; the front face represents $D(1,2,3,3)$; the interior of the pyramid represents $E(1,2,3,4)$.

Geometrically, the occurrence of such a pyramid indicates that five $3$-dimensional irreducible components of the central fiber intersect at a common point, hence the degeneration is not semistable. 

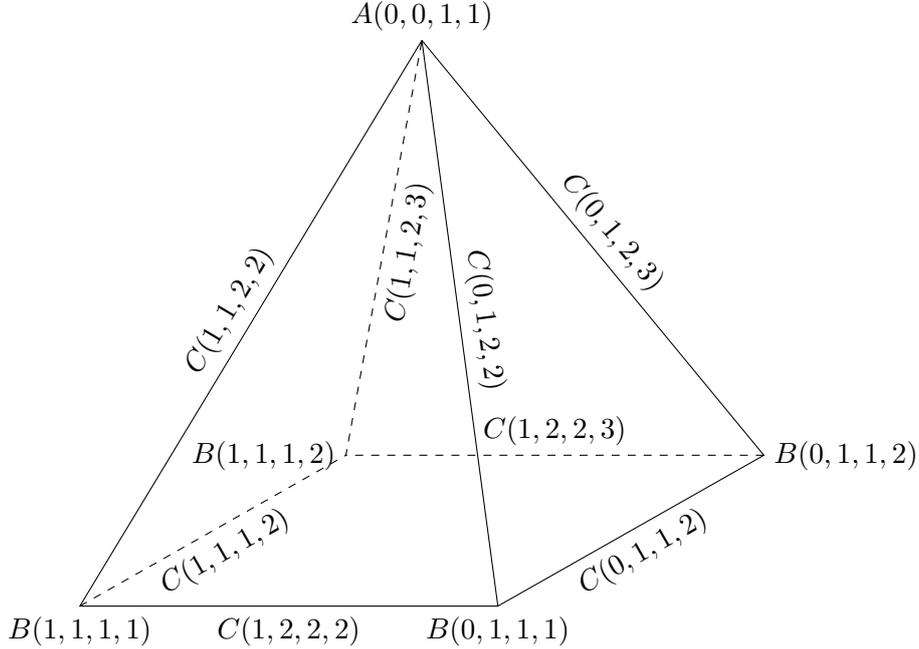
\begin{figure}
    \begin{tikzpicture}
        \coordinate (T) at (0,6.5);
        \coordinate (A) at (-4.5,-1);
        \coordinate (B) at (-1,1);
        \coordinate (D) at (1,-1);
        \coordinate (C) at (4.5,1);
        \draw (T) node[anchor=south] {$A(0,0,1,1)$};
        \draw (A) node[anchor=north] {$B(1,1,1,1)$};
        \draw (B) node[anchor=east] {$B(1,1,1,2)$};
        \draw (C) node[anchor=west] {$B(0,1,1,2)$};
        \draw (D) node[anchor=north] {$B(0,1,1,1)$};
        \draw (T) -- node[sloped,above]{$C(1,1,2,2)$} (A) -- node[sloped,below]{$C(1,2,2,2)$} (D) -- node[sloped,below]{$C(0,1,1,2)$} (C) -- node[sloped,above]{$C(0,1,2,3)$} (T);
        \draw (D) -- node[sloped,above]{$C(0,1,2,2)$} (T);
        \draw[dashed] (A) -- node[sloped,below]{$C(1,1,1,2)$} (B) -- node[sloped,below]{$C(1,1,2,3)$} (T);
        \draw[dashed] (C) -- node[sloped,above]{$C(1,2,2,3)$} (B);
        \end{tikzpicture}
    \caption{Smoothings of $E(1,2,3,4)$ in Example \ref{eg:4d-example}}
    \label{fig:4d-example}
\end{figure}

\end{example}

\subsection{Dual complexes}\label{subsec:dualcpldef}
We shall now introduce dual complexes more systematically, without relying on low dimension like above. 
In particular, we let $ n \geq 3 $ be arbitrary. In the sequel, some standard facts and terminology related to 
convex polytopes will be used, we refer to \cite{Zie} for a comprehensive reference.

For the Hilbert scheme degeneration $I^n = I^n_{\mathcal{Y}/C}$, we have already introduced and studied the dual complex $\Delta(I^n)$ in \cite{GHHZ21}. Our task is thus to develop a notion of dual complex also for the generalized Kummer degeneration $K^{n-1} = K^{n-1}_{\mathcal{Y}/C}$. Since $K^{n-1} \to C$ will not in general be \emph{dlt}, we do not get the existence of a well behaved dual complex from \cite{dFKX}, as in the case of $I^n$.  

Our approach will be to first define a dual complex encoding the smoothings of a deepest stratum, i.e. corresponding to a complete admissible line chart, and then to glue this objects to obtain the "global" dual complex, which shall be denoted $\Delta(K^{n-1}_{\mathcal{Y}/C})$.

\subsubsection{} A complete admissible line chart $\mathcal{L}$ defines two graded posets. The first is $P_H$ (here we do not use admissibility), formed by all line subcharts $\mathcal{L}' \prec \mathcal{L}$, with rank function $r(\mathcal{L}') = \delta n - \mathrm{dim}(\mathcal{X}(\mathcal{L}'))$. The second is $P_K$, consisting of all admissible line subcharts $\mathcal{L}'$, with rank function $r(\mathcal{L}') = \delta (n-1) - \mathrm{dim}(X(\mathcal{L}'))$. Here $\mathcal{X}(\mathcal{L}')$, resp.~$X(\mathcal{L}')$, denotes the unique stratum in the Hilbert scheme, resp.~the Kummer scheme, corresponding to $\mathcal{L}'$. (That the latter forms a graded poset is a straightforward verification using properties of line charts together with Lemma \ref{lemma:numsmoothing}.)

Since $P_H$ is just the power set of the set of vertices $V$, it can also be seen as the face lattice of the standard $n$-simplex. For a given $\mathcal{L}$, this defines the (H-)dual complex of $\mathcal{L}$, denoted $\Delta_H(\mathcal{L})$.

\subsubsection{}

Also $P_K$ can in fact be identified with the face lattice of a polytope, which we will now explain. First of all, we recall that there is a one-to-one correspondence between the vertex set $V$ of $\mathcal{L}$ and the vertex set of $\Delta_H$, hence we use also $V$ for the latter. We moreover identify $\Delta_H$ with the convex hull of the standard basis vectors of $\mathbb{R}^{n+1}$ (see e.g. \cite[p.~7]{Zie}).

We decompose $V$ into a disjoint union
$$ V_+ \cup V_0 \cup V_- $$
according to the subsets of vertices that lie above, on and below the neutral line, respectively. These sets have cardinality $n_+$, $ n_0$ and $n_-$, respectively. Note that since $\mathcal{L}$ is complete and admissible, both $n_+$ and $n_-$ are strictly positive. Like in the setting of Lemma \ref{lemma:descriteratedconeprod}, we consider the hyperplane 
$$ \Lambda = \{z_1 + \ldots + z_{n_+} - z_{n_+ + 1} - \ldots - z_{n_+ + n_-} = 0 \}$$
in $\mathbb{R}^{n+1}$. By construction, it contains all vertices of $V_0$, the vertices of $V_+$ are all on one side of $\Lambda$, and the vertices of $V_-$ are all on the other side of $\Lambda$.

In the proposition below, we use the notation $\delta_m$ for the standard $m$-simplex, and $C^m(Q)$ for the $m$-fold cone over a convex polytope $Q$. 

\begin{prop}\label{prop:from-H-to-K}
Let $\Lambda$ be as above. Then the intersection $\Lambda \cap \Delta_H $ is a convex polytope affinely equivalent to $C^{n_0}(\delta_{n_+ - 1} \times \delta_{n_- - 1})$. Moreover, its face lattice equals $P_K$.
\end{prop}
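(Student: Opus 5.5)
The plan has two independent parts: identify the polytope $\Lambda\cap\Delta_H$ up to affine equivalence, and then match its faces with the admissible line subcharts. Order the coordinates of $\mathbb{R}^{n+1}$ so that $z_1,\dots,z_{n_+}$ correspond to $V_+$, the next $n_-$ coordinates to $V_-$, and the last $n_0$ to $V_0$. A point of $\Delta_H$ is then $z=(z_+,z_-,z_0)$ with all entries nonnegative and total sum $1$, and it lies on $\Lambda$ exactly when $|z_+|=|z_-|$, where $|\cdot|$ denotes the coordinate sum.

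For the first part, consider first the case $n_0=0$. The map $(z_+,z_-)\mapsto (2z_+,2z_-)$ identifies $\Lambda\cap\Delta_H$ with the set of pairs in which $2z_+$ and $2z_-$ are each a point of a standard simplex. This set is exactly $\delta_{n_+-1}\times\delta_{n_--1}$, and the map is affine. When $n_0>0$, the $n_0$ vertices $e_v$ with $v\in V_0$ lie on $\Lambda$. Each point of the intersection can be written uniquely as a convex combination of these vertices and a point of the $n_0=0$ slice; the scaling factor is $1-|z_0|$. The points $e_v$ are affinely independent of each other and of the affine span of that slice, because they have nonzero $z_0$-coordinates. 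This is the iterated cone construction referred to in Lemma \ref{lemma:descriteratedconeprod}. Therefore $\Lambda\cap\Delta_H\cong C^{n_0}(\delta_{n_+-1}\times\delta_{n_--1})$.

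For the second part, use the standard description of the faces of a hyperplane section of a simplex. Every nonempty face of $\Lambda\cap\Delta_H$ has the form $\Lambda\cap F_W$, where $F_W=\mathrm{conv}\{e_v: v\in W\}$ for some $W\subset V$. The face $\Lambda\cap F_W$ is nonempty precisely when $W$ either meets both $V_+$ and $V_-$, or satisfies $W\subset V_0$. Among all $W$ giving the same face there is a unique maximal choice, namely those $W$ for which every vertex of $W$ is used by some point of the intersection. The maximal $W$ are exactly the sets with $W\cap V_+\neq\emptyset\neq W\cap V_-$, together with the nonempty subsets of $V_0$. Under $W\mapsto \mathcal{L}'$ with vertex set $W$, this is verbatim condition \eqref{eqn:lc-condition-prime}, with Lemma \ref{lemma:narrowwidedescr} separating the wide and narrow cases. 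So the faces of $\Lambda\cap\Delta_H$ correspond bijectively to the admissible line subcharts of $\mathcal{L}$. The correspondence preserves inclusion, since both orders are inclusion of vertex sets, and by \ref{subsubsec:smoothing-line-charts} this is the smoothing order.

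It then remains to check that ranks match. The face $\Lambda\cap F_W$ has dimension $|W|-2$ in the wide case and $|W|-1$ in the narrow case. On the other side, the rank in $P_K$ is $\delta(n-1)-\dim X(\mathcal{L}')$. By Lemma \ref{lemma:stratdimension} and its corollary, with $b=|W|-1$, this equals $b-\varepsilon$, which gives $|W|-2$ in the wide case and $|W|-1$ in the narrow case. This agrees with the face dimensions.

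The main obstacle is the bookkeeping for equivalent line charts in Definition \ref{def:linechartequiv}. Distinct subcharts of a fixed $\mathcal{L}$ might represent the same stratum, which would make the map to $P_K$ non-injective. I would argue that this cannot happen inside a single $\mathcal{L}$: equivalent charts differ by a nonzero vertical shift, while subcharts of $\mathcal{L}$ sit at fixed heights. Hence vertex subsets of $\mathcal{L}$ determine their strata, and the identification with $P_K$ is a genuine isomorphism of graded posets.
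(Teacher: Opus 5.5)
Your proposal is correct and follows the paper's route. The affine equivalence is exactly the doubling map plus iterated-cone argument of Lemma \ref{lemma:descriteratedconeprod}, and the face correspondence is the paper's split of faces of $\Delta_H$ into those lying in $\Lambda$ (narrow) and those straddling $\Lambda$ (wide); you make it more precise with carrier sets, the rank check and the injectivity remark.

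Two small slips should be fixed, though your final list of carrier sets is correct. The canonical $W$ attached to a face is the unique \emph{minimal} one (the carrier), not the maximal one: for instance, both $\{v_0\}\cup V_+$ and $\{v_0\}\cup V_-$ are maximal choices for the face $\{e_{v_0}\}$. Also, $\Lambda\cap F_W\neq\emptyset$ holds iff $W$ meets both $V_+$ and $V_-$ or meets $V_0$; for example, $W=\{v_+,v_0\}$ gives the face $\{e_{v_0}\}$.
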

\begin{proof}
We have seen in Subsection \ref{subsubsec:line-chart-arbitrary} that only admissible line subcharts of $\mathcal{L}$ correspond to strata that are smoothings of $X_n(\mathcal{L})$. In other words, only the following types of faces of $\Delta_H$ correspond to strata in the relative Kummer variety:
    \begin{itemize}
    	\item the faces of $\Delta_H$ that are completely contained in $\Lambda$, which correspond to narrow strata,
    	\item the faces of $\Delta_H$ that have nonempty intersection with both sides of $\Lambda$, which correspond to wide strata.
    \end{itemize}

Then $\Lambda \cap \Delta_H$ is a complex, whose faces are given by the intersections of $\Lambda$ with both above types of faces, which correspond to strata in the relative Kummer variety. A purely combinatorial argument shows that it is equivalent to $C^{n_0}(\delta_{n_+ - 1} \times \delta_{n_- - 1})$, see Lemma \ref{lemma:descriteratedconeprod}. Moreover, the boundary relations of the faces of $\Lambda \cap \Delta_H$ are precisely the same as the boundary relations of the corresponding faces of $\Delta_H$, hence also agree with the boundary relations of the strata in the relative Kummer variety. Therefore we conclude that $\Lambda \cap \Delta_H$ precisely encodes all smoothings of $X_n(\mathcal{L})$, as desired.
\end{proof}

In light of the description in Proposition \ref{prop:from-H-to-K}, we therefore define 
$$\Delta_K(\mathcal{L}) = \Lambda \cap \Delta_K(\mathcal{L}). $$

\subsubsection{}
We next record an elementary feature of smoothings, or, equivalently, line charts, that will be useful in defining the global dual complex.

\begin{prop}\label{prop:dual-const-dim}
    For $n \geq 3$, the closure of every admissible stratum in the relative Kummer variety contains a deepest stratum (of dimension $(\delta-1)(n-1)$). 
\end{prop}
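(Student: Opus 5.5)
The plan is to translate the statement into the language of line charts and then apply Lemma \ref{lem:correct-dim}. Fix $n \geq 3$ and an admissible stratum $X_b(\tau, x_1, \ldots, x_n)$ of $\mathcal{K}^{n-1}$, with an associated admissible line chart $\mathcal{L}'$ and neutral line $y = 2k$. By \ref{subsubsec:smoothing-line-charts}, specialization of strata corresponds to adding vertices to a line chart, and smoothing corresponds to passing to line subcharts. It therefore suffices to find an admissible complete line chart $\mathcal{L}$ with $\mathcal{L}' \prec \mathcal{L}$, with the same neutral line $y=2k$, whose stratum has the right $\tau$-data. Lemma \ref{lem:correct-dim} provides exactly such an $\mathcal{L}$: in the wide case any completion works, and in the narrow case one flips sub-intervals across the neutral line or uses the explicit vertices $(0,0),(1,1),(2,0),(3,-1)$ (this is where $n \geq 3$ is needed).

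Next I would verify that the stratum $X_n(\mathcal{L})$ really lies in $\mathcal{K}^{n-1}$ and lies in the closure of $X_b(\mathcal{L}')$. The $\varepsilon$-part of admissibility is the condition \eqref{eqn:lc-condition} for $\mathcal{L}$ with the same $k$. The $\tau$-part \eqref{eqn:admissible-second} only involves $\sum \tau_i + k$ modulo $N$. Keeping the same $\tau_i$ and the same $k$, this condition is inherited from the stratum of $\mathcal{L}'$.

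One subtlety concerns points on original odd components, where $|x_i| = b+1$ and the line chart is non-unique (Definition \ref{def:linechartequiv}). In this case the choice of $\varepsilon_i$ made when completing the chart must be compatible with the shift rules for $\tau$. I would handle this by choosing the representative of $\mathcal{L}'$ that is used in the completion, and noting that equivalent charts describe the same stratum.

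Closure containment then follows from iterating Lemma \ref{lemma:numsmoothing}. Removing the missing vertices of $\mathcal{L}$ one at a time gives a chain of smoothings from $X_n(\mathcal{L})$ to $X_b(\mathcal{L}')$. The intermediate charts need not be admissible, but this does not matter. One can argue in $\mathcal{H}^n$, where the smoothing relations of strata always hold, and then use \eqref{eqn:intersection-of-strata}: it gives $\mathrm{cl}(X_b(\mathcal{L}')) \cap \mathcal{X}_n(\mathcal{L}) = X_n(\mathcal{L})$, which is nonempty because $\mathcal{L}$ is admissible.

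Finally, $X_n(\mathcal{L})$ is deepest. Indeed $b=n$ forces the stratum to be wide, so by the Corollary to Lemma \ref{lemma:stratdimension} its image has dimension $\delta(n-1) - n + 1 = (\delta-1)(n-1)$. I expect the main obstacle to be this closure step together with the bookkeeping of $\tau$ under non-unique charts, since it relies on \eqref{eqn:intersection-of-strata}, which is proved later (Proposition \ref{prop:C-goal}), rather than on purely combinatorial facts.
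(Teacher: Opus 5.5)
Your proposal is correct and is essentially the paper's argument. You complete $\mathcal{L}'$ to an admissible complete line chart with the same neutral line via Lemma \ref{lem:correct-dim}, so that \eqref{eqn:admissible-second} is inherited with the same $\tau$ and $k$; then you obtain $X_n(\mathcal{L}) \subset \overline{X_b(\mathcal{L}')}$ from Proposition \ref{prop:C-goal}, and the $b=n$ stratum is wide of dimension $(\delta-1)(n-1)$. Your caution about passing to an equivalent chart is in fact needed for general $n$: for even $n$ the one-vertex chart $\{(n,n)\}$ of $A(1,\dots,1)$ has no admissible completion with neutral line $y=n$, whereas the equivalent chart $\{(n,n-2)\}$, with one $\tau_i$ shifted, does.
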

\begin{proof}
    It follows immediately from the smoothing mechanism in Subsection \ref{subsubsec:numrules} and the second statement of Lemma \ref{lem:correct-dim}. Indeed, to every admissible line chart $\mathcal{L}'$, we can add more vertices to obtain a complete admissible line chart $\mathcal{L}$, which corresponds to an $(n-1)$-cell. By Proposition \ref{prop:C-goal} we conclude that the stratum associated to $\mathcal{L}$ is included in the closure of the stratum associated to $\mathcal{L}'$.
\end{proof}

For use later in the paper, we also record the following phenomenon.

\begin{cor}\label{cor-boundarynarrowwide}
    The boundary of a narrow stratum can contain wide and narrow strata, while the boundary of a wide stratum can only contain wide strata.
\end{cor}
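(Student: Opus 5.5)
The plan is to translate the statement into line charts, using Lemma \ref{lemma:narrowwidedescr} to read off narrow and wide, and the smoothing mechanism of \ref{subsubsec:smoothing-line-charts} to read off closures.

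First, the boundary relation. Suppose the stratum $X(\mathcal{L}'')$ lies in the closure of $X(\mathcal{L}')$. By Lemma \ref{lemma:numsmoothing} and its line-chart interpretation, smoothing means deleting vertices. Hence $\mathcal{L}'$ is, up to the equivalence of Definition \ref{def:linechartequiv}, a line subchart of $\mathcal{L}''$. Here we choose representatives so that their neutral lines agree. Let $S' \subseteq S''$ be the sets of $y$-coordinates of the vertices.

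Second, wide boundaries are wide. Assume $X(\mathcal{L}')$ is wide. By Lemma \ref{lemma:narrowwidedescr} this means $\min S' < 2k < \max S'$. Since $S' \subseteq S''$, we get
$$\min S'' \le \min S' < 2k < \max S' \le \max S''.$$
So $\mathcal{L}''$ again satisfies the first alternative of \eqref{eqn:lc-condition-prime}, and $X(\mathcal{L}'')$ is wide. Thus a wide stratum has only wide strata in its boundary.

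The one delicate point is that the neutral line must be the same for $\mathcal{L}'$ and $\mathcal{L}''$. This holds because the parameter $k$ comes from the same admissible equation, namely \eqref{eqn:admissible-first} with some $r_i$ set to $0$. If one passes to an equivalent chart, it is shifted vertically by $2p$ together with its neutral line, so the relative position is preserved.

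Third, narrow strata can have boundary strata of both kinds, which I show by example. In Example \ref{eg:all-admissible-12minus3}, $A(0,0,0)$ and $A(0,1,-1)$ are narrow. The narrow stratum $B(0,1,-1)$ lies in the closure of both, and all three lie on the line $y=0$. The wide stratum $D(1,2,-3)$ lies in the closure of every subchart, by Proposition \ref{prop:C-goal}, so in particular in the closure of the narrow stratum $A(0,0,0)$.

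The only real work is justifying the first step: that closure containment of Kummer strata corresponds exactly to the line-subchart relation. This is what Lemma \ref{lemma:numsmoothing} together with \eqref{eqn:intersection-of-strata} provides, and I would simply cite them.
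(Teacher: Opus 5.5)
Your proposal is correct and follows the paper's own argument. Specialization adds vertices to the line chart, the strict inequality $\min S < 2k < \max S$ survives enlarging $S$, and the narrow stratum $A(0,0,0)$ in $\mathscr{D}^+$ specializes both to the narrow $B(0,1,-1)$ and to a wide stratum; the paper uses $C(1,1,-2)$ where you use $D(1,2,-3)$, and either works. Your concern about aligning the neutral lines is harmless but unnecessary: for an admissible chart, narrowness is simply the intrinsic condition $\min S = \max S$, and this condition is clearly lost once a wide chart is enlarged.
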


\begin{proof}
    Taking the boundary corresponds to adding more vertices to the line chart. If a line chart satisfies the condition $\min S < 2k < \max S$, this condition still holds after adding more vertices. Therefore a wide stratum can only specialize to wide strata.

    All the other possibilities can be achieved. For example, in the dual complex $\mathscr{D}^+$ in Subsection \ref{subsubsec:dualcplx12minus3}, we have seen that the narrow stratum $A(0,0,0)$ specializes to both a narrow stratum $B(0,1,-1)$ and a wide stratum $C(1,1,-2)$, and that the wide stratum $C(1,1,-2)$ specializes to a wide stratum $D(1,2,-3)$.
\end{proof}

\subsubsection{Definition of the global dual complex}\label{subsec:dualcplxdef}
If $\mathcal{L}'$ is an admissible line chart, Proposition \ref{prop:dual-const-dim} asserts that it can be embedded in a complete admissible line chart $\mathcal{L}$. By Proposition \ref{prop:from-H-to-K}, this embedding corresponds to a face $F(\mathcal{L}')$ in $\Delta_K(\mathcal{L})$.

Let $ m_0 $, $m_+$ and $m_-$ denote the number of vertives of $\mathcal{L}'$ on, above and below the neutral line, respectively. We point out that, since $\mathcal{L}'$ is not assumed complete, the integers $m_+$ and $m_-$ could be both zero. Moreover, admissibility implies that $ m_+ = 0 $ if and only if $m_- = 0$. With this notation, the convex polytope $F(\mathcal{L}')$ is affinely equivalent to
$$ C^{m_0}(\delta_{m_+ - 1} \times \delta_{m_- - 1}).$$
Indeed, this is immediate from Proposition \ref{prop:from-H-to-K}, using the explicit description of faces in a product and in the cone construction, together with the fact that a convex polytope is the convex hull of its vertices (see \cite{Zie} for these well known properties of polytopes). To be precise, if $m_+ = m_- = 0$, $F(\mathcal{L}')$ equals $\delta_{m_0 - 1}$, interpreted as the $m_0$-fold cone over the empty face.

In particular, we see that the convex polytope $F(\mathcal{L}')$ is intrinsic to $\mathcal{L}'$ up to affine equivalence. We can therefore define the dual complex of $K^{n-1}$ as
\begin{equation}\label{equation:dualcplxdef}
\Delta(K^{n-1}) = \bigsqcup_{\mathcal{L}} \Delta_K(\mathcal{L})/\sim.
\end{equation}
Here $\mathcal{L}$ runs over the complete admissible line charts with $n+1$ vertices, and $\sim$ is the equivalence relation that, if $\mathcal{L}'$ is a line subchart of both $\mathcal{L}_1$ and $\mathcal{L}_2$, identifies $F(\mathcal{L}')$ in $\Delta_K(\mathcal{L}_1)$ and $\Delta_K(\mathcal{L}_2)$.

We note that as an immediate consequence of 
Proposition \ref{prop:dual-const-dim}, the dual complex $\Delta(K^{n-1})$ is of pure dimension $n - 1$. In Section \ref{sec:dualcplxIII}, we explicitly compute $\Delta(K^{2})$. We plan to return to the general case in future work.

\subsection{Dual complexes for $n=3$ and $n=4$}
We illustrate some of the results in Section \ref{subsec:dualcpldef} for small values of $n$. In particular, we clarify the relationship between the dual complexes $\Delta_K$ and $\Delta_H$ for any complete admissible line chart with $n+1$ vertices, for $n=3$ of $n=4$. 

\begin{example}\label{eg:example-dual-complex}
    We continue the construction from \ref{eg:admissible-123-again}. For the dual complex $\Delta_K$ that represents smoothings of the stratum $D(1,2,3)$ (which is a $2$-simplex), we represent it as a subset of $\Delta_H$ (which is a $3$-simplex or a tetrahedron). The line chart associated to the stratum $D(1,2,3)$ has $2$ vertices above the line $y=2$, $1$ vertex below this line, and $1$ vertex on this line. By intersecting $\Delta_H$ with the plane $\Lambda = \{z_1 + z_2 - z_3 =0\}$, we obtain $\Delta_K$ as a subset of $\Delta_H$.
  
    Similarly, we consider smoothings of the stratum $D(1,2,-3)$. The line chart associated to it has $1$ vertex above $y=0$, $1$ vertex below this line, and $2$ vertices on this line. By intersecting $\Delta_H$ with the plane $\Lambda = \{z_1 - z_2 = 0\}$, we obtain $\Delta_K$ as a subset of $\Delta_H$.

    Figure \ref{fig:example-dual-complex} exhibits how $\Delta_K$ embeds in $\Delta_H$ as a subset in both cases. The tetrahedron and the shaded triangle represent $\Delta_H$ and $\Delta_K$ in both pictures.

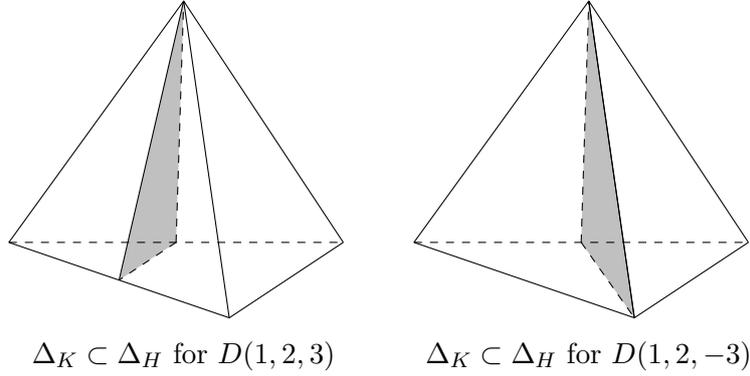
\begin{figure}
    \begin{tikzpicture}
        \coordinate (A) at (-2.3,0);
        \coordinate (B) at (2.1,0);
        \coordinate (C) at (0.6,-1);
        \coordinate (AB) at ($1/2*(A) + 1/2*(B)$);
        \coordinate (AC) at ($1/2*(A) + 1/2*(C)$);
        \coordinate (D) at (0,3.2);
        \draw (A) -- (C) -- (B);
        \draw[dashed] (A) -- (B);
        \draw (A) -- (D);
        \draw (B) -- (D);
        \draw (C) -- (D);
        \draw[dashed] (AB) -- (D);
        \draw (AC) -- (D);
        \draw[dashed] (AB) -- (AC);
        \fill[nearly transparent] (AB) -- (AC) -- (D) -- (AB);
        \node at (0,-1.5) {$\Delta_K \subset \Delta_H$ for $D(1,2,3)$};
    \end{tikzpicture}
    \qquad
    \begin{tikzpicture}
        \coordinate (A) at (-2.3,0);
        \coordinate (B) at (2.1,0);
        \coordinate (C) at (0.6,-1);
        \coordinate (AB) at ($1/2*(A) + 1/2*(B)$);
        \coordinate (AC) at ($1/2*(A) + 1/2*(C)$);
        \coordinate (D) at (0,3.2);
        \draw (A) -- (C) -- (B);
        \draw[dashed] (A) -- (B);
        \draw (A) -- (D);
        \draw (B) -- (D);
        \draw (C) -- (D);
        \draw[dashed] (AB) -- (D);
        \draw (C) -- (D);
        \draw[dashed] (AB) -- (C);
        \fill[nearly transparent] (AB) -- (C) -- (D) -- (AB);
        \node at (0,-1.5) {$\Delta_K \subset \Delta_H$ for $D(1,2,-3)$};
    \end{tikzpicture}
    \caption{Dual complexes in Example \ref{eg:example-dual-complex}}
    \label{fig:example-dual-complex}
\end{figure}
\end{example}

In the above example, we have seen two different relative positions of $\Delta_K$ in $\Delta_H$ (up to isomorphism of the tetrahedron). We can show that they have exhausted all possibilities when $n=3$, and arrive at the following conclusion.

\begin{cor}
    
    Assume $n=3$. Then there are only two 
possible relative positions of $\Delta_K$ in $\Delta_H$. These are described in Example \ref{eg:example-dual-complex}, see also Figure 
\ref{fig:example-dual-complex}. Moreover, the entire dual complex of the relative Kummer variety is a union of $2$-simplices. 
\end{cor}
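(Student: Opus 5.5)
The plan is to reduce the statement to the classification of complete admissible line charts for $n=3$ and then apply Proposition \ref{prop:from-H-to-K} to each case. By Example \ref{eg:complete-admissible}, and by its proof via \eqref{eqn:lc-condition}, the complete admissible line charts with $4$ vertices are exactly those of $D(1,2,3)$, $D(-1,-2,-3)$, $D(1,2,-3)$ and $D(-1,-2,3)$. The $\tau$-data play no role in the shape of the line chart, so these four charts exhaust all cases. For each chart I will read off the triple $(n_+,n_0,n_-)$ of vertex counts above, on and below the neutral line:
\begin{itemize}
\item $D(1,2,3)$ with $y=2$ gives $(1,1,2)$;
\item $D(-1,-2,-3)$ with $y=-2$ gives $(2,1,1)$ by symmetry;
\item $D(1,2,-3)$ with $y=0$ gives $(1,2,1)$;
\item $D(-1,-2,3)$ with $y=0$ gives $(1,2,1)$.
\end{itemize}

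By Proposition \ref{prop:from-H-to-K}, $\Delta_K(\mathcal{L})=\Lambda\cap\Delta_H$ is affinely equivalent to $C^{n_0}(\delta_{n_+-1}\times\delta_{n_--1})$. In the first two cases this is $C^1(\delta_0\times\delta_1)=C(\delta_1)$, a $2$-simplex. Here $\Lambda$ separates one vertex of the tetrahedron $\Delta_H$ from two others and passes through the fourth. So $\Lambda$ meets two edges at interior points and contains one vertex, which is the first picture in Example \ref{eg:example-dual-complex}. In the last two cases we get $C^2(\delta_0\times\delta_0)=C^2(\mathrm{pt})$, again a $2$-simplex. Now $\Lambda$ contains two vertices and separates the remaining two, so it meets one edge at its midpoint and contains the opposite edge, which is the second picture.

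Up to symmetries of the tetrahedron, the relative position of $\Lambda\cap\Delta_H$ is determined by the unordered pair $\{n_+,n_-\}$ together with $n_0$. This is because the hyperplane is determined by which vertices lie on it and on which sides the others lie, and relabelling vertices is an automorphism of the simplex. The triples above give only the two classes $(\{1,2\},1)$ and $(\{1,1\},2)$, which proves the first assertion.

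For the second assertion, the definition \eqref{equation:dualcplxdef} writes $\Delta(K^{2})$ as a quotient of the disjoint union of the $\Delta_K(\mathcal{L})$ over complete admissible line charts, glued along common faces $F(\mathcal{L}')$. Each such $\Delta_K(\mathcal{L})$ is a $2$-simplex by the computation above. Proposition \ref{prop:dual-const-dim} says every cell lies in some such $\Delta_K(\mathcal{L})$, so $\Delta(K^2)$ is a union of $2$-simplices.

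The only delicate point is the claim that the combinatorial type $(n_+,n_0,n_-)$ determines the relative position up to isomorphism of the tetrahedron. For $n=3$ this is a direct check, since there is only one way, up to symmetry, to place a plane with a prescribed vertex partition.
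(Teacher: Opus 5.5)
Your proposal is correct and follows essentially the paper's argument: determine $(n_+,n_0,n_-)$ for the complete admissible line charts, apply Proposition \ref{prop:from-H-to-K} to see that $\Delta_K$ is a $2$-simplex sitting in $\Delta_H$ in one of the two ways, and conclude through the gluing definition of $\Delta(K^2)$. The only difference is minor. The paper derives that one of $n_+,n_0,n_-$ equals $2$ and the other two equal $1$ from the fact that three positive integers sum to $4$. You instead read the triples off the explicit list of four charts in Example \ref{eg:complete-admissible}, which also makes evident the positivity of $n_0$ that the paper uses without comment.
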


\begin{proof}
    When $n=3$, an admissible complete line chart contains $4$ vertices. Since $n_+$, $n_-$ and $n_0$ are positive integers, one of them equals $2$ while both others are equal to $1$. If $n_+=2$ or $n_-=2$, we obtain the relative position of $\Delta_K$ in $\Delta_H$ that is similar to the case of $D(1,2,3)$ in Example \ref{eg:example-dual-complex}; if $n_0=2$, we obtain the relative position that is similar to the case of $D(1,2,-3)$ in the same example.  
     
    Therefore these are the only possible relative positions when $n=3$. Since in either case $\Delta_K$ is a $2$-simplex, we conclude that the entire dual complex is a union of these $2$-simplices.
\end{proof}

We now increase the dimension and analyze the building blocks of the entire dual complex of the relative Kummer variety when $n=4$.

\begin{example}\label{eg:4d-example-cont}
    We continue with Example \ref{eg:4d-example}. Here $\Delta_H$, for the stratum $E(1,2,3,4)$, is a $4$-simplex. We place it in $\R^5$ as required by Proposition \ref{prop:from-H-to-K}, using the parameters $n_+=n_-=2$ and $n_0=1$. By Lemma \ref{lemma:descriteratedconeprod}, the intersection of such a $4$-simplex and the hyperplane $z_1 + z_2 - z_3 - z_4 = 0$ is a pyramid over a square. 
\end{example}

The above example shows that the pyramid over a square is one type of building block for the entire dual complex of the relative Kummer variety. The following result gives a complete list of building blocks for the case of $n=4$.

\begin{cor}\label{cor:possible-cells-in-dual}
    For $n=4$, there are three possible relative positions of $\Delta_K$ in $\Delta_H$. In two of the three cases, $\Delta_K$ is a $3$-simplex (tetrahedron); while in the other case, $\Delta_K$ is a pyramid over a square. Hence the entire dual complex for the relative Kummer variety is a union of tetrahedra and pyramids over squares.
\end{cor}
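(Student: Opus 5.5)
The plan is to follow the proof of the $n=3$ corollary. By Proposition \ref{prop:from-H-to-K}, $\Delta_K(\mathcal{L})$ is affinely equivalent to $C^{n_0}(\delta_{n_+-1}\times\delta_{n_--1})$, where $(n_+,n_0,n_-)$ counts the vertices of the complete admissible line chart $\mathcal{L}$ above, on and below the neutral line. The relative position of $\Delta_K$ in $\Delta_H$, up to symmetries of the simplex, is governed by the hyperplane $\Lambda$. Permuting coordinates and multiplying the equation of $\Lambda$ by $-1$ shows that it depends only on the unordered pair $\{n_+,n_-\}$ together with $n_0$. So the argument has three steps: list which triples occur for $n=4$, identify the resulting polytope in each case, and remove duplicates under the symmetry $n_+\leftrightarrow n_-$.

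For $n=4$ a complete line chart has $5$ vertices, so $n_++n_0+n_-=5$, and admissibility forces $n_+,n_-\geq 1$.

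\textbf{First step: bound $n_0$.} The vertices have $y$-coordinates alternating in parity along the chart, since consecutive $y$-values differ by $\pm1$. The neutral line $y=2k$ is even, so a vertex on it can only occur at even $x$-positions $0,2,4$. Hence $n_0\leq 3$.

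The case $n_0=3$ would require all of $(0,0),(2,2k),(4,2k)$ on the line. This forces $k=0$, and then the odd vertices have $y=\pm1$. That configuration is fine; for example, the chart through $0,1,0,-1,0$ is admissible. So $n_0=3$ gives $(1,3,1)$. The remaining cases are $n_0=2$, giving $\{n_+,n_-\}=\{1,2\}$, and $n_0=1$, giving $\{1,3\}$ or $\{2,2\}$. We also have to allow $n_0=0$, giving $\{1,4\}$ or $\{2,3\}$.

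\textbf{Second step: identify the polytopes, checking the parity constraints more carefully.} The crossing vertices occur in an alternating-parity pattern around an even line. The vertices with even $x$-coordinate (three of them) have even $y$, and the odd ones (two of them) have odd $y$. If $n_0=0$, the even vertices must all lie at distance at least $2$ from the line, while consecutive vertices differ by only $1$. A vertex at even height $2k\pm2$ forces both its neighbors to be strictly on the same side. Then the chart can never cross the line, because crossing would pass through height $2k$ at an even $x$. This contradicts admissibility, so $n_0\geq1$.

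The same reasoning shows that between a vertex above the line and a vertex below it there must be a vertex on the line. With $n_0=1$ and the split $\{1,3\}$, the three vertices on one side plus the single vertex on the other, together with the on-line vertex, would need two separate crossings, so one side cannot hold three vertices. I will check this case by hand; I expect it to be impossible, leaving $(2,1,2)$.

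That leaves three types:
\begin{enumerate}
\item $(1,3,1)$: $C^3(\delta_0\times\delta_0)$, a tetrahedron.
\item $\{1,2\}$ with $n_0=2$: $C^2(\delta_0\times\delta_1)$, a tetrahedron.
\item $(2,1,2)$: $C(\delta_1\times\delta_1)$, a pyramid over a square, realized by $E(1,2,3,4)$ as in Example \ref{eg:4d-example-cont}.
\end{enumerate}

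\textbf{Third step.} I will check that each of the three types is realized by an explicit chart, and that the two tetrahedral types are inequivalent. They differ in how $\Lambda$ meets the $4$-simplex: it contains three vertices in one case and two in the other. The final assertion then follows from the definition \eqref{equation:dualcplxdef} of $\Delta(K^3)$ as a union of the $\Delta_K(\mathcal{L})$.

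The main obstacle is the parity and case analysis excluding the triples $(3,1,1)$, $(1,1,3)$ and $n_0=0$. This needs care, and I will verify it by enumerating charts against the crossing condition \eqref{eqn:lc-condition}.
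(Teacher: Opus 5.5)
Your strategy is the same as the paper's. You bound $n_0\le 3$ by parity, since on-line vertices can only sit at $x=0,2,4$, run through the possible values of $n_0$, and read off $C^{n_0}(\delta_{n_+-1}\times\delta_{n_--1})$. Your explicit exclusion of $n_0=0$ and your plan to exhibit a chart for each type are details the paper leaves implicit.

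The one step you have not proved is excluding the split $\{n_+,n_-\}=\{1,3\}$ when $n_0=1$, and the reason you sketch is not the right one. The pattern above, above, above, on, below needs only a single crossing, so ``two separate crossings'' does not rule it out. What rules it out is parity: the on-line vertex in that pattern would sit at $x=3$.

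The fix uses only your own observation that between a vertex above the line and one below it there must be a vertex on it. With $n_0=1$, the on-line vertex lies at $x\in\{0,2,4\}$. The remaining vertices form runs of consecutive vertices avoiding the line, and each run lies entirely on one side.

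\begin{itemize}
\item If the on-line vertex is at $x=0$ or $x=4$, there is a single run of four vertices, all on one side, contradicting admissibility.
\item So it is the middle vertex at $x=2$. The runs are then the vertices at $x=0,1$ and at $x=3,4$, and admissibility forces them onto opposite sides, giving $n_+=n_-=2$.
\end{itemize}

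This is exactly the paper's remark that for $n_0=1$ the on-line vertex must be the third one. With it, your case analysis is complete.
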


\begin{proof}
    A complete admissible line chart consists of $5$ vertices. We observe that only the first, third and fifth vertex could be on the neutral line, hence $v_0 \leq 3$. 
    
    If $n_0 = 1$, then the vertex that lies on the neutral line must be the third one, hence $n_+=n_-=2$. It follows that $\Delta_K$ is the pyramid over a square, as described in Example \ref{eg:4d-example-cont}. 
    
    If $n_0=2$, then $\{ n_+, n_- \} = \{ 1, 2 \}$ and if $n_0=3$, then $n_+=n_-=1$. In both these cases, Lemma \ref{lemma:descriteratedconeprod} asserts that $\Delta_K$ is a tetrahedron.
    
    Summarizing all the three possibilities, we conclude that tetrahedra and pyramids over squares are building blocks of the entire dual complex for the relative Kummer variety when $n=4$.
\end{proof}

The occurrence of pyramids over squares indicates that irreducible components do not always intersect transversely. However, we will mainly focus on the cases with $n \leq 3$ in the following sections, and leave this phenomenon for future discussion.

For each fixed value of $n$, we can always follow the proof of Corollary \ref{cor:possible-cells-in-dual} to find out all possible building blocks of the dual complex. Since there are possibly more $n$-cells that are not $n$-simplices, we expect that there are more non-transversal intersections of irreducible components in the degenerate fiber of the relative Kummer variety.

\subsection{Number of strata in every dimension}

It is difficult to give a closed formula for the number of strata in every dimension in general, so in this subsection we focus on the case $n=3$ (while $\delta$ and $N$ remain arbitrary). The corresponding minimal and maximal dimension of an admissible stratum is $2\delta -2$ and $2\delta$. Each stratum of dimension $2\delta-2$, $2\delta-1$ and $2\delta$ is represented by a face, edge and vertex respectively in the dual complex.

\begin{prop}\label{prop:number}
    For $n=3$, the number of strata of dimension $2\delta-2$, $2\delta-1$ and $2\delta$ is $4N^2$, $6N^2 + 3N$ and $2N^2 + 3N + 1$ respectively.
\end{prop}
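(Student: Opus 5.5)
The plan is to count the strata by their dimensions in the GIT quotient $K^2_{\mathcal{Y}/C}$. By the Corollary to Lemma \ref{lemma:stratdimension}, with $n=3$ the image of $X_b(\tau,x_1,x_2,x_3)$ has dimension $2\delta-b+\varepsilon$. This sorts the strata into three groups:
\begin{itemize}
\item dimension $2\delta-2$: exactly the wide strata with $b=3$;
\item dimension $2\delta-1$: the wide strata with $b=2$ together with the narrow strata with $b=1$;
\item dimension $2\delta$: the wide strata with $b=1$ together with the narrow strata with $b=0$.
\end{itemize}
The full list of admissible types is already available in Propositions \ref{prop:stratadeterm} and \ref{prop:stratadeterm2}, including their negatives. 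So the proof reduces to counting, for each admissible type, the number of distinct strata, i.e. the admissible values of $\tau$, modulo the identifications between different labels of the same stratum.

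The deepest case is immediate from Proposition \ref{prop:number-deepest}: it gives $N^{2}a_3=4N^2$. For the other two dimensions I would proceed type by type through the lists:
\begin{itemize}
\item for $2\delta-1$: $C(1,1,2)$, $C(1,2,2)$, $C(1,2,3)$, $C(1,1,-2)$, $C(1,2,-3)$, their negatives, and the narrow strata $B(0,1,-1)$ and its variants;
\item for $2\delta$: $B(1,1,1)$, $B(1,1,2)$, $B(1,1,-2)$, their negatives, and $A(0,1,1)$, $A(0,0,0)$, $A(0,1,-1)$ and their negatives.
\end{itemize}
For each type the constraint on $\tau$ is the congruence \eqref{eqn:admissible-second}, $\mathfrak{T}+k\equiv 0 \pmod N$. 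Two features govern the count.
\begin{itemize}
\item When all $x_i$ are distinct, the $\tau_i$ are ordered, and choosing two of them freely gives $N^2$ solutions.
\item When some $x_i$ coincide, the corresponding $\tau_i$ are unordered. Then one counts multisets, which produces the linear terms in $N$. For example, for $B(1,1,1)$ one counts unordered triples in $\mathbb{Z}/N$ with prescribed sum.
\end{itemize}

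The main obstacle is the identification step. By \ref{subsubsec:non-uniqueness} and the Remark after Proposition \ref{prop:stratadeterm2}, a stratum with points on original odd components ($|x_i|=b+1$) has several equivalent line charts. These differ by vertical shifts of $2$, together with a corresponding shift of one $\tau_i$; typical instances are $C(1,2,3)=C(1,2,-3)$ and $A(0,1,1)=A(0,1,-1)=A(0,-1,-1)$. To avoid double counting, I would first fix a normal form, namely $\varepsilon_i=1$ whenever $|x_i|=b+1$, which puts the chart in its highest position. I would then rewrite each type in this form and track how $\tau$ and $k$ transform, using the rule for $\tau$ under shifts. Only after that would I count the solutions of the congruence, with ordered or unordered $\tau$ as appropriate.

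As a consistency check, the resulting numbers must satisfy
\[
4N^2-(6N^2+3N)+(2N^2+3N+1)=1,
\]
which matches the Euler characteristic of a disk, as predicted by Theorem \ref{thm:dual-complex-n3}. Each edge should also bound exactly two triangles unless it lies on the boundary. I would use this incidence count, with each triangle having three edges, to double-check the edge count independently: it gives $3\cdot 4N^2=2E_{\mathrm{int}}+E_{\partial}$.
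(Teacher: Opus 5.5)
Your plan is correct and is essentially the paper's own proof. You sort strata by the quotient dimension $2\delta-b+\varepsilon$, take the admissible charts from Propositions \ref{prop:stratadeterm} and \ref{prop:stratadeterm2}, identify equivalent charts, and count solutions of $\mathfrak{T}+k\equiv 0 \pmod N$ as ordered ($N^2$) or partially unordered ($\binom{N+1}{2}$) tuples. Your normal form $\varepsilon_i=1$ for $|x_i|=b+1$ is a cleaner version of the paper's ``possibly after replacing the line chart by an equivalent one''.

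The only computation you leave implicit is the all-equal case $B(1,1,1)$, $B(-1,-1,-1)$, $A(0,0,0)$. There the individual counts $m(k)$ of unordered triples are not uniform in $k$ when $3\mid N$. The paper uses $m(k)=m(k-3)$ and $\sum_{k} m(k)=\binom{N+2}{3}$ to get $m(-1)+m(0)+m(1)=\frac{3}{N}\binom{N+2}{3}=\binom{N+2}{2}$. Together with $3\binom{N+1}{2}$ this gives $2N^2+3N+1$.
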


\begin{proof}
    The number of strata of dimension $2\delta -2$ is given in Proposition \ref{prop:number-deepest}. To motivate the following calculation, we recall that there are four complete admissible line charts, which are given in Figure \ref{fig:admissible3}. Each of them determines a set of values of $\{x_1, x_2, x_3\}$. For the values of $\{\tau_1, \tau_2, \tau_3\}$, two of them can be chosen arbitrarily among all residue classes modulo $N$, 
    while the third is then determined by the admissibility. Therefore the total number of such strata is $4N^2$.
    
    The number of strata in other dimensions can be computed along a similar line. We first find all admissible line charts representing strata in a particular dimension, which determine the values of $\{x_1, x_2, x_3\}$, then count the number of possible combinations of $\{\tau_1, \tau_2, \tau_3\}$. Among different line charts that represent the same strata, which could show up due to the reason explained in Subsection \ref{subsubsec:non-uniqueness}, we keep only one of them.
    
    To find all admissible line charts, we recall that it suffices to find all admissible line subcharts of the complete admissible line charts. Such a calculation has been carried out in Examples \ref{eg:all-admissible-123} and \ref{eg:all-admissible-12minus3} for $D(1,2,3)$ and $D(1,2,-3)$. The remaining cases of $D(-1,-2,-3)$ and $D(-1,-2,3)$ are mirror images of both above cases across the $x$-axis. We collect all these admissible line charts and classify them into the following four types.

    \emph{Type 1.} The dimension of strata is $2\delta-1$, and the values of $\{x_1, x_2, x_3\}$ are all distinct; see Figure \ref{fig:typeA}.
    
\begin{figure}
    \begin{tikzpicture}
        \draw[gray!30, step=1] (0,-1) grid (3,1);
        \node at (-0.3,0) {$O$};
        \draw[thick] (1,-1) -- (3,1);
        \draw (0,0) -- (3,0);
        \node at (3.75,0) {$y=0$};
        \node at (1.5,-1.5) {$C(1,2,-3)$};
        \filldraw (1,-1) circle [radius=2pt];
        \filldraw (2,0) circle [radius=2pt];
        \filldraw (3,1) circle [radius=2pt];
    \end{tikzpicture}
    \begin{tikzpicture}
        \draw[gray!30, step=1] (0,-1) grid (3,1);
        \node at (-0.3,0) {$O$};
        \draw[thick] (1,1) -- (3,-1);
        \draw (0,0) -- (3,0);
        \node at (3.75,0) {$y=0$};
        \node at (1.5,-1.5) {$C(-1,-2,3)$};
        \filldraw (1,1) circle [radius=2pt];
        \filldraw (2,0) circle [radius=2pt];
        \filldraw (3,-1) circle [radius=2pt];
    \end{tikzpicture}
    \begin{tikzpicture}
        \draw[gray!30, step=1] (0,-1) grid (3,1);
        \node at (-0.3,0) {$O$};
        \draw[thick] (0,0) -- (2,0);
        \draw (0,0) -- (3,0);
        \node at (3.75,0) {$y=0$};
        \node at (1.5,-1.5) {$B(0,1,-1)$};
        \filldraw (0,0) circle [radius=2pt];
        \filldraw (2,0) circle [radius=2pt];
    \end{tikzpicture}
    \caption{Admissible line charts of type 1 in Proposition \ref{prop:number}}
    \label{fig:typeA}
\end{figure}
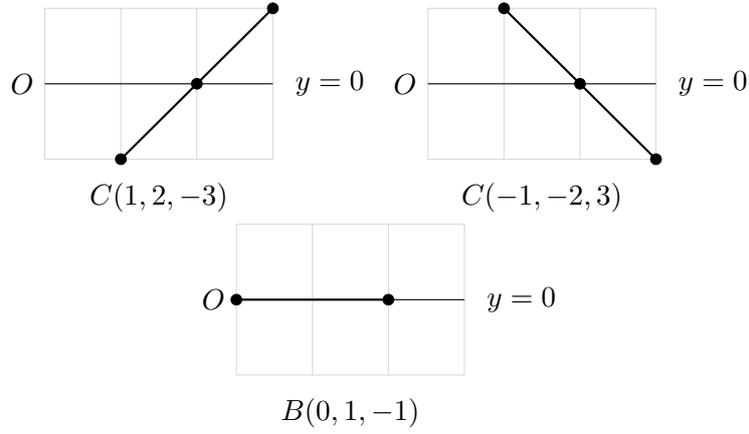

	For each of these line charts, since $\{x_1, x_2, x_3\}$ have distinct values, $(\tau_1, \tau_2, \tau_3)$ is an ordered triple of residue classes modulo $N$, which is subject to \eqref{eqn:admissible-second}. It follows that we can require two of them to take arbitrary values, while the third one is automatically determined. Therefore each line chart contributes $N^2$ strata.
    
    \emph{Type 2.} The dimension of the strata is still $2\delta-1$, two among the values of $\{x_1, x_2, x_3\}$ agree and the third is different; see Figure \ref{fig:typeB}.
    
\begin{figure}
    \begin{tikzpicture}
        \draw[gray!30, step=1] (0,0) grid (3,3);
        \node at (-0.3,0) {$O$};
        \draw[thick] (0,0) -- (2,2);
        \draw[thick] (2,2) -- (3,3);
        \draw (0,2) -- (3,2);
        \node at (3.75,2) {$y=2$};
        \node at (1.5,-0.5) {$C(1,2,2)$};
        \filldraw (0,0) circle [radius=2pt];
        \filldraw (2,2) circle [radius=2pt];
        \filldraw (3,3) circle [radius=2pt];
    \end{tikzpicture}
    \begin{tikzpicture}
        \draw[gray!30, step=1] (0,0) grid (3,-3);
        \node at (-0.3,0) {$O$};
        \draw[thick] (0,0) -- (2,-2);
        \draw[thick] (2,-2) -- (3,-3);
        \draw (0,-2) -- (3,-2);
        \node at (3.75,-2) {$y=-2$};
        \node at (1.5,-3.5) {$C(-1,-2,-2)$};
        \filldraw (0,0) circle [radius=2pt];
        \filldraw (2,-2) circle [radius=2pt];
        \filldraw (3,-3) circle [radius=2pt];
    \end{tikzpicture}
    \begin{tikzpicture}
        \draw[gray!30, step=1] (0,0) grid (3,3);
        \node at (-0.3,0) {$O$};
        \draw[thick] (0,0) -- (1,1);
        \draw[thick] (1,1) -- (3,3);
        \draw (0,2) -- (3,2);
        \node at (3.75,2) {$y=2$};
        \node at (1.5,-0.5) {$C(1,1,2)$};
        \filldraw (0,0) circle [radius=2pt];
        \filldraw (1,1) circle [radius=2pt];
        \filldraw (3,3) circle [radius=2pt];
    \end{tikzpicture}
    \begin{tikzpicture}
        \draw[gray!30, step=1] (0,0) grid (3,-3);
        \node at (-0.3,0) {$O$};
        \draw[thick] (0,0) -- (1,-1);
        \draw[thick] (1,-1) -- (3,-3);
        \draw (0,-2) -- (3,-2);
        \node at (3.75,-2) {$y=-2$};
        \node at (1.5,-3.5) {$C(-1,-1,-2)$};
        \filldraw (0,0) circle [radius=2pt];
        \filldraw (1,-1) circle [radius=2pt];
        \filldraw (3,-3) circle [radius=2pt];
    \end{tikzpicture}
    \begin{tikzpicture}
        \draw[gray!30, step=1] (0,-1) grid (3,1);
        \node at (-0.3,0) {$O$};
        \draw[thick] (0,0) -- (1,1);
        \draw[thick] (1,1) -- (3,-1);
        \draw (0,0) -- (3,0);
        \node at (3.75,0) {$y=0$};
        \node at (1.5,-1.5) {$C(-1,-1,2)$};
        \filldraw (0,0) circle [radius=2pt];
        \filldraw (1,1) circle [radius=2pt];
        \filldraw (3,-1) circle [radius=2pt];
    \end{tikzpicture}
    \begin{tikzpicture}
        \draw[gray!30, step=1] (0,-1) grid (3,1);
        \node at (-0.3,0) {$O$};
        \draw[thick] (0,0) -- (1,-1);
        \draw[thick] (1,-1) -- (3,1);
        \draw (0,0) -- (3,0);
        \node at (3.75,0) {$y=0$};
        \node at (1.5,-1.5) {$C(1,1,-2)$};
        \filldraw (0,0) circle [radius=2pt];
        \filldraw (1,-1) circle [radius=2pt];
        \filldraw (3,1) circle [radius=2pt];
    \end{tikzpicture}
    \caption{Admissible line charts of type 2 in Proposition \ref{prop:number}}
    \label{fig:typeB}
\end{figure}
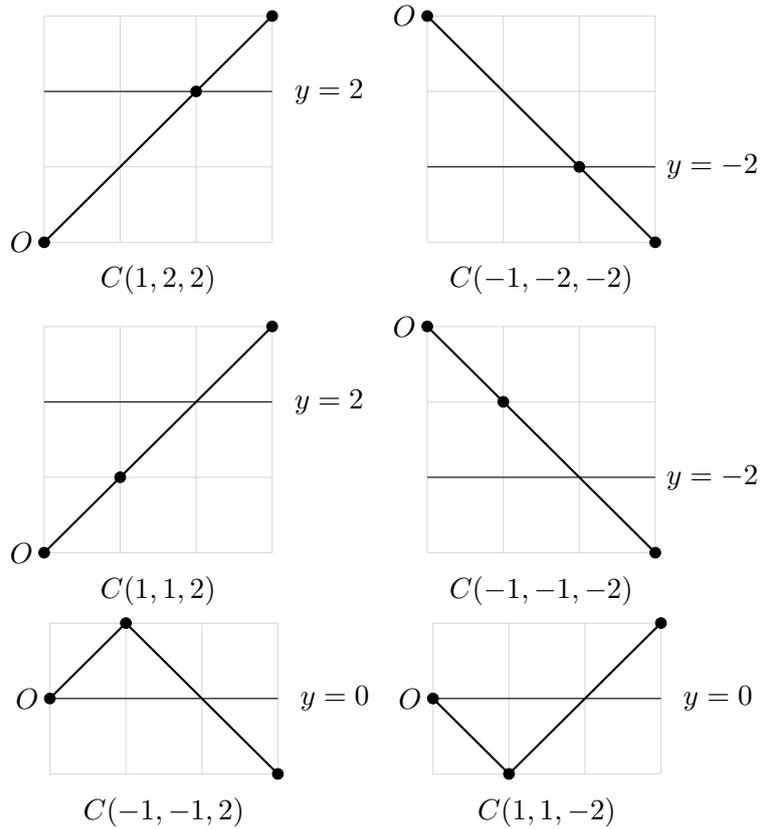

	For each of these line charts, two of the $x_i$'s have equal value, hence the values of the corresponding $\tau_i$'s are exchangeable. The number of choices of both these $\tau_i$'s is $\binom{N+1}{2}$, and the third $\tau_i$ is determined as a consequence of \eqref{eqn:admissible-second}.
	
	\emph{Type 3.} The dimension of the strata is $2\delta$; moreover, possibly after replacing the line chart by an equivalent one, among the values of $\{x_1, x_2, x_3\}$, still two of them agree and the third is different; see Figure \ref{fig:typeC}. For example, here we use that  $A(0,1,-1)$, $A(0,1,1)$ and $A(0,-1,-1)$ are equivalent (see Definition \ref{def:linechartequiv}).

\begin{figure}
    \begin{tikzpicture}
        \draw[gray!30, step=1] (0,-1) grid (3,1);
        \node at (-0.3,0) {$O$};
        \draw[thick] (1,-1) -- (3,1);
        \draw (0,0) -- (3,0);
        \node at (3.75,0) {$y=0$};
        \node at (1.5,-1.5) {$B(1,1,-2)$};
        \filldraw (1,-1) circle [radius=2pt];
        \filldraw (3,1) circle [radius=2pt];
    \end{tikzpicture}
    \begin{tikzpicture}
        \draw[gray!30, step=1] (0,-1) grid (3,1);
        \node at (-0.3,0) {$O$};
        \draw[thick] (1,1) -- (3,-1);
        \draw (0,0) -- (3,0);
        \node at (3.75,0) {$y=0$};
        \node at (1.5,-1.5) {$B(-1,-1,2)$};
        \filldraw (1,1) circle [radius=2pt];
        \filldraw (3,-1) circle [radius=2pt];
    \end{tikzpicture}
    \begin{tikzpicture}
        \draw[gray!30, step=1] (0,-1) grid (3,1);
        \node at (-0.3,0) {$O$};
        \draw (0,0) -- (3,0);
        \node at (3.75,0) {$y=0$};
        \node at (1.5,-1.5) {$A(0,1,-1)$};
        \filldraw (2,0) circle [radius=2pt];
    \end{tikzpicture}
    \caption{Admissible line charts of type 3 in Proposition \ref{prop:number}}
    \label{fig:typeC}
\end{figure}
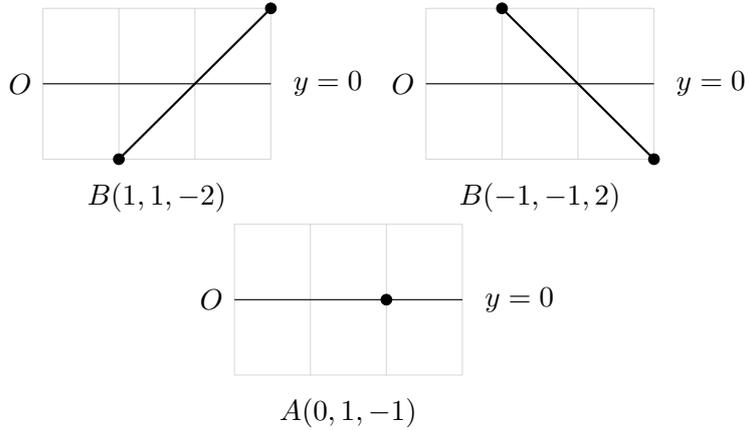

	The contribution in this case can be computed similarly as for the previous type. For each of these line charts, two of the $x_i$'s have equal value, hence the values of the corresponding $\tau_i$'s are exchangeable. The number of choices of both of these $\tau_i$'s is $\binom{N+1}{2}$, and the third $\tau_i$ is determined as a consequence of \eqref{eqn:admissible-second}.
	
	\emph{Type 4.} The dimension of strata is still $2\delta$, the values of $\{x_1, x_2, x_3\}$ are all equal; see Figure \ref{fig:typeD}.
	
\begin{figure}
	\begin{tikzpicture}
        \draw[gray!30, step=1] (0,0) grid (3,3);
        \node at (-0.3,0) {$O$};
        \draw[thick] (0,0) -- (3,3);
        \draw (0,2) -- (3,2);
        \node at (3.75,2) {$y=2$};
        \node at (1.5,-0.5) {$B(1,1,1)$};
        \filldraw (0,0) circle [radius=2pt];
        \filldraw (3,3) circle [radius=2pt];
    \end{tikzpicture}
    \begin{tikzpicture}
        \draw[gray!30, step=1] (0,0) grid (3,-3);
        \node at (-0.3,0) {$O$};
        \draw[thick] (0,0) -- (3,-3);
        \draw (0,-2) -- (3,-2);
        \node at (3.75,-2) {$y=-2$};
        \node at (1.5,-3.5) {$B(-1,-1,-1)$};
        \filldraw (0,0) circle [radius=2pt];
        \filldraw (3,-3) circle [radius=2pt];
    \end{tikzpicture}
    \begin{tikzpicture}
        \draw[gray!30, step=1] (0,-1) grid (3,1);
        \node at (-0.3,0) {$O$};
        \draw (0,0) -- (3,0);
        \node at (3.75,0) {$y=0$};
        \node at (1.5,-1.5) {$A(0,0,0)$};
        \filldraw (0,0) circle [radius=2pt];
    \end{tikzpicture}
	\caption{Admissible line charts of type 4 in Proposition \ref{prop:number}}
    \label{fig:typeD}
\end{figure}
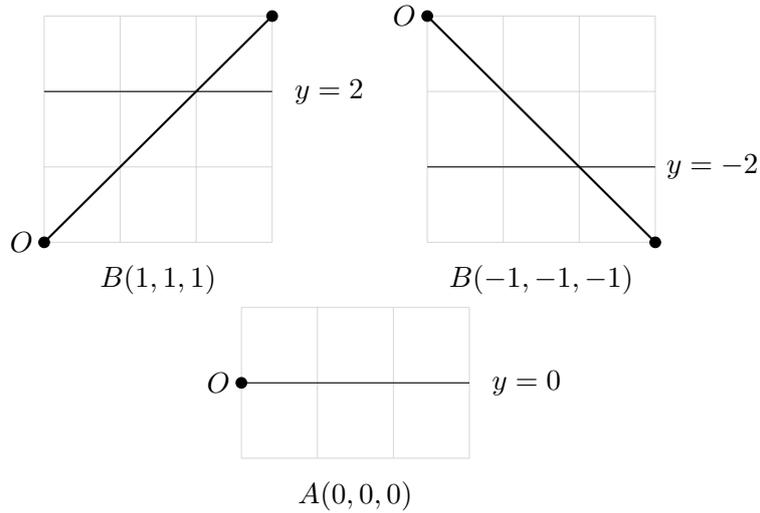

	We compute the contribution of each line chart to the number of strata. Since $\{x_1, x_2, x_3\}$ take the same values, $(\tau_1, \tau_2, \tau_3)$ becomes an unordered triple. The problem reduces to the computation of the number of such unordered triples satisfying \eqref{eqn:admissible-second} with $n=3$ and various values of $k$. For simplicity we denote the number of unordered solutions $(\tau_1, \tau_2, \tau_3)$ to such equation
	$$ \tau_1 + \tau_2 + \tau_3 + k \equiv 0 \pmod N $$
	by $m(k)$, then the number of strata given by the above line charts are $m(1), m(-1), m(0)$ respectively, and we need to compute their sum.
	
	We can observe the following simple properties of the $m(k)$'s:
	\begin{itemize}
		\item The value of $m(k)$ only depends on the residue class of $k \pmod N$.
		\item For every $k$, we have \begin{equation}\label{eqn:mk-property1} m(k) = m(k-3), \end{equation} since by adding $1$ to each component of a solution $(\tau_1, \tau_2, \tau_3)$, we obtain a solution to the above equation with $k$ replaced by $k-3$.
		\item The sum \begin{equation}\label{eqn:mk-property2} m(0) + m(1) + \dots + m(N-1) = \binom{N+2}{3}, \end{equation} because, when $k$ runs through all residue classes, $(\tau_1, \tau_2, \tau_3)$ runs through all unordered triples of residue classes.
	\end{itemize}
	
	We claim that these properties already imply
	\begin{equation}\label{eqn:sum-of-three}
		m(-1) + m(0) + m(1) = \frac{3}{N} \binom{N+2}{3}.
	\end{equation}
	
	Indeed, this can be seen in two cases: if $N$ is divisible by $3$, then \eqref{eqn:mk-property1} implies that among all summands in the left side of \eqref{eqn:mk-property2}, there are exactly $\frac{N}{3}$ of them equal to $m(-1)$, or $m(0)$, or $m(1)$, 
    hence \eqref{eqn:sum-of-three} holds; if $N$ is not divisible by $3$, then \eqref{eqn:mk-property1} implies that all summands in the left side of \eqref{eqn:mk-property2} are equal, which still gives \eqref{eqn:sum-of-three}.

	Finally we summarize the above calculation. We have already seen that the total number of strata of dimension $2\delta -2$ is $4N^2$. 
	
	The line charts associated to strata of dimension $2\delta - 1$ is either of type 1 (as in Figure \ref{fig:typeA}) or type 2 (as in Figure \ref{fig:typeB}). By the calculation above, the number of these strata is given by
	$$ 3N^2 + 6\binom{N+1}{2} = 6N^2 + 3N. $$
	
	Similarly, the line charts associated to strata of dimension $2\delta$ is either of type 3 (as in Figure \ref{fig:typeC}) or type 4 (as in Figure \ref{fig:typeD}). By the calculation above, the number of these strata is given by
	$$ 3\binom{N+1}{2} + \frac{3}{N}\binom{N+2}{3} = 2N^2 + 3N + 1. $$
	
	This finishes the calculation of the number of strata in each possible dimension.
\end{proof}

\begin{remark} 
    Later in Section \ref{sec:dualcplxIII}, we will prove a stronger statement when $n=3$, namely that the dual complex for the degeneration of generalized Kummer varieties is PL-homeomorphic to a standard $2$-simplex, see Theorem \ref{thm:dual-complex-n3}. In particular, the Euler characteristic is $1$, which agrees with Proposition \ref{prop:number}.
\end{remark}

\section{Kummer surfaces}
\label{subsec:Kummersurfaces}
In this section we assume that $\mathcal{Y} \to C $ has relative dimension $2$. We will describe the Kummer locus $ \mathcal{K}^1$, both in terms of the stratification introduced in \ref{subsec:stratification}, and as a scheme. This is subsequently used to describe the GIT quotient $ \mathcal{K}^1/\mathbb G[2] $, which forms a compactification of the family of Kummer surfaces $$ 
 \mathrm{Kum}(\mathcal{Y}) := \mathrm{Kum}^1(\mathcal{Y}^{sm}/C) \to C. $$

\subsection{The stratification}
\label{subsection:stratificationKumersurfaces}
We  start out by computing the strata of the (boundary of the) Kummer locus $\mathcal{K}^1$. Since $n=2$ and $ 0 \leq b \leq n$, there are three cases to consider.

\subsubsection{$(b=0)$}
Each component of $\mathrm{Kum}(\mathcal{Y})_s$ arises as the kernel of a summation map
$$ \mathcal{S}_{\mathbf{m}(i)} \to G_0^{\circ}, $$
 where the only nonzero indices of $\mathbf{m}(i)$ are $m_i = m_{2N-i} = 1$ if $ 0 < i < N $ and $m_0=2$, resp. $m_N=2$, if $i=0$, resp. $i=N$. The resulting component will be denoted $F^{\circ}_{i} $. 

Note that this also describes the degenerate fibers of the generic Kummer locus $ \mathcal{K}^1_{\circ} $ over $C[2]$, since the pullback $\mathcal{Y}^{sm} \times_C C[b]$ remains a group scheme for any $b \geq 0$, and the Kummer construction commutes with base change. 

\subsubsection{$(b=2)$}
We next show that there is no deepest stratum when $n=2$. Note that this is not the case when $n \geq 3$, by Proposition \ref{prop:number-deepest}.

\begin{lemma}
The fiber of $ \mathcal{K}^1$ over $ 0 \in C[2] $ is empty.
\end{lemma}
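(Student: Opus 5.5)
The plan is to show that no stratum over $0 \in C[2]$ survives the numerical obstruction of Proposition \ref{prop:numobstr}. By Theorem \ref{theorem:mainstrat}, the boundary of $\mathcal{K}^1$ is the union of the strata $\mathcal{K}(b,\mathbf{m})$ over unobstructed pairs. Hence it suffices to check that every stratum $\mathscr{S}_{\mathbf{m}}$ of $\mathcal{H}^2$ over the origin of $C[2]$ is obstructed in the sense of Definition \ref{def:admisstuple}. Concretely, we must show there is no tuple $(r_1,r_2,r_3)$ of positive integers with $\mathscr{W}^+(\mathbf{m})+\mathscr{W}^-(\mathbf{m}) \equiv 0 \pmod{2Nr}$.

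First I would pin down the possible $\mathbf{m}$. By the stability criterion (Theorem \ref{theorem:stabilitycondition} and the subsequent remark), a stable $Z$ in the fiber over $0\in C[2]$ has smooth support and meets both $\Delta^1$ and $\Delta^2$. With $n=2$, this means one point lies on an inserted component of level $1$ and one on level $2$. In the notation of Definition \ref{def:stratumnew}, these are the strata $C(\tau;x_1,x_2)$ with $|x_1|=1$ and $|x_2|=2$, so $x_i=\varepsilon_i i$. This is exactly the complete line chart case $b=n=2$ treated in the discussion before Example \ref{eg:complete-admissible}.

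Next I would invoke the line chart criterion \eqref{eqn:lc-condition}, derived there from Lemma \ref{lem:existence-solutions-general}. The coefficient set is $S=\{0,\varepsilon_2,\varepsilon_1+\varepsilon_2\}$, and solvability in positive $r_i$ requires an even integer $2k$ with $\min S<2k<\max S$. The four possibilities give $S$ equal to $\{0,1,2\}$, $\{0,1,0\}$, $\{0,-1,0\}$ and $\{0,-1,-2\}$. In each case the open interval $(\min S,\max S)$ contains only odd integers, or is empty of integers. So no admissible $\mathbf{r}$ exists. This is the computation $a_2=0$ already noted in the proof of Proposition \ref{prop:number-deepest}. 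One should also check that the $\tau$-dependence contributes only the term $2\tau r$ times the point count, which is absorbed into $k$. Thus the congruence modulo $2Nr$ reduces to the displayed equation with $2kr$ on the right, as in \eqref{eqn:admissible-first}.

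The only step needing care, and the one I expect to be the main obstacle, is justifying that every point of $\mathcal{K}^1$ over $0$ is a specialization as in \ref{subsubsec:ZinKum}. That is, it must be the closed point of an $S$-valued point whose generic point lies over $C[2]^*$ and factors, after a torus translate via Lemma \ref{lemma:standardformmaptobase}, through $C[2]$ itself with all $r_i>0$. This is provided by the discussion opening Subsection \ref{subsubsec:Kummerboundary} together with Lemma \ref{lemma:kernelZ}. Given that, Proposition \ref{prop:numobstr} applies directly and yields a contradiction, so the fiber is empty.
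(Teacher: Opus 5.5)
Your proposal is correct and is essentially the paper's argument. Like you, the paper works with the complete line chart for $b=n=2$ and $S=\{0,\varepsilon_2,\varepsilon_1+\varepsilon_2\}$. It notes that $\min S=\max S$ is impossible and rules out $\min S<2k<\max S$: this forces $k=0$, while $\varepsilon_2$ and $\varepsilon_1+\varepsilon_2$ are never both nonzero with opposite signs. That is equivalent to your four-case check, and you additionally spell out the reduction, left implicit in the paper, to a specialization over $0\in C[2]$ with all $r_i>0$ so that Proposition \ref{prop:numobstr} applies.
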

\begin{proof}
We use the approach and notation from Subsection \ref{subsec:deepeststrata}. When $n=2$, we have
$$ S = \{ 0, \varepsilon_2, \varepsilon_1 + \varepsilon_2 \}. $$
Since $\varepsilon \neq 0$, we can never have $\mathrm{max}~S = \mathrm{min}~S$. Moreover, the inequalities
$$ \mathrm{min}~S < 2k < \mathrm{max}~S $$
cannot both hold. Indeed, since $ \vert \varepsilon_1 + \varepsilon_2 \vert \leq 2$, we must have $k=0$. But $\varepsilon_2$ and $ \varepsilon_1 + \varepsilon_2 $ cannot simultaneously both be nonzero, and of opposite sign. 
\end{proof}

\subsubsection{$(b=1)$}
It remains to consider the case $b=1$.
 \begin{lemma}\label{lemma:KumSurb1}
The strata of $\mathcal{K}^1$ corresponding to $b=1$ are
$$ F_0^1, \ldots, F_{N-1}^1, $$
where $F_i^1$ is narrow and isomorphic to $(G_{4i+1})^{\circ}$ for each $i$.
\end{lemma}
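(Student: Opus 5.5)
We first list all stable configurations over $0 \in C[1]$. With $b=1$ the fiber $\mathcal{Y}[1]_0$ is a cycle of $4N$ components $G^1_j$: the strict transforms $G^1_{2\tau\cdot 2}=G_{4\tau}$ and $G^1_{4\tau-2}$ of the original components, and the inserted components $G^1_{4\tau\pm1}$. By Theorem \ref{theorem:stabilitycondition}, a stable $Z$ with $n=2$ has exactly one point on the inserted level and one point on an original component. In line-chart language $\vert x_1\vert=1$ and $\vert x_2\vert \in\{0,2\}$, with $\varepsilon_1$ and $\varepsilon_2$ both signs allowed.

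Next we decide admissibility using Lemma \ref{lem:existence-solutions-general} and \eqref{eqn:lc-condition-prime}.

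If $x_2=0$, the surviving coefficients are $r_1$ and $r_2$, and $S'=\{0,\varepsilon_1\}$. Since $\varepsilon_1\neq 0$, condition \eqref{eqn:lc-condition-prime} fails.

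If $\vert x_2\vert=2$, only one vertex survives, namely $(1,\varepsilon_1+\varepsilon_2)$. Since $\varepsilon_2=\pm1$ can be shifted by $2$ (the non-uniqueness of \ref{subsubsec:non-uniqueness}), we may take $\varepsilon_2=-\varepsilon_1$. Then $S'=\{0\}$, and the narrow condition $\min S'=2k=\max S'$ holds with $k=0$. So every unobstructed stratum has one point on $G^1_{2\tau_1\cdot 2\pm1}$ and one point on an odd original component $G_{2\tau_2-1}$. It is narrow by Lemma \ref{lemma:narrowwidedescr}, since $\alpha^+(\mathbf m,1)=\alpha^-(\mathbf m,1)$ forces the two points to sit on $G^1_{4\tau_1+1}$ and $G^1_{4\tau_2-1}$ respectively.

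The remaining constraint is the $\tau$-condition \eqref{eqn:admissible-second}. We would rederive it directly from Proposition \ref{prop:numobstr} with $r=r_1+r_2$: the component indices $4\tau_1 r+r_1$ and $4\tau_2 r - r_2$ (equivalently $2(2\tau_2-1)r+r_1$) must sum to $0 \bmod 4Nr$. Taking $r_1=r_2$, this becomes $\tau_1+\tau_2\equiv 0 \pmod N$ after absorbing the shift. Hence for each $i=\tau_1\in\{0,\dots,N-1\}$ exactly one stratum survives, and we denote it $F_i^1$.

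Finally we identify the geometry. By Proposition-Definition \ref{prop:stratumdfn}(1), $O(1,\mathbf m)=\mu^{-1}(e)$ inside $(G^1_{4i+1})^\circ\times (G^1_{-4i-1})^\circ$, where $\mu(P,Q)=P+Q$. For fixed $P$ the condition $P+Q=e$ determines $Q$ uniquely, because each component of the Néron fiber is a torsor under $G_0^\circ$. So projection to the first factor gives an isomorphism $\mu^{-1}(e)\cong (G_{4i+1})^\circ$. The same argument also gives smoothness, and it matches Lemma \ref{lem:Tfibersirreducible} with $\delta(n-1)=2$.

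The main obstacle is the index bookkeeping in the $\tau$-condition. This includes the modular conventions for $\tau=0$, the choice between $\pm$ on the odd original component, and checking that the pairing $i\leftrightarrow -i$ gives exactly $N$ distinct strata rather than $N/2$ or $2N$. We would verify this on the explicit relabelled cycle $\mathscr G_j$ with $r_1=r_2=1$.
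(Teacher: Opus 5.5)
There is a genuine gap: your list of stable configurations over $0\in C[1]$ is wrong, and everything after it inherits the error. For $b=1<n=2$, stability only requires the unique inserted level to be occupied. In line-chart terms, the sequence $0=|x_0|\le|x_1|\le|x_2|\le|x_3|=2$ must rise by unit jumps exactly at the two indices of $I$. This gives three types: $(|x_1|,|x_2|)=(0,1)$, $(1,2)$ and $(1,1)$, with $I=\{2,3\}$, $\{1,2\}$ and $\{1,3\}$ respectively.

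You omitted $(1,1)$, where both points lie on inserted components over the locus $t_2\neq 0$. That is the only unobstructed type, and it is the one the lemma describes.

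Your dismissal of $(0,1)$ is fine, but your treatment of $(1,2)$ is not.
\begin{itemize}
\item In the line-chart equation only $r_3$ drops out, so two vertices survive: $(1,\varepsilon_2)$ and $(2,\varepsilon_1+\varepsilon_2)$. Their heights differ by $1$, so neither alternative of \eqref{eqn:lc-condition-prime} can hold. The vertical shift by $2$ from the non-uniqueness of line charts does not change relative heights, so it cannot rescue this.
\item Equivalently, use Proposition \ref{prop:numobstr} with $r=r_1+r_2$, where $r_1,r_2$ are the parameters of $C[1]$. The contributions $2\tau_1r\pm r_1$ and $2\tau_2r-r$ would have to sum to $0$ modulo $2Nr$. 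This forces $r_2\equiv 0$ or $r_1\equiv r \pmod{2r}$, which is impossible for $0<r_1,r_2<r$.
\item So the configuration you declare narrow is actually obstructed.
\item Your next sentence also contradicts it. With exactly one point on an inserted component, $\alpha^+(\mathbf m,1)+\alpha^-(\mathbf m,1)=1$, so the stratum can never be narrow. Moreover, $G^1_{4\tau_2-1}$ is an inserted component, not an odd original one; those are $G^1_{4\tau-2}$.
\item The $\tau$-bookkeeping is also off. For instance, $4\tau_2r-r_2\neq 2(2\tau_2-1)r+r_1$. With $r_1=r_2$, what you actually compute is the both-inserted configuration.
\end{itemize}

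The repair is to analyze $(1,1)$ directly, as the paper does. Here $S'=\{0,\varepsilon_1+\varepsilon_2\}$, so admissibility forces $\varepsilon_1=-\varepsilon_2$ and $k=0$; narrowness then follows from Lemma \ref{lemma:narrowwidedescr}. The numerical condition $(2\tau_1r+r_1)+(2\tau_2r-r_1)\equiv 0 \pmod{2Nr}$ gives $\tau_2\equiv-\tau_1 \pmod N$. Hence there is exactly one stratum for each $i=\tau_1\in\{0,\dots,N-1\}$; the pair is ordered because the two points lie on the $+1$ and $-1$ sides. It is supported on $G_{4i+1}$ and $G_{4(N-i)-1}$, which are distinct modulo $4N$, so the subscheme is reduced. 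With this in place, your final step is correct: project $\mu^{-1}(e)\subset (G_{4i+1})^\circ\times(G_{4(N-i)-1})^\circ$ to the first factor, using that each component is a $G_0^\circ$-torsor, to get $F^1_i\cong(G_{4i+1})^\circ$.
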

\begin{proof}
We proceed as in Subsection \ref{subsec:higherdimstrata}. It is easy to check that the only possibility is when $\vert x_1 \vert = \vert x_2 \vert $ and $k=0$, in which case
$$ S' = \{ 0, \varepsilon_1 + \varepsilon_2 \}. $$
Then we must require that $\varepsilon_1 + \varepsilon_2 = 0$, thus, the stratum is narrow by Lemma \ref{lemma:narrowwidedescr}. Since $k=0$, we have $\tau_2 = N - \tau_1$. Any point of this stratum is a subscheme supported on components $G_{4 i + 1}$ and $G_{4 (N-i) - 1}$, in particular it is reduced, since $ 4i+1 $ is not congruent to $4(N-i) - 1$ modulo $4N$. The rest of the statement now follows directly from Proposition \ref{prop:stratumdfn}.
\end{proof}

We shall return to this discussion in Section \ref{sec:stratageom}, in particular Subsection \ref{subsection:geomstrataKum} where we will encounter the cases $b=0,2$ from a slightly different point of view.

\subsection{Scheme structure of the Kummer locus}

We now explain an alternative description of the Kummer locus $\mathcal{K}^1$, using the involution $j[2] \colon \mathcal{Y}[2] \to \mathcal{Y}[2] $ from Lemma \ref{lem:involutiongeneral}. 

\subsubsection{}
We recall from \cite[Section 3]{GHHZ21} that we have maps
\begin{equation}\label{ABCD}
\xymatrix{
     & \left( \mathcal{Y}[2] \times_{C[2]} \mathcal{Y}[2] \right)  \ar[d]^q \\
    \mathrm{Hilb}^2(\mathcal{Y}[2]/C[2])  \ar[r]^{hc} & \mathrm{Sym}^2(\mathcal{Y}[2]/C[2])
}    
\end{equation}
where $hc$ is the Hilbert-Chow morphism. Both maps are compatible with the formation of (semi)stable loci:
$$ \left( \mathcal{Y}[2] \times_{C[2]} \mathcal{Y}[2] \right)^{st} = q^{-1} \left(\mathrm{Sym}^2(\mathcal{Y}[2]/C[2])^{st}\right)$$
and 
$$ \mathrm{Hilb}^2(\mathcal{Y}[2]/C[2])^{st} = hc^{-1} \left(\mathrm{Sym}^2(\mathcal{Y}[2]/C[2])^{st}\right). $$

\begin{remark}
 In all three cases, the stable and semi-stable loci are the same.    
\end{remark}
\subsubsection{}

The graph of $j[2]$ is a closed immersion
$$ (1,j[2]) \colon \mathcal{Y}[2] \to \mathcal{Y}[2] \times_{C[2]} \mathcal{Y}[2] $$
whose image we shall denote $V$. Since $j[2]$ is an involution, $V$ is stable under the $\mathcal{S}_2$-action permuting the two factors of the product. Therefore, its quotient is a closed subscheme
$$ W := V/\mathcal{S}_2 \subset \mathrm{Sym}^2(\mathcal{Y}[2]/C[2]). $$
This construction is compatible with restriction to the stable locus;  writing $ V^{st} = V \cap \left( \mathcal{Y}[2] \times_{C[2]} \mathcal{Y}[2] \right)^{st}$, we get 
$$ V^{st} \to W^{st} = V^{st}/\mathcal{S}_2 \subset  \mathrm{Sym}^2(\mathcal{Y})^{st}. $$

\subsubsection{} 
We form the strict transform
$$ \widetilde{W^{st}} \to W^{st} $$
under the (restricted) Hilbert-Chow morphism
$$ hc \colon \mathrm{Hilb}^2(\mathcal{Y}[2]/C[2])^{st} \to \mathrm{Sym}^2(\mathcal{Y}[2]/C[2])^{st}. $$

\begin{prop}\label{prop:Kummerprops}
 The strict transform $ \widetilde{W^{st}}  $ is smooth over $C[2]$, stable under the action of $\mathbb G[2]$, and coincides with $ \mathcal{K}^1$ as a closed subscheme of $ \mathcal{H}^2$.
 
 \end{prop}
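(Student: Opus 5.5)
The plan is to work locally over $C[2]$, first describing $W^{st}$ and its strict transform, then comparing with $\mathcal{K}^1$ stratum by stratum and using reducedness and irreducibility to upgrade a set-theoretic equality to an equality of schemes.

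\textbf{Step 1: smoothness.} The graph $V \cong \mathcal{Y}[2]$ meets the stable locus in $V^{st}$. By stability a point of $V^{st}$ is a pair $(P, j[2](P))$ lying in the smooth locus of $f[2]$, so $V^{st}$ is smooth over $C[2]$. The $\mathcal{S}_2$-action on $V^{st}$ is the action of $j[2]$. Its fixed locus is the fixed locus of $j[2]$ on $\mathcal{Y}[2]^{sm}$. Over $C[2]^*$, and over strata with $b=0$, this is the $2$-torsion of the group scheme $\mathcal{Y}^{sm}_{C[2]}$, by Lemma~\ref{lemma:invoL}. That locus is finite étale over the base wherever it lies in the stable locus.

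Over strata with $b=1$, the analysis of Lemma~\ref{lemma:KumSurb1} shows that stable points $P \cup j[2](P)$ are reduced, because $j[2]$ sends $G_{4i+1}$ to a different component. So $W^{st}$ has quotient singularities only along a smooth codimension-$2$ fixed locus, which is étale over $C[2]$. There $W^{st}$ is étale-locally (smooth)$\times A_1$. Since $hc$ restricted to the diagonal is the blowup of the diagonal, the strict transform $\widetilde{W^{st}}$ is the blowup of the fixed locus, i.e.\ the standard Kummer resolution in families. It is therefore smooth over $C[2]$.

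\textbf{Step 2: $\mathbb G[2]$-stability.} The involution $j[2]$ commutes with the torus action, because it lifts $j\times \mathrm{id}$ compatibly with the blowup construction of Lemma~\ref{lem:involutiongeneral}. Alternatively, by Lemma~\ref{lemma:invoL}, $j[2]$ is the inverse on every $\mathcal{A}(L)$, and Proposition~\ref{prop:genline-hom} shows that $\sigma$ is a group homomorphism between these. Hence $V$, $W$, $W^{st}$ and the strict transform are all $\mathbb G[2]$-stable, since $hc$ is equivariant.

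\textbf{Step 3: comparison with $\mathcal{K}^1$.} Over $C[2]^*$, and more generally on $\mathrm{Hilb}^2$ of the group scheme $\mathcal{Y}^{sm}_{C[2]}$, a length-$2$ subscheme has sum $e$ exactly when it is of the form $P + (-P)$, or is a tangent vector at a $2$-torsion point. Thus $\mathrm{Kum}^1$ agrees with the strict transform there, and $\mathcal{K}^1_{\circ}$ is an open subscheme of $\widetilde{W^{st}}$.

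Both $\mathcal{K}^1$ and $\widetilde{W^{st}}$ are closed in $\mathcal{H}^2$. The first is integral, and the second is smooth with every component dominating $C[2]$, or at least meeting the open part. To see the latter, note that $\widetilde{W^{st}}$ is smooth of pure relative dimension $2$ over the $3$-dimensional $C[2]$, so every component has dimension $5$. Fibers over the boundary strata have dimension at most $2 + \dim(\text{stratum})$, so no component lives over the boundary alone. Using irreducibility of $V \cong \mathcal{Y}[2]$ (restricted to the stable part, which is dense in it), it follows that $\widetilde{W^{st}}$ is irreducible. Both schemes are therefore the closure of their common open part, and since both are reduced they coincide as subschemes.

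As a consistency check, the $b=1$ strata $F_i^1$ of Lemma~\ref{lemma:KumSurb1} should be seen to be exactly the orbits of points $P \cup j[2](P)$ with $P \in G_{4i+1}^\circ$. The emptiness over $0\in C[2]$ matches the fact that no $j[2]$-pair is stable there.

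\textbf{Main obstacle.} I expect the hardest step to be the local analysis of the fixed locus of $j[2]$ within the stable locus. One must check that it is étale over $C[2]$, in particular near the inserted components, where $j[2]$ is only given abstractly. One must also check that the strict transform under $hc$ really is the blowup of that locus. I would handle the first point using Lemma~\ref{lemma:invoL} along generic lines together with torus translation, and the second by the explicit local coordinates of the Hilbert–Chow map on $\mathrm{Sym}^2$.
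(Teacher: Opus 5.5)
Your proof is correct, and its overall structure matches the paper's. Equality with $\mathcal{K}^1$ comes from both being integral closed subschemes of $\mathcal{H}^2$ that agree over $C[2]^*$. Smoothness comes from the fixed locus of $j[2]$ in $V^{st}$ being \'etale over $C[2]$, which produces a family of Kummer resolutions.

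The two routes diverge in how smoothness is extracted. The paper works upstairs. It shows $\mathcal{D}\cap V^{st}$ is smooth over $C[2]$: the fibres are reduced $2$-torsion points, locally cut out by two equations, so it is a relative regular immersion. Hence the strict transform $\widetilde{V}^{st}$ is a blowup of a smooth scheme in a smooth centre. The paper then identifies $\widetilde{V}^{st}/\mathcal{S}_2$ with $\widetilde{W^{st}}$ by integrality, and gets smoothness from Cohen--Macaulayness \cite{Hochster-Roberts}, flatness over the regular base, and fibrewise smoothness \cite[Lemma 3.9]{Overkamp}.

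This sidesteps exactly what you flag as your main obstacle, namely an $\mathcal{S}_2$-equivariant \'etale-local model of $W^{st}\subset\mathrm{Sym}^2$ compatible with $hc$. Your route needs that linearization, which is standard in characteristic $0$: make local fibre coordinates anti-invariant by averaging. In exchange, it gives smoothness over $C[2]$ directly, without Hochster--Roberts.

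Your proof that the fixed locus is \'etale is a clean alternative to the paper's uniform fibre-by-fibre argument. You realize it as an open part of the $2$-torsion subgroup scheme of $\mathcal{Y}^{sm}_{C[2]}$, and observe that no fixed point of $j[2]$ is stable over strata with $b\geq 1$, since $j[2]$ sends $G_{(\tau,i)}$ to $G_{(N-\tau,-i)}$ and so fixes no inserted component. Note that this locus is \'etale but not finite over $C[2]$ (it is empty over the origin); \'etaleness is all you need.

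Finally, your Step~2 and the dimension count in Step~3 are superfluous. The strict transform of the integral scheme $W^{st}$ is integral by construction. $\mathbb G[2]$-stability then follows from $\widetilde{W^{st}}=\mathcal{K}^1$ together with Lemma~\ref{lem:Kummerlocproperties}.
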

\begin{proof}
We observe that $ \widetilde{W^{st}} $ is closed in $ \mathcal{H}^2$, integral (since $ W^{st} $ is integral) and coincides with $ \mathcal{K}^1_{\circ} $ over $C[2]^*$. So it suffices to prove that it is smooth over $C[2]$. We shall use the fact that $ \mathrm{Hilb}^2(\mathcal{Y}[2]^{sm}/C[2]) $ can also be obtained by blowing up $\mathcal{Y}[2]^{sm} \times_{C[2]} \mathcal{Y}[2]^{sm}$ in the diagonal $\mathcal{D}$ and dividing out by the $\mathcal{S}_2$ action. 

We claim that $\mathcal{D}_V = \mathcal{D} \cap V^{st}$ is smooth over $C[2]$. If $ c \in C[2]$, $(\mathcal{D}_V)_c $ is the intersection of the graph of the involution $-1 \colon \mathcal{Y}[2]^{sm}_c \to \mathcal{Y}[2]^{sm}_c $ with the diagonal. This intersection is supported on the $2$-torsion points of $\mathcal{Y}[2]_c$, and the multiplicities are all equal to $1$ (since, at any fixed point, the induced map on the tangent space is multiplication by $-1$). Hence each closed fiber $(\mathcal{D}_V)_c $ is smooth. Since $\mathcal{D}_V $ is locally cut out by $2$ equations, $\mathcal{D}_V \subset V^{st}$ is a relative regular immersion by \cite[Lemma 31.22.7]{Sta} and in particular flat, which proves the claim. Consequently, the strict transform $\widetilde{V}^{st}$ of $ V^{st} $ is smooth. 

The quotient $\widetilde{V}^{st}/\mathcal{S}_2$ is a closed subscheme of the Hilbert scheme coinciding generically with $ \widetilde{W^{st}} $ so they must be equal, since they are integral schemes. It remains to prove that $\widetilde{V}^{st}/\mathcal{S}_2$ is smooth over $C[2]$. Since this quotient is Cohen-Macaulay by the Hochster-Roberts theorem \cite{Hochster-Roberts} and $C[2]$ is regular, flatness follows from \cite[Lemma 00R4]{Sta}. It therefore suffices to note that each closed fiber is smooth, by \cite[Lemma 3.9]{Overkamp}.
\end{proof}

\subsection{The GIT quotient} 
The GIT quotient $ I^2_{\mathcal{Y}/C} = \mathcal{H}^2/\mathbb G[2] $ is, as recalled in \ref{subsubsec:GITstability}, a projective dlt-model over $C$. In \ref{subsubsec:Kummerlocquot}, we moreover saw that the restriction of the quotient  
$$  \mathcal{H}^2 \to I^2_{\mathcal{Y}/C} $$
to $\mathcal{K}^1$ yields a geometric quotient $ K^1_{\mathcal{Y}/C} $ which is closed in $I^2_{\mathcal{Y}/C} $.

\begin{theorem}\label{theorem:Kummerdeg}
The GIT quotient $ K^1_{\mathcal{Y}/C} $ forms a projective Kulikov model over $C$. 

\end{theorem}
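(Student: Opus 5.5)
The plan is to check the two conditions of Definition \ref{def:KulikovMod} for $K^1_{\mathcal{Y}/C}\to C$: regular total space with reduced snc special fiber, and trivial relative canonical sheaf. Projectivity over $C$ is already available, since $K^1_{\mathcal{Y}/C}$ is closed in the projective $C$-scheme $I^2_{\mathcal{Y}/C}$.

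For regularity and the structure of the special fiber, I would descend from $\mathcal{K}^1$. By Proposition \ref{prop:Kummerprops}, $\mathcal{K}^1$ is smooth over $C[2]$. Since $C[2]\to C$ is semistable with snc coordinate divisors, $\mathcal{K}^1$ is regular and its fiber over $0\in C$ is a reduced snc divisor. By Theorem \ref{theorem:stabilitycondition}, stable equals semistable, so $\mathbb G[2]$ acts on $\mathcal{K}^1$ with finite stabilizers and $\mathcal{K}^1\to K^1_{\mathcal{Y}/C}$ is a geometric quotient. I would then show that the stabilizers are in fact trivial on $\mathcal{K}^1$. This is the free-action statement for the stable locus of $\mathcal{H}^n$ from \cite{GHH}, where the action is free since points are supported on every inserted component. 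Granting freeness, Luna's étale slice theorem makes the quotient map a $\mathbb G[2]$-torsor. Hence $K^1_{\mathcal{Y}/C}$ is regular, and its special fiber is the quotient of an snc divisor by a free torus action, which is again reduced snc. Irreducibility of the components of the special fiber comes from the stratification of Subsection \ref{subsection:stratificationKumersurfaces}. The strata over $b=0$ (the $F_i^\circ$) give the components. The narrow strata $F^1_i$ over $b=1$ are $2$-dimensional with a $\mathbb G[1]$ factor, so they become curves in the quotient, and these curves are the double loci. The fiber over $b=2$ is empty, so there are no triple points, consistent with type II. I would also check that each component closure is smooth, which amounts to showing that no component meets itself along some $F^1_i$. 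For this I would use that $F_i^1$ lies in the closure of exactly two distinct components, namely the smoothings of $B$-type charts along $t_1$ and along $t_2$ as in Lemma \ref{lemma:numsmoothing}.

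For triviality of $\omega$, I would argue as follows. The generic fiber is a family of Kummer surfaces, so $\omega$ is trivial there. Since $K^1_{\mathcal{Y}/C}$ is regular, $\omega_{K^1/C}=\mathcal{O}(\sum a_iF_i)$ for components $F_i$ of the reduced special fiber. Because the fiber $\sum F_i$ is principal, it suffices to show that all $a_i$ are equal. I would compute via the torsor. We have $\omega_{\mathcal{K}^1/C[2]}\otimes\omega_{C[2]/C}$, where $\omega_{C[2]/C}$ is trivial since $C[2]\to C$ is log smooth with the relevant canonical forms $dt_i/t_i$. Also $\omega_{\mathcal{K}^1/C[2]}$ is the restriction of the relative canonical sheaf of the Kummer family built from the relative symplectic form. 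Concretely, the relative $2$-form on $\mathrm{Hilb}^2(\mathcal{Y}[2]^{sm}/C[2])$ (induced from Lemma \ref{lem:smallres}) restricts to a nowhere vanishing form on the smooth family $\mathcal{K}^1/C[2]$, since each fiber is a smooth Kummer-type surface built from $\mathcal{Y}[2]^{sm}_c$, which has trivial canonical bundle. This form is $\mathbb G[2]$-invariant and descends, giving a generator of $\omega_{K^1/C}$.

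I expect the main obstacle to be the torsor and descent step. This means showing that the stabilizers in $\mathbb G[2]$ are trivial on $\mathcal{K}^1$, rather than merely finite, and then descending the invariant volume form so that it stays nowhere vanishing along the double curves. If freeness fails at some $2$-torsion configurations, I would fall back on computing the local structure at points of $F^1_i$ directly via the local equations of $\mathcal{Y}[2]$ from \cite[Prop.~1.5]{GHH}.
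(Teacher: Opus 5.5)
Your route to regularity and the snc property is the paper's: smoothness of $\mathcal{K}^1$ over $C[2]$ (Proposition~\ref{prop:Kummerprops}), trivial stabilizers, then Luna's slice theorem. However, the reason you give for trivial stabilizers is false. The $\mathbb G[n]$-action on the stable locus $\mathcal{H}^n$ is not free in general, because stability forces exactly one point on each inserted level only over the origin of $C[n]$. For example, over a point of $C[2]$ with $t_1=t_3=0\neq t_2$ the numerical support is $(0,2,0)$, so both points lie on the single inserted level. A subscheme $\{(d,\lambda),(d,-\lambda)\}$ in one $\mathbb{P}^1$-fiber of an inserted component is then stable, and it is fixed by $(\sigma_1,\sigma_2)=(-1,-1)$, which stabilizes that point of $C[2]$. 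Freeness is a special feature of $\mathcal{K}^1$, and the paper reads it off from Lemma~\ref{lemma:KumSurb1}:
\begin{itemize}
\item the fiber of $\mathcal{K}^1$ over $0\in C[2]$ is empty;
\item on the $b=0$ strata, $\mathbb G[2]$ acts simply transitively on the torsor $\mathcal{U}(I)_0$;
\item on the $b=1$ strata $F^1_i$, the two points lie on the distinct inserted components $G_{4i+1}$ and $G_{4(N-i)-1}$, which are acted on by translation by $+\sigma_1$ and $-\sigma_1$, so only the identity fixes them.
\end{itemize}
With this substitution the first half of your argument goes through.

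For the triviality of $\omega_{K^1_{\mathcal{Y}/C}/C}$ you take a different route from the paper, and its key step is unsupported. You claim that the form on $\mathrm{Hilb}^2(\mathcal{Y}[2]^{sm}/C[2])$ built from the generator $s$ of $\omega_{\mathcal{Y}[2]/C[2]}$ restricts to a nowhere vanishing form on every fiber of $\mathcal{K}^1\to C[2]$, because each fiber has trivial canonical bundle. Triviality of the fiber's canonical bundle says nothing about whether this particular restricted form vanishes. Over the $b=1$ locus the fiber $\{P+Q=e\}\subset G^\circ_{4i+1}\times G^\circ_{4(N-i)-1}$ pairs two different components of $\mathcal{Y}[2]^{sm}_c$, and the two contributions could a priori cancel identically.

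Within your approach this can be repaired using Proposition~\ref{prop:Kummerprops}. Away from the diagonal, $\mathcal{K}^1$ is the $\mathcal{S}_2$-quotient of the graph of $j[2]$, and on that graph $p_1^*s+p_2^*s$ pulls back to $s+j[2]^*s=2s$. Here $j[2]^*s=s$ because $j[2]^*s/s$ is a regular function on the integral scheme $\mathcal{Y}[2]$ equal to $1$ over $C[2]^*$. Near the diagonal you would use the fiberwise crepancy of resolving the $A_1$-points. You also tacitly need that $\omega_{\mathcal{K}^1/K^1_{\mathcal{Y}/C}}$ is equivariantly trivial for the torus torsor when you say the form descends.

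The paper sidesteps all of this. By Lemma~\ref{lemma:KumSurb1}, the open subscheme $\mathrm{Kum}^1(\mathcal{Y}^{sm}/C)\subset K^1_{\mathcal{Y}/C}$ has complement the union of the double curves $F^1_i/\mathbb G[1]\cong D$, which has codimension two. Since $K^1_{\mathcal{Y}/C}$ is regular, a trivialization of the relative canonical sheaf on the smooth group-scheme family extends. That argument needs nondegeneracy only on the N\'eron-model part, where translation invariance of the form is automatic. Your route, once completed, yields an explicit $\mathbb G[2]$-invariant generator, but at the cost of this extra boundary analysis.
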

\begin{proof}
The definition of a Kulikov model was given in Definition \ref{def:KulikovMod}. By Proposition \ref{prop:Kummerprops}, the Kummer locus $\mathcal{K}^1$ is smooth over $C[2]$ and from the description in \ref{lemma:KumSurb1} we see that there are no non-trivial stabilizers. Using Luna's \'etale slice theorem, it is straightforward to verify that $K^1_{\mathcal{Y}/C}$ is regular, and that the fiber over $ 0 \in C $ is a reduced strict normal crossings divisor. The scheme $K^1_{\mathcal{Y}/C}$ contains $\mathrm{Kum}^1(\mathcal{Y}^{sm}/C)$ as an open subset, and the complement has codimension $2$; this follows again from Lemma \ref{lemma:KumSurb1}. Since the latter family has trivial relative canonical sheaf, the same holds for $K^1_{\mathcal{Y}/C} \to C$. 
\end{proof}

Recall that Kulikov models of K3 surfaces have been classified by Kulikov, Persson and Pinkham, see \cite[(6.1.2)]{HN} for an overview. They are subdivided into three types, depending on the unipotency of the monodromy of the special fiber. The models appearing in Theorem \ref{theorem:Kummerdeg} are of type II. This implies that the special fiber of $ K^1_{\mathcal{Y}/C} $ is a chain in which the two exterior surfaces are rational, the interior surfaces are elliptic ruled, and all intersection curves are elliptic sections. We will come back to the geometry of the strata in more detail 
in Section \ref{sec:stratageom}, in particular Subsection \ref{subsection:geomstrataKum}, where we discuss the geometry of the strata in $ (K^{n-1}_{\mathcal{Y}/C})_0 $ in more detail.

Degenerations of Kummer surfaces have also been studied by Overkamp in \cite{Overkamp}. His approach is to start with a relatively projective model $\mathcal{X}$ of an abelian surface $A$ constructed via uniformization techniques, making sure that the involution $ -1_A $ extends. Then the desired model of $\mathrm{Kum}(A)$ is obtained as a quotient of a suitable modification of $\mathcal{X}$ by the action of the involution. It is noteworthy that he does not require $\mathcal{X}$ to be a simple degeneration. His methods do not extend to the case where $n > 2$, which is the main focus of our paper.

\section{The dual complex for $n=3$}\label{sec:dualcplxIII}

In this section, we will explicitly construct the dual complex for $n=3$ and arbitrary value of $N$, which we denote as $\Delta(K^2_{\mathcal{Y}/C})$. We have seen from Proposition \ref{prop:dual-const-dim} that $\Delta(K^2_{\mathcal{Y}/C})$ is a complex of dimension $2$, whose cells of dimension $0$, $1$ and $2$ will be called vertices, edges and faces respectively.

\subsection{The notion of a local chart}

Recall that the faces of $\Delta(K^2_{\mathcal{Y}/C})$ have been completely classified in Proposition \ref{prop:stratadeterm}. The closure of each face is a $2$-simplex, denoted as $\mathcal{D}^\pm$ or $\mathscr{D}^\pm$, which has been explicitly described in Subsection \ref{subsec:numerical-smoothing}. The dual complex $\Delta(K^2_{\mathcal{Y}/C})$ will be obtained by glueing them along edges. To obtain the full dual complex, we first glue the $2$-simplices that share a common vertex. For this purpose we use

\begin{definition}
	In the dual complex, the union of all $2$-simplices which contain any particular vertex will be called the \emph{local chart} around this vertex. This vertex is called the \emph{center} of the local chart.
\end{definition}

Recall that in the proof of Proposition \ref{prop:number}, we also have a classification of edges into Type 1 and Type 2, as well as a classification of vertices into Type 3 and Type 4. The following observation helps understand the role of some particular local charts.

\begin{lemma}\label{lem:glue-type-4}
	Each $2$-simplex in $\Delta(K^2_{\mathcal{Y}/C})$ contains $2$ vertices of Type 3 and $1$ vertex of Type 4. Therefore $\Delta(K^2_{\mathcal{Y}/C})$ can be obtained by glueing local charts around vertices of Type 4 along edges.
\end{lemma}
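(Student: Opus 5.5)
The plan is to check the claim directly on the four types of $2$-simplices and then deduce the gluing statement formally. By Proposition \ref{prop:stratadeterm}, every face of $\Delta(K^2_{\mathcal{Y}/C})$ is the closure of a deepest stratum $D(\tau,x_1,x_2,x_3)$ with $\mathfrak{X}\in\{\{1,2,3\},\{-1,-2,-3\},\pm\{1,2,-3\}\}$. Its closure is one of the simplices $\mathcal{D}^\pm$ or $\mathscr{D}^\pm$ from \ref{subsubsec:dualcplx123} and \ref{subsubsec:dualcplx12minus3}. The dependence on $\tau$ only shifts labels, so the vertex types are unaffected, and I will read off the types of the three vertices in each case.

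The first step concerns $\mathcal{D}^+$. Its vertices are $A(0,1,1)$, $B(1,1,1)$ and $B(1,1,2)$. In the classification in the proof of Proposition \ref{prop:number}, $A(0,1,1)$ is equivalent to $A(0,1,-1)$ by Definition \ref{def:linechartequiv}, so it is of Type 3. The vertex $B(1,1,2)$ has line chart of the same shape as $B(1,1,-2)$; this is the vertical shift by $2$ units coming from $x_3=\pm(b+1)=\pm 2$, so it is also of Type 3. The vertex $B(1,1,1)$ is of Type 4. The simplex $\mathcal{D}^-$ follows by negation, using that Type 3 and Type 4 are closed under $X_b(x)\mapsto X_b(-x)$.

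The second step concerns $\mathscr{D}^+$. Its vertices are $A(0,0,0)$, which is Type 4, and $A(0,1,-1)$ and $B(1,1,-2)$, which are both Type 3. Again $\mathscr{D}^-$ follows by negation.

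The main point to verify carefully is the identification of each vertex with the representative listed in Figures \ref{fig:typeC} and \ref{fig:typeD}, up to equivalence of line charts. Concretely, one must check that the vertical shifts of \ref{subsubsec:non-uniqueness} send $B(1,1,2)$ to $B(1,1,-2)$ and $A(0,1,1)$ to $A(0,1,-1)$. One must also check that no Type 4 chart is equivalent to a Type 3 chart. This last point holds because a Type 4 chart has all $|x_i|$ equal, while a Type 3 chart does not, and equivalence preserves $|\mathfrak{X}|$.

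For the second assertion, every $2$-simplex contains exactly one Type 4 vertex, so it lies in the local chart around that vertex. Hence $\Delta(K^2_{\mathcal{Y}/C})$ is the union of the local charts centered at Type 4 vertices. Since $\Delta$ is pure of dimension $2$ by Proposition \ref{prop:dual-const-dim}, every cell lies in some face. Two distinct such local charts share no face, because each face has a unique Type 4 vertex. Any identification of cells between two different local charts therefore occurs along edges or vertices lying on faces, and so the global complex is recovered by gluing these local charts along edges, via the relation $\sim$ in \eqref{equation:dualcplxdef}.
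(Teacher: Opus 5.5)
Your proposal is correct and follows the paper's proof. The paper likewise reads off from $\mathcal{D}^\pm$ and $\mathscr{D}^\pm$ that each face has exactly one Type 4 vertex ($B_1$, resp.\ $A_1$) and two Type 3 vertices, and concludes that each $2$-simplex lies in exactly one local chart around a Type 4 vertex. Your extra checks make explicit what the paper leaves implicit: that $A(0,1,1)$ and $B(1,1,2)$ are equivalent to the listed Type 3 representatives $A(0,1,-1)$ and $B(1,1,-2)$ via vertical shifts by $2$ units, and that equivalence preserves $\vert\mathfrak{X}\vert$, so the two types cannot coincide. Your final ``along edges'' step is at the same informal level as the paper's.
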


\begin{proof}
	The vertices of $\mathcal{D}^\pm$ and $\mathscr{D}^\pm$ have also been explicitly computed in Subsections \ref{subsubsec:dualcplx123} and \ref{subsubsec:dualcplx12minus3}. We can see that in either case, there is precisely one vertex of Type 4 (given by $B_1$ in Subsection \ref{subsubsec:dualcplx123} and $A_1$ in Subsection \ref{subsubsec:dualcplx12minus3}), as well as both other vertices of Type 3. It follows that each $2$-simplex is contained in exactly one local chart around a vertex of Type 4, which proves the claim.
\end{proof}

We will have to describe all local charts around vertices of Type 4, and the algorithm for glueing them. Recall from Proposition \ref{prop:number} that the line chart associated to a vertex of Type 4 has three possibilities: $A(0,0,0)$ or $B(1,1,1)$ or $B(-1,-1,-1)$. To be more precise, we will also take the values of $\tau_i$'s into consideration, then such a vertex of Type 4 will be labelled as $A(\tau;0,0,0)$ or $B(\tau;1,1,1)$ or $B(\tau;-1,-1,-1)$, where $\tau = (\tau_1, \tau_2, \tau_3)$ with each component viewed as a residue class modulo $N$. For simplicity, we will sometimes omit the values of $x_i$'s, and label the vertex as $A_1(\tau)$ or $B_1^+(\tau)$ or $B_1^-(\tau)$. 
We also remind the readers that since the values of $x_i$'s are equal, $\tau$ should be viewed as an unordered triple for each vertex of Type 4. 
Next we consider the local chart around each of these vertices.

\subsection{The local chart $\mathscr{C}(\tau)$ around $A_1(\tau)$}\label{subsec:3-charts-A}

For the vertex $ A_1(\tau) \coloneqq A(\tau;0,0,0)$, the admissibility condition \eqref{eqn:admissible-second} requires 
\begin{equation} \label{eqn:adm-A}
	\tau_1 + \tau_2 + \tau_3  \equiv 0 \pmod N.
\end{equation}
We denote the local chart around the vertex $A_1(\tau)$ as $\mathscr{C}(\tau)$. The following is an easy observation

\begin{lemma} \label{lem:local-around-A}
	The local chart around the vertex $A(\tau;0,0,0)$ can be given by
	\begin{equation} \label{eq:config1-C}
		\mathscr{C}(\tau) = \bigcup_{\zeta \in \mathcal{S}_3} \left( \mathscr{D}^+(\zeta(\tau)) \cup \mathscr{D}^-(\zeta(\tau)) \right)
	\end{equation}
	as a union of $2$-simplices, where $\mathcal{S}_3$ is the symmetric group on $3$ elements.
\end{lemma}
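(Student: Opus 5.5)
By definition, the local chart $\mathscr{C}(\tau)$ is the union of all $2$-simplices of $\Delta(K^2_{\mathcal{Y}/C})$ that contain the vertex $A_1(\tau)=A(\tau;0,0,0)$. Every $2$-simplex is the closure of a face, and by Proposition \ref{prop:stratadeterm} it is one of $\mathcal{D}^\pm(\tau')$ or $\mathscr{D}^\pm(\tau')$ for some ordered triple $\tau'$ satisfying the relevant congruence. So the plan is to decide, for each of these four types, when $A(\tau;0,0,0)$ occurs among its three vertices. This is done by comparing line charts.

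First I exclude $\mathcal{D}^\pm$. By \ref{subsubsec:dualcplx123}, the vertices of $\mathcal{D}^+(\tau')$ are $A(0,1,1)$, $B(1,1,1)$ and $B(1,1,2)$, and those of $\mathcal{D}^-$ are their negatives. The only vertex with $b=0$ is $A(0,1,1)$ (respectively $A(0,-1,-1)$). Its line chart is the single vertex $(2,\pm2)$, whereas $A(0,0,0)$ has the single vertex $(0,0)$. These charts have different $x$-coordinates, so they are not equivalent in the sense of Definition \ref{def:linechartequiv}. Equivalently, $A(0,0,0)$ has no point on the odd original components, while $A(0,1,1)$ has two. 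Hence no $\mathcal{D}^\pm$ contains $A_1(\tau)$.

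Next I treat $\mathscr{D}^\pm$. By \ref{subsubsec:dualcplx12minus3}, the vertex $A_1$ of $\mathscr{D}^+(\tau')=\mathscr{D}(\tau';1,2,-3)$ is obtained by smoothing along all coordinates except $t_4$. This keeps the $\tau$-labels of the three points and sends every $x_i$ to $0$. The result is the stratum $A(\tau';0,0,0)$, and the same holds for $\mathscr{D}^-$. Since the three $x_i$ of $A(0,0,0)$ coincide, the triple $\tau$ is unordered here. Therefore $A(\tau';0,0,0)=A(\tau;0,0,0)$ exactly when $\tau'$ is a permutation $\zeta(\tau)$ with $\zeta\in\mathcal{S}_3$.

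Conversely, for any $\zeta\in\mathcal{S}_3$ the triple $\zeta(\tau)$ still satisfies $\mathfrak{T}\equiv0 \pmod N$, by \eqref{eqn:adm-A}. By Proposition \ref{prop:stratadeterm}(1), the simplices $\mathscr{D}^\pm(\zeta(\tau))$ therefore exist, and by the previous step they contain $A_1(\tau)$. This gives \eqref{eq:config1-C}.

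The main obstacle is the bookkeeping of $\tau$ under smoothing. One must check that the smoothing of Lemma \ref{lemma:numsmoothing}, which collapses the inserted components onto the original component $G'_{\tau_i,0}$, really keeps each $\tau_i$ fixed. In particular, no vertex of $\mathscr{D}^\pm$ other than $A_1$ can be re-labelled, via the identifications such as $A(\tau_3-1,\tau_1,\tau_2;0,1,1)=A(\tau_3-1,\tau_1,\tau_2+1;0,1,-1)$, into a stratum of the form $A(\ast;0,0,0)$. This follows from the $x$-coordinate argument above, since all such identifications preserve the number of points on odd original components.
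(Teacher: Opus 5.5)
Your argument is correct and is essentially the paper's proof. The paper notes that $D(1,2,-3)$ and $D(-1,-2,3)$ are the only complete admissible line charts having $A(0,0,0)$ as an admissible line subchart, and that specialization leaves the $\tau_i$ unchanged. You verify exactly this through the vertex lists of $\mathcal{D}^\pm$ and $\mathscr{D}^\pm$ and the observation that $\tau$ is unordered when all $x_i=0$. Your extra check, that the identifications between equivalent line charts preserve the number of points on odd original components and so cannot turn $A(0,\pm1,\pm1)$ or $A(0,\pm1,\mp1)$ into some $A(\ast;0,0,0)$, makes explicit a point the paper leaves implicit.
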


\begin{proof}
	First of all, the only complete admissible line charts that contain $A(0,0,0)$ as a line subchart are $D(1,2,-3)$ and $D(-1,-2,3)$. Moreover, specialization does not alter the values of $\tau_i$'s. Therefore the $2$-simplices which contain $A_1(\tau)$ are exactly given by $\mathscr{D}^\pm$ parametrized by any permutation of $\tau$. The claim follows.
\end{proof}

It is clear from Lemma \ref{lem:local-around-A} that the shape of $\mathscr{C}(\tau)$ depends on the number of distinct values among the components of $\tau$. In the following we will distinguish three cases.

\subsubsection{The set $\{ \tau_1, \tau_2, \tau_3 \}$ contains $3$ distinct values} \label{subsubsec:3-values-C}

In such a case, the $2$-simplices on the right side of \eqref{eq:config1-C} are distinct, hence $\mathscr{C}(\tau)$ consists of $12$ distinct $2$-simplices. Let $(x,y,z)$ be any permutation of $(\tau_1, \tau_2, \tau_3)$. By manipulating the corresponding line charts, we observe the following rules concerning the relative positions of the $2$-simplicies
\begin{itemize}
	\item[(a)] the $2$-simplices $\mathscr{D}^+(x,y,z)$ and $\mathscr{D}^-(x,z,y)$ share a common edge labelled by $B(x,y,z;0,1,-1)$;
	\item[(b1)] the $2$-simplices $\mathscr{D}^\pm(x,y,z)$ and $\mathscr{D}^\pm(y,x,z)$ share a common edge labelled by $C(x,y,z; \pm 1, \pm 1, \mp 2)$;
	\item[(c)] the edge of $\mathscr{D}^\pm(x,y,z)$ opposite to the vertex $A(\tau;0,0,0)$ is labelled by $C(x,y,z; \pm 1, \pm 2, \mp 3) = C(x,y,z \mp 1;\pm 1, \pm 2, \pm 3)$, which is also an edge of $\mathcal{D}^\pm(x,y,z \mp 1)$.
\end{itemize}

The first two rules determine that the local chart around $A_1(\tau)$ can be viewed as a full disk, which splits into $12$ sectors; see Figure \ref{fig:fulldisk12-C}. The last rule determines all $2$-simplicies that are attached to this local chart by edges. In particular, none of the boundary edges of this local chart is on the boundary of the dual complex $\Delta(K^2_{\mathcal{Y}/C})$.

\begin{figure}[h]
	\begin{tikzpicture}
		\draw (0,0) circle (5cm);
		\foreach \x in {0,...,5}
			\draw (30*\x:5cm) -- (30*\x+180:5cm);
		\foreach \x in {0,...,11}
			\fill (30*\x:5cm) circle (4pt);
		\fill (0,0) circle (4pt);
		\draw (15:3.5cm) node {$\mathscr{D}^-(z,y,x)$};
		\draw (45:3.5cm) node {$\mathscr{D}^+(z,x,y)$};
		\draw (75:3.8cm) node {$\mathscr{D}^+(x,z,y)$};
		\draw (105:3.8cm) node {$\mathscr{D}^-(x,y,z)$};
		\draw (135:3.5cm) node {$\mathscr{D}^-(y,x,z)$};
		\draw (165:3.5cm) node {$\mathscr{D}^+(y,z,x)$};
		\draw (195:3.5cm) node {$\mathscr{D}^+(z,y,x)$};
		\draw (225:3.5cm) node {$\mathscr{D}^-(z,x,y)$};
		\draw (255:3.8cm) node {$\mathscr{D}^-(x,z,y)$};
		\draw (285:3.8cm) node {$\mathscr{D}^+(x,y,z)$};
		\draw (315:3.5cm) node {$\mathscr{D}^+(y,x,z)$};
		\draw (345:3.5cm) node {$\mathscr{D}^-(y,z,x)$};
		\draw (15:6.4cm) node {$\mathcal{D}^-(z,y,x+1)$};
		\draw (45:6.1cm) node {$\mathcal{D}^+(z,x,y-1)$};
		\draw (75:5.5cm) node {$\mathcal{D}^+(x,z,y-1)$};
		\draw (105:5.5cm) node {$\mathcal{D}^-(x,y,z+1)$};
		\draw (135:6.1cm) node {$\mathcal{D}^-(y,x,z+1)$};
		\draw (165:6.4cm) node {$\mathcal{D}^+(y,z,x-1)$};
		\draw (195:6.4cm) node {$\mathcal{D}^+(z,y,x-1)$};
		\draw (225:6.1cm) node {$\mathcal{D}^-(z,x,y+1)$};
		\draw (255:5.5cm) node {$\mathcal{D}^-(x,z,y+1)$};
		\draw (285:5.5cm) node {$\mathcal{D}^+(x,y,z-1)$};
		\draw (315:6.1cm) node {$\mathcal{D}^+(y,x,z-1)$};
		\draw (345:6.4cm) node {$\mathcal{D}^-(y,z,x+1)$};
		\draw (0,-0.6cm) node [fill=white] {$A_1(x,y,z)$};
	\end{tikzpicture}
	\caption{Local chart $\mathscr{C}(\tau)$ for pairwise distinct $\tau_i$'s}
    \label{fig:fulldisk12-C}
\end{figure}

\subsubsection{The set $\{ \tau_1, \tau_2, \tau_3 \}$ contains $2$ distinct values} \label{subsubsec:2-values-C}

In such a case, the right side of \eqref{eq:config1-C} consists of $6$ distinct $2$-simplices forming $\mathscr{C}(\tau)$. The rules concerning the relative positions of $2$-simplices are similar to those in the above case, namely
\begin{itemize}
	\item[(a)] the $2$-simplices $\mathscr{D}^+(x,y,z)$ and $\mathscr{D}^-(x,z,y)$ share a common edge labelled by $B(x,y,z;0,1,-1)$;
	\item[(b1)] when $x \neq y$, the $2$-simplices $\mathscr{D}^\pm(x,y,z)$ and $\mathscr{D}^\pm(y,x,z)$ share a common edge labelled by $C(x,y,z; \pm 1, \pm 1, \mp 2)$;
	\item[(b2)] when $x = y$, the $2$-simplex $\mathscr{D}^\pm(x,y,z)$ is the only one that contains the edge labelled by $C(x,y,z; \pm 1, \pm 1, \mp 2)$;
	\item[(c)] the edge of $\mathscr{D}^\pm(x,y,z)$ opposite to the vertex $A_1(\tau)$ is labelled by $C(x,y,z; \pm 1, \pm 2, \mp 3) = C(x,y,z \mp 1;\pm 1, \pm 2, \pm 3)$, which is also an edge of $\mathcal{D}^\pm(x,y,z \mp 1)$.
\end{itemize}

The first two rules determine that the local chart around $A_1(\tau)$ can be viewed as a semi-disk, which splits into $6$ sectors; see Figure \ref{fig:semidisk6-C}. The last two rules determines all $2$-simplicies that are attached to this local chart by edges, as well as both boundary edges of this local chart that also lie on the boundary of the dual complex $\Delta(K^2_{\mathcal{Y}/C})$; see the thick edges in Figure \ref{fig:semidisk6-C}.

\begin{figure}[h]
	\begin{tikzpicture}
		\draw (5,0) arc (0:180:5);
		\foreach \x in {0,...,6}
			\draw (30*\x:5cm) -- (0,0);
		\foreach \x in {0,...,6}
			\fill (30*\x:5cm) circle (4pt);
		\draw [ultra thick] (0:5cm) -- (0,0);
		\draw [ultra thick] (180:5cm) -- (0,0);
		\fill (0,0) circle (4pt);
		\draw (15:3.5cm) node {$\mathscr{D}^-(y,y,x)$};
		\draw (45:3.5cm) node {$\mathscr{D}^+(y,x,y)$};
		\draw (75:3.8cm) node {$\mathscr{D}^+(x,y,y)$};
		\draw (105:3.8cm) node {$\mathscr{D}^-(x,y,y)$};
		\draw (135:3.5cm) node {$\mathscr{D}^-(y,x,y)$};
		\draw (165:3.5cm) node {$\mathscr{D}^+(y,y,x)$};
		\draw (15:6.4cm) node {$\mathcal{D}^-(y,y,x+1)$};
		\draw (45:6.1cm) node {$\mathcal{D}^+(y,x,y-1)$};
		\draw (75:5.5cm) node {$\mathcal{D}^+(x,y,y-1)$};
		\draw (105:5.5cm) node {$\mathcal{D}^-(x,y,y+1)$};
		\draw (135:6.1cm) node {$\mathcal{D}^-(y,x,y+1)$};
		\draw (165:6.4cm) node {$\mathcal{D}^+(y,y,x-1)$};
		\draw (0,-0.5cm) node {$A_1(x,y,y)$};
	\end{tikzpicture}
    \caption{Local chart $\mathscr{C}(\tau)$ for $\tau_i$'s taking two distinct values; thick edges are on the boundary of $\Delta(K^2_{\mathcal{Y}/C})$}
    \label{fig:semidisk6-C}
\end{figure}

\subsubsection{The set $\{ \tau_1, \tau_2, \tau_3 \}$ contains $1$ distinct value} \label{subsubsec:1-values-C}

In such a case, the right side of \eqref{eq:config1-C} consists of $2$ distinct $2$-simplices forming $\mathscr{C}(\tau)$. The rules concerning the relative positions of $2$-simplices are similar to those in both above cases, namely
\begin{itemize}
	\item[(a)] the $2$-simplices $\mathscr{D}^+(x,x,x)$ and $\mathscr{D}^-(x,x,x)$ share a common edge labelled by $B(x,x,x;0,1,-1)$;
	\item[(b2)] the $2$-simplex $\mathscr{D}^\pm(x,x,x)$ is the only one that contains the edge labelled by $C(x,x,x; \pm 1, \pm 1, \mp 2)$;
	\item[(c)] the edge of $\mathscr{D}^\pm(x,x,x)$ opposite to the vertex $A(\tau;0,0,0)$ is labelled by $C(x,x,x; \pm 1, \pm 2, \mp 3) = C(x,x,x \mp 1;\pm 1, \pm 2, \pm 3)$, which is also an edge of $\mathcal{D}^\pm(x,x,x \mp 1)$.
\end{itemize}

The first two rules determine that the local chart around $A_1(\tau)$ can be viewed as a sixth of a disk, which splits into $2$ sectors; see Figure \ref{fig:sector2-C}. The last two rules determines all $2$-simplicies that are attached to this local chart by edges, as well as both boundary edges of this local chart that also lie on the boundary of the dual complex $\Delta(K^2_{\mathcal{Y}/C})$; see the thick edges in Figure \ref{fig:sector2-C}.

\begin{figure}[h]
	\begin{tikzpicture}
		\draw (5,0) arc (0:60:5);
		\foreach \x in {0,...,2}
			\draw (30*\x:5cm) -- (0,0);
		\foreach \x in {0,...,2}
			\fill (30*\x:5cm) circle (4pt);
		\draw [ultra thick] (0:5cm) -- (0,0);
		\draw [ultra thick] (60:5cm) -- (0,0);
		\fill (0,0) circle (4pt);
		\draw (13:3.5cm) node {$\mathscr{D}^-(x,x,x)$};
		\draw (43:3.5cm) node {$\mathscr{D}^+(x,x,x)$};
		\draw (13:6.4cm) node {$\mathcal{D}^-(x,x,x+1)$};
		\draw (43:6.1cm) node {$\mathcal{D}^+(x,x,x-1)$};
		\draw (0,-0.5cm) node {$A_1(x,x,x)$};
	\end{tikzpicture}
    \caption{Local chart $\mathscr{C}(\tau)$ for $\tau_i$'s taking identical values; thick edges are on the boundary of $\Delta(K^2_{\mathcal{Y}/C})$}
    \label{fig:sector2-C}
\end{figure}

We summarize the above discussion by the following result

\begin{prop} \label{prop:local-charts-A}
	The local chart $\mathscr{C}(\tau)$ around the vertex $A_1(\tau)$ can be one of the three forms, as shown in Figures \ref{fig:fulldisk12-C}, \ref{fig:semidisk6-C} or \ref{fig:sector2-C}. Moreover, the thick edges in these figures exhibit the edges in $\mathscr{C}(\tau)$ that occur on the boundary of the dual complex $\Delta(K^2_{\mathcal{Y}/C})$. \qed
\end{prop}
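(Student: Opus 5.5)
The argument gathers the analysis of Subsections \ref{subsubsec:3-values-C}--\ref{subsubsec:1-values-C}, so the plan is to make the rules (a), (b1), (b2), (c) rigorous and read off the shape of $\mathscr{C}(\tau)$ from them. First, by Lemma \ref{lem:local-around-A}, the $2$-simplices containing $A_1(\tau)$ are exactly $\mathscr{D}^\pm(\zeta(\tau))$ for $\zeta\in\mathcal{S}_3$. By \ref{subsubsec:dualcplx12minus3}, each $\mathscr{D}^+(x,y,z)$ has exactly two edges through $A_1$, namely $B(x,y,z;0,1,-1)$ and $C(x,y,z;1,1,-2)$. The edge opposite $A_1$ is $C(x,y,z;1,2,-3)$. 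The case of $\mathscr{D}^-$ is the same after negation. The distinct simplices correspond to the distinct orderings of the multiset $\{\tau_1,\tau_2,\tau_3\}$, of which there are $6,3,1$; with the sign this gives $12,6,2$ simplices.

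Next I establish the gluing rules by comparing labelled strata. The key points are that the $\tau$'s attached to equal $x_i$ are unordered, and that line charts are equivalent under vertical shifts by $2$ (Definition \ref{def:linechartequiv}).

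For (a), consider $B(x,y,z;0,1,-1)$. In $\mathscr{D}^+(x,y,z)$ this edge places the second point at $+1$ and the third at $-1$. Relabelling the two points at level $1$ with opposite signs shows that it is also the $B$-edge of $\mathscr{D}^-(x,z,y)$, whose deepest stratum is $D(x,z,y;-1,-2,3)$.

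For (b1) and (b2), the edge $C(x,y,z;1,1,-2)$ has two points with $x_i=1$. Its $\tau$-labels $x,y$ are therefore unordered, so it lies in both $\mathscr{D}^+(x,y,z)$ and $\mathscr{D}^+(y,x,z)$. These two simplices coincide exactly when $x=y$. One must also check that no other $\mathscr{D}^\pm(\zeta\tau)$ contains this edge. For this I compare the multiset of pairs $(x_i,\tau_i)$; a sign flip is impossible, since the vertex $-2$ versus $2$ is not equivalent under the shift allowed for $b=2$.

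For (c), $x_3=-3$ lies on an original component in the expansion $b=2$. The equivalence moving the chart by $2$ converts $C(x,y,z;1,2,-3)$ into $C(x,y,z-1;1,2,3)$, following the rule for how $\tau_3$ shifts when a point on an odd original component changes sign, as in the Remark after Proposition \ref{prop:stratadeterm2}. That stratum is an edge of $\mathcal{D}^+(x,y,z-1)$, so the outer edges are shared with simplices of type $\mathcal{D}$ and are not boundary edges of $\Delta(K^2_{\mathcal{Y}/C})$.

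With the rules in hand, the shape follows by cyclic bookkeeping. Start at $\mathscr{D}^+(x,y,z)$ and alternately cross a $B$-edge (rule (a): $\mathscr{D}^+(x,y,z)\to\mathscr{D}^-(x,z,y)$) and a $C$-edge (rule (b1): swap the first two entries). This walks around $A_1$ and returns after $12$ steps when the $\tau_i$ are distinct, giving the full disk of Figure \ref{fig:fulldisk12-C}. When some $\tau_i$ coincide, the walk hits rule (b2), where a $C$-edge has only one adjacent simplex. The walk then stops in both directions, giving the half-disk with $6$ sectors or the sector with $2$. The two terminal $C$-edges are exactly the thick edges. They are boundary edges of the whole complex because, by Proposition \ref{prop:stratadeterm2} and the list of complete admissible line charts, $C(\cdot;1,1,-2)$ is a subchart only of $D(1,2,-3)$.

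The step I expect to be the main obstacle is the last one: verifying that an edge $C(x,x,z;1,1,-2)$ lies in no other $2$-simplex once all equivalent line charts are taken into account. I would do this by listing all complete admissible charts containing the given vertices up to vertical shift by $2$, and tracking $\tau$.
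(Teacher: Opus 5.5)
Your proposal is correct and takes essentially the same route as the paper. Lemma \ref{lem:local-around-A} identifies the twelve, six or two simplices $\mathscr{D}^\pm(\zeta(\tau))$. The gluing rules (a), (b1), (b2), (c) are then read off by comparing labelled strata and using the vertical-shift equivalence of line charts, which yields the shapes in Figures \ref{fig:fulldisk12-C}--\ref{fig:sector2-C} and the thick boundary edges. Your alternating walk around $A_1(\tau)$, and your check that $C(x,x,z;1,1,-2)$ lies only in $\mathscr{D}^+(x,x,z)$, just make explicit what the paper summarizes as ``manipulating the corresponding line charts''.
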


\subsection{The local chart $\mathcal{C}^+(\tau)$ around $B_1^+(\tau)$} \label{subsec:local-chart-B111-C}

For the vertex $B_1^+(\tau) \coloneqq B(\tau;1,1,1)$, the admissibility condition \eqref{eqn:admissible-second} requires
\begin{equation} \label{eqn:adm-B+}
	\tau_1 + \tau_2 + \tau_3  \equiv -1 \pmod N.
\end{equation}
We denote the local chart around the vertex $B_1^+(\tau)$ as $\mathcal{C}^+(\tau)$. The following is an easy observation

\begin{lemma} \label{lem:local-around-B}
	The local chart around the vertex $B_1^+(\tau)$ can be given by
	\begin{equation} \label{eq:config2-C}
		\mathcal{C}^+(\tau) = \bigcup_{\zeta \in \mathcal{S}_3} \mathcal{D}^+(\zeta(\tau))
	\end{equation}
	as a union of $2$-simplices.
\end{lemma}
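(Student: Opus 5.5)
The argument mirrors the proof of Lemma \ref{lem:local-around-A}. By definition, the local chart $\mathcal{C}^+(\tau)$ is the union of all $2$-simplices of $\Delta(K^2_{\mathcal{Y}/C})$ containing the vertex $B_1^+(\tau)=B(\tau;1,1,1)$. By construction of the global dual complex in \eqref{equation:dualcplxdef}, every $2$-simplex is of the form $\Delta_K(\mathcal{L})$ for a complete admissible line chart $\mathcal{L}$ decorated with a triple $\tau'$. So the task is to determine which pairs $(\mathcal{L},\tau')$ have $B(\tau;1,1,1)$ among their line subcharts.

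First, ignore $\tau$ and work only with line charts. The line chart of $B(1,1,1)$ has vertex set $\{(0,0),(3,3)\}$ with neutral line $y=2$. A complete line chart containing these two vertices must climb from height $0$ to height $3$ in three steps. Hence every step has slope $+1$, and the chart is exactly that of $D(1,2,3)$. We must still rule out equivalent line charts in the sense of Definition \ref{def:linechartequiv}. Since $B(1,1,1)$ has no points on the original odd-labelled components, we have $p=0$ and the first vertex is fixed at $(0,0)$, so no vertical shifts occur. Among the four complete admissible charts of Example \ref{eg:complete-admissible}, only $D(1,2,3)$ therefore contains $B(1,1,1)$ as a subchart. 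This is confirmed by Example \ref{eg:all-admissible-123}, where $B(1,1,1)$ appears as a subchart of $D(1,2,3)$, whereas it does not appear in Figure \ref{fig:subcharts-12minus3}.

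Next, account for $\tau$. Smoothing and specialization do not change the labels $\tau_i$: passing to a line subchart only deletes vertices. The points keep their component indices $\tau_i$, and $x_i$ is modified as in Lemma \ref{lemma:numsmoothing}. Hence a simplex $\mathcal{D}^+(\tau')$ contains $B(\tau;1,1,1)$ exactly when $\tau'$ and $\tau$ agree as multisets. The triple $\tau'$ is ordered because the $x_i$ of $D(1,2,3)$ are distinct, while $\tau$ is unordered because the $x_i$ of $B(1,1,1)$ coincide. So the admissible $\tau'$ are precisely the permutations $\zeta(\tau)$ with $\zeta\in\mathcal{S}_3$. Each such $\mathcal{D}^+(\zeta(\tau))$ is admissible, since the condition $\tau_1+\tau_2+\tau_3\equiv -1 \pmod N$ in \eqref{eqn:adm-B+} is symmetric and is the same condition that Proposition \ref{prop:stratadeterm}(2) requires for $D(1,2,3)$. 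Conversely, by Proposition \ref{prop:C-goal}, each such stratum $D$ lies in the closure of $B_1^+(\tau)$. This gives \eqref{eq:config2-C}.

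The only delicate point is the bookkeeping of $\tau$ under smoothing. One must check that deleting the vertices $(1,1)$ and $(2,2)$ from $D(\tau';1,2,3)$ yields $B(\tau';1,1,1)$ with no shift in the $\tau_i$. Shifts of that kind occur only when a line chart is moved vertically, as in rule (c) of Subsection \ref{subsec:3-charts-A}, and such a move is impossible here because $p=0$.
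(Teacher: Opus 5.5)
Your proposal is correct and follows the paper's proof. The paper argues that $D(1,2,3)$ is the unique complete admissible line chart having $B(1,1,1)$ as a line subchart, and that specialization does not change the $\tau_i$, so the $2$-simplices through $B_1^+(\tau)$ are exactly the $\mathcal{D}^+(\zeta(\tau))$. Your slope argument and your observation that $p=0$ rules out vertically shifted (equivalent) line charts make explicit details the paper leaves implicit.
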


\begin{proof}
	The idea is similar to that of Lemma \ref{lem:local-around-A}. The only complete admissible line chart that contains $B(1,1,1)$ as a line subchart is $D(1,2,3)$. Moreover, specialization does not alter the value of $\tau_i$'s. Therefore the $2$-simplices which contain $B_1^+(\tau)$ are exactly given by $\mathcal{D}^+$ parametrized by any permutation of $\tau$. The claim follows.
\end{proof}

It is clear from Lemma \ref{lem:local-around-B} that the shape of $\mathcal{C}^+(\tau)$ depends on the number of distinct values among the components of $\tau$. Hence we still distinguish three cases.

\subsubsection{The set $\{ \tau_1, \tau_2, \tau_3 \}$ contains $3$ distinct values}

In such a case, the $2$-simplices on the right side of \eqref{eq:config2-C} are distinct, hence $\mathcal{C}^+(\tau)$ consists of $6$ distinct $2$-simplices. Let $(x,y,z)$ be any permutation of $(\tau_1, \tau_2, \tau_3)$. By manipulating the corresponding line charts, we observe the following rules concerning the relative positions of the $2$-simplicies
\begin{itemize}
	\item[(a)] the $2$-simplices $\mathcal{D}^+(x,y,z)$ and $\mathcal{D}^+(y,x,z)$ share a common edge labelled by $C(x,y,z;1,1,2)$;
	\item[(b)] the $2$-simplices $\mathcal{D}^+(x,y,z)$ and $\mathcal{D}^+(x,z,y)$ share a common edge labelled by $C(x,y,z;1,2,2)$;
	\item[(c)] the edge of $\mathcal{D}^+(x,y,z)$ opposite to the vertex $B_1^+(\tau)$ is labelled by $C(x,y,z;1,2,3)=C(x,y,z+1;1,2,-3)$, which is also an edge of $\mathscr{D}^+(x,y,z+1)$.
\end{itemize}

By the first two rules, also for the convenience later, we can view the local chart around $B_1^+(\tau)$ as an equilateral triangle with curved edges, which splits into $6$ sectors; see Figure \ref{fig:fulldisk6-C}. The last rule determines all $2$-simplicies that are attached to this local chart by edges. In particular, none of the boundary edges of this local chart is on the boundary of the dual complex $\Delta(K^2_{\mathcal{Y}/C})$.

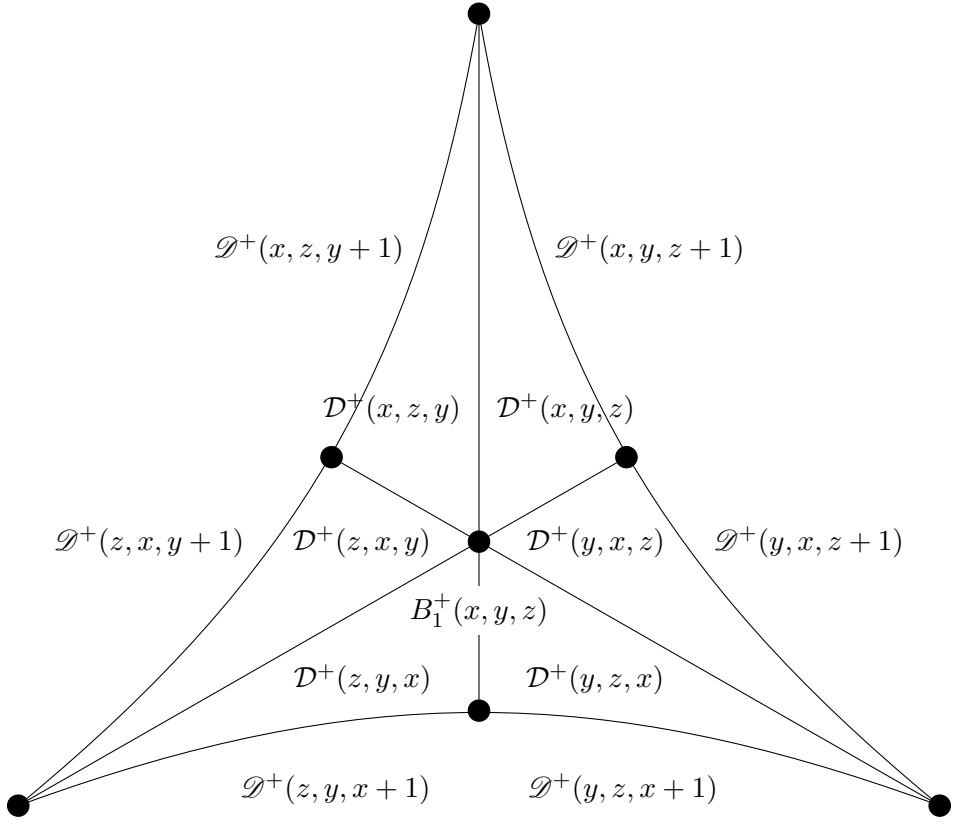
\begin{figure}[h]
	\begin{tikzpicture}[inner sep=1mm, scale=1.4]
		\node[circle,draw,fill] (a) at (90:5) {};
		\node[circle,draw,fill] (b) at (210:5) {};
		\node[circle,draw,fill] (c) at (330:5) {};
		\node[circle,draw,fill] (o) at (0,0) {};
		\draw (a) to [bend left=20] (b);
		\draw (b) to [bend left=20] (c);
		\draw (c) to [bend left=20] (a);
		\node[circle,draw,fill] (ab) at (150:1.6) {};
		\node[circle,draw,fill] (bc) at (270:1.6) {};
		\node[circle,draw,fill] (ca) at (30:1.6) {};
		\draw (a) to (bc);
		\draw (b) to (ca);
		\draw (c) to (ab);
		\draw (0,-0.65cm) node [fill=white] {$B_1^+(x,y,z)$};
		\draw (0:1.1cm) node {$\mathcal{D}^+(y,x,z)$};
		\draw (57:1.5cm) node {$\mathcal{D}^+(x,y,z)$};
		\draw (123:1.5cm) node {$\mathcal{D}^+(x,z,y)$};
		\draw (180:1.1cm) node {$\mathcal{D}^+(z,x,y)$};
		\draw (230:1.7cm) node {$\mathcal{D}^+(z,y,x)$};
		\draw (310:1.7cm) node {$\mathcal{D}^+(y,z,x)$};
		\draw (0:3.1cm) node {$\mathscr{D}^+(y,x,z+1)$};
		\draw (60:3.2cm) node {$\mathscr{D}^+(x,y,z+1)$};
		\draw (120:3.2cm) node {$\mathscr{D}^+(x,z,y+1)$};
		\draw (180:3.1cm) node {$\mathscr{D}^+(z,x,y+1)$};
		\draw (240:2.7cm) node {$\mathscr{D}^+(z,y,x+1)$};
		\draw (300:2.7cm) node {$\mathscr{D}^+(y,z,x+1)$};
	\end{tikzpicture}
	\caption{Local chart $\mathcal{C}^+(\tau)$ for pairwise distinct $\tau_i$'s}
    \label{fig:fulldisk6-C}
\end{figure}

\subsubsection{The set $\{ \tau_1, \tau_2, \tau_3 \}$ contains $2$ distinct values}

In such a case, the right side of \eqref{eq:config2-C} consists of $3$ distinct $2$-simplices forming $\mathcal{C}^+(\tau)$. The rules concerning the relative positions of $2$-simplices are similar to those in the above case, namely
\begin{itemize}
	\item[(a1)] when $x \neq y$, the $2$-simplices $\mathcal{D}^+(x,y,z)$ and $\mathcal{D}^+(y,x,z)$ share a common edge labelled by $C(x,y,z;1,1,2)$;
	\item[(a2)] when $x = y$, the $2$-simplex $\mathcal{D}^+(x,y,z)$ is the only one that contains the edge labelled by $C(x,y,z;1,1,2)$;
	\item[(b1)] when $y \neq z$, the $2$-simplices $\mathcal{D}^+(x,y,z)$ and $\mathcal{D}^+(x,z,y)$ share a common edge labelled by $C(x,y,z;1,2,2)$;
	\item[(b2)] when $y = z$, the $2$-simplex $\mathscr{D}^+(x,y,z)$ is the only one that contains the edge labelled by $C(x,y,z;1,2,2)$;
	\item[(c)] the edge of $\mathcal{D}^+(x,y,z)$ opposite to the vertex $B_1^+(\tau)$ is labelled by $C(x,y,z;1,2,3) = C(x,y,z+1;1,2,-3)$, which is also an edge of $\mathscr{D}^+(x,y,z+1)$.
\end{itemize}

By the rules (a1), (b1) and (c), also for the convenience later, we can view the local chart around $B_1^+(\tau)$ as half of an equilateral triangle with curved edges, which splits into $3$ sectors; see Figure \ref{fig:semidisk3-C}. The rules (a2) and (b2) determine all $2$-simplicies that are attached to this local chart by edges, as well as both boundary edges of this local chart that also lie on the boundary of the dual complex $\Delta(K^2_{\mathcal{Y}/C})$; see the thick edges in Figure \ref{fig:semidisk6-C}.

\begin{figure}[h]
	\begin{tikzpicture}[inner sep=1mm, scale=1.4]
		\node[circle,draw,fill] (a) at (0:5) {};
		\node[circle,draw,fill] (b) at (180:1.6) {};
		\node[circle,draw,fill] (c) at (120:5) {};
		\node[circle,draw,fill] (o) at (0,0) {};
		\draw (b) to [bend right=10] (c);
		\draw (c) to [bend right=20] (a);
		\node[circle,draw,fill] (ca) at (60:1.6) {};
		\draw [ultra thick] (a) to (b);
		\draw (o) to (ca);
		\draw (c) to (o);
		\draw (0,-0.4cm) node [fill=white] {$B_1^+(x,y,y)$};
		\draw (90:1.3cm) node {$\mathcal{D}^+(y,x,y)$};
		\draw (20:1.3cm) node {$\mathcal{D}^+(x,y,y)$};
		\draw (155:1.2cm) node {$\mathcal{D}^+(y,y,x)$};
		\draw (90:2.9cm) node {$\mathscr{D}^+(y,x,y+1)$};
		\draw (20:3.1cm) node {$\mathscr{D}^+(x,y,y+1)$};
		\draw (160:3cm) node {$\mathscr{D}^+(y,y,x+1)$};
	\end{tikzpicture}
	\caption{Local chart $\mathcal{C}^+(\tau)$ for $\tau_i$'s taking two distinct values; thick edges are on the boundary of $\Delta(K^2_{\mathcal{Y}/C})$.}
    \label{fig:semidisk3-C}
\end{figure}
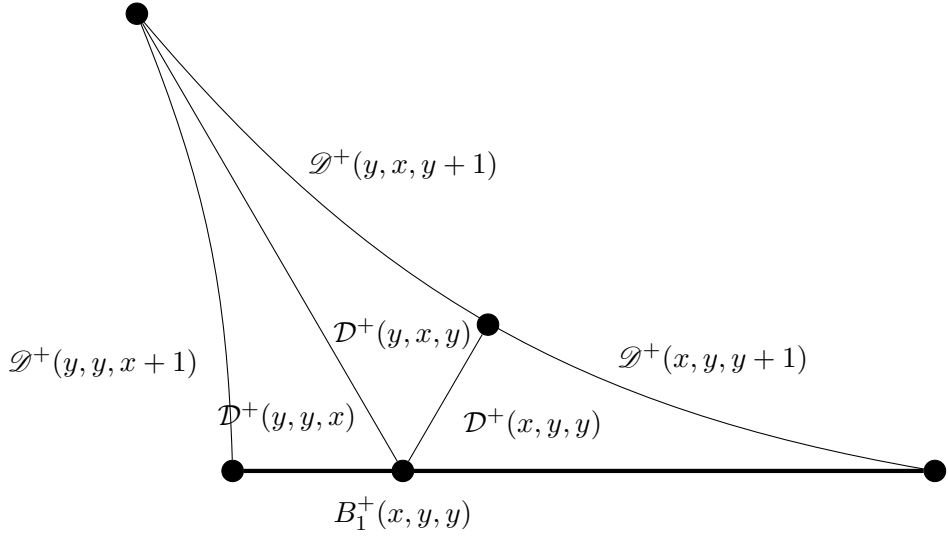

\subsubsection{The set $\{ \tau_1, \tau_2, \tau_3 \}$ contains $1$ distinct value}

In such a case, the right side of \eqref{eq:config2-C} consists of only $1$ distinct $2$-simplices forming $\mathcal{C}^+(\tau)$. The rules concerning the relative positions of $2$-simplices are similar to those in both above cases, namely
\begin{itemize}
	\item[(ab)] the $2$-simplex $\mathcal{D}^+(x,x,x)$ is the only one that contains the edge labelled by $C(x,y,z;1,1,2)$ or $C(x,y,z;1,2,2)$;
	\item[(c)] the edge of $\mathcal{D}^+(x,x,x)$ opposite to the vertex $B_1^+(\tau)$ is labelled by $C(x,x,x;1,2,3) = C(x,x,x+1;1,2,-3)$, which is also an edge of $\mathscr{D}^+(x,x,x+1)$.
\end{itemize}

The first rule determines that the local chart around $B_1^+(\tau)$ can be viewed as a sector of the above mentioned equilateral triangle; see Figure \ref{fig:sector1-C}. The second rule determines the only $2$-simplex that is attached to this local chart by edges, as well as both boundary edges of this local chart that also lie on the boundary of the dual complex $\Delta(K^2_{\mathcal{Y}/C})$; see the thick edges in Figure \ref{fig:sector1-C}.

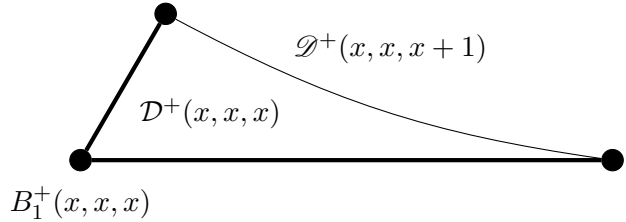
\begin{figure}[h]
	\begin{tikzpicture}[inner sep=1mm, scale=1.4]
		\node[circle,draw,fill] (a) at (0:5) {};
		\node[circle,draw,fill] (ca) at (60:1.6) {};
		\node[circle,draw,fill] (o) at (0,0) {};
		\draw (ca) to [bend right=10] (a);
		\draw [ultra thick] (a) to (o);
		\draw [ultra thick] (o) to (ca);
		\draw (0,-0.4cm) node [fill=white] {$B_1^+(x,x,x)$};
		\draw (20:1.3cm) node {$\mathcal{D}^+(x,x,x)$};
		\draw (20:3.1cm) node {$\mathscr{D}^+(x,x,x+1)$};
	\end{tikzpicture}
    \caption{Local chart $\mathcal{C}^+(\tau)$ for $\tau_i$'s taking identical values; thick edges are on the boundary of $\Delta(K^2_{\mathcal{Y}/C})$.}
    \label{fig:sector1-C}
\end{figure}

We summarize the above discussion by the following result

\begin{prop} \label{prop:local-charts-B}
	The local chart $\mathcal{C}^+(\tau)$ around the vertex $B_1^+(\tau)$ can be one of the three forms, as shown in Figures \ref{fig:fulldisk6-C}, \ref{fig:semidisk3-C} or \ref{fig:sector1-C}. Moreover, the thick edges in these figures exhibit the edges in $\mathcal{C}^+(\tau)$ that occur on the boundary of the dual complex $\Delta(K^2_{\mathcal{Y}/C})$. \qed
\end{prop}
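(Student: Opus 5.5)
The statement is Proposition \ref{prop:local-charts-B}, which summarizes the three cases just analyzed. The plan is to organize the argument along the lines of the proof of Proposition \ref{prop:local-charts-A}.

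First, by Lemma \ref{lem:local-around-B} the local chart $\mathcal{C}^+(\tau)$ is the union of the $2$-simplices $\mathcal{D}^+(\zeta(\tau))$ for $\zeta \in \mathcal{S}_3$. The number of distinct simplices is the size of the $\mathcal{S}_3$-orbit of the ordered triple $\tau$, namely $6$, $3$ or $1$, according to how many distinct values $\tau$ takes. Since $\tau$ satisfies \eqref{eqn:adm-B+}, each permuted triple again indexes an admissible deepest stratum $D(\zeta(\tau);1,2,3)$ by Proposition \ref{prop:stratadeterm}. Hence every simplex in the union genuinely occurs.

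Next, I will verify the gluing rules (a), (b) and (c) directly from the line charts. The edges of $\mathcal{D}^+(x,y,z)$ through $B_1^+$ are $C(x,y,z;1,1,2)$ and $C(x,y,z;1,2,2)$; this is read off from \ref{subsubsec:dualcplx123}. In $C(x,y,z;1,1,2)$ the first two points sit on the same component, so their $\tau$-labels are interchangeable. Thus the same stratum is also a face of $\mathcal{D}^+(y,x,z)$. Likewise, in $C(x,y,z;1,2,2)$ the last two labels are interchangeable, so that edge is shared with $\mathcal{D}^+(x,z,y)$.

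Conversely, these edges lie in no other simplex. Any complete admissible chart containing $C(1,1,2)$ or $C(1,2,2)$ as a subchart is of type $D(1,2,3)$, as Example \ref{eg:all-admissible-123} and its mirror images show. Once the $x_i$ are fixed, the $\tau$'s are determined up to the stated swap. When the swapped labels coincide, the two simplices are the same, so the edge belongs to exactly one simplex. This is exactly where (a2) and (b2) produce boundary edges of $\Delta(K^2_{\mathcal{Y}/C})$.

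For rule (c), I will use the identification of the point with $|x_3| = b+1 = 3$ from \ref{subsubsec:non-uniqueness}. Shifting the line chart vertically by two units gives $C(x,y,z;1,2,3)=C(x,y,z+1;1,2,-3)$. By \ref{subsubsec:dualcplx12minus3}, this is an edge of $\mathscr{D}^+(x,y,z+1)$. It is not on the boundary, because it is shared by exactly these two simplices.

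Finally, I will assemble the pieces. Cyclically going around $B_1^+$, the alternating (a) and (b) adjacencies give a closed cycle of six triangles when the values are distinct; this is a disk, as in Figure \ref{fig:fulldisk6-C}. With two equal values the chain has three triangles and terminates at two type-(a2)/(b2) edges. With all values equal there is a single triangle with both radial edges on the boundary. The main obstacle is the bookkeeping of how $\tau$ changes under the vertical shift in (c), in particular the sign convention that sends $z$ to $z+1$ rather than $z-1$. I will fix this by recomputing the admissibility congruence \eqref{eqn:admissible-second} before and after the shift: $k$ changes from $1$ to $0$, so $\tau_3$ must increase by one.
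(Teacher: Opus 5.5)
Your proposal is correct and follows essentially the same route as the paper: Lemma \ref{lem:local-around-B}, the case split by the size of the $\mathcal{S}_3$-orbit of $\tau$, the gluing rules (a), (b), (c) read off from the line charts, and the vertical-shift identification $C(x,y,z;1,2,3)=C(x,y,z+1;1,2,-3)$. One small wording fix: in $C(x,y,z;1,1,2)$ the first two points lie on components with the same level label, namely $G'_{x,1}$ and $G'_{y,1}$, which need not be the same component; it is this equality of $x$-labels that makes the stratum symmetric in $\tau_1,\tau_2$. Similarly, the shift in (c) can be checked directly from $G'_{z,3}=G'_{6z+3}=G'_{z+1,-3}$ for $b=2$.
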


\subsection{The local chart $\mathcal{C}^-(\tau)$ around $B_1^-(\tau)$}

This situation is completely parallel to the one in Subsection \ref{subsec:local-chart-B111-C}. For the vertex $ B_1^-(\tau) \coloneqq B(\tau;-1,-1,-1)$, the admissibility condition \eqref{eqn:admissible-second} requires
\begin{equation} \label{eqn:adm-B-}
	\tau_1 + \tau_2 + \tau_3  \equiv 1 \pmod N.
\end{equation}
We denote the local chart around the vertex $B_1^-(\tau)$ as $\mathcal{C}^-(\tau)$. Then we can similarly obtain both the following results

\begin{lemma} \label{lem:local-around-C}
	The local chart around the vertex $B_1^-(\tau)$ can be given by
	\begin{equation} \label{eq:config3-C}
		\mathcal{C}^-(\tau) = \bigcup_{\zeta \in \mathcal{S}_3} \mathcal{D}^-(\zeta(\tau))
	\end{equation}
	as a union of $2$-simplicies. \qed
\end{lemma}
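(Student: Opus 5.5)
The plan is to argue exactly as in the proof of Lemma \ref{lem:local-around-B}, replacing every $X_b(x_1,x_2,x_3)$ by its negative $X_b(-x_1,-x_2,-x_3)$. The key steps, in order, are the following.

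\textbf{Step 1: which complete line charts contain $B(-1,-1,-1)$.} The $2$-simplices of $\Delta(K^2_{\mathcal{Y}/C})$ are the closures $\Delta_K(\mathcal{L})$ of faces attached to complete admissible line charts $\mathcal{L}$. By Example \ref{eg:complete-admissible} these are $D(\pm1,\pm2,\pm3)$ and $D(\pm1,\pm2,\mp3)$. A $2$-simplex contains the vertex $B_1^-(\tau)$ exactly when the line chart of $B(-1,-1,-1)$ is an admissible line subchart of $\mathcal{L}$, in the sense of Subsection \ref{subsubsec:smoothing-line-charts} and Proposition \ref{prop:from-H-to-K}.

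\textbf{Step 2: compare vertices.} The line chart of $B(-1,-1,-1)$ has vertices $\{(0,0),(3,-3)\}$ with neutral line $y=-2$. One now checks the four complete admissible charts:
\begin{itemize}
\item $D(-1,-2,-3)$ has vertices $(0,0),(1,-1),(2,-2),(3,-3)$, so it contains $\{(0,0),(3,-3)\}$.
\item $D(1,2,3)$ ends at $(3,3)$, so it does not.
\item $D(1,2,-3)$ and $D(-1,-2,3)$ end at $(3,\pm1)$, so they do not.
\end{itemize}
The non-uniqueness of line charts (Subsection \ref{subsubsec:non-uniqueness}) cannot create further containments. Here no point lies on an original odd component, since $|x_i|=1<b+1=2$. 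Hence the first vertex is forced to be $(0,0)$ and no vertical shift is allowed. This is the only point needing care, and it is immediate.

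\textbf{Step 3: fix the $\tau$'s.} Specialization does not change the $\tau_i$. The $x_i$ of $B(-1,-1,-1)$ are all equal, so the triple $\tau$ is unordered at this vertex. Passing to $D(-1,-2,-3)$, whose $x_i$ are distinct, orders $\tau$, and every ordering $\zeta(\tau)$ with $\zeta\in\mathcal{S}_3$ arises. Each ordering satisfies the admissibility condition \eqref{eqn:adm-B-}, which agrees with condition (3) of Proposition \ref{prop:stratadeterm}.

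\textbf{Conclusion.} The $2$-simplices containing $B_1^-(\tau)$ are therefore exactly the $\mathcal{D}^-(\zeta(\tau))$ with $\zeta\in\mathcal{S}_3$, which is the claimed description of $\mathcal{C}^-(\tau)$.
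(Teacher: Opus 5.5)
Your proposal is correct and follows the paper's argument, which is the mirror image of the proof of Lemma~\ref{lem:local-around-B}: $D(-1,-2,-3)$ is the only complete admissible line chart having $B(-1,-1,-1)$ as a line subchart, and specialization preserves the $\tau_i$, so every permutation $\zeta(\tau)$ occurs. Your explicit check that the non-uniqueness of line charts from Subsection~\ref{subsubsec:non-uniqueness} cannot produce extra containments (there are no points on original components, so the chart of $B(-1,-1,-1)$ cannot be shifted vertically) is a detail the paper leaves implicit.
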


\begin{prop}
	The local chart $\mathcal{C}^-(\tau)$ around the vertex $B_1^-(\tau)$ can be one of the three forms, as shown in Figures \ref{fig:fulldisk6-C}, \ref{fig:semidisk3-C} or \ref{fig:sector1-C}, with $\mathcal{D}^+$ replaced by $\mathcal{D}^-$, and $\mathscr{D}^+$ replaced by $\mathscr{D}^-$. Moreover, the thick edges in these figures exhibit the edges in $\mathcal{C}^-(\tau)$ that occur on the boundary of the dual complex $\Delta(K^2_{\mathcal{Y}/C})$. \qed
\end{prop}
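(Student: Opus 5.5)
The plan is to reduce the statement to the case of $B_1^+(\tau)$ in Subsection \ref{subsec:local-chart-B111-C} by the symmetry $X_b(\tau;x_1,x_2,x_3)\mapsto X_b(\tau;-x_1,-x_2,-x_3)$, i.e.\ reflecting line charts across the $x$-axis. This reflection sends $y=2k$ to $y=-2k$, so admissible line charts go to admissible ones. It preserves the vertex sets up to reflection, hence the subchart relation $\prec$. It sends $D(1,2,3)$ to $D(-1,-2,-3)$ and $D(1,2,-3)$ to $D(-1,-2,3)$, so $\mathcal{D}^+\leftrightarrow\mathcal{D}^-$ and $\mathscr{D}^+\leftrightarrow\mathscr{D}^-$.

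First I would check that the $\tau$-conditions transform compatibly. By \eqref{eqn:admissible-second}, replacing $k$ by $-k$ turns the condition $\mathfrak T\equiv -1$ of \eqref{eqn:adm-B+} into $\mathfrak T\equiv 1$ of \eqref{eqn:adm-B-}. This matches Proposition \ref{prop:stratadeterm}, whose cases (2) and (3) are exchanged. Thus Lemma \ref{lem:local-around-C} follows verbatim from the proof of Lemma \ref{lem:local-around-B}. The only complete admissible chart containing $B(-1,-1,-1)$ as a subchart is $D(-1,-2,-3)$, and specialization does not change $\tau$.

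Next I would transcribe the gluing rules (a), (b), (c) together with their degenerate versions (a1), (a2), (b1), (b2), (ab), with all signs negated. Two simplices $\mathcal{D}^-(x,y,z)$ and $\mathcal{D}^-(y,x,z)$ share the edge $C(x,y,z;-1,-1,-2)$, and $\mathcal{D}^-(x,y,z)$ and $\mathcal{D}^-(x,z,y)$ share $C(x,y,z;-1,-2,-2)$. The opposite edge is $C(x,y,z;-1,-2,-3)$.

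The one genuinely new check is rule (c), which I expect to be the main, if mild, obstacle. One must verify $C(x,y,z;-1,-2,-3)=C(x,y,z-1;-1,-2,3)$, so the adjacent simplex is $\mathscr{D}^-(x,y,z-1)$ rather than $\mathscr{D}^-(x,y,z+1)$. This uses the non-uniqueness of line charts in \ref{subsubsec:non-uniqueness}. A point on an original component has $x_3=\pm(b+1)$, and the flip of $\varepsilon_3$ shifts the chart vertically by $2$ and the neutral line by $2$, i.e.\ changes $k$ by $\pm1$. By \eqref{eqn:admissible-second} this forces $\tau_3$ to change by $\mp1$. The direction of this shift is mirrored relative to the $B^+$ case, which is consistent with $\mathfrak T$ moving from $1$ to $0$.

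With the rules in hand, I would count distinct simplices according to the number of distinct values among the $\tau_i$ (six, three or one), exactly as before. This yields the shapes of Figures \ref{fig:fulldisk6-C}, \ref{fig:semidisk3-C} and \ref{fig:sector1-C} with $\mathcal{D}^+$ replaced by $\mathcal{D}^-$ and $\mathscr{D}^+$ replaced by $\mathscr{D}^-$, and the $\tau$-labels on the outer simplices shifted as above. An edge is a boundary edge of $\Delta(K^2_{\mathcal{Y}/C})$ exactly when the degenerate rules (a2), (b2), (ab) apply. These edges are the images of the thick edges under the reflection.
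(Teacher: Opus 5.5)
Your proposal is correct and is essentially the paper's own argument. The paper simply declares this case completely parallel to that of $B_1^+(\tau)$, i.e.\ obtained by negating all $x_i$ and rerunning the same line-chart analysis. Your separate check of rule (c) is right and actually sharpens the statement. The simplex glued to $\mathcal{D}^-(x,y,z)$ along $C(x,y,z;-1,-2,-3)$ is $\mathscr{D}^-(x,y,z-1)$, consistent with rule (c) in Subsection \ref{subsubsec:3-values-C}. So the outer labels in the figures must also change $z+1$ to $z-1$, not only $\mathcal{D}^+,\mathscr{D}^+$ to $\mathcal{D}^-,\mathscr{D}^-$.

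One correction to your framing: the map $X_b(\tau;x)\mapsto X_b(\tau;-x)$ with $\tau$ fixed is in general neither admissibility-preserving nor compatible with the identifications of Subsection \ref{subsubsec:non-uniqueness}. The genuine symmetry is $X_b(\tau;x)\mapsto X_b(-\tau;-x)$, induced by the involution $j$ (see the remark after Lemma \ref{lemma:invoL}). Since you verify the only $\tau$-dependent rule, (c), directly, this does not affect your conclusion.
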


\subsection{Glueing local charts} \label{subsec:glue-local-charts}

Based on the above discussion, we will glue the local charts to obtain the full dual complex $\Delta(K^2_{\mathcal{Y}/C})$.

\subsubsection{General principle} \label{subsubsec:general-principle}

In the above discussion we have completely classified all local charts around vertices of Type 4, as well as the $2$-simplicies that share common edges with them. It follows that we can easily find all neighboring local charts of any particular one.

By Lemma \ref{lem:glue-type-4}, the full dual complex $\Delta(K^2_{\mathcal{Y}/C})$ is obtained by glueing these local charts along their boundary edges. Therefore, we can start from any fixed local chart, and recursively find all neighboring local charts to expand our knowledge about the dual complex, until we reach the boundary. In this way we obtain a connected component of the dual complex.

In particular, if we start from a local chart of type $\mathscr{C}$ in any of the three cases discussed in Subsection \ref{subsec:3-charts-A}, the following Figures \ref{fig:neighbor-scr1}, \ref{fig:neighbor-scr2} and \ref{fig:neighbor-scr3} show how to find its neighboring local charts (which are necessarily of type $\mathcal{C}^\pm$), and further the neighboring local charts of them (which are necessarily of type $\mathscr{C}$ again). For simplicity, we only draw the center and the boundary of each local chart; other vertices and edges are all omitted.

\begin{figure}[h]
	\begin{tikzpicture}[scale=0.8]
		\draw (0,0) circle (3);
		\foreach \x in {0,...,5}
			\draw (60*\x-30:6)+(60*\x+90:3) arc (60*\x+90:60*\x+210:3);
		\fill (0,0) circle (4pt);
		\foreach \x in {0,...,5}
			\fill (60*\x+30:3)+(60*\x-60:{sqrt(3)}) circle (4pt);
		\foreach \x in {0,...,5}
			\fill (60*\x+30:6) circle (4pt);
		\draw (0,-0.5) node {$A_1(x,y,z)$};
		\draw (0:{2*sqrt(3)})+(0,-0.5) node {$B_1^-(x+1,y,z)$};
		\draw (60:{2*sqrt(3)})+(0,-0.5) node {$B_1^+(x,y-1,z)$};
		\draw (120:{2*sqrt(3)})+(0,-0.5) node {$B_1^-(x,y,z+1)$};
		\draw (180:{2*sqrt(3)})+(0,-0.5) node {$B_1^+(x-1,y,z)$};
		\draw (240:{2*sqrt(3)})+(0,-0.5) node {$B_1^-(x,y+1,z)$};
		\draw (300:{2*sqrt(3)})+(0,-0.5) node {$B_1^+(x,y,z-1)$};
		\draw (30:6)+(0,0.5) node {$A_1(x+1,y-1,z)$};
		\draw (90:6)+(0,-0.5) node {$A_1(x,y-1,z+1)$};
		\draw (150:6)+(0,0.5) node {$A_1(x-1,y,z+1)$};
		\draw (210:6)+(0,0.5) node {$A_1(x-1,y+1,z)$};
		\draw (270:6)+(0,0.5) node {$A_1(x,y+1,z-1)$};
		\draw (330:6)+(0,0.5) node {$A_1(x+1,y,z-1)$};
	\end{tikzpicture}
	\caption{Neighboring local charts of $\mathscr{C}(\tau)$ in Subsection \ref{subsubsec:3-values-C}. }
	\label{fig:neighbor-scr1}
\end{figure}
\begin{figure}[h]
	\begin{tikzpicture}[scale=0.8]
		\draw (3,0) arc (0:180:3);
		\foreach \x in {1,...,3}
			\draw (60*\x-30:6)+(60*\x+90:3) arc (60*\x+90:60*\x+210:3);
		\fill (0,0) circle (4pt);
		\foreach \x in {0,...,3}
			\fill (60*\x+30:3)+(60*\x-60:{sqrt(3)}) circle (4pt);
		\foreach \x in {0,...,2}
			\fill (60*\x+30:6) circle (4pt);
		\draw [ultra thick] ({-3*sqrt(3)},0) -- ({3*sqrt(3)},0);
		\draw (0,-0.5) node {$A_1(x,y,y)$};
		\draw (0:{2*sqrt(3)})+(0,-0.5) node {$B_1^-(x+1,y,y)$};
		\draw (60:{2*sqrt(3)})+(0,-0.5) node {$B_1^+(x,y,y-1)$};
		\draw (120:{2*sqrt(3)})+(0,-0.5) node {$B_1^-(x,y,y+1)$};
		\draw (180:{2*sqrt(3)})+(0,-0.5) node {$B_1^+(x-1,y,y)$};
		\draw (30:6)+(0,0.5) node {$A_1(x+1,y,y-1)$};
		\draw (90:6)+(0,-0.5) node {$A_1(x,y+1,y-1)$};
		\draw (150:6)+(0,0.5) node {$A_1(x-1,y,y+1)$};
	\end{tikzpicture}
	\caption{Neighboring local charts of $\mathscr{C}(\tau)$ in Subsection \ref{subsubsec:2-values-C}. }
	\label{fig:neighbor-scr2}
\end{figure}
\begin{figure}[h]
	\begin{tikzpicture}[scale=0.8]
		\draw (3,0) arc (0:60:3);
		\draw (60-30:6)+(60+90:3) arc (60+90:60+210:3);
		\fill (0,0) circle (4pt);
		\fill (30:6) circle (4pt);
		\foreach \x in {0,1}
			\fill (60*\x+30:3)+(60*\x-60:{sqrt(3)}) circle (4pt);
		\draw [ultra thick] (0,0) -- ({3*sqrt(3)},0);
		\draw [ultra thick] (0,0) -- (60:{3*sqrt(3)});
		\draw (0,-0.5) node {$A_1(x,x,x)$};
		\draw (0:{2*sqrt(3)})+(0,-0.5) node {$B_1^-(x,x,x+1)$};
		\draw (60:{2*sqrt(3)})+(-1.8,0) node {$B_1^+(x,x,x-1)$};
		\draw (30:6)+(0,-0.5) node {$A_1(x,x+1,x-1)$};
	\end{tikzpicture}
	\caption{Neighboring local charts of $\mathscr{C}(\tau)$ in Subsection \ref{subsubsec:1-values-C}. }
	\label{fig:neighbor-scr3}
\end{figure}

It is helpful to observe that the centers of all local charts are located on an equilateral triangle lattice. Therefore in the following discussion, we will further omit the edges and only draw the centers of local charts.

\subsubsection{Algorithm for building the dual complex} \label{subsubsec:algorithm}

In practice, with the above principle, we can run the following algorithm to find one connected component of the dual complex

\bigskip

\begin{quotation}

\quad \textbf{Initialization.} The local chart $\mathscr{C}(0,0,0)$ around the vertex $A_1(0,0,0)$ exists in the dual complex $\Delta(K^2_{\mathcal{Y}/C})$ for any value of $N$. We view it as a partial dual complex, denoted by $\Delta_0(K^2_{\mathcal{Y}/C})$.

\textbf{Step $k$ ($k \geq 1$).} Assume that the partial dual complex after Step ($k-1$) is denoted by $\Delta_{k-1}(K^2_{\mathcal{Y}/C})$. We attach to it all its immediate neighboring local charts, and denote this expansion by $\Delta_k(K^2_{\mathcal{Y}/C})$.

\textbf{Termination.} The algorithm terminates until no more local chart can be further attached to the existing partial dual complex.

\end{quotation}

\bigskip

The above algorithm determines a connected component of the dual complex in a recursive manner. 

\subsubsection{Construction of the dual complex}

First of all, we give the reader an idea of the component of the dual complex constructed by the above algorithm. As an example, the following Figure \ref{fig:centers-4-levels} shows the centers of the local charts in $\Delta_4(K^2_{\mathcal{Y}/C})$ for $N>4$, from which we can observe the following pattern 
\begin{itemize}
	\item The centers of all local charts are precisely the lattice points located in a sector of an equilateral triangle lattice;
	\item After \textbf{Step $k$ ($k \geq 1$)} of the algorithm, we obtain the centers of all local charts in $\Delta_k(K^2_{\mathcal{Y}/C})$, which are precisely given by all lattice points in an equilateral triangle, whose sides contain exactly $k$ edges;
	\item Each step of the algorithm adds a new row at the bottom of the picture, hence enlarge the size of the equilateral triangle by $1$ edge on each side.
\end{itemize}

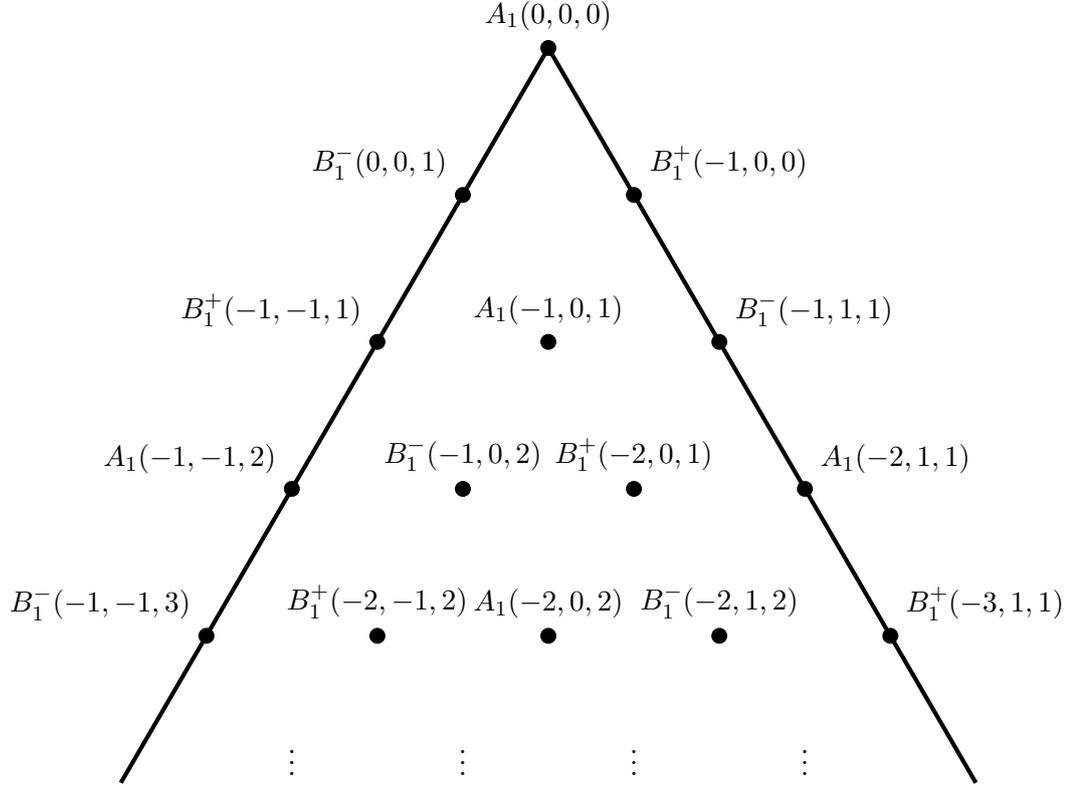
\begin{figure}[h]
	\begin{tikzpicture}[scale=0.75]
		\fill (0:0) circle (4pt) node[above=0.1] {$A_1(0,0,0)$};
		\fill (240:3) circle (4pt) node[above left=0.1] {$B_1^{-}(0,0,1)$};
		\fill (240:3)+(0:3) circle (4pt) node[above right=0.1] {$B_1^{+}(-1,0,0)$};
		\fill (240:6) circle (4pt) node[above left=0.1] {$B_1^{+}(-1,-1,1)$};
		\fill (240:6)+(0:3) circle (4pt) node[above=0.1] {$A_1(-1,0,1)$};
		\fill (240:6)+(0:6) circle (4pt) node[above right=0.1] {$B_1^{-}(-1,1,1)$};
		\fill (240:9) circle (4pt) node[above left=0.1] {$A_1(-1,-1,2)$};
		\fill (240:9)+(0:3) circle (4pt) node[above=0.1] {$B_1^-(-1,0,2)$};
		\fill (240:9)+(0:6) circle (4pt) node[above=0.1] {$B_1^+(-2,0,1)$};
		\fill (240:9)+(0:9) circle (4pt) node[above right=0.1] {$A_1(-2,1,1)$};
		\fill (240:12) circle (4pt) node[above left=0.1] {$B_1^-(-1,-1,3)$};
		\fill (240:12)+(0:3) circle (4pt) node[above=0.1] {$B_1^+(-2,-1,2)$};
		\fill (240:12)+(0:6) circle (4pt) node[above=0.1] {$A_1(-2,0,2)$};
		\fill (240:12)+(0:9) circle (4pt) node[above=0.1] {$B_1^-(-2,1,2)$};
		\fill (240:12)+(0:12) circle (4pt) node[above right=0.1] {$B_1^+(-3,1,1)$};
		\draw[ultra thick] (240:15) -- (0:0) -- (300:15);
		\node at ($(240:15)+(0:3)+(0,0.5)$) {$\vdots$};
		\node at ($(240:15)+(0:6)+(0,0.5)$) {$\vdots$};
		\node at ($(240:15)+(0:9)+(0,0.5)$) {$\vdots$};
		\node at ($(240:15)+(0:12)+(0,0.5)$) {$\vdots$};
	\end{tikzpicture}
	\caption{Centers of local charts in $\Delta(K^2_{\mathcal{Y}/C})$ for large $N$}
	\label{fig:centers-4-levels}
\end{figure}

Indeed, we can give a precise description of the dual complex by specifying the new row of centers added in \textbf{Step $k$} for each value of $k$. 

\begin{prop} \label{prop:dual-complex-n3}
	Following the algorithm in Subsection \ref{subsubsec:algorithm}, \textbf{Step $k$} adds a new row of centers of local charts at the bottom of Figure \ref{fig:centers-4-levels}, which are determined by both of the following rules
	\begin{itemize}
		\item The row starts with $B_1^+(-q,-q,2q-1)$ if $k=3q-1$, $A_1(-q,-q,2q)$ if $k=3q$, and $B_1^-(-q,-q,2q+1)$ if $k=3q+1$;
		\item Assuming $a \leq b \leq c$, the one on the right side of $B_1^+(a,b,c)$ is $A_1(a,b+1,c)$; the one on the right side of $A_1(a,b,c)$ is $B_1^-(a,b+1,c)$; and the one on the right side of $B_1^-(a,b,c)$ is $B_1^+(a-1,b,c-1)$;
		\item The row ends with $B_1^-(-2q+1,q,q)$ if $k=3q-1$, $A_1(-2q,q,q)$ if $k=3q$, and $B_1^+(-2q-1,q,q)$ if $k=3q+1$;
		\item The new row adds exactly $k+1$ centers of local charts to the partial dual complex.
	\end{itemize}
	Moreover, the algorithm terminates precisely after \textbf{Step $N$}, and the connected component $\Delta_N(K^2_{\mathcal{Y}/C})$ that we obtain is the full dual complex $\Delta(K^2_{\mathcal{Y}/C})$. 
\end{prop}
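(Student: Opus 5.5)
The plan is an induction on $k$ that tracks the bottom row of centers, using only the neighbor rules already recorded in Figures \ref{fig:neighbor-scr1}, \ref{fig:neighbor-scr2} and \ref{fig:neighbor-scr3}, together with their analogues for $\mathcal{C}^\pm$. Those analogues are read off from rule (c) in Subsection \ref{subsec:local-chart-B111-C}: the neighbors of $B_1^+(x,y,z)$ are the $A_1$ obtained by adding $1$ to one coordinate, and the neighbors of $B_1^-(x,y,z)$ are the $A_1$ obtained by subtracting $1$ from one coordinate. First I would record a clean lattice model. Each Type 4 center has an unordered triple $\tau$ of residues mod $N$, and the admissibility conditions \eqref{eqn:adm-A}, \eqref{eqn:adm-B+}, \eqref{eqn:adm-B-} state that $\tau_1+\tau_2+\tau_3\equiv 0,-1,1$ respectively. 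Lifting to integers, the adjacency "change one coordinate by $\pm1$, changing the type accordingly" is the standard triangular lattice in the plane $\{\sum \tau_i\in\{-1,0,1\}\}$ after a suitable affine change. Unordering the triple corresponds to taking the quotient by $\mathcal{S}_3$, which gives a Weyl chamber, that is, the sector of Figure \ref{fig:centers-4-levels}. The thick boundary edges of Propositions \ref{prop:local-charts-A} and \ref{prop:local-charts-B} occur exactly at the centers with repeated coordinates, and these are the walls of the chamber.

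Next comes the inductive step. Assume the row added in Step $k-1$ is as described. Applying the neighbor rules to each center in that row, I would verify the following. The new neighbors not already present are exactly the listed row. The leftmost entry $B_1^+(-q,-q,2q-1)$, $A_1(-q,-q,2q)$ or $B_1^-(-q,-q,2q+1)$, according to $k \bmod 3$, arises from the left boundary wall, where two coordinates are equal. The successor rule $B_1^+(a,b,c)\to A_1(a,b+1,c)\to B_1^-(a,b+1,c)\to B_1^+(a-1,b,c-1)$ is a direct check that consecutive entries are adjacent lattice points on a common horizontal line. The right end, with coordinates $(\ast,q,q)$, lies on the other wall. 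Counting is then immediate: the row goes from one wall to the other, and each step moves one unit, which gives $k+1$ entries. I would handle the three residues $k\equiv 2,0,1 \pmod 3$ separately, since the start and end labels differ between them.

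The main obstacle is termination together with the identification of the result as the whole complex. Here the reduction modulo $N$ matters. The integer lattice is infinite, so the claim is that after Step $N$ every further neighbor is congruent mod $N$, as an unordered triple satisfying the admissibility congruence, to a center that is already present. Equivalently, the bottom row of Step $N$ is a third wall of the fundamental domain for the affine Weyl group action given by translations by $N$ in each coordinate, preserving the sum conditions. I would try to show that the centers added in Steps $0,\dots,N$ form a fundamental domain. First, count them: the total is $\sum_{k=0}^{N}(k+1)$, and I would compare this with the count of vertices of Types 3 and 4 obtained in Proposition \ref{prop:number}, namely $m(-1)+m(0)+m(1)=\frac{3}{N}\binom{N+2}{3}$, which equals $\binom{N+2}{2}$. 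The two counts agree, so the algorithm is injective on the $\tau$ classes. Second, show that the row at Step $N$ consists of triples with a repeated residue mod $N$, so that it lies on boundary (thick) edges, and hence no new charts attach. Third, the same count shows that every Type 4 vertex is reached; by Lemma \ref{lem:glue-type-4} every $2$-simplex then lies in the constructed component, so $\Delta_N(K^2_{\mathcal{Y}/C})=\Delta(K^2_{\mathcal{Y}/C})$. The delicate point I expect is the second one, namely checking that mod $N$ the last row really consists of the degenerate triples, including small $N$, where the listed labels may coincide.
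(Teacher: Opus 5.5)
Your route is the paper's: verify the rows inductively from the neighbor patterns in Figures \ref{fig:neighbor-scr1}--\ref{fig:neighbor-scr3}, get termination because row $N$ consists of degenerate triples mod $N$, and get completeness from the count $\binom{N+2}{2}=\frac{3}{N}\binom{N+2}{3}$ of Type 4 vertices together with Lemma \ref{lem:glue-type-4}. The lattice/Weyl-chamber packaging is a good conceptual gloss.

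However, the counting step as you wrote it is circular. Two finite sets of equal size only give ``injective $\Leftrightarrow$ surjective''. From the agreement of counts you cannot get both of the following:
\begin{itemize}
\item that the $\binom{N+2}{2}$ listed centers are pairwise distinct mod $N$, which the fourth bullet (``exactly $k+1$ new centers'') needs;
\item that every Type 4 vertex is reached, which $\Delta_N=\Delta$ needs.
\end{itemize}
One of the two must be proved independently. The paper takes distinctness from the explicit labels produced by the construction, and uses the count only for exhaustiveness.

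In your framework the clean fix is a standard fact. On each plane $\tau_1+\tau_2+\tau_3=\epsilon\in\{-1,0,1\}$, the closed alcove $\{\tau_1\le\tau_2\le\tau_3\le\tau_1+N\}$ is a strict fundamental domain for $\mathcal{S}_3\ltimes N\{v\in\mathbb{Z}^3:\sum v_i=0\}$. Two integer lifts with the same sum that agree as unordered triples mod $N$ differ by exactly such a group element. So this fact gives distinctness and exhaustiveness at once, and the comparison with Proposition \ref{prop:number} becomes only a consistency check. Also note that $m(-1)+m(0)+m(1)$ counts Type 4 vertices only, not Types 3 and 4.

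The point you flag as delicate is in fact easy. Row $k$ is the line $\tau_3-\tau_1=k$ in your chamber. The paper proves this invariant by induction, together with $\tau_1=\tau_2$ exactly at the left end of a row and $\tau_2=\tau_3$ exactly at the right end. Hence in row $N$ one has $\tau_3\equiv\tau_1$.

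What you do need for termination is slightly more than ``row $N$ lies on thick edges''. Such a center still has neighboring charts (Figures \ref{fig:neighbor-scr2} and \ref{fig:neighbor-scr3}). You should check that each lattice neighbor across the wall $\tau_3-\tau_1=N$ is congruent mod $N$ to its mirror image on the near side. For example, $(a-1,b,a+N)\equiv(a,b,a+N-1)$. Consequently all neighbors of row-$N$ centers already lie in rows $N-1$ and $N$, and no new chart is attached.
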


\begin{proof}
	The statement about the new row added in Figure \ref{fig:centers-4-levels} in \textbf{Step $k$} can be checked inductively, following the pattern exhibited in Figures \ref{fig:neighbor-scr1}, \ref{fig:neighbor-scr2}, \ref{fig:neighbor-scr3}. This is a tedious but straightforward calculation, hence we omit the details.
	
	For the statement about the termination of the algorithm, we need to find out all local charts that lie on the boundary of $\Delta(K^2_{\mathcal{Y}/C})$. For the center of any local chart $A_1(\tau)$ or $B_1^\pm(\tau)$ where $\tau=(\tau_1, \tau_2, \tau_3)$ described above, we can prove by induction that $\tau_1=\tau_2$ if and only if the center is the first one in a row, and $\tau_2=\tau_3$ if and only if the center is the last one in a row. Moreover, we can also prove by induction that $\tau_3 - \tau_1 = k$ for all centers in the row added in \textbf{Step $k$}. When we view the components of $\tau$ as residue classes modulo $N$, it is clear that the entire row added in \textbf{Step $N$} is on the boundary of the dual complex $\Delta(K^2_{\mathcal{Y}/C})$. Therefore the algorithm terminates after \textbf{Step $N$}, and $\Delta_N(K^2_{\mathcal{Y}/C})$ becomes a connected component of the full dual complex $\Delta(K^2_{\mathcal{Y}/C})$.
	
	Finally we show that $\Delta(K^2_{\mathcal{Y}/C})$ does not contain any other component. By Proposition \ref{prop:stratadeterm} we have seen that $\Delta(K^2_{\mathcal{Y}/C})$ is given by the union of all its $2$-simplicies. Moreover by Lemma \ref{lem:glue-type-4} we have also seen that each $2$-simplex contains a vertex of Type 4. Hence each connected component $\Delta(K^2_{\mathcal{Y}/C})$ must have vertices of Type 4. However, \eqref{eqn:sum-of-three} in the proof of Proposition \ref{prop:number} shows that $\Delta(K^2_{\mathcal{Y}/C})$ contains exactly $\frac{3}{N}\binom{N+2}{3}$ vertices of Type 4. And the above construction shows that the number of vertices of Type 4 in $\Delta_N(K^2_{\mathcal{Y}/C})$ is given by
	$$ 1 + 2 + \dots + (N+1) = \binom{N+2}{2} = \frac{3}{N}\binom{N+2}{3}. $$
	Therefore $\Delta_N(K^2_{\mathcal{Y}/C})$ already contains all vertices of Type 4 in $\Delta(K^2_{\mathcal{Y}/C})$, which must be the unique component of it.
\end{proof}

The following result can be viewed as a summary of the above construction

\begin{theorem} \label{thm:dual-complex-n3}
	The dual complex $\Delta(K^2_{\mathcal{Y}/C})$ is PL-homeomorphic to the standard $2$-simplex. In particular, it is  contractible and of dimension $2$.
\end{theorem}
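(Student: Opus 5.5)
The plan is to read the PL-type directly off the explicit global description in Proposition \ref{prop:dual-complex-n3}. That proposition shows that $\Delta(K^2_{\mathcal{Y}/C})=\Delta_N(K^2_{\mathcal{Y}/C})$. It also shows that the centers of the local charts (the Type 4 vertices) are exactly the lattice points of an equilateral triangle $T_N$ of side length $N$ in the triangular lattice, with $\binom{N+2}{2}$ points, as in Figure \ref{fig:centers-4-levels}. I will build an explicit PL map from $\Delta(K^2_{\mathcal{Y}/C})$ onto a subdivision of this triangle and check that it is a homeomorphism cell by cell.

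First I will fix the geometric model. Each $2$-simplex contains exactly one Type 4 vertex (Lemma \ref{lem:glue-type-4}), so $\Delta(K^2_{\mathcal{Y}/C})$ is the union of the local charts $\mathscr{C}(\tau)$, $\mathcal{C}^\pm(\tau)$ glued along boundary edges. Propositions \ref{prop:local-charts-A} and \ref{prop:local-charts-B} identify each local chart with a star-shaped polygon around its center, triangulated into sectors:
\begin{itemize}
\item a full disk (12 or 6 sectors) for an interior center;
\item a half disk for a center on an edge of $T_N$, i.e.\ when two of the $\tau_i$ coincide;
\item a $60^\circ$ sector for a corner, i.e.\ when $\tau_1=\tau_2=\tau_3$.
\end{itemize}
I will realize these as the closed stars of the lattice points in a suitable (barycentric-type) refinement of the triangular lattice restricted to $T_N$. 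The $\mathscr{C}$ charts carry the finer 12-fold subdivision and the $\mathcal{C}^\pm$ charts the 6-fold one, matching Figures \ref{fig:neighbor-scr1}--\ref{fig:neighbor-scr3}. The neighboring rules (c) in each local chart description say precisely that adjacent charts share edges exactly as the stars of adjacent lattice points do. This gives a well-defined simplicial map to $T_N$.

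Next I will verify that this map is a bijection on cells:
\begin{itemize}
\item \emph{Faces.} They biject by the count $4N^2$ from Proposition \ref{prop:number}; this equals the number of $2$-simplices in the refinement, up to a check with the sector counts.
\item \emph{Vertices and edges.} Each edge lies in at most two faces, by rules (a), (b1), (b2). The thick edges, i.e.\ those lying in only one face, are exactly the ones mapping to $\partial T_N$. The identification of boundary centers with $\tau_1=\tau_2$ or $\tau_2=\tau_3$, or with the final row $k=N$, is taken from the proof of Proposition \ref{prop:dual-complex-n3}.
\end{itemize}
It follows that $\Delta(K^2_{\mathcal{Y}/C})$ is a connected pseudomanifold with boundary. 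The link of every vertex is an arc or a circle, read off from the local chart pictures, and it is mapped homeomorphically onto its image. As a consistency check, the Euler characteristic is $(2N^2+3N+1)-(6N^2+3N)+4N^2=1$, using Proposition \ref{prop:number}.

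The main obstacle is injectivity. The labels $\tau$ are residues modulo $N$, so the lattice picture might wrap around, with two distinct lattice points carrying the same label. This would give a quotient of $T_N$ rather than $T_N$ itself. To rule it out I will use the invariant $\tau_3-\tau_1=k$ on row $k$, together with the row description, which shows the centers in a given row are pairwise distinct. Distinct rows $0\le k\le N$ have distinct invariants except possibly $k=0$ and $k=N$, both of which are corners or edges of $T_N$. I expect to have to treat these degenerate cases by hand, or else invoke the count $\binom{N+2}{2}$ of Type 4 vertices, which matches the number of lattice points and forces injectivity.

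Finally, a simplicial bijection onto a triangulation of $T_N$ is a PL homeomorphism onto the standard $2$-simplex. Contractibility and dimension $2$ follow.
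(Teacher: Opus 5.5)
Your route is the paper's: the theorem is read off from Proposition~\ref{prop:dual-complex-n3}, which the paper simply asserts exhibits $\Delta(K^2_{\mathcal{Y}/C})$ as a triangulated equilateral triangle. Your explicit cellular map sharpens that usefully. However, neither of your arguments closes the step you single out as the main obstacle, namely injectivity of the labelling of lattice points by Type~4 vertices.

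A Type~4 vertex is labelled by an \emph{unordered} triple of residues modulo $N$, so $\tau_3-\tau_1$ is not a function of the vertex. If the residues have cyclic gaps $(g_1,g_2,g_3)$, there are integer lifts $a\le b\le c$ with $c-a=N-g_i$ for each $i$. For example, for $N=3$ the row-$1$ vertex $B_1^-(0,0,1)$ also has the lifts $(0,1,3)$ and $(1,3,3)$, with $c-a=3$ and $c-a=2$. So ``distinct rows have distinct invariants'' excludes nothing.

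The fallback is circular. Equal cardinalities force injectivity only once you know that every Type~4 vertex occurs in the picture, i.e.\ $\Delta=\Delta_N$. But the paper derives $\Delta=\Delta_N$ exactly by counting $1+2+\dots+(N+1)$ \emph{distinct} Type~4 vertices in $\Delta_N$, that is, by presupposing injectivity.

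The fix is to use the integer sum of the label instead. By the row rules, the lattice point in position $j$ of row $k$ carries the label $(a,a+g_1,a+g_1+g_2)$ with $(g_1,g_2,g_3)=(j,k-j,N-k)$. Its sum $3a+2g_1+g_2$ is $-1$, $0$ or $1$ according as the vertex is $B_1^+$, $A_1$ or $B_1^-$; this holds for the first entry of each row and is preserved by the three rightward moves. Every lift $a'\le b'\le c'$ with $c'-a'\le N$ of the same residue multiset is obtained from this label in two steps:
\begin{itemize}
\item re-cut the circle $\mathbb{Z}/N$ at one of the three gaps, which adds $0$, $N$ or $2N$ to the sum;
\item translate by a multiple of $N$, which adds a multiple of $3N$.
\end{itemize}
Two lattice points carrying the same vertex have the same type, hence the same sum. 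Since $0,N,2N$ are distinct modulo $3N$, both the re-cut and the translation are trivial, so the gap vectors agree and the lattice points coincide.

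With injectivity in hand, $\Delta=\Delta_N$ follows as in the paper. Bijectivity on \emph{all} cells is then best obtained by counting rather than from links, since the link of a Type~3 vertex cannot be read off a single local chart. The picture has $4N^2$ faces and $6N$ boundary edges (two thick edges at each of the $3N$ boundary centers). Hence it has $6N^2+3N$ edges and, being a disk, $2N^2+3N+1$ vertices. These are exactly the counts of Proposition~\ref{prop:number}. So the surjective cellular map from the picture onto $\Delta(K^2_{\mathcal{Y}/C})$ is a bijection on cells, hence a PL homeomorphism.
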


\begin{proof}
	Indeed, Proposition \ref{prop:dual-complex-n3} shows inductively that $\Delta(K^2_{\mathcal{Y}/C})$ can be viewed as a triangulation of an equilateral triangle, consisting of $N$ horizontal strips of $2$-simplicies. Whenever $N$ is increased by $1$, an extra strip has to be attached to the bottom of the dual complex. In other words, if we extend the complex in Figure \ref{fig:centers-4-levels} downwards infinitely, then $\Delta(K^2_{\mathcal{Y}/C})$ is given by the top $N$ horizontal strips. Hence this statement follows immediately from Proposition \ref{prop:dual-complex-n3}.
\end{proof}

The statement in Theorem \ref{thm:dual-complex-n3} is not a coincidence. In fact, a result of Brown-Mazzon proved in \cite{BrownMazzon} says that the essential skeleton of the associated Berkovich analytic space of the generic fiber is PL homeomorphic to the standard  $(n-1)$-simplex in general. Our computation of the dual complex can be viewed as an explicit realization of this general result in a very explicit example.

Moreover, by the construction in Proposition \ref{prop:dual-complex-n3}, we can see that the pattern of the triangulation of $\Delta(K^2_{\mathcal{Y}/C})$ differ slightly depending on the the residue of $N$ modulo $3$. In particular, the dual complex admits an $\mathcal{S}_3$-symmetry if $N$ is divisible by $3$; while only an $\mathcal{S}_2$-symmetry if $N$ is not divisible by $3$. The following Figures \ref{fig:dual-4}, \ref{fig:dual-5} and \ref{fig:dual-6} illustrate the dual complex $\Delta(K^2_{\mathcal{Y}/C})$ in the case of $N=4$, $5$ and $6$ respectively.

\begin{figure}[h]
	\begin{tikzpicture}[scale=1.8]
		\fill (0,0) circle (1pt);
		\foreach \x in {0,1}
			\fill ($(240:{2/sqrt(3)})+({2/sqrt(3)*\x},0)$) circle (1pt);
		\foreach \x in {0,1,2}
			\fill ($(240:{4/sqrt(3)})+({2/sqrt(3)*\x},0)$) circle (1pt);
		\foreach \x in {0,1,2,3}
			\fill ($(240:{6/sqrt(3)})+({2/sqrt(3)*\x},0)$) circle (1pt);
		\foreach \x in {0,1,2,3,4}
			\fill ($(240:{8/sqrt(3)})+({2/sqrt(3)*\x},0)$) circle (1pt);
		
		\foreach \x in {0,1,2}
			\fill ($(0:0)+(240+30*\x:1)$) circle (1pt);
		\foreach \x in {0,...,11}
			\fill ($(270:2)+(30*\x:1)$) circle (1pt);
		\foreach \x in {0,...,6}
			\fill ($(240:{6/sqrt(3)})+(240+30*\x:1)$) circle (1pt);
		\foreach \x in {0,...,6}
			\fill ($(300:{6/sqrt(3)})+(120+30*\x:1)$) circle (1pt);
		\foreach \x in {0,...,6}
			\fill ($(270:4)+(30*\x:1)$) circle (1pt);
			
		\draw (240:1) arc (240:300:1);
		\draw (270:2) circle (1);
		\draw ($(240:{6/sqrt(3)})+(240:1)$) arc (240:420:1);
		\draw ($(300:{6/sqrt(3)})+(120:1)$) arc (120:300:1);
		\draw ($(270:4)+(0:1)$) arc (0:180:1);
		
		\foreach \x in {1,...,4}
			\draw (240:{2/sqrt(3)*\x}) -- (300:{2/sqrt(3)*\x});
		\foreach \x in {0,...,3}
			\draw (240:{2/sqrt(3)*\x}) -- ($(300:{8/sqrt(3)})+(180:{2/sqrt(3)*\x})$);
		\foreach \x in {0,...,3}
			\draw (300:{2/sqrt(3)*\x}) -- ($(240:{8/sqrt(3)})+(0:{2/sqrt(3)*\x})$);
			
		\draw (0,0) -- ++ (270:4);
		\draw (240:{6/sqrt(3)}) -- ++(270:1);
		\draw (240:{6/sqrt(3)}) -- ++(30:3);
		\draw (300:{6/sqrt(3)}) -- ++(270:1);
		\draw (300:{6/sqrt(3)}) -- ++(150:3);
		\draw (270:4) -- ++(150:2);
		\draw (270:4) -- ++(30:2);
	\end{tikzpicture}
	\caption{Example: dual complex $\Delta(K^2_{\mathcal{Y}/C})$ for $N=4$}
	\label{fig:dual-4}
\end{figure}
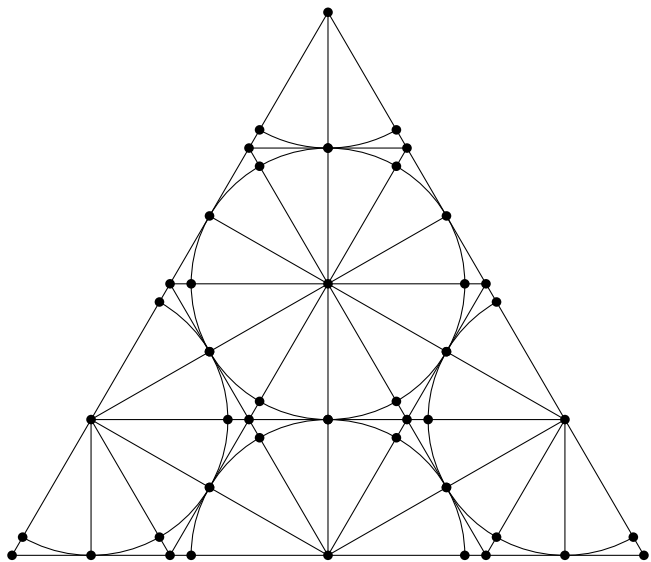

\begin{figure}[h]
	\begin{tikzpicture}[scale=1.8]
		\fill (0,0) circle (1pt);
		\foreach \x in {0,1}
			\fill ($(240:{2/sqrt(3)})+({2/sqrt(3)*\x},0)$) circle (1pt);
		\foreach \x in {0,1,2}
			\fill ($(240:{4/sqrt(3)})+({2/sqrt(3)*\x},0)$) circle (1pt);
		\foreach \x in {0,1,2,3}
			\fill ($(240:{6/sqrt(3)})+({2/sqrt(3)*\x},0)$) circle (1pt);
		\foreach \x in {0,1,2,3,4}
			\fill ($(240:{8/sqrt(3)})+({2/sqrt(3)*\x},0)$) circle (1pt);
		\foreach \x in {0,1,2,3,4,5}
			\fill ($(240:{10/sqrt(3)})+({2/sqrt(3)*\x},0)$) circle (1pt);
		
		\foreach \x in {0,1,2}
			\fill ($(0:0)+(240+30*\x:1)$) circle (1pt);
		\foreach \x in {0,...,11}
			\fill ($(270:2)+(30*\x:1)$) circle (1pt);
		\foreach \x in {0,...,6}
			\fill ($(240:{6/sqrt(3)})+(240+30*\x:1)$) circle (1pt);
		\foreach \x in {0,...,6}
			\fill ($(300:{6/sqrt(3)})+(120+30*\x:1)$) circle (1pt);
		\foreach \x in {0,...,11}
			\fill ($(270:4)+(30*\x:1)$) circle (1pt);
		\foreach \x in {0,...,6}
			\fill ($(240:{6/sqrt(3)})+(270:2)+(30*\x:1)$) circle (1pt);
		\foreach \x in {0,...,6}
			\fill ($(300:{6/sqrt(3)})+(270:2)+(30*\x:1)$) circle (1pt);
			
		\draw (240:1) arc (240:300:1);
		\draw (270:2) circle (1);
		\draw ($(240:{6/sqrt(3)})+(240:1)$) arc (240:420:1);
		\draw ($(300:{6/sqrt(3)})+(120:1)$) arc (120:300:1);
		\draw (270:4) circle (1);
		\draw ($(240:{6/sqrt(3)})+(270:2)+(0:1)$) arc (0:180:1);
		\draw ($(300:{6/sqrt(3)})+(270:2)+(0:1)$) arc (0:180:1);

		\foreach \x in {1,...,5}
			\draw (240:{2/sqrt(3)*\x}) -- (300:{2/sqrt(3)*\x});
		\foreach \x in {0,...,4}
			\draw (240:{2/sqrt(3)*\x}) -- ($(300:{10/sqrt(3)})+(180:{2/sqrt(3)*\x})$);
		\foreach \x in {0,...,4}
			\draw (300:{2/sqrt(3)*\x}) -- ($(240:{10/sqrt(3)})+(0:{2/sqrt(3)*\x})$);
			
		\draw (0,0) -- ++ (270:5);
		\draw (240:{6/sqrt(3)}) -- ++(270:2);
		\draw (240:{6/sqrt(3)}) -- ++(30:3);
		\draw (300:{6/sqrt(3)}) -- ++(270:2);
		\draw (300:{6/sqrt(3)}) -- ++(150:3);
		\draw ($(240:{6/sqrt(3)})+(270:2)$) -- ++(150:1);
		\draw ($(240:{6/sqrt(3)})+(270:2)$) -- ++(30:4);
		\draw ($(300:{6/sqrt(3)})+(270:2)$) -- ++(150:4);
		\draw ($(300:{6/sqrt(3)})+(270:2)$) -- ++(30:1);
	\end{tikzpicture}
	\caption{Example: dual complex $\Delta(K^2_{\mathcal{Y}/C})$ for $N=5$}
	\label{fig:dual-5}
\end{figure}

\begin{figure}[h]
	\begin{tikzpicture}[scale=1.8]
		\fill (0,0) circle (1pt);
		\foreach \x in {0,1}
			\fill ($(240:{2/sqrt(3)})+({2/sqrt(3)*\x},0)$) circle (1pt);
		\foreach \x in {0,1,2}
			\fill ($(240:{4/sqrt(3)})+({2/sqrt(3)*\x},0)$) circle (1pt);
		\foreach \x in {0,1,2,3}
			\fill ($(240:{6/sqrt(3)})+({2/sqrt(3)*\x},0)$) circle (1pt);
		\foreach \x in {0,1,2,3,4}
			\fill ($(240:{8/sqrt(3)})+({2/sqrt(3)*\x},0)$) circle (1pt);
		\foreach \x in {0,1,2,3,4,5}
			\fill ($(240:{10/sqrt(3)})+({2/sqrt(3)*\x},0)$) circle (1pt);
		\foreach \x in {0,1,2,3,4,5,6}
			\fill ($(240:{12/sqrt(3)})+({2/sqrt(3)*\x},0)$) circle (1pt);
		
		\foreach \x in {0,1,2}
			\fill ($(0:0)+(240+30*\x:1)$) circle (1pt);
		\foreach \x in {0,...,11}
			\fill ($(270:2)+(30*\x:1)$) circle (1pt);
		\foreach \x in {0,...,6}
			\fill ($(240:{6/sqrt(3)})+(240+30*\x:1)$) circle (1pt);
		\foreach \x in {0,...,6}
			\fill ($(300:{6/sqrt(3)})+(120+30*\x:1)$) circle (1pt);
		\foreach \x in {0,...,11}
			\fill ($(270:4)+(30*\x:1)$) circle (1pt);
		\foreach \x in {0,...,11}
			\fill ($(240:{6/sqrt(3)})+(270:2)+(30*\x:1)$) circle (1pt);
		\foreach \x in {0,...,11}
			\fill ($(300:{6/sqrt(3)})+(270:2)+(30*\x:1)$) circle (1pt);
		\foreach \x in {0,...,5}
			\fill ($(270:6)+(30*\x:1)$) circle (1pt);
		\foreach \x in {0,1,2}
			\fill ($(240:{12/sqrt(3)})+(30*\x:1)$) circle (1pt);
		\foreach \x in {0,1,2}
			\fill ($(300:{12/sqrt(3)})+(120+30*\x:1)$) circle (1pt);
			
		\draw (240:1) arc (240:300:1);
		\draw (270:2) circle (1);
		\draw ($(240:{6/sqrt(3)})+(240:1)$) arc (240:420:1);
		\draw ($(300:{6/sqrt(3)})+(120:1)$) arc (120:300:1);
		\draw (270:4) circle (1);
		\draw ($(240:{6/sqrt(3)})+(270:2)$) circle (1);
		\draw ($(300:{6/sqrt(3)})+(270:2)$) circle (1);
		\draw ($(270:6)+(0:1)$) arc (0:180:1);
		\draw ($(240:{12/sqrt(3)})+(0:1)$) arc (0:60:1);
		\draw ($(300:{12/sqrt(3)})+(120:1)$) arc (120:180:1);

		\foreach \x in {1,...,6}
			\draw (240:{2/sqrt(3)*\x}) -- (300:{2/sqrt(3)*\x});
		\foreach \x in {0,...,5}
			\draw (240:{2/sqrt(3)*\x}) -- ($(300:{12/sqrt(3)})+(180:{2/sqrt(3)*\x})$);
		\foreach \x in {0,...,5}
			\draw (300:{2/sqrt(3)*\x}) -- ($(240:{12/sqrt(3)})+(0:{2/sqrt(3)*\x})$);
			
		\draw (0,0) -- ++ (270:6);
		\draw (240:{6/sqrt(3)}) -- ++(270:3);
		\draw (240:{6/sqrt(3)}) -- ++(30:3);
		\draw (240:{6/sqrt(3)}) -- ++(330:6);
		\draw (300:{6/sqrt(3)}) -- ++(270:3);
		\draw (300:{6/sqrt(3)}) -- ++(150:3);
		\draw (300:{6/sqrt(3)}) -- ++(210:6);
		\draw (270:6) -- ++(150:3);
		\draw (270:6) -- ++(30:3);
	\end{tikzpicture}
	\caption{Example: dual complex $\Delta(K^2_{\mathcal{Y}/C})$ for $N=6$}
	\label{fig:dual-6}
\end{figure}
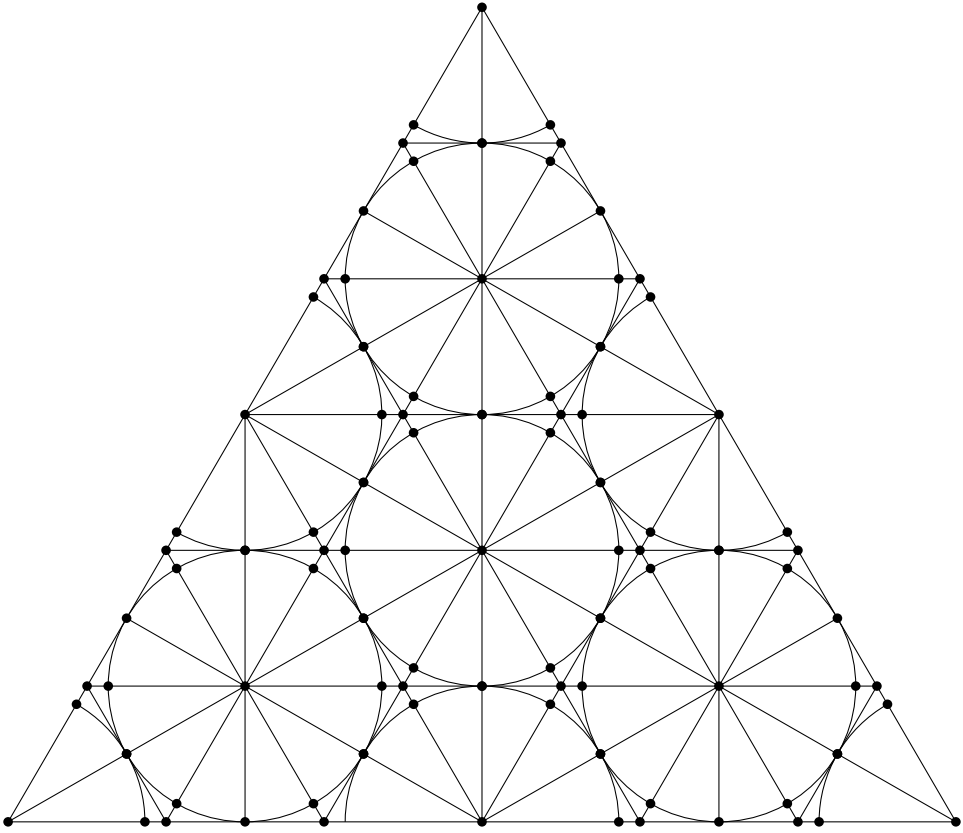

\section{Geometry of the strata}\label{sec:stratageom}

In this section, we discuss the geometry of the strata of the degenerate fiber of the GIT quotient $K^{n-1}_{\mathcal{Y}/C}$. We will primarily focus on the case $n=3$, but we also briefly mention the (much) simpler situation when $n=2$. The methods we use work in principle for arbitrary $n \geq 2$. 

\subsection{Preliminaries}
To begin with, we do not make any assumptions on $n$. The geometry of the strata depends on the distribution of the $n$ points on original components and inserted components, and how the torus $\mathbb G[n]$ acts on the latter. 

\subsubsection{}
Using the notation and results from Subsection \ref{sec:stratfinal}, we see that each stratum in the quotient can be written 
$$ \mathcal{K}(b,\mathbf{m})/\mathbb G[n] = O(b,\mathbf{m})/\mathbb G[b], $$ 
where $b \leq n$ and $\mathbf{m}$ is a numerical vector indexing the distribution of points in $\mathcal{Y}[b]_0$. Indeed, we have splittings $ \mathcal{K}(b,\mathbf{m}) = O(b,\mathbf{m}) \times \mathcal{U}(I)_0$ and $\mathbb G[n] = \mathbb G[b] \times \mathbb G[n-b]$, where $\mathcal{U}(I)_0$ is a $\mathbb G[n-b]$-torsor. We will use the notation $ A(x_1,x_2,x_3)$, $B(x_1,x_2,x_3)$, \emph{etc.}, introduced in Section \ref{subsec:prelsstratadescr} for the strata $\mathcal{K}(b,\mathbf{m})$ of the Kummer locus. Like in the previous section, we leave out the triple $\tau = (\tau_1, \tau_2,\tau_3)$ from the notation. 

\subsubsection{}
We represent by $\circ$ a point on an original component, and by $\bullet$ a point on an inserted component. We also need to specify the action of $\mathbb G[b]$ on inserted components. Let $\sigma_1, \ldots, \sigma_b$ be coordinates for $\mathbb G[b]$. Then the torus acts by $\pm \sigma_i$ on an inserted component, for some $i$. Lastly, we indicate by $\mathfrak w$ and $\mathfrak n$ whether a stratum is wide or narrow, respectively.

\subsubsection{} To give an example when $n=3$, the notation
$$ [(\circ, \bullet, \bullet), (-, \sigma_1, -\sigma_1), \mathfrak n]$$
means a narrow stratum for $b=1$, with one point on an original component, one point on an inserted component where the action is $P \mapsto P + \sigma_1$ and one inserted component where the action is $P \mapsto P - \sigma_1$. As another example, 
$$ [(\bullet, 2 \bullet), (\sigma_1, \sigma_2), \mathfrak w]$$
means a wide stratum for $b=2$, where one point is on an inserted component with action $+\sigma_1$ and $2$ points are located on the same inserted component where the action is by $+\sigma_2$. 

\subsubsection{} We denote for simplicity by $G^{\circ}$ the identity component of $\mathcal{A}_0 \cong \mathcal{Y}_0^{sm}$. Then $ O(b,\mathbf{m}) $ can be identified with the $\mathbb G[b]$-orbit of the kernel of the addition map
$$ \prod_l \mathrm{Hilb}^{m_l}(G^{\circ}) \to G^{\circ}, $$
where, for a given $l$, $\mathbb G[b]$ either acts trivially on $G^{\circ}$, or the action is identified with plus or minus the action of the rank $1$ subtorus $T \subset G^{\circ}$. 

\subsection{The case of three points}
We only treat the strata corresponding to cells in $\mathcal{D}^+$ and $\mathscr{D}^+$, strata corresponding to their negatives follow in the same way. 
\subsubsection{} First we discuss the $4$-dimensional strata without inserted components, which are obtained from $A(0,0,0)$, $A(0,1,-1)$ and $A(0,1,1)$ after taking the quotient. In this case we have three possibilities, depending on the  number of components on which the points lie, namely 3,2 or 1 different components, i.e. $(\circ,\circ,\circ)$, $(\circ,2\circ)$ or $(3\circ)$. There is no torus action and all strata are narrow. 

For $(\circ,\circ,\circ)$, the stratum is identified with the kernel of the  homomorphism 
\[
\mu: G^{\circ} \times G^{\circ} \times G^{\circ} \to G^{\circ},
\]
which is isomorphic to $G^{\circ} \times G^{\circ}$. For $(\circ,2\circ)$, the stratum is identified with the kernel of 
\[
\mu: G^{\circ} \times \mathrm{Hilb}^2(G^{\circ}) \to G^{\circ},
\]
which is isomorphic to $\mathrm{Hilb}^2(G^{\circ})$. For the case $(3\circ)$, the stratum is isomorphic to the kernel of the map
\[
\mu: \mathrm{Hilb}^3(G^{\circ}) \to G^{\circ}.
\]
This is the generalized Kummer variety for the semi-abelian surface $G^{\circ}$, i.e. $\mathrm{Kum}^2(G^{\circ})$. 

\subsubsection{} We next discuss the $4$-dimensional strata with inserted components.
 Here we start with $B(1,1,2)$. This can lead to two possibilities, namely $[(\circ,\bullet,\bullet),(-,\sigma_1,\sigma_1),\mathfrak w]$ and  $[(\circ,2\bullet),(-,\sigma_1,),\mathfrak w]$. In the first case we have to look at
 \[
\mu: G^{\circ} \times G^{\circ} \times G^{\circ} \to G^{\circ},
\]
and since we are in the wide case, the stratum is $O/\mathbb G[1] = \mu^{-1}(T)/\mathbb G[1]$, where $T \subset G^{\circ}$ is the rank $1$ subtorus. 
Throughout this section we denote by $O$ the inverse image of either the origin (narrow case) or of $T$ (wide case).
Note that the map $\mu$ is not equivariant with respect to the action of $\mathbb G[1]$ on the source and of $T$ on the target. 

The $\mathbb G[1]$-action has the form 
\[
(P,Q,R) \mapsto (P,Q+\sigma_1,R+\sigma_1)
\]  
where the latter triple is mapped to $P+Q+R+2\sigma_1$ under the addition map. This shows that every $\mathbb G[1]$-orbit of $O$ 
has a representative in $\mu^{-1}(e)$ where $e$ is the origin of $G^{\circ}$. 
The action of $\mathbb G[1]$ moreover maps fibers of the map $O \to T$ to fibers, hence two points in $\mu^{-1}(e)$ are identified if and only if they are equivalent under the stabilizer of $\mu^{-1}(e)$ in $\mathbb G[1]$. Clearly this is the $2$-torsion subgroup $\mu_2=\{\pm1\}$.

To conclude, we identify the kernel $\mu^{-1}(e)\cong G^{\circ} \times G^{\circ}$ with the last two components of $(G^{\circ})^3$, and claim that 
\[ O/\mathbb G[1] \cong G^{\circ} \times G^{\circ}/ \mu_2\]
where $\mu_2$ acts diagonally. Indeed,  The map $G^{\circ}\times G^{\circ} \to O/\mathbb G[1]$ mapping $(P,Q)$ to the class of $(-P-Q,P,Q)$ is surjective. The above discussion shows that, after dividing $G^{\circ} \times G^{\circ}$ by $\mu_2$ this becomes a bijective morphism. As both varieties are normal, this is an isomorphism. We will use similar arguments in other cases below without spelling them out each time. 

The case $[(\circ,2\bullet),(-,\sigma_1), \mathfrak w]$ is similar and an analogous discussion gives 
\[
O/\mathbb G[1] \cong \mathrm{Hilb}^2(G^{\circ}) / \mu_2.
\] 
 
 A further possibility is $B(1,1,1)$, where we again have two possibilities. The first is $[(\bullet,\bullet,\bullet),(\sigma_1,\sigma_1,\sigma_1),\mathfrak w]$. Arguing as above, we find that 
 \[
O/\mathbb G[1] \cong G^{\circ} \times G^{\circ}/ \mu_3
\] 
where the group $\mu_3$ of third roots of unity acts diagonally.  
Also possible is $[(\bullet,2\bullet),(\sigma_1,\sigma_1),\mathfrak w]$, leading to 
 \[
O/\mathbb G[1] \cong \mathrm{Hilb}^2(G^{\circ})/ \mu_3
\] 
where the $\mu_3$-action is induced by a diagonal action on $G^{\circ} \times G^{\circ}$. Finally, the case where all 3 points lie on the same component does not occur, as there is a numerical obstruction.

\subsubsection{$3$-dimensional strata}
 We start with the unique narrow stratum, which is $B(0,1,-1)$, which is of type $[(\circ,\bullet,\bullet),(-,\sigma_1,-\sigma_1),\mathfrak n]$. The standard arguments, identifying the kernel of $\mu$ with the first two factors of $(G^{\circ})^3$, show that this stratum is
 \[
O/\mathbb G[1] \cong G^{\circ} \times G^{\circ}/\mathbb G[1] \cong G^{\circ} \times D,
\]  
where we recall that $ D = G^{\circ}/T $.

Next we consider $C(1,2,3)$ which is $[(\circ,\bullet,\bullet),(-,\sigma_1,\sigma_2),\mathfrak w]$, leading to
 \[
O/\mathbb G[2] \cong G^{\circ} \times G^{\circ}/ T_1 \cong G^{\circ} \times D,
\]  
where $T_1$ is the rank $1$ subtorus of $\mathbb G[2]$ with coordinate $\sigma_1$. The case $C(1,2,-3)$ is identical.  The case $C(1,2,2)$ however allows two possibilities, the first being $[(\bullet,\bullet,\bullet),(\sigma_1,\sigma_2,\sigma_2),\mathfrak w]$. We can identify the kernel $\mu^{-1}(e)$ with the two last factors of $(G^{\circ})^3$. The rank $1$ subtorus $T_2 \subset \mathbb G[2]$ 
then acts on
the kernel and the stratum is isomorphic to 
 \[
O/\mathbb G[2] \cong G^{\circ} \times G^{\circ}/ T_2 \cong G^{\circ} \times D.
\]   
where the last isomorphism again follows from a suitable change of coordinates. 
The second possibility is $[(\bullet,2\bullet),(\sigma_1,\sigma_2),\mathfrak w]$ where we get 
 \[
O/\mathbb G[2]  \cong \mathrm{Hilb}^2(G^{\circ})/ T_2
\] 
with the action of $T_2$ on $\mathrm{Hilb}^2(G^{\circ})$ induced from the action $(\sigma_2,\sigma_2)$ on $G^{\circ}\times G^{\circ}$.

We also have the case $C(1,1,2)$. This is, after relabeling the same as $C(1,2,2)$ and, finally, $C(1,1,-2)$ is, modulo a change of coordinates $\sigma_2 \mapsto -\sigma_2$, also identical.

\subsubsection{$2$-dimensional strata}
 Here we have $D(1,2,3)$ and $D(1,2,-3)$ which are, modulo a change of coordinates, the same. In our notation this corresponds to $[(\bullet,\bullet,\bullet),(\sigma_1,\sigma_2,\sigma_3),\mathfrak w]$ and the same arguments as above give
  \[
O/\mathbb G[3] \cong G^{\circ} \times G^{\circ}/ (T_1 \times T_2) \cong D \times D.
\]  

\subsection{Kummer surfaces $(n=2)$}
\label{subsection:geomstrataKum}
Lastly, we discuss the case when $n=2$, i.e. Kummer surfaces.
Here we take up and recover some of the results of Subsection \ref{subsec:Kummersurfaces}.
There we proved in Theorem \ref{theorem:Kummerdeg} that  
$ K^1_{\mathcal{Y}/C} \to C $ is a projective Kulikov model. 
 Kulikov's theory of type II degenerations of K3 surfaces tells us what to expect, namely a chain of surfaces where the two ends are rational surfaces and the surfaces inbetween are 
$\mathbb P^1$-bundles over an elliptic curve $D$. Any two consecutive surfaces intersect transversally along sections of the $\mathbb P^1$-bundles.  

Indeed, our construction results in  a chain of $N+1$ surfaces 
$$S_0 \cup S_1 \cup \ldots \cup S_{N}$$ 
intersecting along curves $S_i \cap S_{i+1} \cong D_{i,i+1}$ for $i=0, \ldots, N-1$. Accordingly, the dual complex is a simple tree with $N+1$ vertices $v_i, i=0, \ldots, N$ and $N$ edges connecting $v_i$ and $v_{i+1}$ for $i=0, \ldots, N-1$.
This follows from Kulikov's theory of type II degenerations, but it can also be deduced by the same methods which we used to describe the dual complex for $n=3$ in Section \ref{sec:dualcplxIII} (the case $n=2$ is much easier).

We claim that the open surfaces $S_i \setminus (D_{i-1,i} \cup D_{i,i+1}), i \neq 0, N$ are isomorphic to a semi-abelian variety $G$ over the elliptic curve $D \cong D_{i,i+1}$ , whereas $S_0$ and $S_{N}$ are rational surfaces. 

We shall first treat the 2-dimensional strata. This is the situation also discussed in Subsection \ref{subsection:stratificationKumersurfaces} (the case $b=0$), i.e., no inserted components.
 The 2-dimensional strata are all narrow. The first case is that the two points belong to different components. and in the notation introduced above, this is the case $[(\circ,\circ),(-,-),\mathfrak n]$. 
This means that we look at the addition map $\mu: G^{\circ} \times G^{\circ} \to G^{\circ}$ and the kernel 
\[
O =\mu^{-1}(e) \cong G^{\circ},
\]
which is obviously isomorphic to $G^{\circ}$, as any point $P$ in, say the first component of $G^{\circ} \times G^{\circ}$, defines a unique point $(P,-P) \in O$. 
The resulting strata 
are open parts of the surfaces $S_i, i \neq 0, N$.
Indeed, the semi-abelian surfaces have a natural compactification as a  $\mathbb P^1$-bundle over the elliptic curve $D$, given by adding a $0$-section and an $\infty$-section,m thus giving the surfaces $S_i, i\neq 0,N$. The open stratum is given by removing these two sections.

The surfaces $S_0$ and $S_{N}$ correspond to the case where the two points lie on the same component, i.e., to $[(2\circ),(-),\mathfrak n]$. The interior of these surfaces, i.e., away from the intersection with $S_1$ and $S_{N-1}$ respectively, is now given by 
\[
\mu^{-1}(e) \cong\mathcal K^1(G^{\circ})\cong \widetilde G^{\circ}/{\tilde{\iota}} \cong \widetilde{G^{\circ}/\iota} 
\]
where $\mathcal K^1(G^{\circ})$ is the Kummer surface of the semi-abelian surface $G^{\circ}$. This is isomorphic to the blowup $\widetilde{G^{\circ}/\iota}$ of the 8 $A_1$-singularities  of  $G^{\circ}/\iota$ or, equivalently, to
$\widetilde G^{\circ}/{\tilde{\iota}}$ where $\widetilde G^{\circ}$ is the blowup of $G^{\circ}$ in the 8 fixed points of the involution $\iota$, and $\tilde \iota$ is the induced involution on  $\widetilde G^{\circ}$. This
is a rational surface.  Indeed, as this is a birational statement, we can consider the projective surface which is given as a resolution of the quotient $G/\iota$ in its 8 $A_1$-singularities, where $G$ 
is the $\mathbb P^1$-bundle 
compactifying $G^{\circ}$. Since $G$ is a $\mathbb P^1$-bundle it has Kodaira dimension $-\infty$ and hence the same holds for the quotient and its resolution. 
By surface theory, it then remains to show that the 
irregularity of the quotient is 0, in other words, that it does not admit a holomorphic $1$-form. Now $G$ has regularity 1 where the 1-form comes as a pullback of the $1$-form on the elliptic curve $D$.
Since the involution $\iota$ acts by multiplication by $-1$, this form does not descend to the quotient.  

The remaining case is that we have two points on inserted components. This is the case treated in Lemma \ref{lemma:KumSurb1} (the case $b=1$). We have a narrow stratum with opposite action of the torus on the two points, i.e., $[(\bullet,\bullet),(\sigma_1,-\sigma_1),\mathfrak n]$. The resulting stratum then is the quotient
\[
O/\mathbb G[1] = G^{\circ}/\mathbb G[1] \cong D
\]
which is the intersection  of two consecutive surfaces $S_i$. Clearly, this fits with the geometry of type II Kulikov models.

\appendix

\section{Closure of strata} \label{appendix-new}

\subsection{Overview}

We consider the degeneration family $$f \colon \mathcal{Y} \longrightarrow C.$$ For any admissible line chart $\mathcal{L}$, let $\mathcal{X}_n(\mathcal{L})$ and $X_n(\mathcal{L})$ be the Hilbert and Kummer strata for length $n$ subschemes associated to $\mathcal{L}$.  Moreover, let $\mathcal{X}_n(\mathcal{L})^\circ$ and $X_n(\mathcal{L})^\circ$ be the corresponding open subvariety parametrizing tuples of $n$ distinct points. The goal of this appendix is to prove the following result

\begin{prop} \label{prop:C-goal}
	Assume $\mathcal{L}_1$ and $\mathcal{L}_2$ are admissible line charts, such that $\mathcal{L}_1 \prec \mathcal{L}_2$, then
	$$ \overline{X_n(\mathcal{L}_1)} \cap \mathcal{X}_n(\mathcal{L}_2) = X_n(\mathcal{L}_2). $$
\end{prop}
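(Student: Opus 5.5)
The equality splits into two inclusions. For ``$\subseteq$'', recall that $\mathcal{K}^{n-1}$ is closed in $\mathcal{H}^n$. By Theorem \ref{theorem:mainstrat}, its intersection with the Hilbert stratum $\mathcal{X}_n(\mathcal{L}_2)$ is exactly the Kummer stratum $X_n(\mathcal{L}_2)$, or is empty if $\mathcal{L}_2$ is obstructed. Since $\overline{X_n(\mathcal{L}_1)} \subset \mathcal{K}^{n-1}$, this inclusion is formal. All the work is in the reverse inclusion $X_n(\mathcal{L}_2) \subseteq \overline{X_n(\mathcal{L}_1)}$. Because $X_n(\mathcal{L}_2)$ is irreducible (Theorem \ref{theorem:mainstrat}) and the closure is $\mathbb G[n]$-stable, it suffices to show that a dense open subset of $X_n(\mathcal{L}_2)$ lies in the closure. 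For this I would use the open part $X_n(\mathcal{L}_2)^\circ$ of reduced subschemes.

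I would work locally over the base and reduce to a one-step smoothing. By induction on the number of vertices of $\mathcal{L}_2$ not in $\mathcal{L}_1$, I would first insert a chain of admissible charts $\mathcal{L}_1 = \mathcal{M}_0 \prec \mathcal{M}_1 \prec \dots \prec \mathcal{M}_s = \mathcal{L}_2$, each obtained from the previous one by adding a single vertex. This is the same flipping/insertion argument as in Lemma \ref{lem:correct-dim}: adding vertices preserves ``wide'', so once $\mathcal{L}_1$ is wide every intermediate chart is admissible. If $\mathcal{L}_1$ is narrow, I would first add the vertices lying on the neutral line and then the off-line ones in pairs, in an order that keeps admissibility. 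If some step forces a non-admissible intermediate chart, I would instead do that step as a two-parameter smoothing.

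So assume $\mathcal{L}_2$ has exactly one more vertex, corresponding to a coordinate $t_{i_j}$. Using the splitting \eqref{equation:restrictionsplitting} and the standard embeddings, the situation reduces to $\mathcal{Y}[b]$ over a neighbourhood of $0\in C[b]$, with $t_{i_j}$ becoming nonzero. Given a reduced point $Z=\bigcup P_i \in X_n(\mathcal{L}_2)^\circ$, I would construct an explicit family $Z_t$ over the $t_{i_j}$-axis, with $Z_t\in X_{b-1}(\mathcal{L}_1)$ for $t\neq 0$ and $Z_0=Z$. The construction deforms each $P_i$ in the smooth locus of $\mathcal{Y}[b]$ along the $t_{i_j}$-direction, which Lemma \ref{lemma:numsmoothing} and \cite[Theorem 4.8]{GHHZ21} provide on the Hilbert side. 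The remaining task is to keep the sum condition.

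The main obstacle is the group law on the nearby fibres, which is not the restriction of a fixed one; it depends on the generic line, as Example \ref{Exa:differentgrstr} shows. To deal with this, I would pick an admissible tuple $\mathbf{r}$ for $\mathcal{L}_1$ and pass to the diagonal line in $C[r-1]$, where $\mathcal{A}(L)$ is a Néron model (Proposition \ref{prop:LNeronmod}), and express both strata as orbits $O(\cdot)$ as in Proposition-Definition \ref{prop:stratumdfn}. On the generic fibre of the family, subtract the sum $\Sigma(Z_t)$ from one point that lies in a component whose torus direction is available, using the identity-component structure and the torus translations of Lemma \ref{lemma:translationinterpret}. By Lemma \ref{lem:fundpropuniformization}(2), this correction extends continuously to $t=0$ and vanishes there, provided $Z$ already lies in $\mu^{-1}(e)$, or in $\mu^{-1}(T)$ after a torus translation in the wide case. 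The narrow/wide dichotomy of Lemma \ref{lemma:narrowwidedescr} tells us which of the two holds. Finally, smoothness of $\mathrm{Kum}^{n-1}$ over the base (Proposition \ref{prop:Kummernsmooth}), applied to the pulled-back Néron model, would give flatness of the Kummer family over the line, and hence the lift of $Z$.
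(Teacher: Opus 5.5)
Your reductions match the paper's. The inclusion $\subseteq$ is formal, it suffices to reach the dense open part $X_n(\mathcal{L}_2)^\circ$, and one may assume $\mathcal{L}_1\prec\mathcal{L}_2$ is simple. A simple step adds one vertex when both charts are wide or both are narrow, and adds one vertex above and one below the neutral line when $\mathcal{L}_1$ is narrow and $\mathcal{L}_2$ is wide.

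\textbf{The gap.} It lies in the only substantive step: building a family that stays in the Kummer stratum. You assert, but do not show, that your correction ``subtract $\Sigma(Z_t)$ from one point'' tends to zero at $t=0$. Lemma \ref{lem:fundpropuniformization}(2) cannot supply this. It concerns translations inside a single special fibre, whereas the issue is how sums on the nearby degenerate fibres compare with sums on $\mathcal{Y}[b]_0$.

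In fact the claim fails in the decisive case. Use the coordinates $(d,\theta_0,\dots,\theta_{b+1})$ with $\theta_j=t_j\theta_{j-1}$. A point on level $\ell$ with limit coordinate $\lambda$ acquires coordinate $\lambda/t_\ell$ on the merged component. So the torus part of the sum on a nearby fibre is $\prod_i\lambda_i^{\varepsilon_i}$ times a monomial in the smoothing parameters. When $\mathcal{L}_1$ is narrow, $\mathcal{L}_2$ is wide, and the removed vertices $\ell_-<\ell_+$ are not adjacent, this product is $\prod_i\lambda_i^{\varepsilon_i}\,t_{\ell_-}^{-m_{\ell_-}}t_{\ell_+}^{m_{\ell_+-1}}$, where $m_j$ is the signed number of points on level $j$. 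Along a general curve in the $(t_{\ell_-},t_{\ell_+})$-plane this monomial tends to $0$ or $\infty$. Your correction therefore pushes one point onto a $0$- or $\infty$-section, and the corrected family no longer converges to $Z$. Reducing to $Z\in\mu^{-1}(e)$ by a torus translation does not help: it only normalizes the constant $\prod_i\lambda_i^{\varepsilon_i}$, not the exponent.

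\textbf{The missing idea.} It is the choice of smoothing curve. The paper keeps all $d_i$ and $\lambda_i$ fixed, and then checks three cases.
\begin{itemize}
\item The condition $\sum_i d_i=0$ persists, and this is all a wide stratum requires.
\item In the narrow--narrow step the exponent is $m_\ell$, which vanishes because $\mathcal{L}_2$ is narrow.
\item In the narrow-from-wide step, the two added vertices lie on opposite sides of the neutral line, which forces $m_{\ell_-},m_{\ell_+-1}<0$. Hence the monomial curve $t_{\ell_-}=\pi^{-m_{\ell_+-1}}$, $t_{\ell_+}=c_+\pi^{-m_{\ell_-}}$, with $c_+^{-m_{\ell_+-1}}=\prod_i\lambda_i^{\varepsilon_i}$, has positive exponents and makes the narrow Kummer condition hold identically in $\pi$. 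The adjacent case $\ell_+=\ell_-+1$ is handled the same way with a different pair of exponents.
\end{itemize}

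\textbf{Your final step.} It does not give what you need either. Smoothness of $\mathrm{Kum}^{n-1}(\mathcal{A}(L))$ over a generic line $L$ lifts $Z$ to a family whose generic point lies in $\mathcal{K}^{n-1}_\circ$, because $L$ leaves every coordinate hyperplane. This only reproves $Z\in\mathcal{K}^{n-1}$, not $Z\in\overline{X_n(\mathcal{L}_1)}$. Over the coordinate axis or plane along which you actually smooth, all fibres are degenerate, so there is no N\'eron model to which Proposition \ref{prop:Kummernsmooth} applies.
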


To start with, it is clear that $ \overline{X_n(\mathcal{L}_1)} \cap \mathcal{X}_n(\mathcal{L}_2) \subseteq X_n(\mathcal{L}_2). $ Since $X_n(\mathcal{L}_2)^\circ$ is dense in $X_n(\mathcal{L}_2)$, it suffices to show that
\begin{equation}\label{eqn:C-goal}
	\overline{X_n(\mathcal{L}_1)} \supseteq X_n(\mathcal{L}_2)^\circ.
\end{equation}

Moreover, it suffices to assume that the inclusion $\mathcal{L}_1 \prec \mathcal{L}_2$ is proper and simple, namely, $\mathcal{L}_1 \neq \mathcal{L}_2$, and there is no intermediate admissible line chart between them. 

In the following we assume that $\mathcal{L}_2$ has $b+1$ vertices, namely, any configuration of type $\mathcal{L}_2$ can only be realized in the central fiber $\mathcal{Y}[b]_0$ of the expansion $$f[b] \colon \mathcal{Y}[b] \longrightarrow C[b].$$ 
To prove \eqref{eqn:C-goal}, we consider an arbitrary tuple 
\begin{equation}\label{eqn:C-config}
	\{P_1, \dots, P_n\} \in X_n(\mathcal{L}_2)^\circ.
\end{equation}
We need to construct a family
\begin{equation}\label{eqn:C-family}
	\alpha_i \colon \Spec k[[\pi]] \longrightarrow \mathcal{Y}[b]
\end{equation}
satisfying $\alpha_i(0) = P_i$ for each $1 \leq i \leq n$ and 
\begin{equation}\label{eqn:C-in-L1}
	\{ \alpha_1(\pi), \dots, \alpha_n(\pi) \} \in X_n(\mathcal{L}_1)
\end{equation}
for $\pi \neq 0$.

The proof will occupy the rest of the section, which will be divided into three cases. In Subsection \ref{subsec:C-local-coord}, we will recall the local coordinates in the expanded degeneration for specifying points. In Subsections \ref{subsec:C-same-status-wide} and \ref{subsec:C-same-status-narrow}, we will prove \eqref{eqn:C-goal} in the cases when $\mathcal{L}_1$ and $\mathcal{L}_2$ are both wide and both narrow respectively. In Subsection \ref{subsec:C-diff-status}, we will prove \eqref{eqn:C-goal} in the case when $\mathcal{L}_1$ is narrow while $\mathcal{L}_2$ is wide.

\subsection{Preliminaries}

\subsubsection{Conventions}
For any $b$, we denote the components of $\mathcal{Y}[b]_0$ by $G_i$, where $0 \leq i < 2N(b+1)$. Recall that each $G_i$ is a $\mathbb{P}^1$-bundle, which, if we remove the $0$ and $\infty$ sections, is isomorphic to the semi-abelian surface 
$$ 0 \to \mathbb{G}_m \to G^{\circ} \to D \to 0.$$

For each $0 \leq i \leq 2N(b+1) - 2 $, $G_i$ intersects $G_{i+1}$ in $D_i$ which we think of both as the $\infty$-section on $G_i$ and the $0$-section on $G_{i+1}$. The latter is naturally identified with the $\infty$-section of $G_{i+1}$ via the $\mathbb{P}^1$-bundle structure. Note that the same identification can not be done for the intersection $G_{2N(b+1)-1} \cap G_0$, since there is an, in general, non-trivial shift along $D$. 

In any case, we can now associate to any point $ P \in \mathcal{Y}[b]_0$ a well defined projection $d = \mathrm{pr}(P) \in D$. The only potential ambiguity is when $ P \in G_{2N(b+1)-1} \cap G_0 $, in which we view $P$ as a point on the $\infty$-section on $G_{2N(b+1)-1}$. For any $n$-tuple $P_1, \ldots, P_n$, it is a necessary condition for being in the Kummer locus that the corresponding $d_1, \ldots, d_n$ add to the identity $e_D$ in $D$.

\subsubsection{}
Each $ \alpha_i $ will be a lift of a certain map $ \overline{\alpha_i} \colon S \to \mathcal{Y} $, where $S$ maps into a $\mathbb{P}^1$-fiber $\mathbb{P}_d \subset G $ where $G$ is a component of $\mathcal{Y}_0$ and $ d \in D $ a chosen point in the base. Depending on the context we will also interpret $d$ as either $0$ or $\infty$ on $\mathbb{P}_d$. 

The closed point $s \in S$ will always map to $d$ in the double locus $ \mathrm{Sing}(\mathcal{Y}_0)$. The generic point $\eta$, however, will either map to $\mathbb{P}_d \setminus \{0, \infty\} $ (when $b=0$), or to $d$ as well (when $b>0$). 

\subsubsection{}
We now explain how to find a suitable open affine $\mathcal{U} \subset \mathcal{Y}$ such that $\overline{\alpha}_i$ factors through $\mathcal{U}$. First of all, we remove all components of $\mathcal{Y}_0$ except for $G$ and the unique other component $G'$ passing through $d$. 
If we further remove finitely many fibers over points $d' \neq d$ from $G$ and $G'$, we can assume that both $G$ and $G'$ are trivial fibrations over an open affine subset of $D$. Finally, inside this open subset of $\mathcal{Y}$, we let $\mathcal{U}$ be an open affine subset containing $d$, and where both $G$ and $G'$ are principal divisors. 

\subsubsection{}
We fix an \'etale coordinate $ C \to \mathbb{A}^1 = \mathrm{Spec}~k[t] $, and define $X = \mathrm{Spec}~k[x,y,z]$ (the variable $z$ is only present if the relative dimension $\delta$ equals $2$). It is a scheme over $\mathbb{A}^1$ via the map $t \mapsto xy$. Let $w$, resp.~$w'$ be a local equation of $G'$, resp.~$G$. Then we can find an \'etale map $ \mathcal{U} \to X $, where $ x \mapsto w$ and $ y \mapsto w'$.

We write $A = \mathcal{O}(\mathcal{U})$. Multiplying, if necessary, $w$ by a unit, we can assume that $w$ maps to the coordinate of the $\mathbb{P}^1$-fibers in $A/(w')$. The composition
$$ k[x,y,z] \to A \to R $$
then necessarily factors through
$$ k[x,z] \to A/(w') \to R,$$
where $z \mapsto 0$, and $ x $ is mapped either to $c \pi^a $ (with $c$ a unit and $a \geq 0$) or to $0$, depending on the limit we want to study. This describes the map $S \to X$ induced by $ \overline{\alpha_i} $.

\subsubsection{}
For each $b \geq 0$ the diagram
\begin{equation}
\xymatrix{
    \mathcal{U}[b]  \ar[r] \ar[d] & \mathcal{U}   \ar[d] \\
   \mathcal{Y}[b]   \ar[r] & X
}
\end{equation}
is Cartesian (\cite[Prop.~1.13]{GHH}). Given $\overline{\alpha}_i$, the lift $ \alpha_i \colon S \to \mathcal{U}[b] \subset \mathcal{Y}[b] $ is therefore uniquely determined by a lift of the composition $S \to X $ to $\mathcal{Y}[b]$. This can be effectively constructed using the local equations from \cite[Prop.~1.7]{GHH}. 

\subsubsection{Local equations} \label{subsec:C-local-coord}

For the expansion family $f[b] \colon \mathcal{Y}[b] \to C[b]$, we consider the coordinates in the \'etale neighborhood $\mathcal{U}[b]$ of $\mathcal{Y}[b]$. We can understand $C[b]$ as $\mathbb{A}^{b+1}$ with coordinates $(t_1, \dots, t_{b+1})$.
To describe any point $P \in \mathcal{Y}[b]$, we assume 
\begin{equation}\label{eqn:C-tcoord}
	f[b](P) = (t_1, \dots, t_{b+1}),
\end{equation}
then we have the local coordinates
\begin{equation}\label{eqn:C-coord}
	P=(d, \theta_0, \theta_1, \dots, \theta_b, \theta_{b+1})
\end{equation}
where $d = \mathrm{pr}(P) \in D$ is the projection of $P$ to the double locus $D$, and
$$ \theta_j = \frac{t_1 \dots t_j}{x} = \frac{y}{t_{j+1} \dots t_{b+1}} $$
for $0 \leq j \leq b+1$. Recall from Subsection \ref{subsec:torus-action-expansion} that $\theta_j$'s are indeed homogeneous coordinates $$ \theta_j = \frac{u_j}{v_j} \in \mathbb{P}^1, $$ among which $\theta_0 = \frac{1}{x} \in \mathbb{P}^1 \setminus \{0\}$ and $\theta_{b+1} = y \in \mathbb{P}^1 \setminus \{\infty\} = k$. Note that these homogeneous coordinates are not independent. They are related by the equations
\begin{equation}\label{eqn:C-compat}
	\theta_j = t_j \theta_{j-1}
\end{equation}
for $1 \leq j \leq b+1$.

\begin{example}
	Assume that the point $P$ lies in the smooth locus of the central fiber $\mathcal{Y}[b]_0$, then we have $P \in G^b_{\tau, \varepsilon\xi}$ for some $\varepsilon \in \{ \pm 1 \}$ and $0 \leq \xi \leq b+1$. In such a case the $\theta$-coordinates of $P$ can be given as
$$ \theta_j= \begin{cases}
	\infty, & j<\xi; \\
	\lambda\in k^\ast, & j=\xi; \\
	0, & j>\xi.
\end{cases} $$
This will be the starting point for the construction of the families of $\alpha_i$'s.
\end{example}

\subsection{The case when $\mathcal{L}_1$ and $\mathcal{L}_2$ are both wide} \label{subsec:C-same-status-wide}

Since we assumed that the inclusion $\mathcal{L}_1 \prec \mathcal{L}_2$ is simple, in such a case $\mathcal{L}_1$ is obtained by removing one vertex from $\mathcal{L}_2$, which we assume to be the $\ell$-th vertex counted from the right side.

\subsubsection{Coordinates of the limit}\label{subsubsec:coord-limit}

We consider the set of $n$ points given in \eqref{eqn:C-config}. For each $1 \leq i \leq n$, we assume that $$P_i \in G^b_{\tau_i, \varepsilon_i \xi_i},$$ where $\varepsilon_i = \pm 1$. Then the local coordinates of $P_i$ can be set as $d(P_i) = d_i$, and for $0 \leq j \leq b+1$
$$ \theta_j(P_i) = \begin{cases}
 \infty, & j < \xi_i; \\
 \lambda_i \in k^\ast, & j = \xi_i; \\
 0, & j > \xi_i.	
 \end{cases} $$
 
 \subsubsection{Construction of the family} \label{subsubsec:constr-family-wide}
 
 Now we construct the required family $\alpha_i$ for each $1 \leq i \leq n$. We require that the image of each $\alpha_i$ is in the same \'etale neighborhood as $P_i$, therefore it can be given in the same set of local coordinates. First of all, the $t$-coordinates as in \eqref{eqn:C-tcoord} are given as
\begin{equation}\label{eqn:family-both-same}
	t_j(\alpha_i(\pi)) = \begin{cases}
 	\pi, & j = \ell; \\ 0, & \text{otherwise}.
	\end{cases}
\end{equation}
The $d$-coordinate as in \eqref{eqn:C-coord} is given as a constant value
 $$ d(\alpha_i(\pi)) = d_i. $$
 Depending on $\xi_i$, there are three cases for the $\theta$-coordinates in \eqref{eqn:C-coord} 
 \begin{itemize}
 	\item If $\xi_i \neq \ell - 1$ or $\ell$, then $$ \theta_j(\alpha_i(\pi)) = \begin{cases}
 		\infty, & j < \xi_i; \\ \lambda_i, & j = \xi_i; \\ 0, & j > \xi_i. 
 	\end{cases}$$
 	\item If $\xi_i = \ell -1$, then  $$ \theta_j(\alpha_i(\pi)) = \begin{cases}
 		\infty, & j < \ell-1; \\ \lambda_i, & j = \ell-1; \\ \lambda_i t_\ell, & j = \ell; \\ 0, & j > \ell. 
 	\end{cases}$$
 	\item If $\xi_i = \ell$, then $$ \theta_j(\alpha_i(\pi)) = \begin{cases}
 		\infty, & j < \ell-1; \\ \frac{\lambda_i}{t_\ell}, & j = \ell-1; \\ \lambda_i, & j = \ell; \\ 0, & j > \ell. 
 	\end{cases}$$
 \end{itemize}
 
It is immediate to see that the above coordinates satisfy the compatibility condition \eqref{eqn:C-compat}, hence they are legitimate coordinates for each family $\alpha_i$. In other words, we have constructed the family $\alpha_i(\pi)$ converging to the point $P_i$ for each $1 \leq i \leq n$, as required in \eqref{eqn:C-family}. It remains to show \eqref{eqn:C-in-L1} is satisfied.
 
\subsubsection{Verifying the Kummer condition} \label{subsubsec:C-both-wide}

The Kummer condition for a wide stratum is given by only one equation, using the group law on the double locus $D$. More precisely, \eqref{eqn:C-config} implies that 
$$d_1 + \dots + d_n = d(P_1) + \dots + d(P_n) = 0.$$ 
Since $d(\alpha_i(\pi)) = d(P_i) = d_i$ for $1 \leq i \leq n$, we still have
$$ d(\alpha_1(\pi)) + \dots + d(\alpha_n(\pi)) = d_1 + \dots + d_n = 0 $$
which implies \eqref{eqn:C-in-L1}; in other words the tuple $\{ \alpha_1(\pi), \dots, \alpha_n(\pi) \}$ is in the Kummer locus.

\subsection{The case when $\mathcal{L}_1$ and $\mathcal{L}_2$ are both narrow} \label{subsec:C-same-status-narrow}

In this situation, we still assume that $\mathcal{L}_1$ is obtained by removing the $\ell$-th vertex in $\mathcal{L}_2$, counted from the right side. Given a tuple of $n$ distinct points $\{ P_i \mid 1 \leq i \leq n \}$, we still use the same notations for their coordinates as in Subsection \ref{subsubsec:coord-limit}, and construct the same family $\{ \alpha_i \mid 1 \leq i \leq n \}$ that converges to the given tuple. What remains to be done is to check that the family satisfies the Kummer condition for narrow strata.

The Kummer condition for a narrow stratum means that we take the inverse image of the origin under the summation map.  
More precisely, starting with $n$ points $\{P_1, P_2, \dots, P_n\}$ as in \eqref{eqn:C-config} the Kummer 
condition translates into two equations, namely
\begin{align}
	d_1 + d_2 + \dots + d_n &= 0, \label{eqn:narrow1}\\
	\lambda_1^{\varepsilon_1} \cdot \lambda_2^{\varepsilon_2} \cdot \dots \cdot \lambda_n^{\varepsilon_n} &= 1, \label{eqn:narrow2}
\end{align}
where the first equation is the Kummer condition in the direction of the double locus, and the second equation is the Kummer condition in the direction of the $\mathbb{P}^1$-fiber.

\subsubsection{Verifying the Kummer condition}

To verify the Kummer condition for the family $\{ \alpha_1(\pi), \alpha_2(\pi), \dots, \alpha_n(\pi) \}$, we also need to check two equations similar to the above ones. First of all, we observe that the condition in the direction of the double locus 
\begin{equation}\label{eqn:next1}
	\sum_{i=1}^n d_i = 0
\end{equation}
holds by \eqref{eqn:narrow1} immediately. To check the condition in the direction of the $\mathbb{P}^1$-fiber, we have to distinguish three cases.

When $\ell \neq 1$ or $b+1$, the coordinates $\theta_{\ell-1}$ and $\theta_{\ell}$ are proportional to each other on each fiber of $f[n]$ with $t_\ell \neq 0$, hence they are related by the action $G[b]=\mathbb{G}_m^b$ on the fiber. Correspondingly, there are two equivalent formulations for the Kummer condition in the direction of the $\mathbb{P}^1$-fiber. The formulation using the coordinate $\theta_{\ell-1}$ is
\begin{equation}\label{eqn:next2}
	\prod_{\substack{i=1 \\ \xi_i \neq \ell }}^n \lambda_i^{\varepsilon_i} \cdot \prod_{\substack{i=1 \\ \xi_i = \ell }}^n \left( \frac{\lambda_i}{t_\ell} \right)^{\varepsilon_i} = 1,
\end{equation}
while the formulation using the coordinate $\theta_\ell$ is
\begin{equation}\label{eqn:narrow-second}
	\prod_{\substack{i=1 \\ \xi_i \neq \ell-1 }}^n \lambda_i^{\varepsilon_i} \cdot \prod_{\substack{i=1 \\ \xi_i = \ell-1 }}^n \left( \lambda_i t_\ell \right)^{\varepsilon_i} = 1.
\end{equation}

To prove \eqref{eqn:next2} and \eqref{eqn:narrow-second}, and to explain why they are equivalent, we introduce the notation
\begin{equation}\label{eqn:C-seg-diff}
	m_j = \# \{ i \mid \xi_i = j \text{ and } \varepsilon_i = 1 \} - \# \{ i \mid \xi_i = j \text{ and } \varepsilon_i = -1 \}
\end{equation}
for each $1 \leq j \leq b$. The narrowness of $\mathcal{L}_2$ implies that
\begin{equation}\label{eqn:narrowness_L2}
	m_j = 0
\end{equation}
for each $j$. Recall that we also have $t_\ell = \pi$. Then \eqref{eqn:next2} follows from
$$ \prod_{\substack{i=1 \\ \xi_i \neq \ell }}^n \lambda_i^{\varepsilon_i} \cdot \prod_{\substack{i=1 \\ \xi_i = \ell }}^n \left( \frac{\lambda_i}{t_\ell} \right)^{\varepsilon_i} = \pi^{-m_\ell} \cdot \prod_{i=1}^n \lambda_i^{\varepsilon_i} = 1, $$
where the last equality holds due to \eqref{eqn:narrow2} and \eqref{eqn:narrowness_L2}. We can similarly prove \eqref{eqn:narrow-second} by observing that
$$ \prod_{\substack{i=1 \\ \xi_i \neq \ell-1 }}^n \lambda_i^{\varepsilon_i} \cdot \prod_{\substack{i=1 \\ \xi_i = \ell-1 }}^n \left( \lambda_i t_\ell \right)^{\varepsilon_i} = \pi^{m_{\ell-1}} \cdot \prod_{i=1}^n \lambda_i^{\varepsilon_i} = 1, $$
whose last equality holds also due to \eqref{eqn:narrow2} and \eqref{eqn:narrowness_L2}. In fact, the above calculation also shows that the left sides of \eqref{eqn:next2} and \eqref{eqn:narrow-second} differ by a factor of $$\pi^{m_{\ell-1} + m_\ell}=1$$ due to \eqref{eqn:narrowness_L2}, which explains the equivalence of the formulations \eqref{eqn:next2} and \eqref{eqn:narrow-second}.

When $\ell = 1$, $\theta_0$ and $\theta_1$ are no longer related by the action of $G[b]$, hence we can only use the original coordinate $\theta_0$ to verify the Kummer condition, which is formulated as \eqref{eqn:next2}, whose proof is still valid since it only requires $m_1=0$ and \eqref{eqn:narrow2}.

When $\ell=b+1$, $\theta_{b}$ and $\theta_{b+1}$ are no longer related by the action of $G[b]$, hence we can only use the original coordinate $\theta_{b+1}$ to verify the Kummer condition, which is formulated as \eqref{eqn:narrow-second}, whose proof is still valid since it only requires $m_\ell=0$ and \eqref{eqn:narrow2}.

Combining all the above possible values of $\ell$, we have verified the Kummer condition for narrow strata given by \eqref{eqn:next1} and either \eqref{eqn:next2} or \eqref{eqn:narrow-second}. Therefore, we conclude \eqref{eqn:C-in-L1} under the assumption that both $\mathcal{L}_1$ and $\mathcal{L}_2$ are narrow.

\subsection{The case when $\mathcal{L}_1$ is narrow and $\mathcal{L}_2$ is wide} \label{subsec:C-diff-status}

In such a case all vertices of $\mathcal{L}_1$ lie on the neutral line, while $\mathcal{L}_2$ has two extra vertices, one above and the other below the neutral line. We assume that the vertices of $\mathcal{L}_2$ that are higher and lower than the neutral line are the $\ell_+$-th and $\ell_-$-th respectively, when counted from the right side. 

Given a tuple of distinct points $\{ P_1, \dots, P_n \} \in \mathrm{Kum}(\mathcal{L}_2)^\circ$, we still assume that their coordinates are given as in Subsection \ref{subsubsec:coord-limit}. In the following we will produce a family $\{ \alpha_1(\pi), \dots, \alpha_n(\pi) \} \in \mathrm{Kum}(\mathcal{L}_1)$ for $\pi \neq 0$, such that $\alpha_i(\pi)$ converges to $P_i$ as $\pi$ approaches $0$ for each $1 \leq i \leq n$.

\subsubsection{Construction of the family}

To construct the families $\alpha_i(\pi)$ for $1 \leq i \leq n$, the $d$-coordinates are given as constant values as usual, namely
\begin{equation}\label{eqn:C-family-d-coord}
	d(\alpha_i(\pi)) = d_i.
\end{equation}    
We also give the $t$-coordinates as
\begin{equation}\label{eqn:C-family-base}
t_j(\alpha_i(\pi)) = \begin{cases}
c_- \pi^{a_-}, & j = \ell_-; \\
c_+ \pi^{a_+}, & j = \ell_+; \\
0, & \text{otherwise}.	
\end{cases}
\end{equation}
We will determine the parameters $a_\pm \in \mathbb{Z}^+$ and $c_\pm \in k^\ast$ later by the Kummer condition that the family has to satisfy. It is worth noting that there is no restriction for the limit tuple $\{ P_1, \dots, P_n \}$ in the fiber direction because $\mathcal{L}_2$ is wide, but we still need to consider the Kummer condition in the fiber direction for the family $\{ \alpha_1(\pi), \dots, \alpha_n(\pi) \}$ when $\pi \neq 0$ because $\mathcal{L}_1$ is narrow.

Depending on the relative position of the two vertices that are removed from $\mathcal{L}_2$ to obtain $\mathcal{L}_1$, we separate two cases.

\subsubsection{The Kummer condition when $\lvert \ell_+ - \ell_- \rvert = 1$} \label{subsubsec:C-diff-equal}

Without loss of generality, we assume that $1 \leq \ell_- < \ell_+ \leq b+1$, then we can further write $$\ell_- = \ell  \quad \text{and} \quad \ell_+ = \ell+1$$ where $1 \leq \ell \leq b$.

Since the $d$- and $t$-coordinates are already given in \eqref{eqn:C-family-d-coord} and \eqref{eqn:C-family-base}, it remains to give the $\theta$-coordinates. Depending on the value of $\xi_i$, they are given as
\begin{itemize}
	\item if $\xi_i \notin \{ \ell-1, \ell, \ell+1 \}$, then $$ \theta_j(\alpha_i(\pi)) = \begin{cases}
			\infty, & j < \xi_i; \\
			\lambda_i, & j = \xi_i; \\
			0, & j > \xi_i.
		\end{cases} $$
	\item if $\xi_i = \ell-1$, then $$ \theta_j(\alpha_i(\pi)) = \begin{cases}
			\infty, & j < \ell-1; \\
			\lambda_i, & j = \ell-1; \\
			\lambda_i t_{\ell}, & j = \ell; \\
			\lambda_i t_{\ell} t_{\ell+1}, & j = \ell+1; \\
			0, & j > \ell+1.
		\end{cases} $$
	\item if $\xi_i = \ell$, then $$ \theta_j(\alpha_i(\pi)) = \begin{cases}
			\infty, & j < \ell-1; \\
			\frac{\lambda_i}{t_{\ell}}, & j = \ell-1; \\
			\lambda_i, & j = \ell; \\
			\lambda_i t_{\ell+1}, & j = \ell+1; \\
			0, & j > \ell+1.
		\end{cases} $$
	\item if $\xi_i = \ell+1$, then $$ \theta_j(\alpha_i(\pi)) = \begin{cases}
			\infty, & j < \ell-1; \\
			\frac{\lambda_i}{t_{\ell} t_{\ell+1}}, & j = \ell-1; \\
			\frac{\lambda_i}{t_{\ell+1}}, & j = \ell; \\
			\lambda_i, & j = \ell+1; \\
			0, & j > \ell+1.
		\end{cases} $$
\end{itemize}

It is immediate to see that these assignments satisfy the relation \eqref{eqn:C-compat}, therefore they are valid coordinates for points on $\mathcal{Y}[b]$.

It remains to check the Kummer condition \eqref{eqn:C-in-L1}, which decomposes into two equations. Along the double locus $D$, we have
$$ \sum_{i=1}^n d_i = 0 $$
as usual. To guarantee the Kummer condition along the direction of the $\mathbb{P}^1$-fiber, we have to observe that $\theta_{\ell-1}$, $\theta_{\ell}$ and $\theta_{\ell+1}$ are coordinates on the same component. If this component is one of the inserted components, then they are related by $G[b]$-action, so we can use any of them to examine the Kummer condition. However, if this component is one of the components of the original degeneration family, only the original coordinates can be used; namely, $\theta_{\ell-1}$ if $\ell = 1$, or $\theta_{\ell+1}$ if $\ell = b$. 

Therefore, in the discussion below, we will first state the approach using the coordinate $\theta_{\ell-1}$, which is valid for $1 \leq \ell \leq b-1$; and then state the approach using the coordinate $\theta_{\ell+1}$, which is valid for $2 \leq \ell \leq b$. Both approaches together cover all possibilities of $\ell$.

For $1 \leq \ell \leq b-1$, since the vertices in the line chart labelled by $\ell+1$ and $\ell$ are above and below the neutral line respectively, we see that $$m_{\ell +1} \in \mathbb{Z}^+ \qquad \text{and} \qquad m_{\ell} + m_{\ell+1} \in \mathbb{Z}^-.$$
In such a case we state the Kummer condition in the fiber direction using the coordinate $\theta_{\ell-1}$, which is given by the equation
\begin{equation*}
	\prod_{\xi_i \neq \ell-1, \ell, \ell+1} \lambda_i^{\varepsilon_i} \cdot \prod_{\xi_i = \ell -1} \lambda_i^{\varepsilon_i} \cdot \prod_{\xi_i = \ell} \left( \frac{\lambda_i}{t_\ell} \right)^{\varepsilon_i} \cdot \prod_{\xi_i = \ell+1} \left( \frac{\lambda_i}{t_\ell t_{\ell+1}} \right)^{\varepsilon_i} = 1.
\end{equation*}
By the notation \eqref{eqn:C-seg-diff}, we can rewrite the above equation as
\begin{equation*}
	\prod_{i=1}^n \lambda_i^{\varepsilon_i} \cdot t_{\ell}^{- \left( m_\ell + m_{\ell+1} \right) } = t_{\ell+1}^{m_{\ell+1}}.
\end{equation*}

There are many choices for the above equation to hold. For example, we can choose the solution
$$ t_{\ell_-} = t_{\ell}=\pi^{m_{\ell+1}} \qquad \text{and} \qquad t_{\ell_+} = t_{\ell+1} = \left( \prod_{i=1}^n \lambda_i^{\varepsilon_i} \right)^{\frac{1}{m_{\ell+1}}} \cdot \pi^{-\left( m_\ell + m_{\ell+1} \right)}. $$
In other words, in the parametrization \eqref{eqn:C-family-base}, it suffices to choose
$$ c_+=\left( \prod_{i=1}^n \lambda_i^{\varepsilon_i} \right)^{\frac{1}{m_{\ell+1}}} \qquad \text{and} \qquad c_-=1  $$
as well as
$$ a_+=- \left( m_{\ell}+m_{\ell+1} \right) \qquad \text{and} \qquad a_-=m_{\ell-1} $$
to guarantee that the family $\{ \alpha_1(\pi), \dots, \alpha_n(\pi) \}$ satisfies the Kummer condition for narrow strata.

For $2 \leq \ell \leq b$, similar to the above case we have
$$ m_{\ell-1} \in \mathbb{Z}^+ \qquad \text{and} \qquad m_{\ell-1} + m_\ell \in \mathbb{Z}^-. $$
Under the assumption we state the Kummer condition in the fiber direction using the coordinate $\theta_{\ell+1}$, which is given by the equation
\begin{equation*}
	\prod_{\xi_i \neq \ell-1, \ell, \ell+1} \lambda_i^{\varepsilon_i} \cdot \prod_{\xi_i = \ell -1} \left( \lambda_i t_\ell t_{\ell+1} \right)^{\varepsilon_i} \cdot \prod_{\xi_i = \ell} \left( \lambda_i t_{\ell+1} \right)^{\varepsilon_i} \cdot \prod_{\xi_i = \ell+1} \lambda_i^{\varepsilon_i} = 1.
\end{equation*}
By the notation \eqref{eqn:C-seg-diff}, we can rewrite the above equation as
\begin{equation*}
	\prod_{i=1}^n \lambda_i^{\varepsilon_i} \cdot t_{\ell}^{m_{\ell-1}} = t_{\ell+1}^{- \left( m_{\ell-1} + m_\ell \right)}.
\end{equation*}
As a solution to the above equation we can choose
$$ t_{\ell_-} = t_{\ell}=\pi^{- (m_{\ell-1} + m_\ell)} \qquad \text{and} \qquad t_{\ell_+} = t_{\ell+1} = \left( \prod_{i=1}^n \lambda_i^{\varepsilon_i} \right)^{\frac{1}{-(m_{\ell-1}+m_\ell)}} \cdot \pi^{ m_{\ell-1} }. $$
In other words, in the parametrization \eqref{eqn:C-family-base}, it suffices to choose
$$ c_+=\left( \prod_{i=1}^n \lambda_i^{\varepsilon_i} \right)^{\frac{1}{-(m_{\ell-1}+m_\ell)}} \qquad \text{and} \qquad c_-=1  $$
as well as
$$ a_+=m_{\ell-1} \qquad \text{and} \qquad a_-=- \left( m_{\ell-1}+m_{\ell} \right) $$
to guarantee that the family $\{ \alpha_1(\pi), \dots, \alpha_n(\pi) \}$ satisfies the Kummer condition for narrow strata.

Combining both approaches, for each possible value of $\ell$, we can find an appropriate family that converges to the given limit tuple.

\subsubsection{The Kummer condition when $\lvert \ell_+ - \ell_- \rvert > 1$} \label{subsubsec:C-diff-large}

We follow a similar strategy as in the previous case, and still assume that $1 \leq \ell_- < \ell_+ \leq b+1$ without loss of generality. Following the notations in \eqref{eqn:C-seg-diff}, this assumption implies that 
\begin{equation}\label{eqn:case-both-negative}
	m_{\ell_-} \in \mathbb{Z}^- \qquad \text{and} \qquad m_{\ell_+-1} \in \mathbb{Z}^-.
\end{equation}

Depending on the value of $\xi_i$, the $\theta$-coordinates of each $\alpha_i(\pi)$ is given as follows
\begin{itemize}
	\item If $\xi_i \neq \ell_\pm -1$ or $\ell_\pm$, then $$ \theta_j(\alpha_i(\pi)) = \begin{cases}
			\infty, & j < \xi_i; \\
			\lambda_i, & j = \xi_i; \\
			0, & j > \xi_i.
		\end{cases} $$
	\item If $\xi_i = \ell_--1$, then $$ \theta_j(\alpha_i(\pi)) = \begin{cases}
 		\infty, & j < \ell_--1; \\ \lambda_i, & j = \ell_--1; \\ \lambda_i t_{\ell_-}, & j = \ell_-; \\ 0, & j > \ell_-.
 		\end{cases}$$
 	\item If $\xi_i = \ell_-$, then $$ \theta_j(\alpha_i(\pi)) = \begin{cases}
 		\infty, & j < \ell_--1; \\ \frac{\lambda_i}{t_{\ell_-}}, & j = \ell_--1; \\ \lambda_i, & j = \ell_-; \\ 0, & j > \ell_-. 
 	\end{cases}$$
 	\item If $\xi_i = \ell_+-1$, then $$ \theta_j(\alpha_i(\pi)) = \begin{cases}
 		\infty, & j < \ell_+-1; \\ \lambda_i, & j = \ell_+-1; \\ \lambda_i t_{\ell_+}, & j = \ell_+; \\ 0, & j > \ell_+.
 		\end{cases}$$
 	\item If $\xi_i = \ell_+$, then $$ \theta_j(\alpha_i(\pi)) = \begin{cases}
 		\infty, & j < \ell_+-1; \\ \frac{\lambda_i}{t_{\ell_+}}, & j = \ell_+-1; \\ \lambda_i, & j = \ell_+; \\ 0, & j > \ell_+. 
 	\end{cases}$$
\end{itemize}

We see again that these assignments satisfy the relation \eqref{eqn:C-compat}, therefore they are valid coordinates for points on $\mathcal{Y}[b]$. The Kummer condition \eqref{eqn:C-in-L1} along the double locus $D$ is still valid. Moreover, although $\theta_{\ell_\pm-1}$ and $\theta_{\ell_\pm}$ are coordinates on the same component, we observe that the Kummer condition along the direction of the $\mathbb{P}^1$-fiber can always be formulated using the $\theta_{\ell_--1}$ and $\theta_{\ell_+}$ coordinates, no matter whether such components are original components or inserted components. Therefore the condition is given by the equation
\begin{equation*}
	\prod_{\xi_i \neq \ell_\pm-1, \ell_\pm} \lambda_i^{\varepsilon_i} \cdot \prod_{\xi_i = \ell_--1} \lambda_i^{\varepsilon_i} \cdot \prod_{\xi_i = \ell_-} \left( \frac{\lambda_i}{t_{\ell_-}} \right)^{\varepsilon_i} \cdot \prod_{\xi_i = \ell_+-1} \left( \lambda_i t_{\ell_+} \right)^{\varepsilon_i} \cdot \prod_{\xi_i = \ell_+} \lambda_i^{\varepsilon_i} = 1.
\end{equation*}
Following the notations in \eqref{eqn:C-seg-diff}, the above equation becomes
\begin{equation*}
	\prod_{i=1}^n \lambda_i^{\varepsilon_i} \cdot t_{\ell_-}^{-m_{\ell_-}} \cdot t_{\ell_+}^{m_{\ell_+-1}} = 1,
\end{equation*}
which can be further rewritten as
\begin{equation*}
	\prod_{i=1}^n \lambda_i^{\varepsilon_i} \cdot t_{\ell_-}^{-m_{\ell_-}} = t_{\ell_+}^{-m_{\ell_+-1}}.
\end{equation*}
As a solution to the above equation we choose
$$ t_{\ell_-} = \pi^{-m_{\ell_+-1}} \qquad \text{and} \qquad t_{\ell_+} = \left( \prod_{i=1}^n \lambda_i^{\varepsilon_i} \right)^{\frac{1}{-m_{\ell_+-1}}} \cdot \pi^{-m_{\ell_-}}. $$
Given the condition \eqref{eqn:case-both-negative}, it suffices to choose in the parametrization \eqref{eqn:C-family-base} that
$$ c_+ = \left( \prod_{i=1}^n \lambda_i^{\varepsilon_i} \right)^{\frac{1}{-m_{\ell_+-1}}} \qquad \text{and} \qquad c_-=1 $$
as well as
$$ a_+ = -m_{\ell_-} \qquad \text{and} \qquad a_-=-m_{\ell_+-1} $$
to guarantee that the family $\{ \alpha_1(\pi), \dots, \alpha_n(\pi) \}$ satisfies the Kummer condition for narrow strata.

\begin{proof}[Proof of Proposition \ref{prop:C-goal}]
	There are three possibilities for the pair $\mathcal{L}_1$ and $\mathcal{L}_2$, namely both being wide, or both being narrow, or the former being narrow while the latter being wide. The three cases have been treated in Subsections \ref{subsec:C-same-status-wide}, \ref{subsec:C-same-status-narrow} and \ref{subsec:C-diff-status}. Therefore \eqref{eqn:C-goal} holds in general.
\end{proof}

\section{A lemma in convex geometry}

In this section we prove the following combinatorial result, which is used in our study of the dual complex encoding 
the intersection relation of irreducible components in the degeneration of generalized Kummer varieties.
This lemma is very likely well known to specialists.

\begin{lemma}\label{lemma:descriteratedconeprod}
	Let $\delta_n$ be the $n$-simplex embedded in $\mathbb{R}^{n+1}$ with vertices given by the standard basis vectors
	$$ V_i = (0, \dots, 0, 1, 0, \dots, 0) \in \mathbb{R}^{n+1} $$ where $1$ is the $i$-th coordinate for $1 \leq i \leq n+1$. Assume that $n_+$ and $n_-$ are positive integers, and $n_0$ is a non-negative integer, such that $$n_++n_-+n_0 = n+1.$$ For 
    $z = (z_1, \dots, z_{n+1}) \in \mathbb{R}^{n+1}$, consider the linear function
	$$ \ell(z) = z_1 + \dots + z_{n_+} - z_{n_++1} - \dots - z_{n_++n_-} $$
	and let $\Lambda$ be the hyperplane in $\mathbb{R}^{n+1}$ defined by $\ell(z)=0$. Then we have an affine
    equivalence of polytopes
    $$ \delta_n \cap \Lambda \ \cong \ C^{n_0}(\delta_{n_+-1} \times \delta_{n_--1}), $$
	where $C^{n_0}$ denotes taking the cone $n_0$ times. 
\end{lemma}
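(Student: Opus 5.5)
We first treat the case $n_0=0$ and then induct on $n_0$ by peeling off one zero-weight coordinate at a time as a cone apex.

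\emph{Case $n_0=0$.} Here $\delta_n\cap\Lambda$ consists of the points $z\geq 0$ with $\sum_{i\le n_+} z_i=\sum_{i>n_+} z_i$ and total sum $1$. Hence both partial sums equal $\tfrac12$. Write $z=\tfrac12(x,y)$ with $x\in\delta_{n_+-1}$ and $y\in\delta_{n_--1}$, each being the standard simplex in its own block of coordinates. The map $(x,y)\mapsto \tfrac12(x,y)$ is an affine bijection $\delta_{n_+-1}\times\delta_{n_--1}\to\delta_n\cap\Lambda$, and its inverse is the restriction of a linear map, so this case is done. For later use, note that the vertices of $\delta_n\cap\Lambda$ are the midpoints $\tfrac12(V_i+V_j)$ with $i\le n_+<j$.

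\emph{Inductive step.} Suppose $n_0\ge1$ and let $V_{n+1}$ be a vertex with $\ell(V_{n+1})=0$; it lies in $\Lambda$. Every $z\in\delta_n\cap\Lambda$ can be written as $z=(1-s)w+sV_{n+1}$, where $s=z_{n+1}$. If $s<1$, then $w=z'/(1-s)$, with $z'$ the vector $z$ with its last coordinate deleted. This $w$ lies in the facet $\delta_{n-1}=\{z_{n+1}=0\}\cap\delta_n$, and since $\ell$ does not involve $z_{n+1}$, we also have $w\in\Lambda$. Conversely, every such combination lies in $\delta_n\cap\Lambda$, by convexity and because $\Lambda$ is linear. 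Thus $\delta_n\cap\Lambda$ is the convex hull of $Q=\delta_{n-1}\cap\Lambda$ together with the point $V_{n+1}$.

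Since $V_{n+1}$ lies off the affine hull of $Q$ (it has $z_{n+1}=1$, while $Q\subset\{z_{n+1}=0\}$), this convex hull is a pyramid $C(Q)$. Here we take "cone" to mean the pyramid, i.e.\ the join with a point, as in \cite{Zie}. By induction on $n_0$, with $n$ replaced by $n-1$ and the same $n_\pm$, we have $Q\cong C^{n_0-1}(\delta_{n_+-1}\times\delta_{n_--1})$. Since affine equivalence is preserved by taking pyramids, the claim follows.

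\emph{Main obstacle.} The only real subtlety is the bookkeeping in the inductive step: one must check that the pyramid decomposition is compatible with the affine equivalence, i.e.\ that the equivalence $Q\cong C^{n_0-1}(\cdots)$ extends affinely to the pyramids. This holds because a pyramid over a polytope $P$ is affinely determined by $P$ together with an apex outside $\mathrm{aff}(P)$. We also need the convention that when $n_+$ or $n_-$ is $1$ the corresponding simplex factor is a point, which is consistent with the usage in Proposition~\ref{prop:from-H-to-K}.
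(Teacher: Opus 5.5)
Your proof is correct and follows essentially the same route as the paper. Your $n_0=0$ case is the paper's Claim~1 (the halving map $(Q_+,Q_-)\mapsto \frac{Q_++Q_-}{2}$), and your inductive step peeling off the apex $V_{n+1}$ is the paper's Claim~2 that $S_k=C(S_{k-1})$ with apex $V_{n_++n_-+k}$, run in the opposite direction; your remark that the apex lies off $\mathrm{aff}(Q)$, so affine equivalences extend to pyramids, makes explicit a point the paper leaves implicit.
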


\begin{proof}
	Using the notations in the statement we have
	$$ \ell(V_i) = \begin{cases}
        1, & \quad\text{if}\quad 1 \leq i \leq n_+; \\
        -1, & \quad\text{if}\quad n_++1 \leq i \leq n_++n_-; \\
        0, & \quad\text{if}\quad n_++n_-+1 \leq i \leq n+1.
    \end{cases} $$
    Moreover
    $$ \delta_n \cap \Lambda = \left\{ Q=(\theta_1, \dots, \theta_{n+1}) \ \middle| \ \sum \theta_i = 1, \ \theta_i \in [0,1], \ \ell(Q) = 0 \right\}. $$
    We also denote
    \begin{equation}\label{eqn:def-Sk-convex}
        S_k = \left\{ Q_k=(\theta_1, \dots, \theta_{n_++n_-+k}, 0, \dots, 0) \ \middle| \ \sum \theta_i = 1, \ \theta_i \in [0,1], \ \ell(Q_k)=0 \right\}
    \end{equation}
    for $0 \leq k \leq n_0$. In particular, we have $\delta_n \cap \Lambda = S_{n_0}$.
    
    \textbf{Claim 1.} We first show that there is an affine equivalence
    \begin{equation*}\label{eqn:first-polyhedron}
        S_0 \cong \delta_{n_+-1} \times \delta_{n_--1}.
    \end{equation*}
    For this we identify $\delta_{n_+-1}$ and $\delta_{n_--1}$ as subsets of $\delta_n$ by
    \begin{align*}
        \delta_{n_+-1} &= \left\{ Q_+ = (\theta_1, \dots, \theta_{n_+}, 0, \dots, 0, 0, \dots, 0) \ \middle| \ \sum \theta_i = 1, \ \theta_i \in [0,1] \right\}; \\
        \delta_{n_--1} &= \left\{ Q_- = (0, \dots, 0, \theta_{n_++1}, \dots, \theta_{n_++n_-}, 0, \dots, 0) \ \middle| \ \sum \theta_i = 1, \ \theta_i \in [0,1] \right\}. \\
    \end{align*}
    To construct the desired affine equivalence, we first consider the  
    map $\varphi_1 \colon \delta_{n_+-1} \times \delta_{n_--1} \to S_0$ given by $\varphi_1(Q_+, Q_-) = \frac{Q_+ + Q_-}{2}$. Indeed, for any given points $Q_+ \in \delta_{n_+-1}$ and $Q_- \in \delta_{n_--1}$, since $\ell(Q_+)=1$ and $\ell(Q_-)=-1$, we find that $\varphi_1(Q_+, Q_-) \in S_0$. Note that the map $\varphi_1$ 
    can be interpreted as the restriction of a linear map $\mathbb R^{n_+} \times \mathbb R^{n_-} \to \mathbb R^{n+1}$.

    We then construct an inverse map $\varphi_2 \colon S_0 \to \delta_{n_+-1} \times \delta_{n_--1}$. Given any point $ Q_0 = (\theta_1, \dots, \theta_{n_++n_-}, 0, \dots, 0) \in S_0 $, since
    \begin{align*}
    	\sum \theta_i &= \theta_1 + \dots + \theta_{n_+} + \theta_{n_++1} + \dots + \theta_{n_++n_-} = 1, \\
    	\ell(Q_0) &= \theta_1 + \dots + \theta_{n_+} - \theta_{n_++1} - \dots - \theta_{n_++n_-} = 0,
    \end{align*}
    we have that
    $$ \theta_1 + \dots + \theta_{n_+} = \theta_{n_++1} + \dots + \theta_{n_++n_-} = \frac{1}{2}. $$
    Then we define $\varphi_2(Q_0) = (Q_+, Q_-)$ where
    \begin{align*}
        Q_+ &= \left( 2\theta_1, \dots, 2\theta_{n_+}, 0, \dots, 0, 0, \dots, 0 \right) \in \delta_{n_+-1}; \\
        Q_- &= \left( 0, \dots, 0, 2\theta_{n_++1}, \dots, 2\theta_{n_++n_-}, 0, \dots, 0 \right) \in \delta_{n_--1}.
    \end{align*}

    It is straightforward to check that $\varphi_1$ and $\varphi_2$ are mutually inverse to each other. Hence Claim 1 holds.
    
    \textbf{Claim 2.} We show that $S_k = C(S_{k-1})$ for each $1 \leq k \leq n_0$.

    On the one hand, for any $Q_k \in S_k$ as given in \eqref{eqn:def-Sk-convex}, if $Q_k \neq V_{n_++n_-+k}$, then
    $$ \theta_1 + \dots + \theta_{n_++n_-+k-1} = 1 - \theta_{n_++n_-+k} >0. $$ We define
    $$ Q_{k-1} = \frac{1}{1 - \theta_{n_++n_-+k}} (\theta_1, \dots, \theta_{n_++n_-+k-1}, 0, 0, \dots, 0). $$
    Then we have $$ Q_k = (1 - \theta_{n_++n_-+k}) Q_{k-1} + \theta_{n_++n_-+k} V_{n_++n_-+k}. $$
    Moreover, since $\ell(Q_k) = \ell(V_{n_++n_-+k}) = 0$, we have $\ell(Q_{k-1})=0$. It follows that $Q_{k-1} \in S_{k-1}$, which shows that $S_k \subseteq C(S_{k-1})$ where the vertex of the cone is $V_{n_++n_-+k}$.

    On the other hand, given any $Q_{k-1} \in S_{k-1}$ and $\lambda \in [0,1]$, it is easy to check that the convex combination $$ (1-\lambda) Q_{k-1} + \lambda V_{n_++n_-+k} \in S_k, $$ which implies that $C(S_{k-1}) \subseteq S_k$. This completes the proof of Claim 2.

    By Claim 1 and repeatedly applying Claim 2 for $k=1, \dots, n_0$, we obtain
    $$ \delta_n \cap \Lambda = S_{n_0} = C^{n_0}(\delta_{n_+-1} \times \delta_{n_--1}) $$
    as desired.
\end{proof}


\begin{thebibliography}{}



\bibitem[BHPT]{CY3}
F.~Bogomolov, L.~H.~Halle, F.~Pazuki, and S.~Tanimoto. 
\newblock{Abelian Calabi-Yau threefolds: N\'eron models and rational points,} 
\newblock{\em Math. Res. Lett.} \textbf{25}, no. 2, 367--392 (2018).

\bibitem[BLR90]{neron}
S.~Bosch, W.~{L\"u}tkebohmert, and M.~Raynaud.
\newblock {N\'eron models,} 
\newblock {\em Ergebnisse der Mathematik und ihrer Grenzgebiete, Springer-Verlag} (21), x + 325 pp. (1990).

 \bibitem[Bo14]{Boschbook}
S.~Bosch.
\newblock {Lectures on formal and rigid geometry,}
 \newblock{\em Lecture Notes in Mathematics 2105. Cham: Springer,} 
 viii+254 pp. (2014).

\bibitem[BM19]{BrownMazzon}
M.V.~Brown and E.~Mazzon.
\newblock{The essential skeleton of a product
of degenerations}, 
\newblock{\em Compos. Math.} \textbf{155}, no. 7, 1259--1300 (2019).

\bibitem[CL16]{ChiLaz}
B.~Chiarellotto and C.~Lazda.
\newblock{Combinatorial degenerations of surfaces and Calabi-Yau threefolds}, 
\newblock{\em Algebra Number Theory,} \textbf{10}, no. 10, 2235 -- 2266 (2016).

\bibitem[DM69]{DM69}
P.~Deligne and D.~Mumford, 
\newblock{The irreducibility of the space of curves of given genus},
\newblock{\em Inst. Hautes Etudes Sci. Publ. Math.,} \textbf{36}, 75--109 (1969).

\bibitem[dFKX17]{dFKX}
T.~de Fernex, J.~Koll\'{a}r and C.~Xu.
\newblock{The dual complex of singularities},
\newblock{\em Higher dimensional algebraic geometry---in honour of
              {P}rofessor {Y}ujiro {K}awamata's sixtieth birthday},
\newblock{\em Adv. Stud. Pure Math., Math. Soc. Japan, Tokyo},
\newblock{74, pp.~103--129 (2017).}

\bibitem[FC90]{FaltingsChai}
G.~Faltings and C.-L.~Chai.
\newblock{Degeneration of Abelian varieties.} 
\newblock{\em Ergeb. Math. Grenzgeb., Springer-Verlag} (3), xii + 316 pp. (1990).

\bibitem[Fel68]{Feller-1968}
Feller, W. 
\newblock{An Introduction to Probability Theory and Its Applications}, 
\newblock{\em Wiley, New York,} 1, 3rd ed., xviii+509 pp. (1968).

\bibitem[Fo68]{Fogarty}
J.~Fogarty.
\newblock {Algebraic families on an algebraic surface}
\newblock{\em Amer. J. Math}, \textbf{90}, 511--521 (1968).



\bibitem[GHH19]{GHH}
M.~G.~Gulbrandsen, L.~H.~Halle, and K.~Hulek. 
\newblock {A GIT construction of degenerations of Hilbert schemes of points}. 
\newblock {\em Doc. Math.}, \textbf{24}, 421--472 (2019).

\bibitem[GHHZ21]{GHHZ21}
	M.~G.~Gulbrandsen,  L.~H.~Halle, K.~Hulek and Z.~Zhang.
	\textit{The geometry of degenerations of Hilbert schemes of points}.
	J. Algebraic Geom. \textbf{30}, 1--56 (2021).

\bibitem[HN10]{HaNi-comp}
L.~H.~Halle and J.~Nicaise.
\newblock{The N\'eron component series of an abelian variety.}
\newblock{\em Math.~Ann.}, \textbf{348}, no. 3, 749--778 (2010).

\bibitem[HN18]{HN}
L.~H.~Halle and J.~Nicaise.
\newblock{Motivic zeta functions of degenerating Calabi-Yau varieties.}
\newblock{\em Math.~Ann.,} \textbf{370}, no. 3-4, 1277--1320 (2018).

\bibitem[Ha77]{Hartshorne}
R.~Hartshorne.
\newblock{Algebraic Geometry.}
\newblock{\em Graduate Texts in Mathematics, 52, Springer Verlag, New York -- Heidelberg,}
\newblock{xvi+496 pp. (1977).}

\bibitem[HR74]{Hochster-Roberts}
 M.~Hochster and J.L.~Roberts. 
 \newblock{Rings of invariants of reductive groups acting on regular rings are Cohen-Macaulay}. 
 \newblock{\em Advances in Mathematics},  13 (2), 115--175 (1974).



\bibitem[KLSV18]{KLSV}
J.~Koll{\'a}r, R.~Laza, G.~Sacc{\`a}, and C.~Voisin. 
 \newblock{Remarks on degenerations of hyper-K\"ahler manifolds},
\newblock{\em Ann. Inst. Fourier} (Grenoble), \textbf{68}, no. 7, 2837--2882 (2018). 

\bibitem[K\"u98]{Kunnemann}
K.~K\"unnemann.
\newblock{Projective regular models for abelian varieties, semistable reduction, and the height pairing}.
\newblock{\em Duke Math. J.}, \textbf{95}, 161--212 (1998).

\bibitem[Ku77]{Ku77}
 V.~S.~Kulikov
 \newblock{Degenerations of {{\(K_3\)}} surfaces and {Enriques} surfaces}.
 \newblock{\em Math. USSR, Izv.}, \textbf{11}, 957--989 (1977).  



\bibitem[LL01]{LiuLor}
Q.~Liu and D.~Lorenzini.
\newblock{Special fibers of N\'eron models and wild ramification.}
\newblock{\em J.~reine angew.~Math.}, \textbf{532}, 179--222 (2001). 



\bibitem[Liu06]{liu}
Q.~Liu.
\newblock {Algebraic geometry and arithmetic curves.} 
\newblock {\em Oxford Graduate Texts in Mathematics, Volume~6},
\newblock {\em Oxford University Press}, xv + 577 pp. (2006). 

\bibitem[Li01]{Li01}
J.~Li.
\newblock{Stable morphisms to singular schemes and relative stable morphisms.}
\newblock{\em J. Differential Geom.}, \textbf{57}, no. 3, 509--578 (2001).

\bibitem[Li13]{Li13}
J.~Li.
\newblock{Good degenerations of moduli spaces.}
\newblock{\em Handbook of moduli. {V}ol. {II}}, Adv. Lect. Math. (ALM) \textbf{25}, 299--351 (2013).

\bibitem[GIT]{GIT}
D.~Mumford, J.~Fogarty, F.~Kirwan.
\newblock {Geometric invariant theory. Third edition.}
\newblock {\em Ergebnisse der Mathematik und ihrer Grenzgebiete (2) [Results in Mathematics and Related Areas (2)], 34,}
\newblock {\em Springer-Verlag, Berlin}, xiv+292 pp. (1994).


\bibitem[Na08]{Na08}
Y.~Nagai. 
\newblock{On monodromies of a degeneration of irreducible symplectic {K{\"a}hler} manifolds}.
\newblock{\em Math. Z.}, 
\textbf{258}, no. 2, 407--426 (2008).

\bibitem[Na18]{Na18}
Y.~Nagai. 
\newblock{Symmetric products of a semistable degeneration of surfaces}.
\newblock{\em Math. Z.}, 
\textbf{289}, no. 3-4, 1143--1168 (2018).


\bibitem[Na21]{Nagai-GHH}
Y.~Nagai. 
\newblock{Gulbrandsen–Halle–Hulek degeneration and Hilbert-Chow morphism}.
\newblock{\em Pure Appl. Math. Q.}, 
\textbf{17}, no. 1, 401--422 (2022).




\bibitem[Ov21]{Overkamp}
O.~Overkamp.
\newblock {Degenerations of Kummer surfaces}.
\newblock{\em Math. Proc. Cambridge Philos. Soc.}, \textbf{171}, no. 1, 65--97 (2021).

 \bibitem[PP81]{PP81}
 U.~Persson, H.~Pinkham.
 \newblock{Degeneration of surfaces with trivial canonical bundle}.
 \newblock{\em Ann. Math. (2)},\textbf{113}, 45--66 (1981). 

 \bibitem[ST25]{ST25}
 Q.~Shafi, C.~Tschanz.
\newblock{From logarithmic Hilbert schemes to degenerations of hyperkähler varieties}.
\newblock{\url{arXiv:2512.21190}.}

\bibitem[Stacks]{Sta}
The Stacks Project Authors.
\newblock{\em Stacks Project}.
\newblock{\url{https://stacks.math.columbia.edu}} (2024).

\bibitem[Tsch24]{Tsch24}
 C.~Tschanz.
\newblock{Good models of Hilbert schemes of points over semistable degenerations}.
\newblock{\url{arXiv:2402.10209}.}

\bibitem[Tsch26]{Tsch23}
 C.~Tschanz.
\newblock{Expansions for Hilbert schemes of points on semistable degenerations}.
\newblock{\em Forum of Mathematics, Sigma}, \textbf{14}, e53 (2026).

\bibitem[Zie95]{Zie}
G.~M.~Ziegler.
\newblock{Lectures on polytopes}.
\newblock{\em Graduate Texts in Mathematics, 152, Springer Verlag, New York,}
\newblock{x+370 pp. (1995).}









\end{thebibliography}
\end{document}